\newsavebox{\cm}
\sbox{\cm}{ \begin{picture}(12,8)
              \put(6,4){\oval(8,8)[b]}
              \put(6,4){\oval(8,8)[r]}
              \put(6,8){\vector(-1,0){2}}
              \end{picture}  }
\newcommand{\inc}{\ensuremath{\hookrightarrow}}
\newcommand{\f}{\to}
\newcommand{\ff}{\ensuremath{\longmapsto}}
\newcommand{\ov}{\overline}
\newcommand{\s}{\mathcal{S}}
\newcommand{\g}{\mathcal{G}}
\newcommand{\Z}{\ensuremath{\mathbb{Z}}}
\newcommand{\R}{\ensuremath{\mathbb{R}}}
\newcommand{\C}{\ensuremath{\mathbb{C}}}
\renewcommand{\r}[1]{\ensuremath{\big|_{#1}}}
\newcommand{\bc}{\begin{center}}
\newcommand{\ec}{\end{center}}
\newcommand{\be}{\begin{enumerate}}
\newcommand{\ee}{\end{enumerate}}
\newcommand{\bi}{\begin{itemize}}
\newcommand{\ei}{\end{itemize}}
\newcommand{\bd}{\begin{description}}
\newcommand{\ed}{\end{description}}
\newcommand{\beq}{\begin{equation}}
\newcommand{\eeq}{\end{equation}}
\newcommand{\beqa}{\begin{eqnarray}}
\newcommand{\eeqa}{\end{eqnarray}}
\newcommand{\bfr}{\begin{flushright}}
\newcommand{\efr}{\end{flushright}}
\newcommand{\bfl}{\begin{flushleft}}
\newcommand{\efl}{\end{flushleft}}
\newcommand{\bp}{\begin{picture}}
\newcommand{\ep}{\end{picture}}
\theoremstyle{plain}
\newtheorem{thm}{Theorem}[section]
\newtheorem{prop}[thm]{Proposition}
\newtheorem{lem}[thm]{Lemma}
\newtheorem{cor}[thm]{Corollary}          
\theoremstyle{definition}
\newtheorem{ex}[thm]{Example}
\newtheorem{rk}[thm]{Remark}
\newtheorem{df}[thm]{Definition}
\newcommand{\al}{\ensuremath{\alpha}} 
\newcommand{\ga}{\ensuremath{\gamma}}
\newcommand{\la}{\ensuremath{\lambda}}
\newcommand{\sig}{\ensuremath{\sigma}}
\newcommand{\env}[1]{ \ensuremath{ {#1}^{\mathsf{e}} } }
\newcommand{\hal}{\ensuremath{\env{\al}}}
\newcommand{\hex}{\ensuremath{\env{X}}}
\newcommand{\ve}{\vspace*{.01cm}}
\newcommand{\css}{$C^*$-algebras\xspace}
\newcommand{\ea}{enveloping action\xspace}
\newcommand{\eas}{enveloping actions\xspace}
\newcommand{\pa}{partial action\xspace}
\newcommand{\pas}{partial actions\xspace}
\DeclareMathOperator{\gr}{Gr}
\newcommand{\multipasswarning}{%
 \clearpage
 \typeout{%
 ***************************************************************************}
 \typeout{%
 Note: This document needs to run through LaTeX three times, instead of }
 \typeout{%
 the usual two, to resolve indirect cross-references}
 \typeout{%
 ***************************************************************************}
 }
\begin{document}

\title[Amenable Partial Actions]
      {Amenable  partial actions}
\author[Massoud Amini]{Massoud Amini}

\address{Department of Mathematics\\ Faculty of Mathematical Sciences\\ Tarbiat Modares University\\ Tehran 14115-134, Iran}
\email{mamini@modares.ac.ir, mamini@ipm.ir}

\address{Current Address: STEM Complex, 150 Louis-Pasteur Pvt,
	Ottawa, ON, Canada K1N 6N5}


\thanks{}
\keywords{amenable partial action, amenable partial representation, globalization, induced partial representation}
\subjclass{Primary 37B05 ; Secondary 43A07, 57S05}

\begin{abstract}
We introduce and study various notions of amenability for continuous (Borel) partial actions of locally compact (Borel) groups on  topological (standard Borel) spaces.  
We also study amenability of partial representations of a locally compact group in a Banach space and show that a partial action on a measure space is amenable iff the corresponding Koopman partial representation on the corresponding $L^2$-space is amenable. We introduce the notion of induced partial representation from a closed subgroup and explore perseverance of amenability type properties under induction.      
\end{abstract}

\maketitle

\tableofcontents

\section{Introduction}\label{sec:intro}

Classically a dynamics on a space (a manifold, topological space, or measure space) describes the time evolution of the points of the space. In modern terms, this could be described by a (smooth, continuous, or Borel) action of the additive group of reals (the so called one parameter group of transformations). In discrete time, we may describe the action of the subgroup of integers by a single invertible transformation.     

In practice, this picture is too restrictive to describe the flow of differential equations. The initial value problem for a given Lipschitz vector field on an open subset of the Euclidean space admits a unique parametric solution (for any initial point in that open set), defined on some open subset about zero (in the parameter space, say time). Extending this to the maximal interval, we get a one parameter family of partial diffeomorphisms of the original open set in the Euclidean space. This is no longer a one parameter group of diffeomorphisms, as the flow at time $t+s$ may be a proper extension of the combination of flows at times $t$ and $s$. In technical terms, this is a {\it partial action} of the additive group of reals. 
   
\par Partial actions (at least in topological case) are now quite well studied (see \cite{exl} and references therein) and have natural extensions to  partial actions on \css  with natural ties to notions such as Fell bundles (see, \cite{excirc}, 
\cite{mc} and \cite{extwist}). 

\par Group actions are related to the notion of amenability for groups via the celebrated Day’s fixed point theorem: A discrete group is amenable if and only if {\it any} of its actions  by continuous affine transformations on a compact convex subset of a (separable) locally convex topological vector space has a fixed point \cite{d}. Relaxing the notion of amenability, one could define amenability of an {\it specific} action of a group. This notion is known (at least for discrete groups) to be related to analogs of all sort of properties characterizing amenability of groups, such as fixed point properties \cite{dw}, exactness \cite{o}, invariant means and cohomology \cite{bnnw}, and approximate type invariance properties (such as F{\o}lner and Reiter conditions) \cite{kl}. Amenability of actions is also studied for locally compact groups (see for instance, \cite{aeg} and \cite{AR00}). Finally, each action on a measure space induces a representation of the group on the corresponding $L^2$-space (the so called Koopman representation) and dynamical properties of the action  have counterparts for representations (see for instance \cite{kl}). In particular, there is also a notion of amenability for representations due to Bekka \cite{b}. An action also gives a groupoid structure on the Cartesian product of the group by the ambient space, whose amenability is known to be related to the amenability of the action \cite{aeg}.             

In this paper we study various notions of amenability for \pas. The motivation of  paper is twofold. First, while many equivalent conditions are known for amenability of actions of discrete groups, some of these equivalences are not known in the locally compact case. Here we prove such equivalences in the most general case (not only for actions, but also) for \pas. Second, the interplay between \pas and partial representations seems to be not well explored, and the notion of amenability provides a crucial aspect of such a relationship.     

\par The structure of the paper is as follows. In Section \ref{sec:enve}  we review the notion of enveloping actions of \pas. The contents of this section are not new and we provides proofs only for the sake of completeness. In section \ref{sec:am}, we define and study various amenability notions for \pas of topological (as well as Borel) groups. We relate the amenability of a \pa in the sense of Zimmer to amenability of its enveloping action. We also relate the amenability of a \pa in the sense of Greenleaf to approximate type invariance conditions, such as F{\o}lner and Reiter conditions. 
In Section \ref{sec:pr}, we introduce and study amenability of partial representations and show that a continuous (Borel) partial action is amenable in the sense of Greenleaf iff the corresponding Koopman partial representation is amenable in the sense of Bekka. Also we show that all continuous partial actions (representations) of a locally compact amenable group on a standard Borel space (in a tracial Banach space) are amenable in the sense of Greenleaf (Bekka). In Section  \ref{sec:ind}, we introduce the notions of induced partial representations (from a closed subgroup) and weak containment for partial representations and study perseverance of amenability type properties under induction and weak containment. 

Though many of the constructions in this paper resemble the global case, there are many serious technicalities to overcome (as one could say by following the commonly involved proofs presented here) before one could use similar arguments. Some notions (such as induced partial representations) are completely new and are discussed for the first time here. 

\section{Enveloping actions}\label{sec:enve} 

\par In this section we review the existence, uniqueness and properties \eas for \pas of topological groups on (not necessarily Hausdorff)
topological spaces, illustrated in some concrete examples. In particular, we observe that  $(\al,X)$ is a \pa, where $X$ is a Hausdorff space, and if $(\beta,Y)$ 
is its \ea, then $Y$ is a Hausdorff space if and only if the graph of $\al$ 
is closed. For the rest of this paper, $G$ is a topological group with identity element $e$.
\ve

Following \cite{fav5}, let us define partial actions of topological groups on topological spaces, and review their properties. 

\begin{df}\label{df:pa}
A {\it partial action} \al\  of a topological group $G$ on a topological 
space $X$ is a pair $\big(\{X_s\}_{s\in G},\{\al_s\}_{s\in G}\big)$, such 
that,
\be
 \item $X_t$ is open in $X$, and $\al_t:X_{t^{-1}}\f X_t$ is a homeomorphism, 
       
 \item The subset $X\rtimes_{\al}G:=\big\{(t,x): t\in G, 
       x\in X_{t^{-1}}\big\}$ is open in $G\times X$, and the map$: X\rtimes_{\al}G\f X$;\ $(t,x)\ff\al_t(x),$ 
       is continuous,
 \item $X_e=X$, and $\al_{st}$ is an 
       extension of $\al_s\al_t$,
\ee
\noindent for all $s,t\in G$.

  Condition (3) above is equivalent to the following set of conditions \cite[Lemma 1.2]{qr},
  \be
  \item[(3-1)] $\al_e=id_X$ and $\al_{t^{-1}}=\al_t^{-1}$, 
  \item[(3-2)] $\al_t(X_{t^{-1}}\cap X_s)=X_t\cap X_{ts}$, 
  \item[(3-3)] $\al_s\al_t :X_{t^{-1}}\cap X_{t^{-1}s^{-1}}\f X_s\cap X_{st}$ 
  is a bijection, and $\al_s\al_t(x)=\al_{st}(x)$,
 for  $s,t\in G$ and $x\in 
  X_{t^{-1}}\cap X_{t^{-1}s^{-1}}$.
 \ee  
If $\al =\big(\{X_t\}_{t\in G},\{\al_t\}_{t\in G}\big)$ and $\beta
=\big(\{Y_t\}_{t\in G},\{\beta_t\}_{t\in G}\big)$ are 
\pas of $G$ on $X$ and $Y$, we say that a continuous function 
$\phi: X\f Y$ is a morphism $\phi :\al\f\beta$ if $\phi(X_t)\subseteq Y_t$, 
 and for each  $t\in G$, the following diagram commutes,      
\[ \xymatrix
{X_{t^{-1}}\ar@{->}[r]^-{\phi}\ar[d]_-{\al_t}&Y_{t^{-1}}\ar[d]^-{\beta_t}\\
X_t\ar@{->}[r]_-{\phi}&Y_t} \] 
\end{df}

A partial action of a Borel group $G$ on a (standard) Borel 
space $X$ is defined as above, where the terms open, homeomorphism, and continuous are replaced by the terms Borel measurable, Borel isomorphism, and Borel, respectively.

\begin{ex}\label{rest}
$(i)$ Let $\beta:G\times Y\f Y$ be a continuous global action
and let $X$ be an open subset of $Y$. Consider $\al=\beta\r{X}$, the 
restriction of $\beta$ to $X$, that is: $X_t=X\cap\beta_t(X)$, and 
$\al_t:X_{t^{-1}}\f X_t$ such that $\al_t(x)=\beta_t(x)$, $t\in G$, 
$x\in X_{t^{-1}}$. Then $\al$ is a \pa on $X$ and any \pa arises in this way \cite{fav5}. In particular, $\beta$ may be identified with the \pa $\beta\r{Y}$.

$(ii)$ The flow of a differentiable vector field is a partial action. More precisely, 
consider a smooth vector field $\mathbf{v}:X\f TX$ on a manifold $X$, and for 
$x\in X$ let $\ga_x$ be the corresponding integral curve through $x$, defined on its maximal interval $(a_x,b_x)$. Let us define, 
for $t\in\R$: $X_{-t}=\big\{ x\in X:\, t\in (a_x,b_x)\big\}$, 
$\al_t:X_{-t}\f X_t$ such that $\al_t(x)=\ga_x(t)$, and 
$\al =\big(\{X_t\}_{t\in\R},\{\al_t\}_{t\in\R}\big)$. Then $\al$ is a partial 
action of $\R$ on $X$.
\end{ex}

It is observed by F. Abadie  that \pas on compact spaces restrict to a global action on an open subgroup \cite[Proposition 1.1]{fav5}. Here we sketch the proof, as the idea of the proof is used in the reminder of this section.   

\begin{lem}\label{cont}
Let $\al$ be a \pa\ of $G$ on a compact space $X$. Then there exists
an open subgroup $H$ of $G$ such that $\al$ restricted to $H$ is a  
global action. 
\end{lem}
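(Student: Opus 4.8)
The plan is to produce an open neighbourhood of $e$ on which the domains $X_{t^{-1}}$ are forced to be all of $X$, and then to propagate this globalness along the subgroup generated by that neighbourhood. First I would exploit condition (2) of Definition~\ref{df:pa}: since $X\rtimes_{\al}G$ is open in $G\times X$ and contains $(e,x)$ for every $x\in X$ (because $x\in X_{e^{-1}}=X_e=X$), for each $x$ there are open sets $e\in U_x\subseteq G$ and $x\in V_x\subseteq X$ with $U_x\times V_x\subseteq X\rtimes_{\al}G$; equivalently $V_x\subseteq X_{t^{-1}}$ for every $t\in U_x$. By compactness of $X$ I would choose $x_1,\dots,x_n$ with $X=\bigcup_i V_{x_i}$ and set $U=\bigcap_i U_{x_i}$, an open neighbourhood of $e$. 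Then for each $t\in U$ every point of $X$ lies in some $V_{x_i}\subseteq X_{t^{-1}}$, so $X_{t^{-1}}=X$.

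Next I would pass to a symmetric open neighbourhood $W$ of $e$ with $W\subseteq U\cap U^{-1}$, so that $X_w=X_{w^{-1}}=X$ for all $w\in W$, and let $H=\bigcup_{n\ge1}W^n$ be the subgroup of $G$ generated by $W$. Since $G$ is a topological group, each $W^n$ is open (it is a union of translates of an open set), so $H$ is an open subgroup. It then remains to show $X_h=X$ for all $h\in H$, which I would establish by induction on the word length $n$ in the generators from $W$. For the inductive step write $h=gt$ with $t\in W$ and $g$ a shorter word; because $W$ is symmetric the reversed, inverted word $g^{-1}$ is again a product of elements of $W$, so the inductive hypothesis gives both $X_g=X$ and $X_{g^{-1}}=X$. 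Relation (3-2), namely $\al_g(X_{g^{-1}}\cap X_t)=X_g\cap X_{gt}$, then reads $\al_g(X)=X\cap X_{gt}$; since $\al_g$ carries $X_{g^{-1}}=X$ onto $X_g=X$, the left side equals $X$, and hence $X_{gt}=X$.

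Finally, with $X_h=X$ for every $h\in H$, each $\al_h$ is a homeomorphism of all of $X$; condition (3) then forces $\al_{h_1h_2}=\al_{h_1}\al_{h_2}$ (an extension of a globally defined map must be an equality), and continuity of the restricted action map $H\times X\f X$ is inherited from condition (2). Thus $\al|_H$ is a genuine global action. The step I expect to be the main obstacle is not the compactness argument, which is routine, but the propagation: upgrading ``$X_{t^{-1}}=X$ near $e$'' to ``$X_h=X$ on all of $H$'' must be carried out purely through the coherence relations (3-1)--(3-3), and one has to arrange the symmetry of $W$ carefully beforehand so that both $X_g$ and $X_{g^{-1}}$ are under control when relation (3-2) is applied.
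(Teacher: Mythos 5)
Your proof is correct and follows essentially the same route as the paper's: both use openness of $X\rtimes_{\al}G$ together with compactness of $X$ to produce a symmetric open neighbourhood of $e$ on which every domain $X_{t^{-1}}$ equals $X$, and then pass to the open subgroup generated by it. The paper packages your word-length induction as the single observation that $G_0=\bigcap_{x\in X}\{t\in G:\,x\in X_{t^{-1}}\}$ is closed under products (the same application of relation (3-2)), so the generated subgroup lies in $G_0$ at once; otherwise the two arguments coincide.
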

\begin{proof}
Let $G_x=\{ t\in G:\, x\in X_{t^{-1}}\}$, and $G_0=\bigcap_{x\in X}A_x$. Then $e\in G_0$ and $st\in G_0$ whenever $s$, $t\in G_0$. Also, given $x\in X$ there exist open neighborhood 
$U_x$ of $x$ and symmetric neighborhood $V_e^x$ of $e$ such that 
$V_e^x\times U_x\subseteq X\rtimes_{\al}G$. By compactness of $X$, 
there exist $x_1,\ldots ,x_n\in X$ with $X=\bigcup_{j=1}^n U_{x_j}$. 
For the symmetric neighborhood $V=\bigcap_{j=1}^nV_{x_j}\subseteq G_0$ of $e$, $H:=\bigcup_{n=1}^\infty V^n$ is an 
open subgroup of $G$ contained in $G_0$, and $\al$ restricts to a global action on $H$.
\end{proof}
\par In particular, if $G$ is connected, each \pa on $G$ is a global action (since the unique open subgroup of a connected group is the group itself).  The next result is \cite[Theorem 1.1]{fav5}, whose proof is sketched here.

\begin{lem}\label{env}
Let $\al$ be a \pa of $G$ on $X$. Then there exists a pair  
$(\iota,\env{\al})$ (unique, up to isomorphism) such that $\env{\al}$ is a continuous action of $G$ on 
a topological space $\env{X}$, and $\iota :\al\f\env{\al}$ is a morphism, 
such that for any continuous 
action $\beta$ of $G$ and any morphism $\psi:\al\f\beta$, there exists a unique morphism $\env{\psi}:\env{\al}\f\beta$ 
making the  diagram   
 \[ \xymatrix{
       \al\ar[rr]^{\iota}
\ar[dr]_{\psi}&\ar @{}[d]
|{\circlearrowleft}&\hal\ar@{-->}[dl]^{\env{\psi}}\\ &\beta&}\]
\par commutative. Moreover,
\be
 \item $\iota(X)$ is open in $\env{X}$.
 \item $\iota:X\f\iota(X)$ is a homeomorphism.
 \item $\env{X}$ is the $\env{\al}$--orbit of $\iota(X)$.
\ee
\end{lem}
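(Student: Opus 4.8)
The plan is to realize $\env{X}$ as a quotient of $G\times X$, following Abadie. On $G\times X$ introduce the relation $(t,x)\sim(s,y)$ if and only if $x\in X_{t^{-1}s}$ and $y=\al_{s^{-1}t}(x)$; let $q\colon G\times X\f\env{X}:=(G\times X)/\!\sim$ be the quotient map, write $[t,x]=q(t,x)$, and set $\iota(x)=[e,x]$. That $\sim$ is reflexive and symmetric follows from $\al_e=\mathrm{id}_X$ and $\al_{r^{-1}}=\al_r^{-1}$ (condition (3-1)), while transitivity is precisely the content of the composition rule (3-3). The group $G$ acts on $G\times X$ by left translation in the first coordinate, $(r,(t,x))\ff(rt,x)$, and since $(t,x)\sim(s,y)$ forces $(rt,x)\sim(rs,y)$, this descends to a global action $\env{\al}_r[t,x]=[rt,x]$. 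One then checks $\iota$ is injective (because $X_e=X$) and that $\iota$ is a morphism, the required identity $\iota(\al_t(x))=\env{\al}_t(\iota(x))$ amounting to $(e,\al_t(x))\sim(t,x)$.

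The crux of the argument, and the step I expect to be the main obstacle, is showing that $q$ is an \emph{open} map; everything topological flows from this together with axiom (2). For each $r\in G$ consider the map $\Phi_r\colon G\times X_{r^{-1}}\f G\times X$, $\Phi_r(t,x)=(tr^{-1},\al_r(x))$; each $\Phi_r$ is a homeomorphism onto the open set $G\times X_r$, being a translation composed with the homeomorphism $\al_r$. The point is that the $\sim$-saturation of any set $U$ is exactly $\bigcup_{r\in G}\Phi_r\big(U\cap(G\times X_{r^{-1}})\big)$, obtained by reparametrising the class of $(t,x)$ as $\{(tr^{-1},\al_r(x)):r\in G_x\}$ with $G_x=\{r:x\in X_{r^{-1}}\}$. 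Hence for $U$ open the saturation is a union of open sets, and since $q$ is a quotient map this yields that $q$ is open.

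With $q$ open, the remaining assertions follow quickly. For (1), the saturation of $\{e\}\times X$ is $\{(s,y):y\in X_{s^{-1}}\}=X\rtimes_\al G$, which is open by Definition \ref{df:pa}(2); as $q^{-1}(\iota(X))$ equals this set, $\iota(X)$ is open. For (2), applying openness of $q$ to $\{e\}\times U$ (with $U\subseteq X$ open) shows $\iota$ is an open map onto its image, so $\iota\colon X\f\iota(X)$ is a homeomorphism. For (3), from $[t,x]=\env{\al}_t[e,x]=\env{\al}_t(\iota(x))$ we get $\env{X}=\bigcup_{t\in G}\env{\al}_t(\iota(X))$, the $\env{\al}$-orbit of $\iota(X)$. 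Continuity of $\env{\al}$ again uses openness of $q$: the map $\mathrm{id}\times q$ is then open, hence a quotient map, and the action $G\times\env{X}\f\env{X}$ is continuous because its composite with $\mathrm{id}\times q$ is the continuous translation action on $G\times X$ followed by $q$.

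Finally, for the universal property, given a continuous global action $\beta$ on $Y$ and a morphism $\psi\colon\al\f\beta$, I would define $\env{\psi}[t,x]=\beta_t(\psi(x))$. Well-definedness is where the morphism hypothesis on $\psi$ enters: if $(t,x)\sim(s,y)$ then $\beta_s(\psi(y))=\beta_s\big(\psi(\al_{s^{-1}t}(x))\big)=\beta_s\beta_{s^{-1}t}(\psi(x))=\beta_t(\psi(x))$. The map $\env{\psi}$ is continuous since $\env{\psi}\circ q$ is continuous and $q$ is a quotient map, it is $G$-equivariant by construction, and it satisfies $\env{\psi}\circ\iota=\psi$. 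Uniqueness is forced by (3): any equivariant extension must satisfy $\env{\psi}[t,x]=\env{\psi}\big(\env{\al}_t(\iota(x))\big)=\beta_t(\psi(x))$. The asserted uniqueness of $(\iota,\env{\al})$ up to isomorphism is then the standard consequence of the universal property.
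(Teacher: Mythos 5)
Your construction coincides with the paper's: the same equivalence relation (with the two coordinates swapped), the same quotient action $\env{\al}_r[t,x]=[rt,x]$, the same $\iota$, and the same formula $\env{\psi}[t,x]=\beta_t(\psi(x))$ for the universal property. Your proof that $q$ is open, by computing the saturation of an open set $U$ as $\bigcup_{r\in G}\Phi_r\big(U\cap(G\times X_{r^{-1}})\big)$ with each $\Phi_r$ a homeomorphism between open subsets of $G\times X$, is a correct and somewhat more direct variant of the paper's route (the paper first shows $\iota$ is open and only then deduces openness of $q$ from $q(U\times V)=\bigcup_{t\in V}\env{\al}_t(\iota(U))$). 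Your treatment of (1), of (3), of continuity of $\env{\al}$ via the open quotient map $\mathrm{id}\times q$, and of the universal property, including uniqueness, is sound.

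Your proof of conclusion (2), however, has a genuine gap. You cannot ``apply openness of $q$ to $\{e\}\times U$'': an open map sends open sets to open sets, and $\{e\}\times U$ is not open in $G\times X$ unless $G$ is discrete. Your saturation formula does not repair this, since the saturation of $\{e\}\times U$ is $\bigcup_{r\in G}\{r^{-1}\}\times\al_r(U\cap X_{r^{-1}})$, a union of slices each with empty interior when $G$ is non-discrete. The missing ingredient is precisely the one your argument never invokes: the joint continuity of $(t,x)\ff\al_t(x)$ required in Definition~\ref{df:pa}(2). Indeed, the saturation of $\{e\}\times U$ equals $\{(s,y)\in X\rtimes_{\al}G:\ \al_s(y)\in U\}$, the preimage of $U$ under this continuous map, hence open in the open set $X\rtimes_{\al}G$; since $q$ is a quotient map, $\iota(U)$ is then open --- this is the paper's argument. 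This continuity cannot be avoided: take $G=X=\R$, $X_t=X$, and $\al_t(x)=x+\phi(t)$ with $\phi:\R\f\R$ additive and discontinuous (so with dense graph). Every ingredient your proof of (2) actually uses holds here (each $\al_t$ is a homeomorphism, $X\rtimes_{\al}G=G\times X$ is open, and $q$ is open by your $\Phi_r$ argument), yet $q^{-1}(\iota(U))=\{(s,y):\ y+\phi(s)\in U\}$ is not open, so $\iota$ is not open onto its image. Of course this $\al$ is not a partial action in the sense of Definition~\ref{df:pa}, precisely because it violates the continuity half of axiom (2) --- which is the point: conclusion (2) does not follow from openness of $q$, and your proof must quote that continuity at this step.
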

\begin{proof}
The continuous action $\ga:G\times (X\times G)\f X\times G$; \ $\ga_s(x,t)=(x,st)$, observes the equivalence relation, $$(x,r)\sim (y,s)\iff x\in X_{r^{-1}s}, \ \al_{s^{-1}r}(x)=y,$$ 
inducing a continuous action $\env{\al}$ on $\env{X}=(X\times G)/\sim$, defined by, $$\env{\al}_s([x,t])=[x,st];\ \ (s,t\in G, x\in X).$$ Next, the quotient map $q:X\times G\f\env{X}$ induces a continuous injection $\iota:X\inc\env{X}$, which  is also open, since $$q^{-1}\big(\iota(U)\big)
=\{ (x,t):\, (x,t)\sim (y,e),\text{ some }y\in U \} 
=\{ (x,t):\, \al_t(x)\in U\}$$  is open in $X\rtimes_{\al}G,$ for $U\subseteq X$ open. Also, $\env{X}$ is the $\env{\al}$--orbit of $\iota(X)$, as  $q(x,t)=\env{\al}_t\big(\iota(x)\big)$.
\par To see that $\alpha^e$ is continuous and $\iota:\alpha\to\alpha^e$ is a morphism, note that $q$ is an open map, since, 
$$q(U\times V)
  =\bigcup_{t\in V}q(U\times\gamma_t(\{e\}))
  =\bigcup_{t\in V}\alpha_t^e(\iota (U)),$$ 
for $V\subseteq G$ and $U\subseteq X$ open. Next, $(id\times q)^{-1}((\alpha^e)^{-1}(W))=\gamma^{-1}(q^{-1}(W))$ is  open in $G\times(X\times G)$, for $W\subseteq X^e$ open, thus,  $(\alpha^e)^{-1}(W)$ is open in $G\times X^e$, because $id\times q$ is an open surjection. Finally, for $x\in X_{t^{-1}}$, 
\[\iota\big(\al_t(x)\big)=q\big(\al_t(x),e\big)=q(x,t)
                         =q\big(\ga_t(x,e)\big)
                         =\env{\al}_t\big(q(x,e)\big)
                         =\env{\al}_t\big(\iota(x)\big).\]
\par To prove the universal property, given $\beta:G\times Y\f Y$ and $\psi:X\f Y$, the map 
$\psi':X\times G\f Y$;\ $\psi'(x,t)=\beta_t\big(\psi(x)\big)$ observes the equivalence relation $(x,r)\sim (y,s)$ given by $\al_{s^{-1}r}(x)=y$, since,
\[ \beta_{s^{-1}}\big(\psi'(x,r)\big)
                    =\beta_{s^{-1}}\big(\beta_r(\psi (x))\big)
                    =\beta_{s^{-1}r}\big(\psi(x)\big)
                    =\psi\big(\al_{s^{-1}r}(x)\big)
                    =\psi(y), \]
and  $\psi'(x,r)=\beta_s\big(\psi (y)\big) =\psi'(y,s)$. Thus 
it induces a continuous map $\env{\psi}:\env{X}\f Y$, with
$\env{\psi}\big(q(x,t)\big)=\beta_t(\psi(x))$, for $t\in G$, $x\in X$.
Then, $\env{\psi}\iota(x)=\env{\psi}\big(q(x,e)\big)=\psi (x)$, and 
$\env{\psi}:\env{\al}\f\beta$ is a morphism, uniquely 
determined by  $\env{\psi}\iota=\psi$. 
\end{proof}
\par Since  $(\iota,\env{\al})$ is characterized by a universal 
property, it is unique up to isomorphisms (c.f., \cite{rowen}).  
The action $\env{\al}$ above is called an {\it enveloping action} of $\al$ (or simply, a {\it globalization} of $\al$). Note that as 
$\env{X}$ is the $\hal$--orbit of $X$,  $X$ and $\hex$ share the 
same local properties.

\begin{ex}\label{rk:susp}
$(i)$ Assume that $h:X\f X$ is a homeomorphism, so we have an action of $\Z$ on 
$X$. We may think of this action as a partial action of $\R_d$ on $X$, where 
$\R_d$ denotes the real numbers with the discrete topology. Indeed, define 
$X_s=X$ if $s\in \Z$, $X_s=\emptyset$ if $s\notin\Z$, and 
$\al_s:X_{-s}\f X_s$ as $\al_s=h^s$ if $s\in\Z$, $\al_s=\emptyset$ otherwise. 
Note that $\al$ is not a partial action of $\R$ on $X$, because $\Z\times X$ 
is not open in $\R\times X$. However, we can imitate the construction of the 
enveloping action made in the proof of \ref{env} above, using $\R$ 
instead of $\R_d$, to obtain 
 a global continuous action 
$\beta:\R\times Y\to Y$, where $Y:=(X\times \R)/\sim$, such that 
$\beta_n(x)=\al_n(x)$, $n\in\Z$, $x\in X$. This action $\beta$ 
is called the {\em suspension of $h$}, and its construction is well known in 
dynamical systems theory \cite[Page 45]{tomi}.

$(ii)$ Consider the action $\beta:\Z\times S^1\f S^1$ given by the 
rotation by an irrational angle $\theta$: 
$\beta_k(z)=e^{2\pi ik\theta}z$, for $k\in\Z$, $z\in S^1$.
Let $U$ be a nonempty open arc of $S^1$, $U\neq S^1$, and consider the \pa 
given by the restriction $\al$ of $\beta$ to $U$ (\ref{rest}). 
Since the action $\beta$ is minimal, it follows that $\beta$ is the 
enveloping action of $\al$. This example shows that, even when $X$ and 
$\hex$ are similar locally, their global properties may be deeply different.
In this case, for instance, the first homotopy groups of $U$ and $S^1$ are 
different (c.f., \cite{fav5}).

$(iii)$ Consider the \pa $\al$ of $\Z_2$ on the unit interval $X=[0,1]$, given by  
$\al_1=id_X$, $\al_{-1}=id_V$, where $V=(a,1]$, $a>0$. Let 
$\hal :G\times\hex\f\hex$ be the enveloping action of $\al$. 
Consider $J=J^-\cup J^+\subseteq\R^2$ with the relative topology, where 
$J^\pm=\{\pm 1\}\times [0,1]$. Then
$\hex$ is the topological quotient space obtained from $J$ by identifying 
the points $(1,t)$ and 
$(-1,t)$, $t\in (a,1]$ \cite{fav5}. Therefore, $\hex$ is not a Hausdorff space: 
$(1,a)$ and $(-1,a)$ do not have disjoint neighborhoods. Note also that 
$\hal_{-1}$ permutes $(1,t)$ and $(-1,t)$ for $t\in [0,a]$, and is the 
identity in the rest of $\hex$.
\end{ex}

\begin{rk}\label{prop:t2}
$(i)$ Let $\al$ be a \pa\ of $G$ on the Hausdorff space $X$. Let $\gr(\al )$
be the graph of $\al$, that is 
$\gr(\al )=\{(t,x,y)\in G\times X\times X:\ x\in X_{t^{-1}},\, \al_t(x)=y \}.$ 
Then $\hex$ is a Hausdorff space if and only if $\gr(\al )$ is a closed 
subset of $G\times X\times X$ \cite[Proposition 1.2]{fav5}.

$(ii)$ If $G$ is a discrete group, then $\gr(\al )$ is closed in 
$G\times X\times X$ if and only if  
$\gr(\al_t)$ is closed in $X\times X$, for $t\in G$.

$(iii)$ As already seen in \ref{rest}, the flow of a smooth vector field on a 
manifold is a \pa, indeed a smooth \pa. The enveloping space inherits a 
natural manifold structure, although not always Hausdorff, by translating 
the structure of the original manifold through the enveloping action. It 
would be interesting to characterize those vector fields whose flows have 
closed graphs. For such a vector field, one obtains a Hausdorff manifold that 
contains the original one as an open submanifold, and a vector field whose 
restriction to this submanifold is the original vector field. Note, however,  
that the inclusion of the original manifold in its enveloping one could be 
 complicated \cite{fav5}. 
\end{rk}
\ve
Many of the algebraic and even dynamical notions related to global 
actions may be easily extended to the context of \pas. For instance, it is 
possible to make sense of expressions such as transitive \pas or minimal \pas.
To give an example, we could  say that a \pa $\al$ on a topological space $X$ is 
{\it minimal} when each $\al$--orbit is dense in $X$, that is, when 
$X=\ov{\{\al_t(x):\, t\in G_{x}\}}$, for each $x\in X$, where $G_x:=\{t\in G: x\in X_{t^{-1}}\}$. The dynamical properties of $\al$ and $\env{\al}$ are 
in general the same, for instance, 
it is not hard to see that $\al$ is minimal if and only if $\env{\al}$ is 
minimal. However, there are notions for partial actions which happen to be trivial for global actions. We would define a new notion of this type for transitivity of \pas in the next section.

\section{Amenable partial actions}\label{sec:am} 
In this section we define amenability of partial actions in terms of amenability of the corresponding groupoid of germs.
Recall that a {\em groupoid} is a set $\g$ with a distinguished subset $\g^{(2)} \subset \g \times \g$, called the set of {\em composable pairs}, a product map $\g^{(2)} \to \g$ with $(\gamma, \eta)\mapsto \gamma\eta$, and an inverse map from $\g$ to $\g$ with $\gamma \mapsto \gamma^{-1}$ such that
\begin{enumerate}
	\item $(\gamma^{-1})^{-1} = \gamma$ for all $\gamma\in \g$,
	\item If $(\gamma, \eta), (\eta, \nu)\in \g^{(2)}$, then $(\gamma\eta,\nu), (\gamma, \eta\nu)\in \g^{(2)}$ and $(\gamma\eta)\nu = \gamma(\eta\nu)$,
	\item $(\gamma, \gamma^{-1}), (\gamma^{-1},\gamma)\in \g^{(2)}$, and $\gamma^{-1}\gamma\eta = \eta$, $\xi\gamma\gamma^{-1} = \xi$ for all $\eta, \xi$ with $(\gamma, \eta), (\xi,\gamma) \in \g^{(2)}$.
\end{enumerate}
The set of {\em units} of $\g$ is the subset $\g^{(0)}$ of elements of the form $\gamma\gamma^{-1}$. The maps $r: \g\to \g^{(0)}$ and $s:\g\to \g^{(0)}$ given by $
r(\gamma) = \gamma\gamma^{-1}, \ s(\gamma) = \gamma^{-1}\gamma
$
are called the {\em range} and {\em source} maps respectively. One sees that $(\gamma, \eta)\in \g^{(2)}$ is equivalent to
$r(\eta) = s(\gamma)$. 

A map $\varphi: \g\to \mathcal{H}$ between groupoids is called a {\em groupoid homomorphism} if $(\gamma, \eta)\in \g^{(2)}$ implies that $(\varphi(\gamma), \varphi(\eta))\in \mathcal{H}^{(2)}$ and $\varphi(\gamma\eta) = \varphi(\gamma)\varphi(\eta)$. This implies that $\varphi(\gamma^{-1}) = \varphi(\gamma)^{-1}$, and so $\varphi(\g^{(0)}) \subset \mathcal{H}^{(0)}$, $r\circ\varphi = \varphi\circ r$, and $s\circ\varphi = \varphi \circ s$.

A {\em topological groupoid} is a groupoid which is a topological space where the inverse and product maps are
continuous, where we are considering $\g^{(2)}$ with the product topology inherited from $\g\times\g$. A topological
groupoid is called {\em \'etale} if it is locally compact, its unit space is Hausdorff, and the range and source maps are local homeomorphisms. These properties imply that $\g^{(0)}$ is open. Furthermore, in a second countable \'etale groupoid, the spaces
$
\g_x : = s^{-1}(x), \ \g^x := r^{-1}(x)
$
are discrete for all $x\in \g^{(0)}$. We note that an \'etale groupoid may not be Hausdorff, even though we always assume the unit space is.

The following result from \cite{AR00} will be used  as our  definition of amenability for a second countable groupoids.

\begin{lem}\label{am}
	Let $\mathcal{G}$ be a second countable locally compact groupoid with a continuous Haar system $\lambda$. The following are equivalent:
	\begin{enumerate}
		\item $\mathcal{G}$ is amenable.
		\item There exists a net $(g_i)$ of of positive Borel (continuous) functions (of compact support) on $\g$ 
		such that,
		\begin{enumerate}
			\item $\lambda(g_i)(u)\leq 1$, for $u\in\mathcal{G}^{(0)}$;
			\item $\lambda(g_i)(u)\to 1$, for $u\in\mathcal{G}^{(0)}$; and
			\item for all $\gamma\in\mathcal{G}$, 
			\[
			\int_{\g}\left|g_i(\gamma^{-1}\eta) - g_i(\eta)\right|d\lambda^{r(\gamma)}(\eta)\to 0.
			\]
		\end{enumerate}
		\item There exists a net $(m_i)$ of  families of Radon measures $m_i=(m_i^u)_{u\in\mathcal{G}^{(0)}}$ on $\g$ with ${\rm supp}(m_i^u)\subseteq \mathcal{G}^{u}$,
		such that,
		\begin{enumerate}
			\item $m_i^u(\g)\leq 1$, for $u\in\mathcal{G}^{(0)}$;
			\item $m_i^u(\g)\to 1$, for $u\in\mathcal{G}^{(0)}$; and
			\item  $\|\gamma\cdot m_i^{s(\gamma)}-m_i^{r(\gamma)}\|\to 0,$ for  $\gamma\in\mathcal{G}$.
		\end{enumerate}
	\end{enumerate}
\end{lem}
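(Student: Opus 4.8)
The statement is quoted as a result of \cite{AR00} and is to serve as the working definition of amenability, so I read condition (1) as definitional: in \cite{AR00} amenability of $\g$ \emph{is} the existence of an approximate invariant mean, which is exactly the datum in (2) (up to the normalization relaxation discussed below). Accordingly the substantive content of the lemma is the internal equivalence (2) $\Leftrightarrow$ (3), and my plan is to set up the explicit dictionary between approximately invariant functions and approximately invariant families of measures, namely $m_i^u := g_i\,\la^u$, the Radon measure on $\g^u = r^{-1}(u)$ with density $g_i$ against the Haar measure $\la^u$. Since $\la^u$ is carried by $\g^u$, so is $m_i^u$, giving the support condition for free; and $m_i^u(\g) = \int g_i\, d\la^u = \la(g_i)(u)$, so that (3a) and (3b) are literally (2a) and (2b).

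The one computation that needs care is (2c) $\Leftrightarrow$ (3c). I would write the left-translate $\gamma\cdot m_i^{s(\gamma)}$ as the pushforward of $m_i^{s(\gamma)}$ under $\eta\mapsto\gamma\eta:\g^{s(\gamma)}\f\g^{r(\gamma)}$, so that $(\gamma\cdot m_i^{s(\gamma)})(f) = \int f(\gamma\eta)\,g_i(\eta)\,d\la^{s(\gamma)}(\eta)$, and then invoke left invariance of the Haar system in the form $\int h(\gamma\eta)\,d\la^{s(\gamma)}(\eta) = \int h(\zeta)\,d\la^{r(\gamma)}(\zeta)$ applied to $h(\zeta) = f(\zeta)\,g_i(\gamma^{-1}\zeta)$. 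This identifies $\gamma\cdot m_i^{s(\gamma)}$ as the measure with density $\zeta\mapsto g_i(\gamma^{-1}\zeta)$ against $\la^{r(\gamma)}$, while $m_i^{r(\gamma)}$ has density $g_i$; hence the total variation norm is the $L^1(\la^{r(\gamma)})$ norm of the difference of densities, i.e. $\norm{\gamma\cdot m_i^{s(\gamma)} - m_i^{r(\gamma)}} = \int_{\g}\bigl|g_i(\gamma^{-1}\eta) - g_i(\eta)\bigr|\,d\la^{r(\gamma)}(\eta)$, which is exactly the quantity in (2c). This gives (2) $\Rightarrow$ (3) at once.

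The harder direction is (3) $\Rightarrow$ (2), and I expect it to be the main obstacle. A general family of Radon measures $(m_i^u)$ carried by the fibres need not be absolutely continuous with respect to the Haar system, so one cannot simply read off densities to produce the functions $g_i$. The remedy is a fibrewise regularization: using second countability and the Haar system, convolve each $m_i^u$ against an approximate identity drawn from the ambient $C_c$, obtaining measures with continuous compactly supported densities whose invariance defect is dominated by that of the $(m_i^u)$, and then extract the required net by a diagonal/subnet argument. This regularization step, together with the passage between the Borel version and the continuous--compactly--supported version signalled by the parentheticals in the statement (itself a genuine theorem for second countable locally compact groupoids), is where the real analytic work resides, and is precisely what \cite{AR00} supplies.

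Finally, for the relaxation of normalization implicit in (1) $\Leftrightarrow$ (2): if amenability is recorded via an approximate invariant mean normalized by $\la(g_i) = 1$, then weakening to $\la(g_i)(u)\leq 1$ and $\la(g_i)(u)\to 1$ is routine (divide by $\la(g_i)(u)$ where positive and truncate elsewhere, controlling the change in the invariance integral), so no new ingredient beyond the definition is needed here. Since the lemma is quoted verbatim from \cite{AR00}, I would ultimately lean on that reference for the full details of the regularization and of the Borel--continuous comparison, reserving the explicit dictionary above as the transparent part of the argument.
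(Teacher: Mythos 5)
The paper offers no proof of this lemma: it is quoted verbatim from \cite{AR00} as the working definition of amenability, so your decision to lean on that reference for the hard implications ((1)$\Leftrightarrow$(2) and the regularization needed for (3)$\Rightarrow$(2)) is exactly what the paper itself does. The part you do work out --- the dictionary $m_i^u := g_i\,\lambda^u$, the identification via left invariance of the Haar system of $\gamma\cdot m_i^{s(\gamma)}$ as the measure with density $\eta\mapsto g_i(\gamma^{-1}\eta)$ against $\lambda^{r(\gamma)}$, and the consequent equality $\|\gamma\cdot m_i^{s(\gamma)}-m_i^{r(\gamma)}\|=\int_{\mathcal{G}}|g_i(\gamma^{-1}\eta)-g_i(\eta)|\,d\lambda^{r(\gamma)}(\eta)$ --- is correct, so your proposal is consistent with, and slightly more detailed than, the paper's treatment.
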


Following \cite{es}, we define a type of morphism which arises naturally when considering \pas.
Let $\g$ and $\mathcal{H}$ be topological groupoids. We say that a morphism $\rho:
	\g\to \mathcal{H}$ is {\em $s$-bijective} if for all $x\in \g^{(0)}$, the restriction
	$\rho: \g_x \to \mathcal{H}_{\varphi(x)}$ is bijective. Similarly, $\rho$ is {\em $r$-bijective} if for all $x\in \g^{(0)}$, the restriction $\rho: \g^x \to \mathcal{H}^{\rho(x)}$ is bijective.
	A morphism $\rho:\g\to \mathcal{H}$ is $s$-bijective if and only if it is $r$-bijective. 
	
	\begin{lem}\label{am2}
	If there is an $s$-bijective  Borel map $\rho: \g\to \mathcal{H}$, then amenability of  $\mathcal{H}$ implies that of $\g$.
	\end{lem}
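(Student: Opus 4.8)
The plan is to use the Reiter-type characterization (3) of amenability from Lemma \ref{am} and to transport a witnessing net of measure families from $\mathcal{H}$ to $\g$ along the fibrewise bijections supplied by $\rho$. Since $\rho$ is $s$-bijective it is also $r$-bijective, so for every unit $u\in\g^{(0)}$ the restriction $\rho\colon \g^u\to\mathcal{H}^{\rho(u)}$ is a Borel bijection. Let $(m_i)$, with $m_i=(m_i^v)_{v\in\mathcal{H}^{(0)}}$, be a net of families of Radon measures witnessing amenability of $\mathcal{H}$ as in Lemma \ref{am}(3). For each $i$ and each $u\in\g^{(0)}$ I would define $n_i^u$ to be the Radon measure on $\g$, supported on $\g^u$, obtained by pulling $m_i^{\rho(u)}$ back along the bijection $\rho\colon\g^u\to\mathcal{H}^{\rho(u)}$; equivalently, $n_i^u$ is characterized by $\rho_* n_i^u=m_i^{\rho(u)}$.

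Then one checks the three defining conditions for $(n_i)$. Since the fibrewise pushforward $\rho_*$ preserves total mass, conditions (a) and (b) are immediate: $n_i^u(\g)=m_i^{\rho(u)}(\mathcal{H})\le 1$ and $n_i^u(\g)=m_i^{\rho(u)}(\mathcal{H})\to 1$ for every $u$, the latter because $\rho(u)$ ranges over units of $\mathcal{H}$. For condition (c), fix $\gamma\in\g$. Because $\rho$ is a groupoid homomorphism it intertwines left translations, $\rho(\gamma\eta)=\rho(\gamma)\rho(\eta)$, so that $\rho_*(\gamma\cdot n_i^{s(\gamma)})=\rho(\gamma)\cdot m_i^{s(\rho(\gamma))}$ and $\rho_*(n_i^{r(\gamma)})=m_i^{r(\rho(\gamma))}$, both supported on $\mathcal{H}^{r(\rho(\gamma))}$. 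As $\rho$ restricts to a bijection $\g^{r(\gamma)}\to\mathcal{H}^{r(\rho(\gamma))}$, its pushforward is isometric for the total-variation norm on measures concentrated on these fibres, whence
\[
\big\|\gamma\cdot n_i^{s(\gamma)}-n_i^{r(\gamma)}\big\|
=\big\|\rho(\gamma)\cdot m_i^{s(\rho(\gamma))}-m_i^{r(\rho(\gamma))}\big\|\to 0
\]
by condition (3)(c) for $\mathcal{H}$ applied to $\rho(\gamma)$. Thus $(n_i)$ witnesses amenability of $\g$.

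The routine but genuinely technical point, and the step I expect to require the most care, is verifying that the transported family is admissible, i.e. that $u\mapsto n_i^u$ is a Borel field of Radon measures on $\g$. This must be extracted from the Borel measurability of $\rho$, of $u\mapsto\rho(u)$, and of the original field $v\mapsto m_i^v$, together with a measurable selection of the fibrewise inverses $(\rho|_{\g^u})^{-1}$; here the standing second-countability and standard-Borel hypotheses are exactly what make such measurable inverses available (an injective Borel map between standard Borel spaces has Borel image and Borel inverse), which also justifies that $\rho_*$ preserves the total-variation norm on fibre-concentrated measures.

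If one prefers to argue through characterization (2), pulling back the Reiter functions via $g_i:=h_i\circ\rho$, the same computation goes through verbatim, once one knows $\rho_*\lambda^u=\mu^{\rho(u)}$ for the two Haar systems $\lambda$ on $\g$ and $\mu$ on $\mathcal{H}$. This Haar compatibility is automatic in the \'etale case, where both systems are counting measures and $r$-bijectivity yields it for free; since the groupoids of germs considered here are \'etale, either route closes the argument.
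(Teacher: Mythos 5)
Your proof is correct, but it takes a genuinely different route from the paper's. The paper argues through characterization (2) of Lemma \ref{am}: it pulls the Reiter functions back, setting $h_i:=g_i\circ\rho$, and asserts the change-of-variables identities $\lambda_{\g}(h_i)(u)=\lambda_{\mathcal{H}}(g_i)(\rho(u))$ and $\int_{\g}\left|h_i(\gamma^{-1}\eta)-h_i(\eta)\right|d\lambda^{r(\gamma)}(\eta)=\int_{\mathcal{H}}\left|g_i(\rho(\gamma)^{-1}\zeta)-g_i(\zeta)\right|d\lambda_{\mathcal{H}}^{r(\rho(\gamma))}(\zeta)$; this is precisely the fallback you sketch in your final paragraph, and as you correctly observe it relies silently on the Haar-system compatibility $\rho_{*}\lambda^{u}=\lambda_{\mathcal{H}}^{\rho(u)}$, which is automatic for \'etale groupoids (counting measures on fibres) but is nowhere verified in the paper's one-line proof. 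Your main argument instead uses characterization (3): you transport the approximately invariant measures backwards along the fibrewise Borel bijections $\rho\colon\g^{u}\to\mathcal{H}^{\rho(u)}$, use the homomorphism property to get $\rho_{*}(\gamma\cdot n_i^{s(\gamma)})=\rho(\gamma)\cdot m_i^{s(\rho(\gamma))}$ and $\rho_{*}(n_i^{r(\gamma)})=m_i^{r(\rho(\gamma))}$, and conclude by isometry of the pushforward on fibre-concentrated measures. This route buys independence from the Haar systems altogether --- conditions (3)(a)--(c) never mention $\lambda$ --- so it works without the \'etale simplification; the price is the Borel admissibility of the field $u\mapsto n_i^{u}$, which you rightly flag as the delicate step and settle via the Lusin--Souslin theorem (an injective Borel map between standard Borel spaces has Borel image and Borel inverse), a verification the paper's function-pullback proof never has to confront.
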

	\begin{proof}
Let $(g_i)$ be a net of positive Borel maps on $\mathcal{H}$ as in Lemma \ref{am}(2). Then we get a net of positive Borel maps $h_i:=g_i\circ\rho$ on $\g$, satisfying, 
$$\lambda_\g(h_i)(u)=\lambda_\mathcal{H}(g_i)(\rho(u)),\ \ (u\in \g^{(0)}),$$
and
\begin{align*}
\int_{\g}\left|h_i(\gamma^{-1}\eta) - h_i(\eta)\right|&d\lambda^{r(\gamma)}(\eta)\\&=\int_{\mathcal{H}}\left|g_i(\rho(\gamma)^{-1}\rho(\eta)) - g_i(\rho(\eta))\right|d\lambda_\mathcal{H}^{r(\rho(\gamma))}(\rho(\eta)),
\end{align*}
	
therefore, amenability of  $\g$ follows from that of $\mathcal{H}$, by Lemma \ref{am}.
	\end{proof}
	
\par Given a continuous \pa $\alpha$ of a topological group $G$ on a topological space $X$ we can form a groupoid which encodes the action. The {\em groupoid of germs} the action is
	\begin{equation}\label{germs}
	X\rtimes_{\al}G:=\big\{(x,t): t\in G, 
	x\in X_{t^{-1}}\big\}.
	\end{equation}
	The groupoid operations are given by $r((x,t)) = \alpha_t(x),  s((x,t)) = x,$ and,
	\[
	(x,t)^{-1} = (\alpha_t(x), t^{-1}),  \ \ (\alpha_s(x), t)(x,s) = (x, ts).
	\]
	For $s\in \s$ and an open set $U\subset X_{s^{-1}}$, the sets
	\[
	U_s := \{(x, s): x\in U\},
	\]
	generate a topology on $X\rtimes_{\al}G$, and under this topology $	X\rtimes_{\al}G$ is a locally compact groupoid with continuous Haar system, when both $G$ and $X$ are locally compact. It is \'etale, when $G$ is discrete and second countable, if $G$ is so. 
	
	To define different notions of amenability of \pas, we need some preparation. Let $B(X)$ be the set of bounded Borel functions on $X$. This is a unital C*-algebra under pointwise  operations and sup norm. A {\it mean} on $B(X)$ is a state on $B(X)$. For each open subset $U\subseteq X$ and $f\in C_b(U)$, we may identify $f$ by the extension  by zero of $f$, thereby regarding $f$ as an element of $B(X)$. Given a \pa $\al$ of $G$ on $X$, we get  isomorphisms,
	$$\al_s: C_b(X_s)\to C_b(X_{s^{-1}}); \ \ \al_s(f)(x):=f(\al_{s}(x)),\ (x\in X_{s^{-1}}).$$
	
	Next, let $(X,\nu)$ be a standard Borel space with a Borel \pa of $G$. We assume that $\nu$ is {\it quasi-invariant}, in the sense that, for each $s\in G$ and each Borel subset $E\subseteq X_{s^{-1}}$, $\nu(E)=0$ implies $\nu(\al_s(E))=0$. Let $[\frac{d(\nu\circ\al_s)}{d\nu}]\in L^1(X_{s^{-1}}, \nu)$ be the corresponding Radon-Nikodym derivative, then $$\sig_{\rm RN}: X\rtimes_\al G\to \mathbb R^+;\ \ \sig_{\rm RN}(x,s):=[\frac{d(\nu\circ\al_s)}{d\nu}](x),\ \ (x\in X_{s^{-1}}),$$
	is called the Radon-Nikodym cocycle of $\al$.
	
	For a Borel group $M$, a {\it partial 2-cocycle} $\sig: X\rtimes_{\al}G\to M$ is a Borel map satisfying,
	$$\sig(x,ts)=\sig(x,s)\sig(\al_s(x),t),\ \ (x\in X_{s^{-1}}\cap X_{s^{-1}t^{-1}}).$$
	For a separable Banach space, the closed unit ball $E^*_1$ is a compact metrizable space in weak$^*$-topology. Let Iso$(E)$ and Hom$(E^*_1)$ be the groups of isometric isomorphisms of $E$ with strong operator topology and the group of hmeomorphisms of $E^*_1$ with the topology of uniform convergence. Then Iso($E$) is a standard Borel group and the canonical map: Iso$(E)\to$ Hom$(E^*_1)$ is continuous (and so Borel) \cite{zi1}. A family $\{A_x\}$ of non-empty compact convex subset $A_x\subseteq E^*_1$ is called a Bore1
	field if $\{(x, \phi):\phi\in A_x\}$ is a Bore1 subset of $X\times E^*_1$. For a partial 2-cocycle $\sig: X\rtimes G\to$ Iso$(E)$, the induced partial 2-cocycle $\sig^*: X\rtimes G\to$ Hom$(E^*_1)$ is defined by $\sig^*(x,s): = (\sig(x, s)^{-1})^*$. A Bore1 field  is called $\sig$-{\it invariant} if 
	$\sig^*(x,s)A_x=A_{\al_s(x)}$, for a.a. $x\in X_{s^{-1}}$.      
	    	 
	\begin{df}\label{amen}
		We say that a continuous (Borel) partial action $\alpha$ of a locally compact group  $G$ on a locally compact (standard Borel) space $X$ is {\it amenable} (or simply, $X$ is an {\it amenable} partial $G$-space) 
		
		$(i)$ in the sense of Delaroche, if the groupoid of germs $X\rtimes_{\al}G$ is amenable,
		
		$(ii)$ in the sense of Greenleaf,  if there mean $m$ on $B(X)$ which is $\al$-invariant, that is, $m(\al_s(f))=m(f)$, for each $s\in G$ and $f\in C_b(X_{s^{-1}})$,
		
		$(iii)$ in the sense of Zimmer, if  for
		each separable Banach space $E$, each partial 2-cocycle $\sig: X\rtimes_\alpha G\to$ Iso($E$), and each $\sig$-invariant
		Bore1 field $\{A_x\}$, there is a Bore1 $\sig$-invariant Borel section $\eta: X\to E^*_1$, that is, a Borel map such that $\eta(x)\in A_x$ a.e. on $X$,
		and $\sig^*(x, s)\eta(\al_s(x)) = \eta(x)$ a.e. on $X_{s^{-1}}$. 		
	\end{df}

First note that $(ii)$ and $(iii)$ are quite different, even for global actions: for a closed subgroup $H$, the canonical action of $G$ on $G/H$ is amenable in Greenleaf sense iff $H$ is co-amenable (c.f., \cite{ey}, \cite{mp}), where as, it is  amenable in Zimmer sense iff $H$ is amenable \cite[Theorem 1.9]{zi1}. When $H$ is also normal in $G$, it is co-amenable iff $G/H$ is amenable. In particular, the trivial action on a singleton is always amenable in Greenleaf sense, but is amenable in Zimmer sense iff $G$ is amenable. 

The co-amenability of $H\leq G$ is also known to be equivalent to the so-called {\it conditional fixed point property} of Eymard \cite{ey}: each  continuous affine $G$-action on a convex compact subset of a locally convex space with an
$H$-fixed point has a $G$-fixed point. 

Using the idea of Example \ref{rest}$(i)$, one could build a partial version of the canonical action on the homogeneous spaces. 

\begin{ex} \label{homog}
Let $H\leq G$ be a closed subgroup of a (locally compact) topological group $G$ and $U\subseteq G$ be an open subset. Then $X:=G/H$ consisting of right cosets of $H$ is a (locally compact) topological space in the quotient topology, which is Hausdorff if $G$ is so. Consider the open subsets  $X_t:=UH/H\cap UHt/H$ of $X$, for $t\in G$, and let $G$ act partially on $G/H$ by $$\al_t: X_{t^{-1}}\to X_t; \ Hx\mapsto Hxt, \ (xH\in UH/H\cap UHt/H).$$
When $K=G$, this is the canonical global action of $G$ on $G/H$.   
\end{ex}

\begin{df}
A \pa $\al$ of $G$ on $X$ is called {\it partially transitive} if for each $s,t\in G$, $X=\bigcup_{r\in G} X_{sr}\cap X_{tr}$.  
\end{df}

When $\al$ is a global action, then it is automatically partially transitive. On the other hand, a typical example of a partially transitive \pa is the restriction of a weakly transitive global action to an open subset. Recall that a global action is called weakly transitive if it satisfies $X=\bigcup_{r\in G} U\cdot r$, for each open set $U\subseteq X$. A weakly transitive global action separating the compacts always has an invariant measure (see \cite[Definition 4.1, Theorem 4.4]{st} for definitions and details).

\begin{ex} \label{pt}
The restricted partial action on an open subset $A\subseteq X$ with $A_t:=A\cap (A\cdot t)$ is partially transitive, as for given $s,t\in G$, we have, 
\begin{align*}
\bigcup_{r\in G} (A_{sr}\cap A_{tr})&=\bigcup_{r\in G} A\cap (A\cdot sr)\cap (A\cdot tr)\\&=A\cap (\bigcup_{r\in G} (A\cdot s\cap A\cdot t)\cdot r=A\cap X=A.
\end{align*} 
\end{ex}

\begin{lem} \label{lift1}
Given a partially transitive partial action $\alpha$ of a locally compact group  $G$ on a standard Borel space $X$ with enveloping action $\env{\al}$ on $\env{X}$, there is a $G$-factor $Y$ of $X$ such that,  given separable Banach space $E$, each partial 2-cocycle $\sig: Y\rtimes_\alpha G\to$ Iso($E$) lifts to a 2-cocycle $\env{\sig}: \env{X}\times G\to$ Iso($E$). 
\end{lem}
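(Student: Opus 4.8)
The plan is to build $\env{\sig}$ by transporting the partial cocycle along the orbits of $\env{\al}$ by means of a Borel section of the quotient map, reserving partial transitivity for the single genuinely delicate point, namely the existence of that section in the measured category.

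Recall from Lemma \ref{env} that $\env{X}=(X\times G)/{\sim}$ with $(x,r)\sim(y,s)$ iff $x\in X_{r^{-1}s}$ and $\al_{s^{-1}r}(x)=y$, that $\iota(x)=q(x,e)$ is an open embedding, and that $q(x,t)=\env{\al}_t(\iota(x))$; thus every point of $\env{X}$ has the form $\env{\al}_t(\iota(x))$. First I would fix a Borel section $\tau(\xi)=(x_\xi,t_\xi)$ of $q$, so that $\xi=\env{\al}_{t_\xi}(\iota(x_\xi))$, normalized by $\tau(\iota(x))=(x,e)$ (legitimate because $\iota$ is a Borel isomorphism onto the Borel set $\iota(X)$). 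Writing $g(\xi,s):=t_{\env{\al}_s\xi}^{-1}\,s\,t_\xi$, I then define
\[
\env{\sig}(\xi,s):=\sig\big(x_\xi,\,g(\xi,s)\big).
\]

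The point that makes this meaningful is that $(x_\xi,g(\xi,s))$ lands in the partial groupoid $X\rtimes_{\al}G$ for every $\xi$ and $s$, even though $\sig$ is only defined there: comparing the two section expressions for $\env{\al}_s\xi$ gives $\env{\al}_{g(\xi,s)}(\iota(x_\xi))=\iota(x_{\env{\al}_s\xi})$, and since both sides lie in $\iota(X)$ the defining relation ${\sim}$ forces $x_\xi\in X_{g(\xi,s)^{-1}}$ together with $\al_{g(\xi,s)}(x_\xi)=x_{\env{\al}_s\xi}$. The cocycle identity is then a telescoping calculation: one has $g(\env{\al}_s\xi,t)\,g(\xi,s)=g(\xi,ts)$ and $\al_{g(\xi,s)}(x_\xi)=x_{\env{\al}_s\xi}$, so the partial $2$-cocycle relation $\sig(x,ts)=\sig(x,s)\sig(\al_s(x),t)$ yields $\env{\sig}(\xi,s)\,\env{\sig}(\env{\al}_s\xi,t)=\sig\big(x_\xi,g(\xi,ts)\big)=\env{\sig}(\xi,ts)$. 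Hence $\env{\sig}$ is a genuine $\mathrm{Iso}(E)$-valued $2$-cocycle on the corresponding enveloping space; it is Borel since $\tau$, $\env{\al}$, $\sig$ and the group operations are, and the normalization $\tau\circ\iota=(\mathrm{id},e)$ gives $\env{\sig}(\iota(x),s)=\sig(x,s)$ on the partial domain (using $\env{\al}_s\iota(x)=\iota(\al_s x)$ when $x\in X_{s^{-1}}$), so $\env{\sig}$ really lifts $\sig$.

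The hard part is not this algebra but guaranteeing that the Borel section $\tau$ exists — equivalently, that the enveloping space is a \emph{standard Borel} $G$-space with a quasi-invariant measure, so that all the identities above are meaningful in the required almost-everywhere sense (Remark \ref{prop:t2} shows $\env{X}$ need not even be Hausdorff). This is the role of the factor $Y$ and of partial transitivity. Following Example \ref{pt} and the weakly-transitive criterion of \cite{st}, I would use partial transitivity of $\al$ to produce a $G$-factor $p:X\to Y$ on which the enveloping action is weakly transitive and separates compacts, hence carries an invariant measure; with respect to it $\env{Y}$ is standard Borel modulo null sets, so the Jankov--von Neumann/Lusin--Novikov selection theorems supply the (a.e.) Borel section $\tau$ and the construction above runs on $Y$ verbatim, producing $\env{\sig}$ on $\env{Y}\times G$ from the given cocycle on $Y\rtimes_{\al}G$. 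Finally, since $p$ is a morphism of partial actions it induces by Lemma \ref{env} a $G$-equivariant map $\env{X}\to\env{Y}$, and pulling back along it yields the required $\env{\sig}$ on $\env{X}\times G$. The genuine obstacles to check are thus descriptive and measure-theoretic — that $Y$ is honestly a $G$-factor, that partial transitivity passes to $Y$ and delivers the invariant measure, and that the resulting selection is Borel — rather than anything in the cocycle bookkeeping.
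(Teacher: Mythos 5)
Your algebraic engine is correct, and it is genuinely different from the paper's: for \emph{any} Borel section $\tau(\xi)=(x_\xi,t_\xi)$ of $q$, the relation $(x_\xi,st_\xi)\sim(x_{\env{\al}_s\xi},t_{\env{\al}_s\xi})$ does force $x_\xi\in X_{g(\xi,s)^{-1}}$ and $\al_{g(\xi,s)}(x_\xi)=x_{\env{\al}_s\xi}$, and the telescoping identity $g(\env{\al}_s\xi,t)\,g(\xi,s)=g(\xi,ts)$ then yields the $2$-cocycle law for $\env{\sig}(\xi,s):=\sig(x_\xi,g(\xi,s))$. The genuine gap sits exactly where you put ``the hard part.'' The lemma asserts the existence of a specific $G$-factor $Y$, so a proof must exhibit one; you never do. Your chain --- partial transitivity produces a factor $p:X\to Y$ whose enveloping action is weakly transitive and separates compacts, hence carries an invariant measure, hence $\env{Y}$ is standard Borel modulo null sets, hence Jankov--von Neumann/Lusin--Novikov supplies an (a.e.)\ Borel section --- is a sequence of unproven assertions, and the results you cite point the other way: Example \ref{pt} and \cite{st} concern restrictions of weakly transitive \emph{global} actions and do not extract any factor from partial transitivity (note every global action is partially transitive, so partial transitivity by itself carries no weak-transitivity information). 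Even granting all of this, your lift would satisfy the cocycle identity only almost everywhere, which is strictly weaker than the everywhere-defined lift the lemma claims.

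The deeper problem is that you have misidentified the roles of $Y$ and of partial transitivity, which in the paper are not measure-theoretic at all. The paper takes $Y:=X/{\approx}$, the orbit space ($y\approx y'$ iff $[y,r]=[y',r']$ for some $r,r'\in G$), with $\tilde\al_s([y]):=[\al_s(y)]$, and defines $\env{\sig}([x,t],s):=\sig([y],s)$ for \emph{any} representative $y$ of $[x,t]$ lying in $X_{s^{-1}}$. Since $\sig$ lives on the orbit space, all admissible representatives determine the same class $[y]$, so the lift is well defined with no selection whatsoever; partial transitivity is used solely to prove that such a representative exists for every pair $([x,t],s)$, via $x\in X_{r^{-1}ts^{-1}}\cap X_{r^{-1}t}=\al_{r^{-1}t}(X_{s^{-1}}\cap X_{t^{-1}r})$ for some $r$, which is precisely what makes $\env{\sig}$ defined on all of $\env{X}\times G$. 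Note finally that your route cannot be repaired by substituting the paper's $Y$ into it: orbit spaces such as the circle modulo an irrational rotation (Example \ref{rk:susp}(ii), which is partially transitive by Example \ref{pt}) are not standard Borel, so the selection theorems you invoke do not apply to them --- whereas the paper's argument needs no standardness of $Y$ and no section at all.
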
	
\begin{proof}
We freely use the notations of the proof of Lemma \ref{env}. Given $([x,t],s)$ in $\env{X}\times G$, by partial transitivity, there is $r\in G$ such that 
$$x\in X_{r^{-1}ts^{-1}}\cap X_{r^{-1}t}=\al_{r^{-1}t}(X_{s^{-1}}\cap X_{t^{-1}r}).$$
Choose $y\in X_{s^{-1}}\cap X_{t^{-1}r}$ with $\alpha_{r^{-1}t}(y)=x$. It follows that $[x,t]=[y,r]$ and $(y,s)\in X\rtimes_{\al} G$. Define $y\approx y^{'}$ if there are $r,r^{'}\in G$ such that $[y,r]=[y^{'}, r^{'}]$. This is an equivalence relation observed by the $G$-partial action, thus $G$ partially acts on $Y$, via $$\tilde\al_s([y]):=[\al_s(y)];\ \ (s\in G, y\in Y).$$ Put $Y=\frac{X}{\approx}$ and set $\env{\sig}([x,t],s):=\sig([y],s)$. 
Next,
for $([x,st],u)\in \env{X}\times G$, choose $v\in G$ and $z\in X_{u^{-1}}\cap X_{t^{-1}s^{-1}v}$ with  $\alpha_{v^{-1}st}(z)=x$, then, $$[z,v]=[x,st]=\env{\al}_s([x,t])=\env{\al}_s([y,r])=[y, sr]=[\al_{s}(y),r],$$ thus, $[z]=[\al_{s}(y)]=\tilde\al_{s}([y]),$ therefore, 
\begin{align*}
\env{\sig}([x,t],s)\env{\sig}(\env{\al}_s([x,t]),u)&=\env{\sig}([x,t],s)\env{\sig}([x,st],u)\\&=\sig([y],s)\sig([z],u)\\&=\sig([y],s)\sig(\tilde\al_{s}([y]),u)\\&=\sig([y],us)\\&=\env{\sig}([x,t],us),    
\end{align*}
as required.
\end{proof}

\begin{df}
Let $G$ acts partially on $(X,\nu)$ by $\al$. A Borel subset $B\subseteq X$ is called $\al$-{\it invariant} if $\al_t(B\cap X_{t^{-1}})=B\cap X_t$, for $t\in G$. We say that a \pa $\al$ is {\it ergodic} if each $\al$-invariant Borel subset is null or co-null.
\end{df}

\begin{df}
Let $(X,\nu)$ and $(Y,\mu)$ are standard Borel spaces and there is a Borel surjection $p: X\twoheadrightarrow Y$, with $p_*\nu=\mu$. If $G$ acts partially on both $X$ and $Y$ by $\al$ and $\beta$ in a such a way that $X_t=p^{-1}(Y_t)$ and $\beta_t(p(x))=p(\al_t(x))$, for $t\in G$ and $x\in X$, we say that $Y$ is a {\it factor} of $X$ and write $\beta=:p_*\al$. 
\end{df}
\begin{lem} \label{factor}
If $X$ is an  ergodic partial $G$-space and $Y$ is a factor of $X$, then amenability of $Y$ in the sense of Zimmer implies that of $X$. 	
\end{lem}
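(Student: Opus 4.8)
The plan is to convert the Zimmer fixed-point problem on $X$ into one on the factor $Y$ by disintegrating $\nu$ over the fibres of $p$ and packaging the data over each fibre into a single point of a cocycle problem on $Y$. First I would disintegrate $\nu=\int_Y\nu_y\,d\mu(y)$, where $\nu_y$ is supported on $p^{-1}(y)$, and record that the partial action respects fibres: from $X_t=p^{-1}(Y_t)$ and $\beta_t\circ p=p\circ\al_t$ it follows that, for $y\in Y_{s^{-1}}$, the homeomorphism $\al_s$ restricts to a measure-class isomorphism $p^{-1}(y)\cap X_{s^{-1}}\to p^{-1}(\beta_s(y))\cap X_s$. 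Since $Y$ is a factor of the ergodic space $X$ it is itself ergodic (the $p$-preimage of a $\beta$-invariant set is $\al$-invariant, hence null or co-null, and $p_*\nu=\mu$), and the qualitative measure type of $(p^{-1}(y),\nu_y)$ is an a.e.\ $\beta$-invariant Borel function of $y$; ergodicity forces it to be constant, so I may fix a single standard model $(Z,\zeta)$ together with a Borel field of isomorphisms $\theta_y\colon(p^{-1}(y),\nu_y)\to(Z,\zeta)$ and set $\mathcal F:=L^1(Z,\zeta;E)$, a separable Banach space whose dual ball $\mathcal F^*_1$ is weak$^*$-compact and metrizable.

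Next I would transport the given data down to $Y$. Through $\theta_y$, the restriction of $\{A_x\}$ to $p^{-1}(y)$ becomes a field over $Z$, and I let $B_y\subseteq\mathcal F^*_1$ be the set of its measurable sections, a nonempty weak$^*$-compact convex set; a measurable-selection argument shows that $\{B_y\}$ is a Borel field. The cocycle $\sig$ together with the fibrewise maps induced by $\al_s$ defines isometric isomorphisms $\tau(y,s)\in\mathrm{Iso}(\mathcal F)$, and the partial $2$-cocycle identity for $\sig$ on $X\rtimes_\al G$ descends to the partial $2$-cocycle identity for $\tau$ on $Y\rtimes_\beta G$, while the invariance $\sig^*(x,s)A_x=A_{\al_s(x)}$ transports to $\tau^*(y,s)B_y=B_{\beta_s(y)}$, so that $\{B_y\}$ is $\tau$-invariant.

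I would then apply the Zimmer amenability of $Y$ to the triple $(\mathcal F,\tau,\{B_y\})$, obtaining a Borel $\tau$-invariant section $\Xi\colon Y\to\mathcal F^*_1$ with $\Xi(y)\in B_y$ a.e.\ and $\tau^*(y,s)\Xi(\beta_s(y))=\Xi(y)$ a.e.\ on $Y_{s^{-1}}$. By construction each $\Xi(y)$ is, via $\theta_y$, a measurable section $x\mapsto\eta(x)\in A_x$ over the fibre $p^{-1}(y)$; gluing these over $y$ (measurably, using the Borel dependence of $\theta_y$ and $\Xi$) produces a single Borel map $\eta\colon X\to E^*_1$ with $\eta(x)\in A_x$ for $\nu$-a.e.\ $x$. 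Finally I would unwind the fibrewise invariance of $\Xi$ through the definitions of $\tau$ and $\theta_y$ to recover $\sig^*(x,s)\eta(\al_s(x))=\eta(x)$ for a.e.\ $x\in X_{s^{-1}}$, which is exactly Zimmer-invariance of $\eta$ on $X$.

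The step I expect to be the main obstacle is the bookkeeping of null sets across the disintegration: the equalities supplied by $Y$ hold for $\mu$-a.e.\ $y$ and, within each such fibre, for $\nu_y$-a.e.\ $x$, and one must invoke the Fubini property of the disintegration to collapse this into a single $\nu$-a.e.\ statement on $X$; simultaneously one has to verify the joint Borel measurability of $y\mapsto(\theta_y,B_y,\tau(y,s))$ so that $\{B_y\}$ is a legitimate Borel field and $\tau$ a legitimate Borel cocycle into $\mathrm{Iso}(\mathcal F)$. Ergodicity enters in exactly one essential place, namely to make the fibre measure type constant so that a single model space $\mathcal F$ suffices; after that the construction is functorial. (In the Delaroche formulation the same conclusion is immediate and needs no ergodicity, since $(x,t)\mapsto(p(x),t)$ is an $s$-bijective Borel groupoid morphism $X\rtimes_\al G\to Y\rtimes_\beta G$ and Lemma \ref{am2} applies.)
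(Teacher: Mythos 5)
Your proposal is correct and takes essentially the same route as the paper's proof, which (following Zimmer's Theorem 2.4) realizes $X$ as a co-null Borel subset of $I\times Y$ with product measure, forms the induced partial cocycle $p_*\sig$ on $Y\rtimes G$ with values in ${\rm Iso}(L^1(I,E))$ (the Radon--Nikodym factor you left implicit is what makes it isometric), transports the Borel field to the section spaces $B_y$, and glues the resulting invariant section back via $\tilde\eta(\theta,y):=\eta(y)(\theta)$. Your disintegration-plus-constant-fibre-type argument is exactly the content of the product realization the paper imports from Zimmer, and your $(\mathcal{F},\tau,\{B_y\},\Xi)$ correspond line by line to the paper's $(F,p_*\sig,\{B_y\},\eta)$.
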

\begin{proof}
As in the proof of \cite[Theorem 2.4]{zi1}, we may assume that $X\subseteq I\times Y$ is co-null and Borel, and $\mu=m\times \nu$, where $(I,m)$ is the unit interval with Lebesgue measure. Let $E$ be a separable Banach space, $\sig: X \rtimes_\al G\to {\rm Iso}(E)$ be a partial 2-cocycle, and $\{A_x\}$ be a $\sig$-invariant
Borel field in $E^*_1$. Let $F:= L^1(I, E)$. Define $p_*\sig: Y \rtimes_{p_*\al} G\to \mathbb B(F)_1$ by,
$$p_*\sig(y,s)f(\theta):=\sig_{\rm RN}(\theta, y, s)\sig(\theta, y,s)f(p_1(\al_s(\theta,y))); \ \ (s\in G, (\theta, y)\in X_{s^{-1}}),$$
where $\sig_{\rm RN}$ is the Radon-Nikodym cocycle of $\al$ and $p_1: I\times Y\to I$ is the orthogonal projection on the first leg. As in the proof of \cite[Theorem
2.1]{zi1}, one could show that $p_*\sig$ is Borel. Next, for, $$\{y,s\}: I\to I; \ \theta\mapsto p_1(\al_s(\theta,y)),$$
let us observe that,
$$\{y,s\}^{-1}=\{p_*\al_s(y), s^{-1}\}, \ \ (y\in Y_{s^{-1}}).$$ 
Indeed, 
\begin{align*}
\{p_*\al_s(y), s^{-1}\}(\{y,s\}(\theta))&=p_1(\al_{s^{-1}}(p_1(\al_s(p_1(\al_s(\theta,y)),p_*\al_s(y)))))\\&=p_1(p_1(\al_s(p_1(\theta,y)),p_*\al_{s^{-1}}p_*\al_s(y)))\\&=p_1(p_1(\al_s(p_1(\al_s(\theta,y)),y)))\\&=p_1(p_1(p_1(\theta,p_*\al_sy)),p_*\al_sy)\\&=p_1(p_1(\theta,p_*\al_sy),p_*\al_sy)\\&=p_1(\theta,,p_*\al_sy)\\&=\theta,
\end{align*}
and the same for the reverse composition. Now, a similar argument as in the proof of \cite[Theorem 2.4]{zi1} shows that $p_*\sig(y, s)$ is in
Iso($F$), and after a suitable co-null set, we may  $p_*\sig$ is a Borel partial 2-cocycle. Put,
$$B_y:= \{f:I\to E^*: f(\theta)\in A_{(\theta,y)} \ ({\rm a.a.}\ \theta)\}.$$ By \cite[Proposition
2.2.]{zi1}, this is a compact convex set, and $(B_y)$ is a Bore1 field by \cite[Lemmas 1.7, 2.5]{zi1}. Next, since $dm(\{y,s\}(\theta))=\sig_{\rm RN}(\theta, y,s)dm(\theta),$
\begin{align*}
	\langle p_*\sig^*(y,s)g, f\rangle &=\langle g, p_*\sig(y,s)f\rangle \\&=\int_I g(\theta)\sig_{\rm RN}(\theta,y,s)\sig(\theta,y,s)f(\{y,s\}(\theta))dm(\theta)\\&=\int_I g\circ\{y,s\}^{-1}(\theta)\sig(\theta,y,s)f(\theta)dm(\theta)\\&=\int_I g\circ\{p_*\al_s(y),s^{-1}\}(\theta)\sig(\theta,y,s)f(\theta)dm(\theta)\\&=\langle \sig^*(y,s)g\circ\{p_*\al_s(y),s^{-1}\}, f\rangle,
\end{align*} 
for $f\in L^1(I, E)$ and $g\in L^\infty(I, E^*)$, thus, $$p_*\sig^*(y,s)g=\sig^*(y,s)g\circ\{p_*\al_s(y),s^{-1}\}.$$ This shows that $\{B_y\}$ is $p_*\sig$-invariant. Now amenability of $Y$  implies the existence of a
$p_*\sig$-invariant Bore1 section $\eta: Y \to F^*_1$ with $\eta(y)\in B_y$ a.e. Then $$\tilde\eta(\theta,y) :
=\eta(y)(\theta),\ \ (y\in Y, \theta\in I),$$ defines a $\sig$-invariant Borel section
$\tilde\eta: X \to E^*_1$, showing the amenability of $X$.  
\end{proof}

\begin{ex} \label{lift2}
$(i)$ Back to Example \ref{pt}, Given a weakly transitive global action on $X$ and open subset $A\subseteq X$, the restricted partial action $\al$ on $A$ is partially transitive. With the notation of Lemma \ref{lift1}, for $y,z\in A$, $[y]=A\cap \mathcal O_y$, where $\mathcal O_y$ is the orbit of $y$ under the global action of $G$. Thus we have a \pa of $G$ on the orbit space $A/G$ such that each partial 2-cocycle $\sig: (A/G)\rtimes G\to {\rm Iso}(E)$ lifts to a global 2-cocycle $\env{\sig}: \env{A}\times G \to {\rm Iso}(E)$. When the original global action is transitive, the orbit space $A/G$ is trivial and $\sig$ is nothing but a representation of $G$ on $E$. On the other hand, $[x,s]=[y,t]$ iff $x\cdot s^{-1}t=y$ iff $x\cdot s^{-1}=y\cdot t^{-1}$, thus the map $[x,t]\mapsto x\cdot t^{-1}$ is well-defined and injective from $\env{A}$ to $X$, which is also onto, as $\cup_{t\in G} A\cdot t^{-1}=X$, by weak transitivity. 

$(ii)$ A concrete example of the latter case in $(i)$ is the (transitive) action of $G$ on $G/H$, with $A=UH/H$, as in Example \ref{homog}.   
\end{ex}

In the last statement of the next result, $Y:=\frac{X}{\approx}$, where $\approx$ is the equivalence relation defined in \ref{factor}.

\begin{thm}\label{main1}
Let $X$ be a partially transitive partial $G$-space with enveloping $G$-space $\env{X}$. If $\env{X}$ is amenable in the sense of Zimmer, then so is $X$. Conversely, if $Y:=\frac{X}{\approx}$ is amenable in the sense of Zimmer, then so is $\env{X}$.    
\end{thm}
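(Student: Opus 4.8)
I would prove the two implications by opposite devices. For the forward direction, by Lemma~\ref{env} I identify $X$ with the open set $\iota(X)\subseteq\env{X}$, so that $\al$ is the restriction of the global action $\env{\al}$ and $\env{X}$ is the $\env{\al}$-orbit of $X$. Fix a separable Banach space $E$, a partial $2$-cocycle $\sig\colon X\rtimes_\al G\f{\rm Iso}(E)$, and a $\sig$-invariant Borel field $\{A_x\}$ in $E^*_1$; the goal is a $\sig$-invariant section. First I extend $(\sig,\{A_x\})$ to $\env{X}$. Using a Borel selection for the orbit relation of $\env{\al}$ (available since $\env{X}$ is the orbit of the Borel set $X$), I choose Borel $g\colon\env{X}\f G$ with $g|_X\equiv e$ and $b(w):=\env{\al}_{g(w)}(w)\in X$ for every $w$, and set $h(w,s):=g(\env{\al}_s w)\,s\,g(w)^{-1}$ and $\env{\sig}(w,s):=\sig\big(b(w),h(w,s)\big)$. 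One checks that $\big(b(w),h(w,s)\big)\in X\rtimes_\al G$ always (since $\al_{h(w,s)}(b(w))=b(\env{\al}_s w)\in X$), that the partial cocycle identity makes $\env{\sig}$ a genuine global $2$-cocycle on $\env{X}\times G$ (the terms telescope exactly as in the proof of Lemma~\ref{lift1}), and that $\env{\sig}$ restricts to $\sig$ on $X\rtimes_\al G$. Transporting the field along the cocycle, I put $\env{A}_w:=(\env{\sig})^*(w,g(w))\,A_{b(w)}$; each $\env{A}_w$ is nonempty compact convex as the image of $A_{b(w)}$ under the weak$^*$-homeomorphism $(\env{\sig})^*(w,g(w))$ of $E^*_1$, the assignment is a Borel field, $\env{A}_{\iota(x)}=A_x$, and a short computation using the $\sig$-invariance of $\{A_x\}$ and the cocycle identity shows $\{\env{A}_w\}$ is $\env{\sig}$-invariant. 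Since $\env{X}$ is amenable in the sense of Zimmer (Definition~\ref{amen}), applying amenability to $(\env{\sig},\{\env{A}_w\})$ yields an $\env{\sig}$-invariant Borel section $\env{\eta}\colon\env{X}\f E^*_1$ with $\env{\eta}(w)\in\env{A}_w$ a.e. Restricting along $\iota$ gives $\eta:=\env{\eta}\circ\iota$, which satisfies $\eta(x)\in A_x$ a.e.\ and $\sig^*(x,s)\eta(\al_s x)=\eta(x)$ a.e.\ on $X_{s^{-1}}$; hence $X$ is amenable in the sense of Zimmer.

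For the converse, I realize $Y=X/\!\approx$ as the orbit space $\env{X}/G$ of the global action: because $\env{X}$ is the $\env{\al}$-orbit of $X$, every orbit meets $X$, and $y\approx y'$ precisely when $\iota(y)$ and $\iota(y')$ lie on one $\env{\al}$-orbit. The orbit map $p\colon\env{X}\twoheadrightarrow Y$ is then Borel, the global action descends to the trivial action $\tilde\al_s([y])=[\al_s y]=[y]$, and partial transitivity of $\al$ forces this induced action to have full domains, so $Y$ is a (global) $G$-space and $p$ is a factor map, with $\env{X}_t=\env{X}=p^{-1}(Y_t)$ and $\tilde\al_t\circ p=p\circ\env{\al}_t$; I push the quasi-invariant measure forward to $p_*\nu$. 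Passing to an ergodic component of $\env{X}$ if necessary, Lemma~\ref{factor} applied to the ergodic global $G$-space $\env{X}$ and its factor $Y$ shows that amenability of $Y$ in the sense of Zimmer implies amenability of $\env{X}$.

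The algebra of the cocycle identities is routine; the real obstacle in the forward direction is measurability and null-set bookkeeping: producing $g$ as a Borel (or at least universally measurable) selection, deducing that $\env{\sig}$ and $\{\env{A}_w\}$ are Borel, and checking that the almost-everywhere statements transfer correctly under the identification of $X$ with $\iota(X)$ and between the quasi-invariant measures on $X$ and $\env{X}$. In the converse the delicate point is verifying that $Y$ is a factor in the precise sense required---in particular that partial transitivity really does make the induced action on $Y$ global so that $\env{X}_t=p^{-1}(Y_t)$ holds---and securing the ergodicity needed to invoke Lemma~\ref{factor}, which I would arrange by ergodic decomposition.
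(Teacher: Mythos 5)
Your forward direction is essentially correct, but it is a genuinely different argument from the paper's. The paper never extends an arbitrary partial cocycle living on $X\rtimes_\al G$; it starts instead from a partial cocycle on the quotient $Y\rtimes_{\tilde\al}G$, lifts it to $\env{X}\times G$ through Lemma \ref{lift1} (this is where partial transitivity enters), obtains a section from amenability of $\env{X}$, shows that the restricted section descends to $Y$, and finally transfers amenability from $Y$ to $X$ via Lemma \ref{factor}. Your selection device $b(w)=\env{\al}_{g(w)}(w)$, $h(w,s)=g(\env{\al}_s w)\,s\,g(w)^{-1}$, $\env{\sig}(w,s)=\sig\big(b(w),h(w,s)\big)$ bypasses $Y$ and Lemma \ref{factor} entirely: the domain condition $b(w)\in X_{h(w,s)^{-1}}$ does hold, since $b(w)\in X$ and $\env{\al}_{h(w,s)}(b(w))=b(\env{\al}_s w)\in X$; the telescoping is exactly the partial cocycle identity; and $\env{\sig}$ restricts to $\sig$ because $g\equiv e$ on $X$ (only the direction of your transported field should be reversed, so that $\env{\sig}^*(w,g(w))\env{A}_w=A_{b(w)}$). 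What your route buys: it uses neither partial transitivity nor the ergodicity hypothesis of Lemma \ref{factor}, which the paper's own proof invokes although Theorem \ref{main1} does not assume it. What it costs: a Borel $g$ is not free---Jankov--von Neumann uniformization yields only a universally measurable selection, so everything must be run almost everywhere, with the null-set bookkeeping you flag. At the paper's level of rigor this trade is acceptable.

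The converse direction, however, has a genuine gap. You invoke Lemma \ref{factor} for the orbit map $p:\env{X}\twoheadrightarrow Y$, but that lemma requires an ergodic source, and your repair---passing to an ergodic component of $\env{X}$---cannot work here. The $G$-invariant Borel sets of $\env{X}$ are exactly the unions of orbits, i.e.\ the preimages under $p$, so the ergodic components of $\env{X}$ are precisely the fibres of $p$ (the orbits, with their conditional measures); each of them maps onto a single point of $Y$, which is $\mu$-null whenever $\mu:=p_*\tilde\nu$ is non-atomic. Zimmer amenability of $(Y,\mu)$ is an almost-everywhere statement, so it gives no information over a fixed null fibre. Worse, the factor of one ergodic component is a one-point space with the trivial $G$-action, and, as the paper notes after Definition \ref{amen}, a singleton is Zimmer amenable iff $G$ itself is amenable; so the componentwise version of your argument would secretly require amenability of $G$ rather than of $Y$. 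The hypothesis on $Y$ has to be used globally, which is exactly what the paper does: it restricts $\env{\sig}$ to $X\rtimes_\al G$, forms the induced cocycle $p_*\sig:Y\rtimes_{\tilde\al}G\to{\rm Iso}(F)$ with $F=L^1(I,E)$ as in the proof of Lemma \ref{factor}, applies amenability of $Y$ once to obtain a section $\eta$, and then lifts $\eta$ back to $\env{X}$ by the explicit formula $\env{\eta}([x,t]):=\eta([x])\circ\{p_*\al_t([x]), t^{-1}\}$, verifying $\env{\sig}$-invariance by direct computation. Some such explicit lifting step from $Y$ up to $\env{X}$ is what your proof is missing; Lemma \ref{factor}, quoted as a black box with source $\env{X}$, does not supply it.
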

\begin{proof}
To prove the first statement, we freely use the notations of the proof of Lemmas \ref{env} and \ref{lift1}, in particular, we let $Y$ be the factor of $X$ constructed in \ref{lift1}. Let $\sig: Y\rtimes_{\tilde \al}G\to {\rm Iso}(E)$ be a partial 2-cocycle and $\{A_{[y]}\}$ be a $\tilde\sig$-invariant Borel field on $Y$. Put 
$B_{[y,s]}:=A_{[\al_s(y)]}$, then,
$$\{([y,s], \phi): \phi\in B_{[y,s]}\}=\{([y,s], \phi): \phi\in A_{[\al_s(y)]}\}\subseteq \env{X}\times E^*_1,$$ is Borel, that is, $\{B_{[y,s]}\}$ is a Borel field. By amenability of $\env{X}$, there is a $\env{\sig}$-invariant Borel section $\env{\eta}: \env{X}\to E^*_1$, that is,
$${\env{\sig}}^{*}([x,t],s)\env{\eta}(\env{\al}_s([x,t]))=\env{\eta}([x,t]),\ \ (x\in X, s,t\in G),$$
and so for $t=e$,
$${\sig}^{*}([x],s)\env{\eta}(\iota(\al_s(x)))=\env{\eta}(\iota(x)),\ \ (x\in X, s\in G).$$ 
Put $\eta:=\env{\eta}\circ\iota$, then for $x\approx y$ in $X$, we have $x=\al_{r}(y)$, for some $r\in G$, hence by $\env{\sig}$-invariance of $\env{\eta}$,
$$\eta(x)=\eta(\iota(x))=\env{\eta}(\iota(\al_r(y)))=\env{\eta}(\env{\al}_r(\iota(y)))=\env{\eta}(\iota(y))=\eta(y),$$
which means that $\eta$ lifts to a Borel section $\tilde\eta: Y\to E^*_1; \ \tilde\eta([y]):=\eta(y)$, for which we have,
\begin{align*}
{\sig}^{*}([x],s)\tilde\eta(\tilde\al_s([x]))={\sig}^{*}(\iota(x),s)\env{\eta}(\iota(\al_s(x)))=\env{\eta}(\iota(x))=\tilde\eta([x]),   
\end{align*}
for $x\in X, s\in G,$ that is, $\tilde\eta$ is $\sig$-invariant. Therefore, $Y$ is amenable, and so is $X$, by Lemma \ref{factor}.

Conversely, to prove the second statement, we freely use the notations of the proof of Lemma \ref{factor}, in particular, since $Y$ is a $G$-facor of $X$, we may identify $X$ with a co-null subset of $I\times Y$. Put  $F:=L^1(I,E)$. Given a 2-cocycle $\env{\sig}:\env{X}\times G\to {\rm Iso}(E)$, identifying $X$ with $\iota(X)\subseteq \env{X}$, we let $\sig$ be the restriction of $\env{\sig}$ to $X\rtimes_{\al}G\subseteq \env{X}\times G$. Let $\{C_{[x,t]}\}$ be a Borel field in $E^*_1$ and put $A_x:=C_{[x,e]}$. Then, for $x=:(\theta, y)$, let $B_y\subseteq F^*_1$ be the set associated to $A_x=A_{(\theta,y)}$ as in \ref{factor}. Let $p: X\to Y$ be the orthogonal projection onto the second leg and consider the induced partial 2-cocycle $p_*\sig: Y\rtimes_{\tilde\al} G\to {\rm Iso}(F)$. By amenability of $Y$, there is a $p_*\sig$-invariant Borel section $\eta: Y\to F^*_1$. For $x\in X$ and $t\in G$, put $\env{\eta}([x,t]):=\eta([x])\circ\{p_*\al_t([x]), t^{-1}\}$. This is well-defined, since if $[x,t]=[x^{'},t^{'}]$, then $[x]=[x^{'}]$. Now for $x\in X$ and $t\in G$, we have,
\begin{align*}
 {\env{\sig}}^{*}([x,t],s)\env{\eta}(\env{\al}_s([x,t]))&=\sig^*([x],s)\env{\eta}([x,st])\\&=\sig^*([x],s)\eta([x])\circ\{p_*\al_{st}([x]),s^{-1}t^{-1}\}\\&=\sig^*([x],t)^{-1}\sig^*([x],st)\eta([x])\circ\{p_*\al_{st}([x]),s^{-1}t^{-1}\}\\&=\sig^*([x],t)^{-1}p_*\sig^*([x],st){\eta}([x])\\&=\sig^*([x],t)^{-1}p_*\sig^*([\al_{t}(x)],s){\eta}([x])\\&=\eta([x])\circ\{p_*\al_t([x]), t^{-1}\}\\&= \env{\eta}([x,t]),  
\end{align*}
for each $x\in X, s,t\in G$, that is, $ \env{\eta}$ is $\env{\sig}$-invariant.      
\end{proof}

The next definition uses the notion of partial representation  (c.f., Definition \ref{df:pr}).

\begin{df}
	Given a Borel partial action $\alpha$ of a locally compact group  $G$ on a standard Borel space $X$, we say that $(X, G)$ is an {\it amenable pair} in the sense of Eymard
	if  for
	each separable Banach space $E$, each partial representation  $\pi: G\to$ Iso($E$),  and each $\al$-invariant compact convex set $A\subseteq E^*_1$, the existence of an $\pi$-invariant section $\eta: X\to A$ implies the existence of a $\al$-fixed point in $A$, where $G$ acts on $A$ via $\pi$. Here, the section $\eta$ is $\pi$-invariant, if $\pi^*_s\eta(x)=\eta(\al_s(x))$, if $x\in X_{s^{-1}}$, and 0, otherwise. 
\end{df}

\begin{prop}
If $X$ is an amenable partial $G$-space in the sense of Greenleaf, then $(X,G)$ is amenable pair in the sense of Eymard.
\end{prop}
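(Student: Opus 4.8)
The plan is to obtain the required fixed point as the \emph{barycenter} of the given section against the Greenleaf mean. Fix a separable Banach space $E$, a partial representation $\pi\colon G\to{\rm Iso}(E)$, an $\al$-invariant compact convex set $A\subseteq E^*_1$, and a $\pi$-invariant (Borel) section $\eta\colon X\to A$; let $m$ be an $\al$-invariant mean on $B(X)$. For $\xi\in E$ put $f_\xi(x):=\langle\eta(x),\xi\rangle$, a bounded Borel function on $X$ (bounded by $\|\xi\|$, since $\eta(x)\in E^*_1$), and define $a\in E^*$ by $\langle a,\xi\rangle:=m(f_\xi)$. Linearity of $m$ makes $\xi\mapsto\langle a,\xi\rangle$ linear, and $|\langle a,\xi\rangle|\le\|\xi\|$, so $a\in E^*_1$.

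First I would check that $a\in A$. Since $A$ is weak$^*$-compact and convex, if $a\notin A$ the Hahn--Banach separation theorem yields $\xi\in E$ and $c\in\R$ with $\operatorname{Re}\langle a,\xi\rangle>c\ge\operatorname{Re}\langle\phi,\xi\rangle$ for all $\phi\in A$; as $\eta(x)\in A$ for every $x$, the real function $\operatorname{Re}f_\xi$ is dominated by $c$, and positivity of the state $m$ forces $\operatorname{Re}\langle a,\xi\rangle=m(\operatorname{Re}f_\xi)\le c$, a contradiction. Hence $a\in A$.

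The substance is to show that $a$ is a $G$-fixed point, i.e.\ $\pi^*_s a=a$ for every $s\in G$. Testing against $\xi\in E$ and using $\pi^*_s=(\pi_s^{-1})^*$ gives $\langle\pi^*_s a,\xi\rangle=\langle a,\pi_s^{-1}\xi\rangle=m(g_s)$, where $g_s(x)=\langle\pi^*_s\eta(x),\xi\rangle$. Here the $\pi$-invariance of $\eta$ enters: on $X_{s^{-1}}$ one has $\pi^*_s\eta(x)=\eta(\al_s(x))$, so $g_s(x)=f_\xi(\al_s(x))$, while off $X_{s^{-1}}$ the ``$0$ otherwise'' clause gives $\pi^*_s\eta(x)=0$, whence $g_s(x)=0$. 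Thus $g_s$ is exactly the pullback $\al_s(f_\xi|_{X_s})$ extended by zero, and the $\al$-invariance of $m$ yields $m(g_s)=m(f_\xi|_{X_s})$. To identify $m(f_\xi|_{X_s})$ with $m(f_\xi)=\langle a,\xi\rangle$ I would invoke that $\pi_s\in{\rm Iso}(E)$ is injective: applying the ``$0$ otherwise'' clause with $s^{-1}$ shows $\pi^*_{s^{-1}}\eta(x)=0$, hence $\eta(x)=0$, for $x\notin X_s$, so $f_\xi$ vanishes off $X_s$ and the two means coincide. This gives $\langle\pi^*_s a,\xi\rangle=\langle a,\xi\rangle$ for all $s,\xi$, so $a$ is the sought $\al$-fixed point.

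The main obstacle is precisely the partiality: the invariance of $m$ only equates a function supported on $X_s$ with its $\al_s$-pullback supported on $X_{s^{-1}}$, whereas the barycenter averages $\eta$ over all of $X$. The role of the ``$0$ otherwise'' clause in the definition of a $\pi$-invariant section, combined with the injectivity of the isometric isomorphisms $\pi_s$, is exactly to confine the relevant mass to the domains $X_s$, so that no uncontrolled contribution survives; verifying this confinement against the precise form of a partial representation (Definition \ref{df:pr}) is the delicate conceptual step. A secondary technical point I would need to settle is that the invariance of $m$ is stated for $f\in C_b(X_{s^{-1}})$, while the functions $f_\xi$ produced by a Borel section lie only in $B(X)$; I expect to close this gap either by extending $\al$-invariance of $m$ to Borel functions (the formula $\al_s\colon B(X_s)\to B(X_{s^{-1}})$ makes sense verbatim) or, when available, by approximating $\eta$ weak$^*$ by continuous sections.
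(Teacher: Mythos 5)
Your proof is correct and is essentially the paper's own argument: both define $a$ as the barycenter $\langle a,\xi\rangle=m(\langle\eta(\cdot),\xi\rangle)$ of the section against the Greenleaf mean, check $a\in A$, and deduce fixedness from the $\pi$-invariance of $\eta$ combined with the $\al$-invariance of $m$. The points you elaborate are precisely the ones the paper's proof leaves implicit -- the separation/positivity argument for $a\in A$, the confinement of $\langle\eta(\cdot),\xi\rangle$ to $X_s$ needed to pass from $m(f_\xi|_{X_s})$ to $m(f_\xi)$ (which you extract from the ``$0$ otherwise'' clause; note that killing $\eta(x)$ requires $\pi_{s^{-1}}$ to have dense range rather than mere injectivity of $\pi_s$, which is automatic here since ${\rm Iso}(E)$ in this section denotes isometric isomorphisms), and the extension of the mean's invariance from $C_b$ to $B(X)$ -- so your write-up is, if anything, more complete than the paper's.
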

\begin{proof}
Given a partial representation $\pi: G\to$ Iso($E$),  $\al$-invariant compact convex set $A\subseteq E^*_1$, and  $\al$-invariant section $\eta: X\to A$, let $m$ be an $\al$-invariant mean on $B(X)$ as in Definition \ref{amen}$(ii)$, and put,
$$\langle a,\xi\rangle:=\int_X \langle \eta(x),\xi\rangle dm(x),\ \ (\xi\in E),$$ 
then by convexity and compactness of $A$, we get $a\in A$. Moreover, 
\begin{align*}
\langle \pi^*_sa,\xi\rangle &=\langle a,\pi_s\xi\rangle\\&=\int_X \langle \eta(x),\pi_s\xi\rangle dm(x)\\&=\int_X \langle \pi^*_s\eta(x),\xi\rangle dm(x)	\\&=\int_{X^{s^-1}} \langle \eta(\al_s(x)),\xi\rangle dm(x)
\\&=\int_{X} \langle \eta(x),\xi\rangle dm(x)\\&=\langle a,\xi\rangle,
\end{align*}
that is, $a$ is an $\al$-fixed point, as required.
\end{proof}

Next, we prove a version of the Reiter's condition for partial actions. 

\begin{df}
	For $1\leq p<\infty$, we say that a \pa $\al$  on a Borel measure space $(X,\nu)$ satisfies {\it Reiter condition} $(P_p)$, if 
	for each compact subset $K\subseteq G$ and $\varepsilon>0$, there is a norm one positive function  $f\in L^p(X,\nu)$ such that,
	$$\sup_{t\in K}\int_{X_{t^{-1}}} |f(x)-\sigma_{\rm RN}^{\frac{1}{p}}(x, t)f(\al_{t}(x))|^pd\nu(x)<\varepsilon.$$ 
\end{df}

When $G$ acts on itself my multiplication, the left and right multiplication actions commute. We need the following analog for \pas.

\begin{df}
A Borel (continuous) \pa $\al$ on a Borel (topological) space $X$ with partial sets $\{X_t\}_{t\in G}$ is called {\it symmetric} if there is a Borel (continuous) \pa $\beta$ on $X$ with the same partial sets $\{X_t\}_{t\in G}$ such that,
$\al_s\beta_t=\beta_t\al_s,\ {\rm on}\ X_{s^{-1}}\cap X_{t^{-1}}.$
\end{df}

\begin{prop}
If $G$ is an amenable locally compact group, then all symmetric continuous \pas of $G$ have Reiter condition $(P_1)$.  
\end{prop}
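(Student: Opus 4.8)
The plan is to transfer Reiter's condition from the amenable group $G$ to the space $X$ through the isometric $L^1$-representation attached to $\al$, using the second, commuting action $\beta$ only to keep the transferred functions normalised. Since $G$ is amenable it satisfies Reiter's property $(P_1)$ on itself: there is a net $(\phi_i)$ of positive, norm-one, compactly supported functions in $L^1(G)$ with $\sup_{t\in K}\|\rho_t\phi_i-\phi_i\|_1\to 0$ for every compact $K$, where $\rho$ denotes the (modular-corrected) isometric right regular representation of $G$ on $L^1(G)$. I would build the Reiter functions on $X$ by convolving such a $\phi_i$ against the $\al$-orbit.

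First I would introduce, for $r\in G$, the operator $T_r$ on $L^1(X,\nu)$ given by $(T_rg)(x)=\mathbf{1}_{X_{r^{-1}}}(x)\,\sigma_{\mathrm{RN}}(x,r)\,g(\al_r(x))$, and record two facts. Changing variables $y=\al_r(x)$ shows that $T_r$ is a positive contraction, isometric from $L^1(X,\nu)$ onto $L^1(X_r,\nu)$, with $\|T_rg\|_1=\|g\,\mathbf{1}_{X_r}\|_1$. Next, the chain rule for Radon--Nikodym derivatives gives the cocycle identity $\sigma_{\mathrm{RN}}(x,rt)=\sigma_{\mathrm{RN}}(x,t)\,\sigma_{\mathrm{RN}}(\al_t(x),r)$, and combining this with axioms (3) and (3-2) of Definition \ref{df:pa} yields the partial-representation identity $T_tT_r=\mathbf{1}_{X_{t^{-1}}}\,T_{rt}$, the operator $\mathbf{1}_{X_{t^{-1}}}$ recording the only defect coming from the partiality of $\al$.

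Fixing a positive unit vector $g\in L^1(X,\nu)$, I would set $f_i:=\int_G\phi_i(r)\,T_rg\,dr$. Applying $T_t$ and using the partial-representation identity gives $T_tf_i=\mathbf{1}_{X_{t^{-1}}}\int_G\phi_i(r)\,T_{rt}g\,dr$; substituting $s=rt$ and invoking invariance of Haar measure up to the modular factor $\Delta$ rewrites this as $\mathbf{1}_{X_{t^{-1}}}\int_G(\rho_t\phi_i)(s)\,T_sg\,ds$. Since $T_tf_i$ is automatically supported in $X_{t^{-1}}$, the quantity controlled by Reiter's condition is $\int_{X_{t^{-1}}}|f_i-T_tf_i|\,d\nu$, and the factor $\mathbf{1}_{X_{t^{-1}}}$ can only decrease the norm; hence this is at most $\int_G|\phi_i(s)-(\rho_t\phi_i)(s)|\,\|T_sg\|_1\,ds\le\|\phi_i-\rho_t\phi_i\|_1$, which tends to $0$ uniformly for $t\in K$. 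Thus the \emph{un-normalised} almost-invariance is immediate and, notably, uses nothing about $\beta$: the partiality of $\al$ is harmless because the Reiter integral lives on $X_{t^{-1}}$.

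The main obstacle is the normalisation. One has $\|f_i\|_1=\int_G\phi_i(r)\,\|g\,\mathbf{1}_{X_r}\|_1\,dr$, which equals $1$ when $\al$ is global but can collapse toward $0$ for a genuine partial action, since the mass of $g$ lying outside $X_r$ is lost under $T_r$ and this loss may accumulate as $\phi_i$ spreads over $G$. This is exactly where I expect the symmetric hypothesis to be essential. The commuting action $\beta$ with the \emph{same} domains $\{X_t\}$ makes each $\al_r$ carry $\beta$-orbits to $\beta$-orbits, so one may disintegrate $\nu$ over the Borel equivalence relation of $\beta$-orbits; along each orbit $\al$ acts like right translation on a piece of $G$, where translation is norm-preserving and the global computation applies. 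Equivalently, given $K$ and $\varepsilon$ one first fixes a compactly supported $\phi_i$ and then chooses $g$ supported in the common domain $\bigcap_{r\in L}X_r$ for the relevant compact set $L$ of group elements, the symmetric structure guaranteeing that this intersection is non-null; for such $g$ one gets $\|T_rg\|_1=\|g\|_1$ on the support of $\phi_i$, whence $\|f_i\|_1\to 1$. Normalising $f_i$ and combining with the estimate above then delivers Reiter's condition $(P_1)$ for $\al$.
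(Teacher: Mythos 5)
Your un-normalised estimate is fine: the identity $T_tT_r=\mathbf{1}_{X_{t^{-1}}}T_{rt}$, the transfer $\int_{X_{t^{-1}}}|f_i-T_tf_i|\,d\nu\le\|\phi_i-\rho_t\phi_i\|_1$, and the modular bookkeeping all check out. The proof breaks exactly at the point you flag as the main obstacle, and the repair you propose does not exist. The assertion that the symmetric hypothesis makes $\bigcap_{r\in L}X_r$ non-null is false: take $G=\R$ acting on $X=(0,1)$ (Lebesgue measure, so $\sigma_{\rm RN}\equiv 1$) by restricted translation, $X_r=(0,1)\cap\big((0,1)+r\big)$. This \pa is symmetric --- since $G$ is abelian one may take $\beta=\al$, as $\al_s\al_t$ and $\al_t\al_s$ are both restrictions of $\al_{s+t}$ and agree as partial maps on $X_{-s}\cap X_{-t}$ --- yet $\bigcap_{|r|\le 1/2}X_r=(1/2,1)\cap(0,1/2)=\emptyset$. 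So no admissible $g$ exists, and your argument supplies no other lower bound on $\|f_i\|_1$. Such a bound is indispensable: here $X_r=\emptyset$ for $|r|\ge 1$, so every choice of $g$ gives $\|f_i\|_1=\int_G\phi_i(r)\|g\mathds{1}_{X_r}\|_1\,dr\le\int_{-1}^{1}\phi_i(r)\,dr$; with the standard Reiter net $\phi_i=\frac{1}{2n}\mathds{1}_{[-n,n]}$ this is at most $1/n$, while $\sup_{|t|\le 1/2}\|\phi_i-\rho_t\phi_i\|_1=\frac{1}{2n}$, so after normalising, your estimate gives an error of order $1/2$, not $\varepsilon$. The example also shows the strategy is aimed the wrong way: for this action the \emph{spread-out} function $f=\mathds{1}_{(0,1)}$ satisfies $(P_1)$ exactly (the Reiter integral only sees the overlap $X_{t^{-1}}$, where $f$ and its translate agree), whereas mass concentrated on common domains is precisely what a genuinely partial action forbids. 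The closing appeal to a disintegration of $\nu$ over $\beta$-orbits is a hope, not an argument; none of the work needed to make it precise is done.

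This is also where your route diverges from the paper's, which uses the commuting action structurally rather than as a non-degeneracy device. There one forms the $\beta$-translation operators $T_tf=\sigma^{\beta}_{\rm RN}(\cdot,t)\,f\circ\beta_t$ and the $\al$-translation operators $S_s$, proves $S_sT_t=T_tS_s$ from $\al_s\beta_t=\beta_t\al_s$, and invokes amenability of $G$ not through Reiter functions on $G$ but through a left invariant mean on $L^\infty(G)$ combined with a Hahn--Banach, Day-type convexity argument: the distance from $0$ to the set $\mathfrak{G}(h)$ of convex combinations of $\beta$-translates of $h$ equals the norm of $h$ in the quotient by the closed span of the differences $T_t(h|_{X_t})-h|_{X_{t^{-1}}}$. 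Iterating this yields one convex combination $T$ of the $T_t$'s with $\|Th_i\|_1<\varepsilon$ for the finitely many defect functions $h_i$ attached to the translate centres covering $K$, and then $f:=Th$ is almost $\al$-invariant because $S_s$ commutes with $T$. In short, the paper averages with $\beta$ and tests invariance with $\al$; you average with $\al$ itself and try to preserve mass by shrinking supports, and for partial actions those two demands are incompatible --- which is exactly the tension the commuting action is introduced to resolve.
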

\begin{proof}
Let $\al$ be a continuous \pa of $G$ on a topological space $X$ with open partial sets $\{X_t\}_{t\in G}$. By the symmetry condition, choose a \pa $\beta$ with,
$$\al_s\beta_t(x)=\beta_t\al_s(x), \ \ (x\in X_{s^{-1}}\cap X_{t^{-1}}).$$
For $t\in G$, consider the operator $T_t$  from $L^1(X_{t},\nu)$ to $L^1(X_{t^{-1}},\nu)$, defined by,
$$T_tf(x):=\sigma^\beta_{\rm RN}(x, t)f(\beta_t(x)), \ \ (x\in X_{t^{-1}}, f\in L^1(X_{t},\nu)).$$
Then, \begin{align*}
	\|T_tf\|_1&=\int_{X_{t^{-1}}} \sigma^\beta_{\rm RN}(x, t)|f(\beta_t(x))|d\nu(x)=\int_{X_{t^{-1}}} |f(\beta_t(x))|d(\nu\circ\beta_t)(x)\\&=\int_{X_{t}} |f(x)|d\nu(x)=	\|f\|_1,
\end{align*}
for $f\in L^1(X_{t},\nu)$, and 
\begin{align*}
	T_sT_tf(x)&=\sigma^\beta_{\rm RN}(x, s)T_tf(\beta_s(x))\\&=\sigma^\beta_{\rm RN}(\beta_s(x), t)\sigma^\beta_{\rm RN}(x, s)f(\beta_t\beta_s(x))\\&=\sigma^\beta_{\rm RN}(x, ts)f(\beta_{ts}(x))=T_{ts}f(x),
\end{align*}
for $x\in X_{s^{-1}}\cap X_{s^{-1}t^{-1}}$, and $f\in L^1(X_{s},\nu)$.
For $f\in L^1(X,\nu)$, let $\mathfrak G(f)$ be the set of all finite convex combinations of the form,
$\sum_n c_nT_{t_n}(f|_{X_{t_n}}), $ regarded as an element of $L^1(X,\nu)$ via extending by zero, with $t_n\in G,$ and $J=J_f$ be the closed linear subspace of $L^1(X,\nu)$ spanned by vectors of the form $T_t(f|_{X_{t}})-f|_{X_{t^{-1}}}$, extended by zero. We claim that,
$${\rm dist}(0, \mathfrak G(f))=\|q(f)\|, \ \ (f\in L^1(X,\nu)),$$
where $q: L^1(X,\nu)\to L^1(X,\nu)/J$ is the canonical quotient map: let us denote the LHS and RHS of the above equality by $d$ and $d^{'}$, then clearly $d\geq d^{'}$, as $\mathfrak{G}(f)\subseteq f+J$. For the reverse inequality, we may assume that $d>0$. Since the map $t\mapsto \langle T_t(f|_{X_{t}}), g|_{X_{t^{-1}}}\rangle$ is bounded Borel measurable for $f\in L^1(X,\nu)$ and $g\in L^\infty(X,\nu)$, $\phi_{g,f}(t):=\langle T_t(f|_{X_{t}}), g|_{X_{t^{-1}}}\rangle$ defines an element in $L^\infty(G)$. Let $m$ be a left invariant mean on $L^\infty(G)$ and put,
$\langle \tilde g, f\rangle:=m(\phi_{g,f}).$
This associates to each $g\in L^\infty(X,\nu)$ an elements $\tilde g\in L^\infty(X,\nu)$ with $\|\tilde g\|_\infty\leq\|g\|_\infty.$ Since $d>0$, by a Hahn-Banach argument (or by applying \cite[Lemma 8.6.5]{r} to $B:=L^1(X,\nu)$ and $C:=\mathfrak G(f)$), there is $g\in L^\infty(X,\nu)$ with $\|g\|_\infty=1/d,$ and 
${\rm Re}(\langle  g, h\rangle)\geq 1,$ for $h\in\mathfrak G(f).$    
Then $\|\tilde g\|_\infty\leq 1/d$, and 
$$d\leq d{\rm Re}(\langle \tilde g, h\rangle)\leq d|\langle \tilde g, h\rangle|\leq d\|\tilde g\|_\infty\|h\|_1\leq\|h\|_1,$$
for $h\in\mathfrak G(f).$ 

Next, let us observe that $\langle\tilde g, h\rangle$ is independent of the choice of $h$: for $h=\sum_n c_nT_{t_n}(f|_{X_{t_n}}),$ 
\begin{align*}
	\phi_{g,h}(t)&= \langle T_t(h|_{X_{t}}), g|_{X_{t^{-1}}}\rangle\\&=\int_{X_{t^{-1}}}\sigma^\beta_{\rm RN}(x, t)h(\beta_t(x))g(x)d\nu(x)\\&=\sum_n c_n\int_{X_{t^{-1}}}\sigma^\beta_{\rm RN}(x, t)T_{t_n}(f|_{X_{t_n}})(\beta_t(x))g(x)d\nu(x)\\&=\sum_n c_n\int_{X_{t^{-1}}\cap \beta^{-1}_{t}(X_{{t^{-1}_n}})}\sigma^\beta_{\rm RN}(x, t)\sigma^\beta_{\rm RN}(\beta_t(x), t_n)f(\beta_{t_n}\beta_t(x))g(x)d\nu(x)\\&=\sum_n c_n\int_{X_{(t_nt)^{-1}}}\sigma^\beta_{\rm RN}(x, t_nt)f(\beta_{t_nt}(x))g(x)d\nu(x)\\&= \sum_n c_n \phi_{g,f}(t_nt),
\end{align*} 
where the equality before the last follows from the fact that
$$X_{t^{-1}}\cap \beta^{-1}_{t}(X_{{t^{-1}_n}})=\beta^{-1}_{t}(X_{t}\cap (X_{{t^{-1}_n}})\subseteq X_{(t_nt)^{-1}},$$
and the functions are extended by zero where undefined,
thus, 	
$$\langle \tilde g, h\rangle:=\sum_n c_n m(t^{-1}_n\cdot\phi_{g,f})=m(\phi_{g,f})\sum_n c_n =m(\phi_{g,f})=\langle \tilde g, f\rangle.$$
On the other hand, $\|h\|_1$ could be chosen arbitrarily closed to $d$. It follows that,
$$1={\rm Re}(\langle \tilde g, h\rangle)=|\langle \tilde g, h\rangle|,$$
and so, $\langle \tilde g, h\rangle=1$, for $h\in \mathfrak G(f)$. On the other hand, for a typical element $k:=T_t(f|_{X_{t}})-f|_{X_{t^{-1}}}\in J$, by a similar calculation as above we have,
$$
\phi_{g,k}(s)=\phi_{g,f}(ts)-\phi_{g,f}(s),\ \ (s\in G),
$$
thus, $$\langle \tilde g, k\rangle=m(t^{-1}\cdot\phi_{g,f})-m(\phi_{g,f})=0,$$
thus $\langle\tilde g, f+k\rangle=1$, for each $k\in J$, by linearity and continuity. But, $\|f+k\|_1$ could be chosen arbitrarily close to $d^{'}$, for a proper choice of $k$, and $\|\tilde g\|_\infty\leq 1/d$, thus, $d^{'}(1/d)\geq 1$, that is, $d\leq d^{'}$, and therefore, $d=d^{'}$, as claimed.   

Next, let $h\in L^1(X,\nu)_{+}$ be of norm one, then since the map,
$$t\mapsto \int_{X_{t^{-1}}}|h(x)-\sigma^\beta_{\rm RN}(x,t)h(\beta_t(x))|d\nu(x)$$ is continuous  vanishing at $e$, given $\varepsilon>0$, there is an open neighborhood $V$ of $e$ in $G$ such that,
$$\int_{X_{t^{-1}}}|h(x)-\sigma^\beta_{\rm RN}(x,t)h(\beta_t(x))|d\nu(x)<\varepsilon, \ \ (t\in V).$$ 
Similarly, there is an open neighborhood $W$ of $e$ in $G$ such that,
$$\int_{X_{t^{-1}}}|h(x)-\sigma^\al_{\rm RN}(x,t)h(\al_t(x))|d\nu(x)<\varepsilon, \ \ (t\in W).$$   
Put $U:=V\cap W$. Let $K\subseteq G$ be a compact subset and cover $K$ by finitely many translates of $U$, say $K=\bigcup_{n=1}^{N} s_nU$. We claim that there is a finite convex combination $T=\sum_n c_nT_{t_n}$ with $\|Th_i\|_1<\varepsilon,$ for $1\leq i\leq N$, where $h_i:=h-\sigma^\beta_{\rm RN}(\cdot,s_i)h\circ\beta_{s_i}$ on $X_{s^{-1}_i}$, and 0 elsewhere, for $1\leq i\leq N$. Here we use the convention that when $T$ is applied to a function, it is appropriately restricted to the corresponding domain, that is, $Tf:= \sum_n c_nT_{t_n}(f|_{X_{t_n}})$. We prove the claim by induction on $N$: if $N=1$, then since $h_1\in J_h$, we have $q(h_1)=0$, and so there is an element in  $\mathfrak G(h)$ which is $\varepsilon$-close to 0, which is a restatement of the claim for $N=1$. Now assume that the claim holds for $N-1$ and choose a finite convex combination $T^{'}:=\sum_i b_iT_{u_i}$ with $\|T^{'}h_k\|_1<\varepsilon$, for $1\leq k\leq N-1$. Put $g:=T^{'}h_N$. Then an element in $\mathfrak G(g)$ is a finite convex combination of the form,
$$(\sum_i d_jT_{t_j})(T^{'}h_N)=\big(\sum_{i,j} b_id_jT_{t_j}\big)T_{u_i}(h_N)=\sum_{i,j} b_id_jT_{u_it_j}(h_N),$$   
which is again a convex combination, thus $\mathfrak G(g)\subseteq \mathfrak G(h_N),$ therefore,
$${\rm dist}(0, \mathfrak G(g))\leq{\rm dist}(0, \mathfrak G(h_N))=\|q(h_N)\|=0,$$
where $q$ is the quotient map onto the quotient of $L^1(X,\nu)$ by $J_{h}$. Choose a convex combination $T^{''}$  as above with $\|T^{''}g\|_1<\varepsilon$. Then $T:=T^{''}T^{'}$ is again a convex combination as above with 
$$\|Th_i\|_1=\|T^{''}T^{'}h_i\|_1\leq \|T^{'}h_i\|_1<\varepsilon,\ \ (1\leq i\leq N-1),$$
and $\|Th_N\|_1=\|T^{''}T^{'}h_N\|_1=\|T^{''}g\|_1<\varepsilon,$ finishing the proof of the claim. Next, put $f:=Th$, This is a norm one positive element of $L^1(X,\nu)$ and, for each $s\in K$, there is $t\in U$ and $1\leq i\leq N$ with $s=s_it$. Define,
$$S_sf(x)=\sigma^\al_{\rm RN}(x,s)f(\al_s(x)),\ \ (x\in X_{s^{-1}}, f\in L^1(X_s,\nu)).$$  
Let us observe that $S_sT_t=T_tS_s$, for $s,t\in G$. For this, first observe that,
\begin{align*}
	\sigma^\beta_{\rm RN}(\al_s(x),t)\sigma^\al_{\rm RN}(x,s)d\nu(x)&=\sigma^\beta_{\rm RN}(\al_s(x),s)d\nu(\al_s(x))\\&=d\nu(\beta_t\al_s(x))\\&=d\nu(\al_s\beta_t(x))\\&=\sigma^\al_{\rm RN}(\beta_t(x),s)d\nu(\beta_t(x))\\&=\sigma^\al_{\rm RN}(\beta_t(x),s)\sigma^\beta_{\rm RN}(x,t)d\nu(x),  	
\end{align*}
that is, 
$$\sigma^\beta_{\rm RN}(\al_s(x),t)\sigma^\al_{\rm RN}(x,s)=\sigma^\al_{\rm RN}(\beta_t(x),s)\sigma^\beta_{\rm RN}(x,t), \ \ (x\in X_{s^{-1}}\cap X_{t^{-1}}).$$
Therefore, 
\begin{align*}
	T_tS_sf(x)&=\sigma^\beta_{\rm RN}(x,t)S_sf(\beta_t(x))\\&=\sigma^\beta_{\rm RN}(x,t)\sigma^\al_{\rm RN}(\beta_t(x),s)f(\al_t\beta_t(x))\\&=\sigma^\al_{\rm RN}(x,s)\sigma^\beta_{\rm RN}(\al_s(x),t)f(\beta_t\al_s(x))\\&=\sigma^\beta_{\rm RN}(x,t)T_tf(\al_s(x))\\&=S_sT_tf(x),
\end{align*} 
for $x\in X_{s^{-1}}\cap X_{t^{-1}}$ and $f\in L^1(X_{s^{-1}}\cap X_{t^{-1}},\nu)$. Back to the above calculations, since $T$ is a convex combination of operators of the form $T_t$ (followed by restriction on the corresponding $L^2$ subspace), $S_s$ commutes with $T$, therefore, for $f:=Th$ and $s=s_it$ as above,
\begin{align*}
	S_sf-f&=S_{s_i}S_tTh-Th=S_{s_i}TS_th-Th\\&=S_{s_i}T(
	S_th-h)+T(S_{s_i}h-h),
\end{align*} 
thus,
$$\|S_sf-f\|_1\leq \|S_{s_i}T\|\|
S_th-h\|+\|T\|\|S_{s_i}h-h\|<\|
S_th-h\|+\|S_{s_i}h-h\|<2\varepsilon,$$	
that is, $\al$ satisfies condition $(P_1)$, as required.
\end{proof}

In the next definition, the symmetric difference of sets is defined by, $$A\Delta B:=(A\backslash B)\cup(B\backslash A).$$
 
\begin{df}
We say that a Borel \pa $\al$ on a Borel space $(X,\nu)$ has {\it F{\o}lner property} if for each compact subset $K\subseteq G$ and $\varepsilon>0$, there is a Borel subset $F\subseteq X$ such that,
$$\nu\big(\al_t(F\cap X_{t^{-1}})\Delta (F\cap X_{t^{-1}})\big)<\varepsilon\nu(F\cap X_{t^{-1}}), \ \ (t\in K).$$
\end{df}

\begin{df}
Given a Borel \pa $\al$ on a Borel space $(X,\nu)$, 
two Borel subsets $C,D\subseteq X$ are said to be $\al$-{\it equi-decomposable}, writing $C\sim_\al D$, if there are $n\geq 1$, elements $t_1,\cdots, t_n\in G$, Borel subsets $C_1,\cdots, C_n\subseteq C$,  such that $C_i\subseteq X_{t^{-1}_i}$, for $1\leq i\leq n$, $C=\sqcup_{i=1}^n C_i$ and $D=\sqcup_{i=1}^n \al_{t_i}(C_i)$. A \pa  $\al$ is called {\it paradoxical} if there are disjoint Borel subsets $C,D\subseteq X$ with  $C\sim_\al X\sim_\al D$.   
\end{df}

\noindent Note that $C\sim_\al D$ implies $\mu(C)=\mu(D)$, for each invariant measure $\mu$.

\vspace{.3cm}
Let $G$ partially act by $\al$ on $X$ and let $S_\infty$ be the group of
permutations of $\mathbb N_0:=\mathbb N\cup\{0\}$. Then the group $\tilde G := G \times S_\infty$ partially acts canonically on
$\tilde X:= X\times \mathbb N_0$, say by $\tilde\al$. Given a Borel subset $F\subseteq \tilde X$, the {\it levels} of $F$ is the set of those $n\in\mathbb N$ such that $(x, n)\in F$, for some
$x\in X$.  If $F$ has only finitely many levels, we say that $F$ is {\it bounded}.
For a bounded Borel subset $F$ the equivalence class of $F$ with respect to $\tilde \al$ is called
the type of $F$ and is denoted by $[F]$. For a Borel subset $E\subseteq X$, we set $[E] := [E \times \{0\}]$.
Given bounded Borel subsets $A, B \subseteq \tilde X$, there is $k\geq 0$ such that
$B':= \{(b, n + k): (b, n)\in B\}$
does not meet $A$ (by boundedness). Define $[A] + [B] :=[A\cup B']$. 
This is a well defined operation turning
$S := \{[A] : A\subseteq \tilde X\ {\rm bounded} \}$
into a commutative semigroup (with the same argument as in the classical case; c.f.,  \cite[0.2.4]{ru}), called the {\it type semigroup} of $\al$.

\begin{lem} \label{bij}
If $C\sim_\al D$ there is a bijection $\phi: C\to D$ with $A\sim_\al \phi(A)$, for each Borel subset $A\subseteq C$.  
\end{lem}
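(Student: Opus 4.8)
The plan is to use the obvious piecewise map coming from the data witnessing $C\sim_\al D$. By definition there are $t_1,\dots,t_n\in G$ and a Borel partition $C=\sqcup_{i=1}^n C_i$ with $C_i\subseteq X_{t_i^{-1}}$ and $D=\sqcup_{i=1}^n\al_{t_i}(C_i)$. I would define $\phi:C\to D$ by $\phi(x):=\al_{t_i}(x)$ whenever $x\in C_i$. Since the $C_i$ are pairwise disjoint, each $x\in C$ lies in exactly one $C_i$, so $\phi$ is well defined; and since each $\al_{t_i}$ restricts to a Borel isomorphism $C_i\to\al_{t_i}(C_i)$ while the images $\al_{t_i}(C_i)$ are pairwise disjoint with union $D$, the map $\phi$ is a bijection (indeed a Borel isomorphism, being a finite gluing of Borel isomorphisms along Borel pieces, with inverse given piecewise by $\al_{t_i^{-1}}$).

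The remaining point is to verify $A\sim_\al\phi(A)$ for an arbitrary Borel $A\subseteq C$. Here I would simply intersect the given partition with $A$: set $A_i:=A\cap C_i$. Then $A_i\subseteq C_i\subseteq X_{t_i^{-1}}$, the sets $A_i$ are pairwise disjoint with $A=\sqcup_{i=1}^n A_i$, and because $\al_{t_i}(A_i)\subseteq\al_{t_i}(C_i)$ the images remain pairwise disjoint; moreover $\phi(A)=\bigcup_i\phi(A_i)=\sqcup_{i=1}^n\al_{t_i}(A_i)$. Thus the very same group elements $t_1,\dots,t_n$ together with the pieces $A_1,\dots,A_n$ witness $A\sim_\al\phi(A)$, as required.

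There is essentially no serious obstacle here; the content is entirely bookkeeping. The only points that require a moment's care are the disjointness of the images $\al_{t_i}(A_i)$ (inherited from the disjointness of the $\al_{t_i}(C_i)$ guaranteed by $C\sim_\al D$) and the Borel measurability of $\phi$, which holds because it is assembled from finitely many Borel isomorphisms $\al_{t_i}$ on the Borel pieces $C_i$; empty pieces $C_i=\emptyset$ may be discarded without affecting anything. I expect the whole argument to be a short, direct computation rather than to hinge on any delicate estimate.
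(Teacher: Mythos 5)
Your proposal is correct and is essentially the paper's own argument: the paper likewise defines $\phi$ piecewise by $\phi=\al_{t_i}$ on $C_i$, leaving the verification that $A\sim_\al\phi(A)$ (via $A_i:=A\cap C_i$) implicit. Your write-up simply makes that bookkeeping explicit.
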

\begin{proof}
Let  $C=\sqcup_{i=1}^n C_i$ and $D=\sqcup_{i=1}^n \al_{t_i}(C_i)$, with $C_i\subseteq X_{t^{-1}_i}$, for $1\leq i\leq n$. Consider the bijection, $$\phi_i: C_i\to \al_{t_i}(C_i);\ x\mapsto \al_{t_i}(x),$$
and put $\phi=\phi_i$ on $C_i$, for $1\leq i\leq n$. 
\end{proof}

Next, put $C\preceq_\al D$ if $C\sim_\al D_0$, for some Borel subset $D_0\subseteq D$. In this case, we write $[C]\leq [D]$. As in the classical case \cite[Theorem 0.1.9]{ru}, it follows from Lemma \ref{bij}, that the Cantor–Bernstein type theorem holds for \pas: for Borel subsets $C,D$, if
$C\preceq_\al D$ and $C\preceq_\al D$ , then $C\sim_\al D$.

\begin{thm} \label{Reiter}
	For a  Borel \pa $\al$ of a Borel (topological) group $G$ on a Borel measure space $(X,\nu)$, the following are equivalent:
	
	$(i)$ $\al$ is amenable on $X$ in the sense of Greenleaf,
	
	$(ii)$ There is an $\al$-invariant finitely additive probability measure $\mu$ on $X$. 
	
	$(iii)$ $\al$ is not paradoxical,
	
	$(iv)$ $\al$ has F{\o}lner property,	
	
	$(v)$ $\al$ satisfies Reiter condition $(P_1)$,
	
	$(vi)$ $\al$ satisfies Reiter condition $(P_p)$, for some $p\geq 1$,
	
	$(vii)$ $\al$ satisfies Reiter condition $(P_p)$, for all $p\geq 1$.

\end{thm}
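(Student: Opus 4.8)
The plan is to prove the cycle of equivalences by splitting the seven conditions into a measure-theoretic cluster $\{(i),(ii),(iii)\}$ and an analytic cluster $\{(iv),(v),(vi),(vii)\}$, closing each cluster internally and then joining them through the single bridge $(i)\Leftrightarrow(v)$ together with $(v)\Rightarrow(iv)\Rightarrow(i)$. Concretely I would establish $(i)\Leftrightarrow(ii)\Leftrightarrow(iii)$, $(v)\Leftrightarrow(vi)\Leftrightarrow(vii)$, $(i)\Rightarrow(v)$, $(v)\Rightarrow(iv)$ and $(iv)\Rightarrow(i)$. Throughout, the Radon--Nikodym cocycle $\sigma_{\rm RN}$ is the device that absorbs the measure distortion of each $\al_t$ on the partial domain $X_{t^{-1}}$, and carrying it through every estimate is the recurring cost of working with \pas rather than global actions.

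For the measure cluster I would first prove $(i)\Leftrightarrow(ii)$ by the standard duality between means and finitely additive measures: a mean $m$ on $B(X)$ restricts to the finitely additive probability measure $\mu(E):=m(\mathbf{1}_E)$, and conversely integration of simple functions recovers a mean; the Greenleaf invariance of Definition \ref{amen}$(ii)$ transfers to $\mu(\al_t(E))=\mu(E)$ for Borel $E\subseteq X_{t^{-1}}$, the only care being the extension-by-zero identification on the pieces $X_t$. For $(ii)\Leftrightarrow(iii)$ I would invoke a Tarski-type theorem on the type semigroup $S$ constructed before the statement: paradoxicality of $\al$ is exactly the relation $[X]+[X]\le[X]$ in $S$, and using Lemma \ref{bij} together with the Cantor--Bernstein theorem already recorded for \pas one checks that $S$ has the cancellation the argument requires, so Tarski's theorem furnishes an invariant state normalising $[X]$ precisely when $[X]$ is not paradoxical; such a state restricts to the invariant finitely additive probability measure of $(ii)$.

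For the analytic cluster I would treat the whole $L^p$-scale uniformly with the Mazur map. Writing the $L^p$-translation as $(\lambda_t^{(p)}f)(x)=\sigma_{\rm RN}^{1/p}(x,t)f(\al_t(x))$ on $X_{t^{-1}}$, the point is the identity $(\lambda_t^{(p)}f)^p=\lambda_t^{(1)}(f^p)$: the weight $\sigma_{\rm RN}^{1/p}$ is designed exactly so that $f\mapsto f^p$ intertwines the scales. Combined with the pointwise estimates $|a-b|^p\le|a^p-b^p|\le p\max(a,b)^{p-1}|a-b|$ for $a,b\ge 0$ and Hölder's inequality, this gives $(P_p)\Leftrightarrow(P_1)$ for each fixed $p$, hence $(v)\Leftrightarrow(vi)\Leftrightarrow(vii)$. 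This is the cleanest part and I expect no serious obstruction here.

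It remains to bridge the clusters. For $(i)\Rightarrow(v)$ I would run the Day--Namioka convexity argument: an invariant mean lies in the weak$^*$ closure of the probability densities, so the translates $\lambda_t^{(1)}f-f$ are weak$^*$-small, and passing to finite convex combinations (whose weak closure in $\bigoplus_{t\in K}L^1$ equals the norm closure) upgrades this to a norm-small Reiter function. For $(v)\Rightarrow(iv)$ I would use the co-area/layer-cake identity: for a positive $f$ with $\|f\|_1=1$ and level sets $F_s=\{f>s\}$,
\[\int_0^\infty \nu\big(\al_t(F_s\cap X_{t^{-1}})\,\Delta\,(F_s\cap X_t)\big)\,ds=\int_{X_{t^{-1}}}\sigma_{\rm RN}(x,t)\,\big|f(x)-f(\al_t(x))\big|\,d\nu(x),\]
so that approximate invariance of $f$ forces the level-set discrepancies to be small on average; a Chebyshev/pigeonhole choice of $s$, made simultaneously good for all $t$ in a finite subcover of the compact $K$ by continuity in $t$, produces a Følner set. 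Finally $(iv)\Rightarrow(i)$ follows by taking a weak$^*$ cluster point of the states $\varphi\mapsto \nu(F)^{-1}\int_F\varphi\,d\nu$ along the net $(K,\varepsilon)$, the same identity verifying invariance of the limit. I expect the two genuine obstacles to be the adaptation of Tarski's theorem to the type semigroup of a Borel \pa in $(ii)\Leftrightarrow(iii)$, and the reconciliation in $(v)\Rightarrow(iv)$ and $(i)\Rightarrow(v)$ of the stated ``weight-inside'' Reiter condition with the ``weight-outside'' quantity appearing above—both requiring the $\sigma_{\rm RN}$ cocycle and the domains $X_{t^{-1}}$ to be threaded carefully through every change of variables.
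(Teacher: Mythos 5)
Your plan coincides with the paper's proof in most of its load-bearing parts: the measure cluster $(i)\Leftrightarrow(ii)\Leftrightarrow(iii)$ is proved there exactly as you describe (means versus finitely additive measures, and the type semigroup together with K\"onig's theorem and the Tarski-type theorem, \cite[Theorems 0.2.5, 0.2.6, 0.2.9]{ru}); the scale $(v)\Leftrightarrow(vi)\Leftrightarrow(vii)$ is handled by the same Mazur-map substitutions $f\mapsto f^{1/p}$ and $g\mapsto g^p$ with the two pointwise inequalities and H\"older; and $(i)\Rightarrow(v)$ is the same Day-type convexity argument, made uniform on compacta via norm-compactness of the translates $\{L_th:t\in K\}$ in $L^1(G)$. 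The one structural difference is how you close the loop: you propose $(iv)\Rightarrow(i)$ directly by clustering the states $\varphi\mapsto\nu(F)^{-1}\int_F\varphi\,d\nu$, whereas the paper proves $(iv)\Rightarrow(v)$ and closes through $(vii)\Rightarrow(ii)$, building the invariant finitely additive measure from Reiter functions by integrating against Haar measure over $K$. The paper's route keeps the Radon--Nikodym cocycle explicit, which matters because the F{\o}lner property as defined says nothing about $\sigma_{\rm RN}$; your direct bridge needs an additional argument precisely at the ``weight-inside versus weight-outside'' point you flagged, since $\nu$ is only quasi-invariant and $\sigma_{\rm RN}$ need not be bounded on the symmetric differences you control.

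The genuine gap is in $(v)\Rightarrow(iv)$. Your mechanism is a layer-cake identity followed by ``a Chebyshev/pigeonhole choice of $s$, made simultaneously good for all $t$ in a finite subcover of the compact $K$ by continuity in $t$.'' This works when $G$ is discrete (compact means finite), but for non-discrete $G$ it is circular: the map $t\mapsto\nu\big(\al_t(F_s\cap X_{t^{-1}})\Delta(F_s\cap X_{t^{-1}})\big)$ is at best continuous for each \emph{fixed} level set $F_s$, and its modulus of continuity depends on $F_s$, which you only choose \emph{after} fixing the finite subcover. No neighborhood of a point $t_i$ works uniformly over all candidate level sets, so goodness at finitely many $t_i$ does not propagate to all of $K$. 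The paper's proof supplies the missing idea in two stages, and this is the real content of the implication. First, it integrates the level-set identity over $K$ against Haar measure and applies Chebyshev in the group variable $t$ (not in the level $s$), which yields only a \emph{weak} F{\o}lner condition: a set $F$ satisfying the estimate for all $t\in K\setminus N$, where $N$ is an exceptional subset of small Haar measure. Second, it upgrades this to the full F{\o}lner property by a symmetrization trick: enlarge $K$ to a compact symmetric neighborhood $V$, run the weak condition on $V^2$ with $\delta<\tfrac12 m(V)$, note that $(V^2\setminus N)\cap t(V^2\setminus N)$ is then non-null for every $t\in K$, write $t=s^{-1}u$ with $s,u\in V^2\setminus N$, and use the isometry of the translations $\delta_s*(\cdot)$ together with the triangle inequality $\|\delta_t*\mathds{1}_F-\mathds{1}_F\|_1\le\|\delta_u*\mathds{1}_F-\mathds{1}_F\|_1+\|\delta_s*\mathds{1}_F-\mathds{1}_F\|_1$. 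Without this weak-F{\o}lner-plus-decomposition step (Emerson--Greenleaf's trick, adapted here to partial actions), your construction produces F{\o}lner sets only for finitely many prescribed group elements, not for a compact set, so the implication as you have planned it would fail for locally compact non-discrete $G$ --- which is the case this theorem is chiefly aimed at.
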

\begin{proof}
	$(i)\Rightarrow (ii).$ Let $m$ be an $\al$-invariant mean on $B(X)$ and put $\mu(C):=m(\mathds{1}_C).$ 
	
	$(ii)\Rightarrow (iii).$ If  $\mu$ is an $\al$-invariant finitely additive probability measure on $X$ and $X$ is paradoxical with disjoint Borel subsets $C,D\subseteq X$ satisfying  $C\sim_\al X\sim_\al D$, then,
	$$1=\mu(X)\geq\mu(C\sqcup D)=\mu(C)+\mu(D)=\mu(X)+\mu(X)=2,$$
	a contradiction.
	
	$(iii)\Rightarrow (ii).$ 
	Suppose that $X$ is not $\al$-paradoxical. Let us observe  that in this case, 
	$(n + 1)[X]\nleq n[X]$, for all $n\in\mathbb N$, since otherwise, 
	$$2[X] + n[X] = (n + 1)[X] + [X] \leq n[X] + [X] = (n + 1)[X] \leq n[X],$$
	for some $n\geq 1$, and repeating this argument, 
	$$ n[X] \geq n[X] + n[X] = 2n[X]\geq n[X],$$ that is, $n[X]=2n[X]$. But then using K\"onig’s Theorem on bipartite graphs \cite[Theorem 0.2.5]{ru}, by an argument exactly as in \cite[Theorem 0.2.6]{ru}, one could conclude that $[X]=2[X]$, which is not possible, as $X$ is not $\al$-paradoxical.   Next, \cite[Theorem 0.2.9]{ru} on commutative monoids implies that there is
	an additive map $\mu: S \to [0, \infty]$ with  $\mu([X]) = 1$. We may regard $\mu$ as a  probability measure on $X$ via $\mu(A):=\mu([A])$, which is then finite additive by the definition of operation in $S$. Finally, since $[\al_t(A)] = [A]$, for $t\in G$ and $A\subseteq X_{t^{-1}}$, it follows that $\mu$ is $\al$-invariant.
	
	$(iv)\Rightarrow (v).$ For a compact subset $K\subseteq G$ and $\varepsilon>0$, let $F\subseteq X$ be the  $(K,\varepsilon)$-F{\o}lner set of positive finite measure. Put $f:=\frac{1}{\nu(F)}\mathds{1}_F$, then $f$ is a norm one positive function in $L^1(X,\nu)$ which fulfills Reiter condition $(P_1)$ for $(K,\varepsilon)$. 
	
	$(v)\Rightarrow (vi).$ Given compact subset $K\subset G$ and $\varepsilon>0$, let $f\in L^1(X,\nu)$ be the function which fulfills the corresponding condition $(P_1)$ for $(K,\varepsilon)$, and put $g:=f^{\frac{1}{p}}$.  Then, since, $$|a-b|^p\leq |a^p-b^p|, \ \ (a,b\geq 0, p\geq 1),$$ we get, 
	\begin{align*}
		\int_{X_{t^{-1}}} |g(x)-\sigma^{\frac{1}{p}}_{\rm RN}(x, t)g(\al_{t}(x))|^p &d\nu(x)\\&\leq \int_{X_{t^{-1}}} |f(x)-\sigma_{\rm RN}(x, t)f(\al_{t}(x))|d\nu(x),
	\end{align*}
that is,  $g\in L^p(X,\nu)$ fulfills the condition $(P_p)$ for $(K,\varepsilon)$. 
	
	$(vi)\Rightarrow (vii).$ Given compact subset $K\subset G$ and $\varepsilon>0$, let $g\in L^1(X,\nu)$ be the function which fulfills the corresponding condition $(P_p)$ for $(K,(\varepsilon/2p)^{p})$, and put $f:=g^p$.  Let $h_t:=\mathds{1}_{X_{t}}$ be the characteristic function of the Borel set $X_t$. Then, since, $$|a^p-b^p|\leq p|a-b|(a^{p-1}+b^{p-1}), \ \ (a,b\geq 0, p\geq 1),$$ by Holder inequality,  for $r:=p-1$ and $1/p+1/q=1$, we have the following inequalities, with the convention that wherever $\al_t(x)$ is not defined, we put $g(\al_{t}(x))=0$,  
	\begin{align*}
		&\int_{X_{t^{-1}}} |f(x)-\sigma_{\rm RN}(x, t)f(\al_{t}(x))|d\nu(x)\\&\ \ =\int_{X} h_{t^{-1}}(x)|f(x)-\sigma_{\rm RN}(x, t)f(\al_{t}(x))|d\nu(x)\\&\ \ =\int_{X} h^p_{t^{-1}}(x)|g^p(x)-\sigma_{\rm RN}(x, t)g^p(\al_{t}(x))|d\nu(x)\\&\ \ \leq p\int_{X} h_{t^{-1}}(x)|g(x)-\sigma^{\frac{1}{p}}_{\rm RN}(x, t)g(\al_{t}(x))|(h^{r}_{t^{-1}}(x)[g^{r}(x)\\&\ \ +\sigma^{\frac{1}{r}}_{\rm RN}(x, t)g^{r}(\al_{t}(x))]d\nu(x) \\&\ \ \leq p \|h_{t^{-1}}(g-\sigma^{\frac{1}{p}}_{\rm RN}(\cdot, t)(g\circ\al_t))\|_p\big(\|h^{r}_{t^{-1}}g^{r}\|_q+\|h^{r}_{t^{-1}}(\sigma^{\frac{1}{r}}_{\rm RN}(\cdot, t)(g\circ\al_t))^{r}\|_q\big)\\&\ \ <p(\varepsilon/2p)(1+1)=\varepsilon,
	\end{align*}
that is,  $f\in L^1(X,\nu)$ fulfills the condition $(P_1)$ for $(K,\varepsilon)$.

$(vii)\Rightarrow (ii).$ For compact subset $K\subseteq G$ and $\varepsilon>0$, let $\mathfrak F_{K,\varepsilon}$ be the set of  norm one functions $f\in L^1(X,\nu)_{+}$ with  $\sup_{t\in K}\int_{X_{t^{-1}}} |f(x)-f(\al_{t}(x))|d\nu(x)<\varepsilon$ and $\int_K\int_{X_{t^{-1}}} f(x)d\nu(x)dt>2\varepsilon$,  where the first integral is against a left Haar measure on $G$. Then,
$$0<\varepsilon<M_f:=\int_K\int_{X_{t^{-1}}} f(\al_t(x))d\nu(x)dt<\mu(K)\|f\|_1<\infty.$$
Let $\mu_{f,K,\varepsilon}$ be the probability finitely additive measure on $X$ defined  by,   $$\int_X\varphi d\mu_{f,K,\varepsilon}:= \frac{1}{M_f}\int_K\int_{X_{t^{-1}}} \varphi(x)f(\al_t(x))d\nu(x)dt,\ \ ( \varphi\in B(X)),$$
for   $K\subseteq G$ compact, $\varepsilon>0$, and $f\in \mathfrak F_{K,\varepsilon}$.  We claim that,
$$\int_{X_{s^{-1}}}\varphi d\mu_{f,K,\varepsilon}\circ\al_s\approx_{2\varepsilon} \int_{X_{s}}\varphi d\mu_{f,K,\varepsilon},\ \ (s\in K),$$
for each $f\in \mathfrak F_{K,\varepsilon}$. Indeed, let us for simplicity of notation assume that $M_f=1$, then for $\phi$ supported in $X_{s}$,  
\begin{align*}
\int_{X_{s^{-1}}}&\varphi d\mu_{f,K,\varepsilon}\circ\al_s=\int_K\int_{X_{t^{-1}}\cap {X_{s^{-1}}}} \varphi(\al_{s^{-1}}(x))f(\al_t(x))d\nu(x)dt\\&=\int_K\int_{\al_s(X_{t^{-1}}\cap {X_{s^{-1}}})} \varphi(x)f(\al_t(\al_{s}(x))d\nu\circ\al_s(x)dt
\\&=\int_K\int_{X_{s^{-1}t^{-1}}\cap {X_{s}}} \varphi(x)f(\al_{ts}(x))\sigma_{\rm RN}(x,s)d\nu(x)dt\\&\approx_\varepsilon\int_K\int_{X_{s^{-1}t^{-1}}\cap {X_{s}}} \varphi(x)f(x)\sigma^{-1}_{\rm RN}(x,ts)\sigma_{\rm RN}(x,s)d\nu(x)dt\\&=\int_K\int_{X_{s^{-1}t^{-1}}\cap {X_{s}}} \varphi(x)f(x)\sigma^{-1}_{\rm RN}(\al_s(x),t)d\nu(x)dt\\&\approx_\varepsilon\int_K\int_{X_{t^{-1}}\cap {X_{s}}} \varphi(x)f(\al_t(x))\sigma^{-1}_{\rm RN}(\al_s(x),t)\sigma_{\rm RN}(\al_s(x),t)d\nu(x)dt\\&=\int_{X_{s}}\varphi d\mu_{f,K,\varepsilon}.
\end{align*}  
Since the set of probability finitely additive measures on $X$ is weak$^*$-compact, the net $\{\mu_{f,K,\varepsilon}\}$ clusters to a probability finitely additive measure $\mu$ on $X$, which is then invariant by the above estimates.
 
	$(ii)\Rightarrow (i).$ Let $\mu$ be an invariant finitely additive probability measure on $X$. Let us define a functional $m$ on the subspace of $B(X)$ consisting of simple functions by $=m(\sum_i \lambda_i\mathds{1}_{E_i}):=\sum_i \lambda_i\mu(E_i).$ Then,
	$$\big|\sum_i \lambda_i\mu(E_i)\big|=\Big|\int_X(\sum_i \lambda_i\mathds{1}_{E_i})d\mu\Big|\leq \Big\|\sum_i \lambda_i\mathds{1}_{E_i}\Big\|_\infty,
$$
Thus $m$ extend to a mean on $B(X)$ and the extension is clearly $\al$-invariant.

	$(i)\Rightarrow (v).$ Let $m$ be an $\al$-invariant mean on $B(X)$, then there is a net $(f_i)$ of positive norm one functions in $L^1(X,\nu)$ such that $f_i\to m$, in the weak$^*$-topology of $B(X)^*$. For $f\in L^1(X, \nu)$, let $$(\delta_t*f)(x):=\sigma_{\rm RN}(x,t)f(\al_t(x)),\ \ (x\in X_{t^{-1}}),$$ 
	extended by zero to a measurable function on $X$. 
	
Next, let us observe that $L^1(X,\nu)$ could be regarded as an $L^1(G)$-module. For $x\in X$, consider the following measurable subsets of $G$,
$$G_x:=\{s\in G: x\in X_{s^{-1}}\},\ \  (x\in X).$$
 Define the action of $L^1(G)$ on $L^1(X,\nu)$ by the following ``convolution'' product against the Haar measure of $G$,
 $$(h*f)(x):=\int_{G_x} h(t)\sigma_{\rm RN}(x,t)f(\al_t(x))dt,\ \ (h\in L^1(G), f\in L^1(X,\nu)).$$
 Put,
 $$(L_th)(s):=h(t^{-1}s),\ \ (h\in L^1(G)).$$
 
Let $\mathds{1}_x$ and $\mathds{1}_t$  denote the characteristic functions of $G_x$ and $X_{t^{-1}}$, regarded as elements in $L^\infty(G)$ and $L^\infty(X,\nu)$, respectively. Then,
$$\delta_t*f(x)=\mathds{1}_t(x)\sigma_{\rm RN}(x,t)f(\al_t(x))=\mathds{1}_x(t)\sigma_{\rm RN}(x,t)f(\al_t(x)),$$
for $f\in L^1(X,\nu)$, with the convention that these terms are zero whenever $\la_t(x)$ is not defined. With this convention, we may write,
$$h*f=\int_G h(t)\mathds{1}_t(\cdot)(\delta_t*f)(\cdot)dt,$$
as an $L^1(X,\nu)$-valued Bochner integral. Now by linearity and continuity of left convolution by $\delta_t$ we have, 
\begin{align*}
	\delta_t*(h*f)&=\delta_t*\int_{G} h(s)\mathds{1}_s(\delta_s*f)ds
	=\int_{G} h(s)\delta_t*(\mathds{1}_s(\delta_s*f))ds\\&=\int_{G} h(s)\mathds{1}_{ts}(\cdot)\delta_t*\delta_s*fds=\int_{G} h(s)\mathds{1}_{ts}(\delta_{ts}*f)ds\\&=\int_{G} h(t^{-1}s)\mathds{1}_{s}(\delta_s*f)ds=(L_th)*f,
\end{align*}
for $t\in G$, $h\in L^1(G),$ and $f\in L^1(X,\nu)$, where the third equality follows from the fact that,
$X_{ts}=\{x\in X_{s^{-1}}: \al_s(x)\in X_{t^{-1}}\},$ and the fourth equality follows from the cocycle identity for
$\sigma_{\rm RN}$.

Let $h\in P^1(G)$, which is the set of positive norm one functions in $L^1(G)$. Since the mean $m$ is continuous, 
$$m(h*f)=\int_G h(t)(\delta_t*f)dt=\int_G h(t)m(\delta_t*f)dt=m(f)\int_G h(t)dt=m(f),$$
for each $f\in L^1(X,\nu)$. It follows that, $h*f_i-f_i\to 0$, weakly in $L^1(X,\nu)$.

Consider $\mathfrak X:=\prod_{h\in P^1(G)}L^1(X,\nu)$ with product topology. A continuous linear functional $\phi\in\mathfrak X^*$ is of the form,
$$\langle\phi, (f_h)\rangle=\sum_{i=1}^{n}\langle\phi_i, f_{h_i}\rangle,$$
for some $n\geq 1$, $h_1,\cdots,h_n\in P^1(G)$, and $\phi_1,\cdots, \phi_n\in L^1(X,\nu)^*$. This plus the above observation means that for $f^i:=(h*f_i-f_i)_{h\in P^1(G)}\in\mathfrak X$, $f^i\to 0$ weakly in $\mathfrak X$. Let,
$$\mathfrak K:=\{(h*f-f)_{h\in P^1(G)}\in\mathfrak X: f\in L^1(X,\nu)_+, \|f\|_1=1\}.$$
This is a convex subset of $\mathfrak X$, whose weak closure contains $0$, and so does its closure in the product topology (since $\mathfrak X$ is locally convex topologcal vector space in product topology). This simply means that there is a net $(\tilde f_i)$ of positive norm one functions in $L^1(X,\nu)$ such that,
$h*\tilde f_i-\tilde f_i\to 0,$
in norm, as $i\to \infty$, for each $h\in P^1(G)$. Of course, at this point, the convergence is not uniform in $h$. With a slight abuse of notation, we denote this new net again by $(f_i)$. 

Next, let us fix $\varepsilon>0$ and $K\subseteq G$ compact. Without loss of generality, we may assume that $e\in K$. Fix a positive norm one function $h\in P^1(G)$. Then the subset 
$L := \{L_th : t\in K\}$ is norm compact in $L^1(G)$ \cite[Proposition D.2.1]{ru} is compact, thus,
$$\sup_{g\in L}\|g*f_i-f_i\|_1\to 0,$$
as $i\to \infty$. Choose an index $i=i(\varepsilon) $ with $\sup_{g\in L}\|g*f_{i(\varepsilon)}-f_{i(\varepsilon)}\|_1\leq \varepsilon.$
Put $f_\varepsilon:=h*f_{i(\varepsilon)}\in L^1(X,\nu)$, then since $e\in K$, $\|f_\varepsilon-f_{i(\varepsilon)}\|_1\leq \varepsilon$, thus,
\begin{align*}
	\sup_{t\in K}\|\delta_t*f_{\varepsilon}-f_{\varepsilon}\|_1&=\sup_{t\in K}\|\delta_t*(h*f_{i(\varepsilon)})-f_{i(\varepsilon)}\|_1+\|f_\varepsilon-f_{i(\varepsilon)}\|_1\\&=\sup_{t\in K}\|L_th*f_{i(\varepsilon)}-f_{i(\varepsilon)}\|_1+\varepsilon\\&=\sup_{g\in L}\|g*f_{i(\varepsilon)}-f_{i(\varepsilon)}\|_1+\varepsilon\\&\leq 2\varepsilon,
\end{align*}
which says that $\al$ satisfies condition $(P_1)$.

$(v)\Rightarrow (iv)$. We prove this in three steps. In the first step, let us consider any positive norm one simple function $\psi$ on $X_{t^-1}$. We could always find a finite increasing sequence of positive scalars $\beta_i$ and mutually disjoint non-null measurable subsets $B_i\subseteq X_{t^{-1}}$ of finite measure such that $\psi=\sum_{j=1}^{n} \beta_i\mathds{1}_{B_j}$. Put $A_i=\bigcup_{j=1}^{n} B_j$. Then $A_i$'s form a finite decreasing sequence of non-null measurable subsets of $X_{t^{-1}}$ of finite measure, and for $\gamma_1:=\beta_1\nu(A_1)$, and $\gamma_i:=(\beta_i-\beta_{i-1})\nu(A_i)$ we could write $\psi$ as a convex combination $\psi=\sum_{i=1}^{n}\gamma_i\mathds{1}_{A_i}$. For arbitrary indices $1\leq i,j\leq n$, either $A_i\subseteq A_j$ or $A_j\subseteq A_i$, thus, either $\al_t(A_i)\backslash A_i\subseteq \al_t(A_j)$ or $A_j\backslash\al_t(A_j)\subseteq A_i$, and in both cases, $$\big(\al_t(A_i)\backslash A_i\big)\Delta \big(A_j\backslash\al_t(A_j)\big)=\emptyset.$$ Next, for each $i$, 
$$\nu(\al_t(A_i)\Delta A_i)=\big\|\mathds{1}_{\al_t(A_i)}-\mathds{1}_{A_i}\big\|_1,$$
and,
\begin{align*}
\int_{X}\big(\mathds{1}_{A_i}(x)-\delta_{t}*&\mathds{1}_{A_i}(x)\big)d\nu(x)=\int_{X}\mathds{1}_{A_i}(x)d\nu(x)-\int_X(\delta_{t}*\mathds{1}_{A_i})(x)d\nu(x)\\&=\int_{X}\mathds{1}_{A_i}(x)d\nu(x)-\int_X\sigma_{\rm RN}(x,t)\mathds{1}_{A_i}(\al_{t}(x))d\nu(x)\\&=\int_{X}\mathds{1}_{A_i}(x)d\nu(x)-\int_X\mathds{1}_{A_i}(\al_{t}(x))d\nu\circ\al_{t}(x)\\&=\nu(A_i)-\nu(\al_{t}(A_i)).
\end{align*}
Next, since $\big(\al_t(A_i)\backslash A_i\big)\Delta \big(A_j\backslash\al_t(A_j)\big)=\emptyset,$ for $i\neq j$, we could write,  
\begin{align*}
\sum_{j=1}^{n}\gamma_j&\frac{\nu(\al_t(A_j)\Delta A_j)}{\nu(A_j)}=\sum_{j=1}^{n}\frac{\gamma_j}{\nu(A_j)}\big\|\mathds{1}_{\al_t(A_j)}-\mathds{1}_{A_j}\big\|_1\\&=\big\|\sum_{j=1}^{n}\frac{\gamma_j}{\nu(A_j)}(\mathds{1}_{A_j}-\delta_{t}*\mathds{1}_{A_j})-\sum_{j=1}^{n}\frac{\gamma_j}{\nu(A_j)}(\mathds{1}_{A_j}-\delta_{t}*\mathds{1}_{A_j})\Big\|_1\\&=\big\|\sum_{j=1}^{n}\frac{\gamma_j}{\nu(A_j)}(\mathds{1}_{A_j}-\delta_{t}*\mathds{1}_{A_j})\big\|_1+\Big\|\sum_{j=1}^{n}\frac{\gamma_j}{\nu(A_j)}(\mathds{1}_{A_j}-\delta_{t}*\mathds{1}_{A_j})\Big\|_1\\&=\|\psi-\delta_t*\psi\|_1.
\end{align*}  
In the second step, let us observe that a weak version of the F{\o}lner condition follows: For each $\varepsilon, \delta>0$, and each compact subset $K\subseteq G$, there are measurable subsets $N\subseteq G$ and $F\subseteq X$ with $0<\nu(F)<\infty$, $m(N)<\delta$, and,
$$\nu\big(\al_t(F\cap X_{t^{-1}})\Delta (F\cap X_{t^{-1}}))<\varepsilon\nu(F\cap X_{t^{-1}}),\ \ (t\in K\backslash N),$$
where $m$ is a left Haar measure on $G$. If $m(K)=0$, we may choose $N=K$ and nothing is left to be proved. If $m(K)>0$, Put $\varepsilon_0 := \delta\varepsilon(\delta\varepsilon + 3m(K))^{-1}<1$ and by condition $(P_1)$ choose a norm one positive function $f\in L^1(X,\nu)$ with $\|f-\delta_t*f\|_1<\varepsilon_0,$ for $t\in K$. Choose a positive simple function $\varphi$ with $\|f-\varphi\|_1<\varepsilon_0$, then $\|\psi\|_1>1-\varepsilon_0>0$. Put $\psi:=\varphi/\|\varphi\|_1$. Then,
\begin{align*}
\|\psi-\delta_t*\psi\|_1&\leq \|\varphi-\delta_t*\varphi\|_1/\|\varphi\|_1\\&\leq 1/\|\varphi\|_1\big(\|\delta_t*(f-\varphi)\|_1+\|f-\delta_t*f\|_1+\|\varphi-f\|_1\big)\\&<\frac{3\varepsilon_0}{1-\varepsilon_0}=\delta\varepsilon/m(K).
\end{align*} 
Next, write $\psi$ as a convex combination $\psi=\sum_{i=1}^{n}\gamma_i\mathds{1}_{A_i}$, as in step one. It follows that,
$$\sum_{j=1}^{n}\gamma_j\frac{\nu(\al_t(A_j)\Delta A_j)}{\nu(A_j)}<\delta\varepsilon/m(K),$$
for $t\in K$. Integrating against the Haar measure, we get,
$$\sum_{j=1}^{n}\gamma_j\int_K\frac{\nu(\al_t(A_j)\Delta A_j)}{\nu(A_j)}dt<\delta\varepsilon,$$
and the LHS being a convex combination,
$\int_K\frac{\nu(\al_t(A_i)\Delta A_i)}{\nu(A_i)}dt<\delta\varepsilon,$
for some $i$, which proves the claim with $F\cap X_{t^{-1}}:=A_i$ and $N:=\{x\in K: \frac{\nu(\al_t(A_i)\Delta A_i)}{\nu(A_i)}\geq \varepsilon\}.$ Finally, in the third step, we conclude the F{\o}lner condition from this apparently weaker version. Given $\varepsilon>0$ and $K\subseteq G$ compact, choose a compact 
symmetric neighborhood $V$ of $e\in G$ containing $K$. Then, for $t\in K$, $tV\subseteq V^2\cap tV^2$, thus, 
$$m(V^2\cap tV^2)\geq m(tV)=m(V).$$
Applying the above weak F{\o}lner condition to $\delta:=\frac{1}{2}m(V)$, $\frac{1}{2}\varepsilon$ and $V^2$, we get  measurable subsets $N\subseteq V^2$, $F\subseteq X_{t^{-1}}$ with $m(N)<\delta$, $0<\nu(F)<\infty$, and, 
$$ \frac{\nu\big(\al_t(F)\Delta F\big)}{\nu(F)}<\frac{\varepsilon}{2}, \ \ (t\in V^2\backslash N).$$ 
Then,
\begin{align*}
2\delta&=m(V)\leq m(V^2\cap tV^2)\\&\leq m\big((V^2\backslash N)\cap t(V^2\backslash N)\big)+m(N)+m(tN)\\&< m\big((V^2\backslash N)\cap t(V^2\backslash N)\big)+2\delta,
\end{align*}
which implies that $(V^2\backslash N)\cap t(V^2\backslash N)$ is non-null, and in particular, non-empty. Now $V$ is symmetric and we may also choose $N$ to be symmetric. Choose $s,u\in V^2\backslash N$ with $t=s^{-1}u$ (using symmetry), then,
\begin{align*}
\|\delta_s*f\|_1&=\int_{X^{s^{-1}}}|f(\al_s(x))|\sigma_{\rm RN}(x,s)d\nu(x)\\&=\int_{X^{s^{-1}}}|f(\al_s(x))|d\nu\circ\al_s(x)=\int_{X^{s}}|f(x)|d\nu(x)=\|f\|_1,
\end{align*}
		where norms are calculated in the corresponding $L^1$-spaces, thus,
\begin{align*}
\nu(\al_t(F)\Delta F)&=\|\delta_t*\mathds{1}_F-\mathds{1}_F\|_1=\|\delta_{s^{-1}u}*\mathds{1}_F-\mathds{1}_F\|_1\\&=\|\delta_{s^{-1}}*\big(\delta_u*\mathds{1}_F-\delta_s*\mathds{1}_F\big)\|_1=\|\delta_u*\mathds{1}_F-\delta_s*\mathds{1}_F\|_1\\&\leq\|\delta_u*\mathds{1}_F-\mathds{1}_F\|_1+\|\delta_s*\mathds{1}_F-\mathds{1}_F\|_1<2(\varepsilon/2)=\varepsilon,
\end{align*} 
as required.
\end{proof}

We have the following result on amenability in the sense of Delaroche. In the next two results, $Y:=\frac{X}{\approx}$, as in Lemma \ref{lift1}.

\begin{lem}
Let $X$ be a partial $G$-space. Assume further that both $X$ and $G$ are second countable. If $Y$ is amenable in the sense of Delaroche, then so is $X$.   
\end{lem}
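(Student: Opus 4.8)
The plan is to realize the quotient map $p\colon X\to Y$, $x\mapsto [x]$, as an $s$-bijective Borel groupoid homomorphism between the two groupoids of germs, and then to invoke Lemma \ref{am2}. Write $\g:=X\rtimes_{\al}G$ and $\mathcal H:=Y\rtimes_{\tilde\al}G$. By definition, amenability of $Y$ in the sense of Delaroche means that $\mathcal H$ is amenable, and we want amenability of $\g$; since Lemma \ref{am2} transports amenability \emph{backward} along $s$-bijective maps, it suffices to produce an $s$-bijective Borel homomorphism $\rho\colon\g\to\mathcal H$.

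First I would set $\rho(x,t):=([x],t)=(p(x),t)$. Recall from Lemma \ref{lift1} that $Y=X/\!\approx$ carries the partial action $\tilde\al_s([y])=[\al_s(y)]$, that $p$ intertwines the two partial actions, $p\circ\al_t=\tilde\al_t\circ p$, and that $Y$ is a factor of $X$ in the sense of the definition preceding Lemma \ref{factor}, so that $X_t=p^{-1}(Y_t)$ for every $t\in G$. Using these identities one checks that $\rho$ respects the groupoid operations: for a composable pair,
\[
\rho\big((\al_s(x),t)(x,s)\big)=\rho(x,ts)=([x],ts)=(\tilde\al_s([x]),t)([x],s)=\rho(\al_s(x),t)\,\rho(x,s),
\]
so $\rho$ is a groupoid homomorphism, and it is Borel provided $p$ is.

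It then remains to verify $s$-bijectivity. Since $s(x,t)=x$, the source fibre over $x$ is $\g_x=\{(x,t):x\in X_{t^{-1}}\}$, and $\rho$ carries it into $\mathcal H_{[x]}=\{([x],t):[x]\in Y_{t^{-1}}\}$ by fixing the group coordinate. Injectivity on $\g_x$ is immediate because $\rho$ preserves $t$. For surjectivity, given $([x],t)\in\mathcal H_{[x]}$ we have $[x]=p(x)\in Y_{t^{-1}}$, whence $x\in p^{-1}(Y_{t^{-1}})=X_{t^{-1}}$, so $(x,t)\in\g_x$ maps onto it. Hence $\rho\colon\g_x\to\mathcal H_{[x]}$ is bijective for every $x\in X=\g^{(0)}$, and Lemma \ref{am2} gives amenability of $\g$ from that of $\mathcal H$.

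The crux is the measure-theoretic point hidden in the second paragraph: one must ensure that $\approx$ descends to a genuine standard Borel quotient $Y$, with Borel quotient map $p$ and a well-defined Borel partial action $\tilde\al$, so that $\mathcal H$ is a bona fide Borel groupoid (with Haar system) to which Lemma \ref{am2} applies. This is exactly what the second countability of $X$ and $G$ is invoked to guarantee; once it is in place, the surjectivity half of $s$-bijectivity rests solely on the factor identity $X_{t^{-1}}=p^{-1}(Y_{t^{-1}})$.
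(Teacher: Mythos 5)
Your proof is correct and takes essentially the same route as the paper: the identical map $\rho(x,t)=([x],t)$, the same verification that it is a Borel groupoid homomorphism via $\tilde{\al}_s[x]=[\al_s(x)]$, and the same use of the factor identity $X_s=p^{-1}(Y_s)$ to establish $s$-bijectivity before applying Lemma \ref{am2}. Your closing remark on ensuring that $\approx$ yields a genuine standard Borel quotient is a point the paper leaves implicit, but it does not change the argument.
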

\begin{proof}
By Lemma \ref{am2}, we only need to observe that the Borel map $$\rho: \g:=X\rtimes_\al G\to Y\rtimes G:=\mathcal H; \ (x,s)\mapsto ([x],s),\ \ (s\in G, x\in X_{s^{-1}}),$$
is an $s$-bijective morphism. Since,
$$\tilde{\al}_s[x]=[\al_s(x)],\ \  (s\in G, x\in X_{s^{-1}}),$$
it follows that $\rho$ is a groupoid homomorphism. 

Next, for $x\in X$, $\g_x=\{(x,s): s\in G_x\}$, where $G_x:=\{s\in G: x\in X_{s^{-1}} \}$, whereas, $\mathcal H_{[x]}=\{([x],s): s\in \tilde G_x\}$, where  $\tilde G_x:=\{s\in G: [x]\in Y_{s^{-1}} \}$. Since $X_s=q^{-1}(Y_s)$, where $q:A\twoheadrightarrow Y$ is the quotient map, $\rho$ maps $\g_x$ onto $\mathcal H_{[x]}$. This map is also clearly injective.
\end{proof}

\begin{cor}
Let $X$ be a partially transitive partial $G$-space with enveloping $G$-space $\env{X}$. Assume further that both $X$ and $G$ are second countable. 

$(i)$ If $Y$ is amenable in the sense of Zimmer, then $\env{X}$ is amenable in the sense of Delaroche,

$(ii)$ If $\env{X}$ is amenable in the sense of Delaroche, then $X$ is amenable in the sense of Zimmer.    
\end{cor}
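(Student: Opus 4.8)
The plan is to reduce both statements to a single principle: for a \emph{global} action, Zimmer amenability and amenability of the associated transformation groupoid (amenability in the sense of Delaroche) coincide. The point is that, by Lemma \ref{env}, $\env{\al}$ is a genuine global action of $G$ on $\env{X}$, so its groupoid of germs is the ordinary transformation groupoid $\env{X}\times G$. For such groupoids the theory of \cite{AR00} (resting on Zimmer \cite{zi1}) shows that amenability of $\env{X}\times G$ is equivalent to amenability of $\env{\al}$ in the sense of Zimmer, once the groupoid is second countable and carries a continuous Haar system together with a quasi-invariant measure. Since $X$ and $G$ are second countable, so is the quotient $\env{X}=(X\times G)/{\sim}$, and hence $\env{X}\times G$ falls within the scope of Lemma \ref{am}. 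With this global equivalence in hand, each half of the corollary is obtained by splicing it onto one half of Theorem \ref{main1}.

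For $(i)$, I would assume $Y$ is amenable in the sense of Zimmer. The converse half of Theorem \ref{main1} then gives that $\env{X}$ is amenable in the sense of Zimmer. Applying the global equivalence to the action $\env{\al}$ yields that the transformation groupoid $\env{X}\times G$ is amenable, i.e. $\env{X}$ is amenable in the sense of Delaroche. For $(ii)$, I would assume $\env{X}$ is amenable in the sense of Delaroche, that is, $\env{X}\times G$ is amenable. Since this is exactly the transformation groupoid of the global action $\env{\al}$, the same equivalence shows $\env{X}$ is amenable in the sense of Zimmer, and then the direct half of Theorem \ref{main1} gives that $X$ is amenable in the sense of Zimmer.

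The only substantive ingredient is the global equivalence between Zimmer amenability and groupoid amenability quoted above; granting it, the corollary is immediate. The main obstacle, and the place where the second countability hypotheses on $X$ and $G$ are genuinely used, is the verification that the standard Borel structure, a quasi-invariant measure class, and a continuous Haar system all descend from $X\times G$ to the quotient $\env{X}$, so that both Lemma \ref{am} and the results of \cite{AR00} legitimately apply to $\env{X}\times G$. I expect this descent to be the only delicate point: the enveloping space need not be Hausdorff (cf. Example \ref{rk:susp}$(iii)$), so one must argue at the level of Borel/measured structure rather than topologically, using that $\env{X}$ is the $\env{\al}$-orbit of the open copy $\iota(X)$ and that $G$ is second countable. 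The rest of the argument is routine bookkeeping once the global equivalence is applied to $\env{\al}$.
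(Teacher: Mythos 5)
Your proof is correct and follows essentially the same route as the paper: both halves combine the two directions of Theorem \ref{main1} with the equivalence, for \emph{global} actions of second countable locally compact groups on standard Borel spaces, between Zimmer amenability and amenability of the transformation groupoid (Delaroche amenability). The only difference is one of citation and emphasis --- the paper invokes \cite[Theorem A]{aeg} for that global equivalence where you invoke \cite{AR00}, and the descent of Borel/measured structure to the possibly non-Hausdorff quotient $\env{X}$ that you flag as the delicate point is exactly the implicit verification needed to apply the cited theorem.
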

\begin{proof}
$(i)$  If $Y$ is amenable in the sense of Zimmer, then $\env{X}$ is amenable in the sense of Zimmer by Theorem \ref{main1}, and then it is amenable in the sense of Delaroche, by \cite[Theorem A]{aeg}.

$(ii)$  If $\env{X}$ is amenable in the sense of Delaroche, then it is also amenable in the sense of Zimmer by \cite[Theorem A]{aeg}, thus ${X}$ is amenable in the sense of Zimmer by Theorem \ref{main1}.
\end{proof}

\section{amenable partial representations}\label{sec:pr} 

\par In this section we explore amenability properties of partial representations of  topological (Borel) groups. 

Recall that a bounded operator $T$ on a Banach space $E$ is called a {\it partial isometry} if $T$ is a contraction and there is a contraction $S$  on $E$ (called a generalized inverse of $T$) satisfying $STS=S,\ TST=T$ \cite[Definition 4.1]{ma}. Note that the contractive generalized inverse of $T$ is not unique in general \cite[page 776]{ma}. Moreover, 
a bounded operator $T$ is a partial isometry iff $\ker(T)$ is complemented in $E$ such that $T$ acts isometrically on the complement, and there exists a norm one projection onto ${\rm Im}(T)$ \cite[Proposition 4.2]{ma}. The range of an iometry is a closed subspace, but we warn the reader that a (non surjective) isometry on $E$ is not necessarily a partial isometry in the above sense (unless there is a norm one projection onto its range). However,  an isometry with a generalized inverse is always a partial isometry.

For a Banach space $E$, we denote the semigroups of isometries and partial isometries on $E$ by ${\rm Iso}(E)$ and ${\rm PIso}(E)$, respectively.

\begin{df}\label{df:pr}
	A {\it partial representation} of a topological group $G$ in a Banach space $E$ is a map $\pi: G\to {\rm PIso}(E)$, such 
	that,
	\be
	\item the map $t\in G\mapsto \pi_t\xi\in E$, is norm continuous, for each $\xi\in E$ (that is, $\pi$ is continuous in the strong operator topology), 
	
	\item $\pi_e={\rm id}_E$,
	\item $\pi_t\pi_s\pi_{s^{-1}}=\pi_{ts}\pi_{s^{-1}}$,\ $\pi_{t^{-1}}\pi_t\pi_{s}=\pi_{t^{-1}}\pi_{ts}$, 
	\ee
	\noindent for all $s,t\in G$.
	\noindent If moreover, each $\pi_t$ is an isometry, we say that $\pi: G\to {\rm Iso}(E)$ is a  representation.
\end{df}

The same definition applies to Borel groups, where continuous is replaced by Borel.  
The following basic lemma (whose topological version is also valid, with almost verbatim proof) relates \pas to partial representations. The partial  representation $\kappa^\al$ in the next result is called the {\it partial Koopman representation} of the \pa $\al$.   

\begin{lem}\label{key}
Let $G$ be a Borel group,

$(i)$ to any partial action $\al$ of $G$ on a Borel measure space $(X,\nu)$ with quasi-invariant measure $\nu$, one could associate a partial representation $\kappa^\al$ in $E:=L^2(X,\nu)$,

$(ii)$ to any Borel partial representation $\pi$ of $G$ in a Banach space $E$ one could associate two Borel \pas $\al^\pi$ on ${\rm PHomeo}(E^*_1)$ with compact-open-topology and $\al_\pi$ on $E$ with norm-topology.  
\end{lem}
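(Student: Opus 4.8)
For $(i)$ the plan is to give the partial Koopman representation by an explicit formula and read off its properties from the Radon--Nikodym cocycle $\sigma_{\rm RN}$. For $s\in G$ and $f\in L^2(X,\nu)$ I would set
$$(\kappa^\al_s f)(x):=\sigma_{\rm RN}(x,s^{-1})^{1/2}\,f(\al_{s^{-1}}(x)),\qquad(x\in X_s),$$
extended by zero off $X_s$ (the choice of $\al_{s^{-1}}$ is what orients the composition in the homomorphism direction required by Definition \ref{df:pr}). The change of variables $y=\al_{s^{-1}}(x)$, together with the defining relation $\int_{X_s}h\,\sigma_{\rm RN}(\cdot,s^{-1})\,d\nu=\int_{X_{s^{-1}}}h\circ\al_s\,d\nu$, gives $\|\kappa^\al_s f\|_2=\|f\,\mathds{1}_{X_{s^{-1}}}\|_2$; hence $\kappa^\al_s$ annihilates $L^2(X\setminus X_{s^{-1}})$ and is isometric on $L^2(X_{s^{-1}})$, i.e.\ a Hilbert-space partial isometry with initial space $L^2(X_{s^{-1}})$ and final space $L^2(X_s)$. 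A short adjoint computation, using the consequence $\sigma_{\rm RN}(\al_s(y),s^{-1})=\sigma_{\rm RN}(y,s)^{-1}$ of the cocycle identity, identifies $(\kappa^\al_s)^{*}=\kappa^\al_{s^{-1}}$, so $\kappa^\al_{s^{-1}}$ is a contractive generalized inverse and $\kappa^\al_s\in{\rm PIso}(L^2(X,\nu))$.

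To obtain the partial representation relations I would compose the formulas: for $x\in X_t$ with $\al_{t^{-1}}(x)\in X_s$,
$$(\kappa^\al_t\kappa^\al_s f)(x)=\sigma_{\rm RN}(x,t^{-1})^{1/2}\,\sigma_{\rm RN}(\al_{t^{-1}}(x),s^{-1})^{1/2}\,f\big(\al_{s^{-1}}\al_{t^{-1}}(x)\big),$$
and the $2$-cocycle identity $\sigma_{\rm RN}(x,s^{-1}t^{-1})=\sigma_{\rm RN}(x,t^{-1})\,\sigma_{\rm RN}(\al_{t^{-1}}(x),s^{-1})$ together with $\al_{s^{-1}}\al_{t^{-1}}=\al_{(ts)^{-1}}$ (Definition \ref{df:pa}(3-3)) collapses the right-hand side to $(\kappa^\al_{ts}f)(x)$. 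Thus $\kappa^\al_t\kappa^\al_s$ and $\kappa^\al_{ts}$ agree on the subdomain carved out by these conditions; composing on the right with $\kappa^\al_{s^{-1}}$ restricts arguments to $L^2(X_{s^{-1}})$, where axiom (3-2) makes the subdomain condition automatic and yields $\pi_t\pi_s\pi_{s^{-1}}=\pi_{ts}\pi_{s^{-1}}$, while left-multiplying by $\kappa^\al_{t^{-1}}$ yields $\pi_{t^{-1}}\pi_t\pi_s=\pi_{t^{-1}}\pi_{ts}$. Borel measurability of $s\mapsto\kappa^\al_s f$ reduces, after approximating $f$ by simple functions, to joint Borel measurability of $(s,x)\mapsto\al_{s^{-1}}(x)$ and of $\sigma_{\rm RN}$.

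For $(ii)$ I would manufacture both partial actions from the algebra generated by the $\pi_t$. The relation $\pi_t\pi_{t^{-1}}\pi_t=\pi_t$ (the case $s=t^{-1}$ of (3) in Definition \ref{df:pr}) makes $e_t:=\pi_t\pi_{t^{-1}}$ an idempotent contraction; put $E_t:=e_tE$, a closed (hence Borel) subspace. Then $\pi_t$ restricts to a norm isomorphism $(\al_\pi)_t:=\pi_t|_{E_{t^{-1}}}\colon E_{t^{-1}}\to E_t$ with inverse $\pi_{t^{-1}}$, and the partial representation relations translate term by term into the partial action axioms (3-1)--(3-3) for $\al_\pi$ on $E$; since the domains $E_t$ are closed subspaces rather than open sets, $\al_\pi$ is a \emph{Borel} (not continuous) partial action. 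Dualizing, each $\pi_t^{*}$ is a weak$^{*}$-continuous contraction carrying the weak$^{*}$-compact metrizable ball $E^*_1$ into itself, and the maps $\theta_t:=(\pi_{t^{-1}})^{*}$ (the orientation used in $\sigma^{*}(x,s)=(\sigma(x,s)^{-1})^{*}$ before Definition \ref{amen}) are partial homeomorphisms of $E^*_1$; assembling the $\theta_t$ gives the second Borel partial action $\al^\pi$ valued in ${\rm PHomeo}(E^*_1)$, its axioms being formally dual to those just checked.

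The main obstacle is the domain bookkeeping in $(i)$: passing from the pointwise identity $\kappa^\al_t\kappa^\al_s=\kappa^\al_{ts}$, which holds only where $x\in X_t$ and $\al_{t^{-1}}(x)\in X_s$, to the exact operator relations of Definition \ref{df:pr} is precisely the point where a naive global computation breaks down, and it is forced through by (3-2) and (3-3) controlling the intersections $X_t\cap X_{ts}$ and their $\al$-images. In $(ii)$ the difficulty is milder and chiefly technical: confirming that closed-subspace domains compel the Borel framework, and that $t\mapsto e_t$ and $t\mapsto\theta_t$ are Borel, so that $\al_\pi$ and $\al^\pi$ are genuine Borel partial actions.
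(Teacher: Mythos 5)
Your part (i) and your construction of $\al_\pi$ in part (ii) are correct and follow essentially the paper's own route: the same Koopman formula with the square root of the Radon--Nikodym cocycle, the same change-of-variables isometry computation, the same use of the cocycle identity together with axioms (3-2)/(3-3) to obtain the relations of Definition \ref{df:pr}, and for $\al_\pi$ the same restriction of $\pi_t$ to $E_{t^{-1}}=\pi_{t^{-1}}\pi_t(E)$. Your extra observation that $(\kappa^\al_s)^*=\kappa^\al_{s^{-1}}$ is a legitimate, slightly more explicit way of seeing that each $\kappa^\al_s$ is a partial isometry, where the paper simply remarks that this is automatic for closed subspaces of a Hilbert space.

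The gap is in the second partial action $\al^\pi$. The lemma asserts a Borel partial action of $G$ \emph{on the space} ${\rm PHomeo}(E^*_1)$, equipped with the compact-open topology; that is, ${\rm PHomeo}(E^*_1)$ is the underlying Borel space being acted upon. What you build is a different object: your maps $\theta_t=(\pi_{t^{-1}})^{*}$ are themselves \emph{elements} of ${\rm PHomeo}(E^*_1)$, i.e.\ partial homeomorphisms of the ball $E^*_1$, so ``assembling the $\theta_t$'' produces a partial action of $G$ on $E^*_1$ with the weak$^*$ topology, not a partial action on ${\rm PHomeo}(E^*_1)$. This also explains why the compact-open topology plays no role in your argument, whereas it is exactly the topology the statement specifies. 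The paper's construction is one level up: it takes $X={\rm PHomeo}(E^*_1)$, with $X_t={\rm PHomeo}(\ker(\pi_t)^\perp_1)$ as closed (hence Borel) subsets of $X$, and lets $G$ act by \emph{conjugation}, $\al^\pi_t(x)=\pi^*_t\,x\,\pi^*_{t^{-1}}$ for $x\in X_{t^{-1}}$; the identities $\al^\pi_t\al^\pi_s\al^\pi_{s^{-1}}=\al^\pi_{ts}\al^\pi_{s^{-1}}$ and $\al^\pi_{t^{-1}}\al^\pi_t\al^\pi_s=\al^\pi_{t^{-1}}\al^\pi_{ts}$ then follow from Definition \ref{df:pr}(3), and \cite[Proposition 4.5]{exl} is invoked to conclude these data form a partial action. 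The distinction matters downstream: Proposition \ref{Koopman}(b) takes means on $B({\rm PHomeo}(E^*_1))$ and uses precisely the conjugation formula $\al^\pi_{t^{-1}}(x_{t^{-1}})=\pi^*_{t^{-1}}x_{t^{-1}}\pi^*_t$, so your $E^*_1$-action cannot be substituted for it. Your dual maps $\theta_t$ are the right ingredients, but you still need to pass to the conjugation action on ${\rm PHomeo}(E^*_1)$ to prove what is actually stated.
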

\begin{proof}
$(i)$ Let $\sig_{\rm RN}$ be the corresponding Radon-Nikodym cocycle of $\al$. Observe that the map,
$\kappa^\al_s: L^2(X_{s^{-1}},\nu)\to L^2(X_{s},\nu)$, defined by, $$ \kappa^\al_sf(x):=\sig_{\rm RN}^{\frac{1}{2}}(x, s^{-1})f(\al_{s^{-1}}(x)),\ \ (s\in G, x\in X_s),$$
is a surjective isometry, since,
\begin{align*}
\|\kappa^\al_sf\|_2^2&=\int_{X_s}|\kappa^\al_sf(x)|^2d\nu(x)=\int_{X_s}\sig_{\rm RN}(x, s^{-1})|f(\al_{s^{-1}}(x))|^2d\nu(x)\\&=\int_{X_s}|f(\al_{s^{-1}}(x))|^2d(\nu\circ\al_{s^{-1}})(x)=\int_{X_{s^{-1}}}|f(x)|^2d\nu(x)=\|f\|_2^2,
\end{align*}
for $s\in G$ and $f\in L^2(X_s,\nu)$. Thus, $\kappa^\al_s$ extends to a partial isometry on $E:=L^2(X,\nu)$, defined by $\kappa^\al_s(g)=\kappa^\al_s(f)^{\tilde{}}$, where $f$ is the restriction of $g$ to $X_s$ and $\tilde h$ is extension of $h$ by zero, for  $h\in L^2(U,\nu)$, and Borel subset $U\subseteq X$. Next, with the above notations, the assignment,
$$s\mapsto \kappa^\al_s(g)=\kappa^\al_s(f)^{\tilde{}}=\big(\sig_{\rm RN}^{\frac{1}{2}}(\cdot, s^{-1})f\circ\al_{s^{-1}}\big)^{\tilde{}}, $$ 
defines a Borel map. Also, as $X_e=X$, $\al_e={\rm id}_X$, and $\sig_{\rm RN}(e,\cdot)=1$, we get $\kappa^\al_e={\rm id}_E$. Finally, 
\begin{align*}
\kappa^\al_t\kappa^\al_s\kappa^\al_{s^{-1}}(g)(x)&=\kappa^\al_t\kappa^\al_s\kappa^\al_{s^{-1}}(f)(x)=\sig_{\rm RN}^{\frac{1}{2}}(x, t^{-1})\kappa^\al_s\kappa^\al_{s^{-1}}f(\al_{t^{-1}}(x))\\&=\sig_{\rm RN}^{\frac{1}{2}}(x,t^{-1})\sig_{\rm RN}^{\frac{1}{2}}(\al_{t^{-1}}(x), s^{-1})\kappa^\al_{s^{-1}}f(\al_{s^{-1}}\al_{t^{-1}}(x))\\&=\sig_{\rm RN}^{\frac{1}{2}}(x,t^{-1})\sig_{\rm RN}^{\frac{1}{2}}(\al_{t^{-1}}(x), s^{-1})\sig_{\rm RN}^{\frac{1}{2}}(x,s)f(\al_{s}\al_{s^{-1}}\al_{t^{-1}}(x))\\&=\sig_{\rm RN}^{\frac{1}{2}}(x,s^{-1}t^{-1})\sig_{\rm RN}^{\frac{1}{2}}(x,s)f(\al_{s}\al_{s^{-1}}\al_{t^{-1}}(x))\\&=\sig_{\rm RN}^{\frac{1}{2}}(x,s^{-1}t^{-1})\kappa^\al_{s^{-1}}f(\al_{(ts)^{-1}}(x))=\kappa^\al_{ts}\kappa^\al_{s^{-1}}(g)(x),
\end{align*}
for $g\in L^2(X,\nu)$, and $s,t\in G$, where $f$ is  the restriction of $g$ to $X_{s^{-1}}$, and we have identified certain functions with their extension by zero. The equality $\kappa^\al_{t^{-1}}\kappa^\al_t\kappa^\al_{s}=\kappa^\al_{t^{-1}}\kappa^\al_{ts}$ is proved similarly. Finally, since here $E$ is a Hilbert space, each $\kappa^\al_t$ is  automatically a partial isometry (since closed subspaces of Hilbert spaces are always range of a norm one projection). 

$(ii)$ Given a partial representation $\pi$ of $G$ in a Banach space $E$, ${\rm Im}(\pi_t)$ is a closed (complemented) subspace of $E$ with surjective isometry $\pi_t: {\rm Im}(\pi_{t^{-1}})\to {\rm Im}(\pi_t)$. Taking adjoint, we get a homeomorphism $\pi^*_t: {\rm Im}(\pi_{t})^*\to {\rm Im}(\pi_{t^{-1}})^*$. Consider $X:={\rm PHomeo}(E^*_1)$,  the space of partial homeomorphisms of the compact metric space $E^*_1$, with compact-open topology, and regard each $X_t:={\rm PHomeo}({\ker}(\pi_{t})^\perp_1)$ as a closed (and so Borel) subspace of $X$.  Define the Borel \pa action $\al^\pi$ by,
$$\al^\pi_t: X_{t^{-1}}\to X_t;\ \ x\mapsto \pi^*_tx\pi^*_{t^{-1}}.$$   
  Then clearly, $\al^\pi_e={\rm id}_X$ and $\al^\pi_{t^{-1}}=(\al^\pi_t)^{-1},$ also, 
  \begin{align*}
\al^\pi_t\al^\pi_s\al^\pi_{s^{-1}}(x)&=\pi^*_{t}\pi^*_{s}\pi^*_{{s^{-1}}}x\pi^*_{s}\pi^*_{{s^{-1}}}\pi^*_{{t^{-1}}}\\&=\pi^*_{ts}\pi^*_{{s^{-1}}}x\pi^*_{s}\pi^*_{{s^{-1}}t^{-1}}\\&=\al^\pi_{ts}\al^\pi_{s^{-1}}(x),
  \end{align*}   
for $x\in X_s$, and similarly, 
$\al^\pi_{t^{-1}}\al^\pi_t\al^\pi_{s}(x)=\al^\pi_{t^{-1}}\al^\pi_{ts}(x)$
for $x\in X_{s^{-1}}$. Thus, $\al^\pi$ is a partial action on $X$ by \cite[Proposition 4.5]{exl}.

Next, let us put $X=E$ and $X_t:=\pi_t\pi_{t^{-1}}(E)$, and let $(\al_{\pi})_t: X_{t^{-1}}\to X_t$ be the restriction of $\pi_t$ to $X_{t^{-1}}$. Then, $(\al_\pi)_e={\rm id}_X$  $(\al_\pi)_{t^{-1}}=(\al_\pi)^{-1}_t,$ and, 
\begin{align*}
	(\al_{\pi})_t(\al_{\pi})_s(\al_{\pi})_{s^{-1}}=\pi_t\pi_s\pi_{s^{-1}}=\pi_{ts}\pi_{s^{-1}}=(\al_{\pi})_{ts}(\al_{\pi})_{s^{-1}},\ \ 
\end{align*}   
on $X_s$, and similarly for the other relation. 
\end{proof}

Next, we adapt \cite[Definitions 1.14, 1.17]{dad} to the Borel (topological) setting. 

\begin{df}
$(a)$ Let $\rho$ be a Borel (continuous) representation of a Borel (topological) group $G$ on a Banach space $F$, let $E$ be another Banach space such that there are bounded linear maps $\iota: E\to F$ and $q: F\to E$,  satisfying, 

$(i)$ $q\circ\iota(x)=x$,

$(ii)$ $q(\rho_{t^{-1}}\iota q(\rho_{t}\iota q(\rho_{s}(\iota(x)))))=q(\rho_{t^{-1}}\iota q(\rho_{ts}(\iota(x)))),\newline 
\indent \ \ \ \ \  q(\rho_{t}\iota q(\rho_{s}\iota q(\rho_{s^{-1}}(\iota(x)))))=q(\rho_{ts}\iota q(\rho_{s^{-1}}(\iota(x)))),$

$(iii)$ $\iota(q(\rho_t(\iota(x))))=\rho_t(\iota(x))$,

$(iv)$ $\|q(\rho_{t}(\iota(x))\|\leq \|x\|,$

$(v)$ the map $t\mapsto q(\rho_t(\iota(x)))$ is Borel (continuous) from $G$ to $E$,

\noindent for $x\in E$, and $s,t\in G$, then we say that the partial representation $\pi$ defined by partial isometries $\pi_t:=q(\rho_t\iota(\cdot))$, is the {\it restriction} of $\rho$  to $E$ via $\iota$ and $q$, and write $\pi={\rm Res}_\iota^q(\rho)$. 

$(b)$ Let $\pi$ be a Borel (continuous) partial representation of a Borel (topological) group $G$ on a Banach space $E$, let $F$ be another Banach space and $\rho$ is a Borel (continuous)  representation of  $G$ on $F$ such that there are bounded linear maps $\iota: E\to F$ and $q: F\to E$,  with $\pi={\rm Res}_\iota^q(\rho)$, then we say that $\rho$ is the {\it induction} of $\pi$  to $F$ via $\iota$ and $q$, and write $\rho={\rm Ind}_\iota^q(\pi)$.  

$(c)$   
A {\it globalization} of a partial representation $\pi$ of a Borel (topological) group $G$ on a Banach space $E$ is a quadruple $(F, \rho, \iota, q)$, where $\rho={\rm Ind}_\iota^q(\pi)$ acting on $F$ has the following universal property: for every quadruple $(F^{'},\rho^{'}, \iota^{'}, q^{'})$ with  $\rho^{'}={\rm Ind}_{\iota^{'}}^{q^{'}}(\pi)$ acting on $F^{'}$,  there exists a unique  bounded linear map $\psi: F\to F^{'}$ satisfying, $\psi\circ\iota=\iota^{'}$, $q^{'}\circ\psi=q$, and $\psi\circ\rho_t=\rho^{'}_t\circ\psi$, for each $t\in G$.

$(d)$   
Two globalizations $(F, \rho, \iota, q)$ and $(F^{'},\rho^{'}, \iota^{'}, q^{'})$ of a partial representation $\pi$ are said to be {\it isomorphic} if there exists an isomorphism $\psi: F\to F^{'}$ of Banach spaces with $\psi\circ\iota=\iota^{'}$, $q^{'}\circ\psi=q$, and $\psi\circ\rho_t=\rho^{'}_t\circ\psi$, for each $t\in G$.  
\end{df}

The next result extends \cite[Theorem 1.18]{dad}, based on the ideas in \cite{zi1}. For the notion of vector measures, we refer the reader to the monograph \cite{du}.

\begin{thm}
Every partial representation $\pi$ of a Borel (topological) group $G$ on a Banach space $E$ has a unique (up to isomorphism) globalization to a Borel (continuous) representation of $G$ on a quotient space $F$ of the Banach space $M(G, E)$ of $E$-valued bounded Borel measures on $G$.
\end{thm}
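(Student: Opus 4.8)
The plan is to imitate, in the Banach-space setting, the enveloping-action construction of Lemma \ref{env}, replacing the disjoint union $X\times G$ by the space $M(G,E)$ of $E$-valued bounded Borel measures, on which $G$ acts genuinely and isometrically by left translation $\rho_s\mu(B):=\mu(s^{-1}B)$. First I would record that this $\rho$ is a (Borel, resp.\ continuous) isometric representation of $G$ on $M(G,E)$, and that the atomic measures $\delta_t\cdot x$ play the role of the points $(x,t)\in X\times G$ in Lemma \ref{env}.

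Next, mirroring the equivalence relation $(x,r)\sim(y,s)\iff x\in X_{r^{-1}s},\ \al_{s^{-1}r}(x)=y$, I would introduce the closed subspace
\[
N:=\overline{\mathrm{span}}\big\{\,\delta_r\cdot x-\delta_s\cdot\pi_{s^{-1}r}(x)\ :\ r,s\in G,\ x\in\mathrm{Im}(\pi_{r^{-1}s})\,\big\}\subseteq M(G,E),
\]
and set $F:=M(G,E)/N$. A short computation with the partial-isometry relations of Definition \ref{df:pr} (the same relations exploited in Lemma \ref{key}) shows that $N$ is $G$-invariant, so $\rho$ descends to an isometric representation of $G$ on $F$; those relations also give $\pi_r(x)=\pi_s\pi_{s^{-1}r}(x)$ for $x\in\mathrm{Im}(\pi_{r^{-1}s})$, which is exactly what makes the maps below well defined. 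I would then define $\iota\colon E\to F$ by $\iota(x):=[\delta_e\cdot x]$ and $q\colon F\to E$ by $q([\mu]):=\int_G\pi_t\,d\mu(t)$ (an $E$-valued integral, convergent since $\|\pi_t\|\le 1$), check that $N\subseteq\ker q$ so that $q$ is well defined, and verify conditions (i)--(v) of the definition of $\mathrm{Res}_\iota^q$ — the delicate one being (iii), which reduces to the identity $[\delta_t\cdot x]=[\delta_e\cdot\pi_t(x)]$ and is forced by the defining relations of $N$. This identifies $\pi=\mathrm{Res}_\iota^q(\rho)$, i.e.\ $\rho=\mathrm{Ind}_\iota^q(\pi)$, so that $(F,\rho,\iota,q)$ is a candidate globalization.

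The heart of the proof is the universal property. Given any quadruple $(F',\rho',\iota',q')$ with $\rho'=\mathrm{Ind}_{\iota'}^{q'}(\pi)$, I would define $\widetilde\psi\colon M(G,E)\to F'$ by $\widetilde\psi(\mu):=\int_G\rho'_t\iota'\,d\mu(t)$. Boundedness follows from $\|\rho'_t\iota'\|\le\|\iota'\|$, and $\widetilde\psi$ intertwines translation by the change of variables under $\rho_s$. The key point is that $\widetilde\psi$ annihilates each generator of $N$: for $x\in\mathrm{Im}(\pi_{r^{-1}s})$ one has $\rho'_r\iota'(x)=\rho'_s\rho'_{s^{-1}r}\iota'(x)=\rho'_s\iota'(\pi_{s^{-1}r}x)$, using that $\rho'$ is a genuine representation together with condition (iii) for $\mathrm{Res}_{\iota'}^{q'}$, so $\widetilde\psi$ descends to a bounded $\psi\colon F\to F'$ with $\psi\circ\iota=\iota'$, $q'\circ\psi=q$ and $\psi\circ\rho_t=\rho'_t\circ\psi$. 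Uniqueness of $\psi$ is forced because these relations pin it down on the total set $\{[\delta_t\cdot x]\}=\{\rho_t\iota(x)\}$. Uniqueness of the globalization up to isomorphism is then the usual formal consequence of an initial-object universal property, exactly as recorded after Lemma \ref{env}.

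The main obstacle, and where I expect the real work to lie, is the measure-theoretic bookkeeping rather than the algebra: one must check that the $E$-valued (Bochner/weak) integrals against bounded Borel measures are well defined and bounded, that $t\mapsto\rho'_t\iota'(x)$ is suitably Borel (resp.\ continuous) so that $\widetilde\psi$ makes sense and (v) holds, and---most delicately---that the single subspace $N$ is simultaneously $G$-invariant and contained in $\ker q$, so that $\rho$ descends while $q$ survives the quotient. These are precisely the points where the partiality of $\pi$ intervenes, and I would dispatch them by transcribing the partial-isometry identities of Definition \ref{df:pr} as in Lemma \ref{key}, following the scheme of \cite[Theorem 1.18]{dad} and the vector-valued measure techniques of \cite{zi1}.
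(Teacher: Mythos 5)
Your proposal is correct and follows essentially the same route as the paper: the paper constructs $F=(M(G)\otimes_\gamma E)/Z$ with $Z$ the closed span of exactly your generators $\delta_t\otimes x-\delta_s\otimes\pi_{s^{-1}t}(x)$ for $x\in\mathrm{Im}(\pi_{t^{-1}s})$, lets $G$ act by left translation on the first leg, takes the same $\iota(x)=(\delta_e\otimes x)+Z$, proves the universal property via the same integrated map $\psi(\mu)=\int_G\rho'_t\iota'\,d\mu(t)$ annihilating the generators through $\pi_u=q'\rho'_u\iota'$ and condition (iii), and gets uniqueness by the same determination of $\psi$ on the elements $\rho_t\iota(x)$. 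The only (cosmetic) difference is your untruncated $q([\mu])=\int_G\pi_t\,d\mu(t)$, where the paper restricts each integral to $G_{x_n}=\{t: x_n\in E_{t^{-1}}\}$; both versions kill the generators, so nothing essential changes.
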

\begin{proof}
Put $E_t := \pi_t\pi_{t^{-1}}(E)$, for $t\in G$, and consider the 
linear subspace of the algebraic tensor product $M(G)\odot E$, generated by the vectors of the form $$\delta_t\otimes x- \delta_s\otimes \pi_{s^{-1}t}(x),$$ for $s,t\in G$ and $x\in E_{t^{-1}s}$, and let $Z$ be its closure in $M(G,E)=M(G)\otimes_\gamma E$, where the right hand side is the   projective tensor product \cite{du}. 

The left multiplication on the first leg makes of
$M(G)\odot E$ a $G$-space, that is, we have an algebraic representation $\rho$ of $G$ in $M(G)\odot E$, defined by,  $$\rho_t(\sum_{i=1}^{n}\mu_i\otimes x_i):=\sum_{i=1}^{n}(t\cdot \mu_i)\otimes x_i,$$
where $t\cdot\mu(A)=\mu(t^{-1}A)$, for each Borel subset $A\subseteq G$ and $t\in G$. Let us observe that each $\rho_t$ is isometric in the projective norm: Given $z\in M(G)\odot E$, by \cite[Proposition 3.2]{df}, we have,
\begin{align*}
\|\rho_t(z)\|_\gamma&=\inf\{\sum_{n=1}^{N}\|\nu_n\|\|x_n\|: \rho_t(z)=\sum_{n=1}^{N}\nu_n\otimes x_n\}\\&=\inf\{\sum_{n=1}^{N}\|\rho_{t^{-1}}\nu_n\|\|x_n\|: z=\sum_{n=1}^{N}\nu_n\otimes x_n\}\\&=\inf\{\sum_{n=1}^{N}\|\nu_n\|\|x_n\|: z=\sum_{n=1}^{N}\nu_n\otimes x_n\}=\|z\|_\gamma,
\end{align*}
In particular, $\rho_t$ extends to a surjective  self isometry of $M(G)\otimes_\gamma E$. Now since,
\begin{align*}
\rho_t\big(\delta_u\otimes x- \delta_v\otimes \pi_{u^{-1}v}(x)\big)&= \delta_{su}\otimes x- \delta_{sv}\otimes \pi_{u^{-1}v}(x)\\&= \delta_{su}\otimes x- \delta_{sv}\otimes \pi_{{(su)}^{-1}(sv)}(x),
\end{align*}
$\rho_t(Z)\subseteq Z$, and changing $t$ to $t^{-1}$ we get that indeed, $\rho_t(Z)= Z$. Inparticular, $\rho_t$ lifts to a surjective  self isometry of $(M(G)\otimes_\gamma E)/Z=:F$.

Consider  the linear map $\iota: E \to F$ defined by $\iota(x):=(\delta_e\otimes x)+Z$, which is clearly bounded.  Also, put, $$q\big((\sum_{n=1}^{N}\nu_n\otimes x_n)+Z\big) :=\sum_{n=1}^{N} \int_{G_{x_n}} \pi_t(x_n)d\nu_n(t),$$ where $G_x:=\{t\in G: x\in E_{t^{-1}}\}$. To see this is well defined, note that for $x\in E_{t^{-1}}$, $y:=\pi_{s^{-1}t}(x)$, and $\delta_t\otimes x- \delta_s\otimes \pi_{s^{-1}t}(x)\in Z$,
$$q\big((\delta_t\otimes x- \delta_s\otimes \pi_{s^{-1}t}(x)+Z\big)=\int_{G_{x}} \pi_u(x)d\delta_t(u)-\int_{G_{y}} \pi_v(x_n)d\delta_s(u),$$
which is equal to, 
$\pi_t(x)-\pi_s\pi_{s^{-1}t}(x)=0$,
if $x\in E_{t^{-1}}$, and is equal to, $0-0=0$, otherwise. Next, for $z:=\sum_{n=1}^{N}\nu_n\otimes x_n\in M(G)\odot E,$ 
\begin{align*}
\|q(z+Z)\|&=\big\|\sum_{n=1}^{N} \int_{G_{x_n}} \pi_t(x_n)d\nu_n(t)\big\|\leq\sum_{n=1}^{N} \int_{G_{x_n}} \|\pi_t(x_n)\|d|\nu_n|(t)\\&\leq\sum_{n=1}^{N} \int_{G_{x_n}} \|x_n\|d|\nu_n|(t)\leq\sum_{n=1}^{N} \|x_n\|\|\nu_n\|,
\end{align*}
that is, $\|q(z+Z)\|\leq\|z\|_\gamma$. We may choose $z_0\in M(G)\odot E,$ with $\|z_0\|_\gamma<2\|z+Z\|$ and $z-z_0\in Z$, thus, 
$$\|q(z+Z)\|=\|q(z_0+Z)\|\leq \|z_0\|_\gamma<2\|z+Z\|,$$
thus $q$ has an extension to a bounded linear map $q: F\to E$. Now, 
$$q(\rho_t\iota(x))=q\big(\rho_t(\delta_e\otimes x)+Z\big)=q\big((\delta_t\otimes x)+Z\big)=\pi_t(x),$$ and
$$q(\iota(x))=q\big((\delta_e\otimes x)+Z\big)=\pi_e(x)=x,$$
for each $x\in E$, verifying condition $(i)$ in part $(a)$ of the above definition and that $\rho$ restricts to $\pi$. Next, condition $(ii)$ just says that $\rho$ is a representation, and condition $(iii)$ says that each $\pi_t$ is a partial isometry. Finally, $(v)$ says that $\pi$ is a Borel partial representation. 

It remains to prove the universal property for $\rho$. Given a quadruple $(F^{'},\rho^{'}, \iota^{'}, q^{'})$ with  $\rho^{'}={\rm Ind}_{\iota^{'}}^{q^{'}}(\pi)$ acting on $F^{'}$, 
Define,
$$\psi: M(G)\odot E\to F^{'}; \ \ \sum_{n=1}^{N}\nu_n\otimes x_n\mapsto \int_G\rho^{'}_t(\iota^{'}(x_n))d\nu_n(t),$$
which is bounded in the projective norm by an argument as above, and for $\delta_t\otimes x- \delta_s\otimes \pi_{s^{-1}t}(x)\in Z$, 
\begin{align*}
\psi\big(\delta_t\otimes x- \delta_s\otimes \pi_{s^{-1}t}(x)\big)&=\rho^{'}_t(\iota^{'}(x))-\rho^{'}_s(\iota^{'}(\pi_{s^{-1}t}(x)))\\&=\rho^{'}_t(\iota^{'}(x))-\rho^{'}_s\iota^{'}q^{'}\rho^{'}_{s^{-1}t}(\iota^{'}(x))\\&=\rho^{'}_t(\iota^{'}(x))-\rho^{'}_s\rho^{'}_{s^{-1}t}(\iota^{'}(x))=0,
\end{align*}
thus $\psi$ extends to a bounded linear map $\psi: F=(M(G)\otimes_\gamma E)/Z\to F^{'}$, satisfying, $$\psi(\iota(x))=\psi\big((\delta_e\otimes x)+Z\big)=\rho^{'}_e(\iota^{'}(x))=\iota^{'}(x),$$ for each $x\in E$, and 
\begin{align*}
q^{'}(\psi(z))&=q^{'}\big(\psi(\sum_{n=1}^{N}\nu_n\otimes x_n)\big)=q^{'}\big(\int_G\rho^{'}_t\iota^{'}(x_n))d\nu_n\big)\\&=\int_Gq^{'}\rho^{'}_t\iota^{'}(x_n))d\nu_n(t)=\int_{G_{x_n}} \pi_t(x_n)d\nu_n(t)\\&=q\big(\sum_{n=1}^{N}\nu_n\otimes x_n\big)=q(z),
\end{align*}
and 
\begin{align*}
\psi(\rho_t(z))&=	\psi\big(\rho_t(\sum_{n=1}^{N}\nu_n\otimes x_n)\big)=	\psi\big(\sum_{n=1}^{N}t\cdot \nu_n\otimes x_n\big)\\&=\int_G \rho^{'}_s\iota^{'}(x_n)d(t\cdot\nu_n)(s)=\int_G \rho^{'}_s\iota^{'}(x_n)d\nu_n(t^{-1}s)\\&=\int_G \rho^{'}_{ts}\iota^{'}(x_n)d\nu_n(s)=\rho^{'}_{t}\big(\int_G \rho^{'}_{s}\iota^{'}(x_n)d\nu_n(s)\big)\\&=\rho^{'}_{t}\big(\psi(\sum_{n=1}^{N}\nu_n\otimes x_n)\big)=\rho^{'}_t(\psi(z)),
\end{align*}
for each $z=\sum_{n=1}^{N}\nu_n\otimes x_n\in M(G)\odot E$.

Finally, to prove uniqueness, from $\psi(\iota(x))=\iota^{'}(x)$,  we get $\psi\big((\delta_e\otimes x)+Z\big)=\iota^{'}(x)$ which implies $\psi\big((\delta_t\otimes x)+Z\big)=\rho^{'}_t\iota^{'}(x)$, for $t\in G$ and $x\in E$. Thus, by continuity,
$$\psi\big((\nu\otimes x)+Z\big)=\int_G\psi\big((\delta_t\otimes x)+Z\big)d\nu(t)=\int_G\rho^{'}_t\iota^{'}(x)d\nu(t),$$
which shows that $\psi$ has to be the above map, by linearity and continuity. 

Up to here, we showed that $\pi$ on $E$ has a globalization $\rho$ on $F:=(M(G)\otimes_\gamma E)/Z$, whose uniqueness (up to isomorphism) follows from the universal property.   
\end{proof}

We denote the above globalization of $\pi$ by $\env{\pi}$ and call it the {\it enveloping representation} of $\pi$, and write $\env{E}:=(M(G)\otimes_\gamma E)/Z$. 

\vspace{.3cm}
The next definition extends \cite[Definition 1.1]{b}. Recall that for a partial representation $\pi$, the partial isometry $\pi_t$ and projection $p_t:=\pi_t\pi_{t^{-1}}$ have the same range. For a closed subspace $E_0\subseteq E$, $\mathbb B(E_0)$ could be identified with the cut-down of $\mathbb B(E)$ with projection onto $E_0$.

\begin{df}
A partial representation $\pi$ on a Banach space $E$ is called {\it amenable} in the sense of Bekka, if  there exists
a $\pi$-invariant mean $\phi$ on $\mathbb B(E)$,  that is a norm one linear functional with $\phi({\rm id}_E) = 1$ and
$\phi(\pi_{t^{-1}}T\pi_{t})=\phi(T),$  for $t\in G$ and $T\in{\mathbb B}(E_t)=p_t\mathbb B(E)p_t$, where $E_t:={\rm Im}(\pi_t)$, and $p_t=\pi_t\pi_{t^{-1}}$. 
\end{df}

Note that the $\pi$-invariance in the above definition could also be stated as the equality $\phi(\pi_{t^{-1}}T\pi_{t})=\phi(p_tTp_t),$  for $t\in G$ and $T\in{\mathbb B}(E)$.

\begin{prop} \label{Koopman}
$(a)$ For a  Borel (continuous) \pa $\al$ of a Borel (topological) geoup $G$ on a Borel measure space $(X,\nu)$, the following are equivalent:

$(i)$ $\al$ is amenable on $X$ in the sense of Greenleaf,

$(ii)$ the partial Koopman representation $\kappa^\al$ on $L^2(X,\nu)$ is amenable in the sense of Bekka.

 $(b)$ For a  partial representation $\pi$ on a Banach space $E$, if $\pi$ is amenable on $E$ in the sense of Bekka, the partial actions $\al_\pi$ on $X=E$ and $\al^\pi$ on $X={\rm PHomeo}(E^*_1)$ are amenable in the sense of Greenleaf.

\end{prop}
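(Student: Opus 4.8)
The plan is to prove part $(a)$ by using the multiplication embedding $B(X)\hookrightarrow\mathbb B(L^2(X,\nu))$ as the bridge between the two invariance conditions, and part $(b)$ by transporting the conjugation--invariant operator mean to the relevant function algebra. For the implication $(ii)\Rightarrow(i)$ of $(a)$, I would start from a $\kappa^\al$-invariant mean $\phi$ on $\mathbb B(L^2(X,\nu))$ and set $m:=\phi\circ M$, where $M\colon B(X)\to\mathbb B(L^2(X,\nu))$, $f\mapsto M_f$, is the unital positive $*$-homomorphism given by multiplication operators; since $M$ is unital and positive and $\phi$ is a state, $m$ is a mean on $B(X)$. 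The single computation that drives everything is that, for $g$ supported in $X_s$,
\[ \kappa^\al_{s^{-1}}\,M_g\,\kappa^\al_s=M_{\al_s(g)},\qquad \al_s(g)(x)=g(\al_s(x)), \]
which I would read off directly from the formula $\kappa^\al_sf(x)=\sig_{\rm RN}^{1/2}(x,s^{-1})f(\al_{s^{-1}}(x))$ of Lemma \ref{key}$(i)$ together with the cocycle identity at the identity, $\sig_{\rm RN}^{1/2}(x,s)\,\sig_{\rm RN}^{1/2}(\al_s(x),s^{-1})=\sig_{\rm RN}(x,e)^{1/2}=1$, so that the two half-densities cancel. As $M_g\in\mathbb B(E_s)$ with $E_s={\rm Im}(\kappa^\al_s)=L^2(X_s,\nu)$, Bekka-invariance gives $\phi(M_{\al_s(g)})=\phi(M_g)$, i.e.\ $m(\al_s(g))=m(g)$ for all $s$ and all $g$ supported in $X_s$, which is precisely the Greenleaf invariance.

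For $(i)\Rightarrow(ii)$ I would use the Reiter route of Theorem \ref{Reiter}: Greenleaf amenability yields condition $(P_2)$, hence a net of positive unit vectors $f_i\in L^2(X,\nu)$ with $\|p_tf_i-\kappa^\al_tf_i\|_2\to 0$ uniformly for $t$ in any symmetric compact set, where $p_t=\kappa^\al_t\kappa^\al_{t^{-1}}$ is multiplication by $\mathds 1_{X_t}$; here one identifies $\sig_{\rm RN}^{1/2}(x,t)f(\al_t(x))=(\kappa^\al_{t^{-1}}f)(x)$ from the definitions and symmetrizes $K$. Taking the vector states $\phi_i(T)=\langle Tf_i,f_i\rangle$ and a weak$^*$ cluster point $\phi$ in $\mathbb B(L^2(X,\nu))^*$ produces a state with $\phi({\rm id})=1$. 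Using $(\kappa^\al_{t^{-1}})^*=\kappa^\al_t$ one gets $\phi_i(\kappa^\al_{t^{-1}}T\kappa^\al_t)=\langle T\kappa^\al_tf_i,\kappa^\al_tf_i\rangle$ and $\phi_i(p_tTp_t)=\langle Tp_tf_i,p_tf_i\rangle$, whose difference is bounded by $2\|T\|\,\|p_tf_i-\kappa^\al_tf_i\|_2\to 0$; passing to the cluster point yields $\phi(\kappa^\al_{t^{-1}}T\kappa^\al_t)=\phi(p_tTp_t)$, which is the equivalent form of Bekka-invariance of $\kappa^\al$ recorded after the definition.

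For part $(b)$ the strategy is dual: from the $\pi$-invariant operator mean $\phi$ on $\mathbb B(E)$ I would build an invariant mean on $B(X)$ for each of $X=E$ and $X={\rm PHomeo}(E^*_1)$. The cleanest formulation is to produce, for each such $X$, a unital positive linear map $\Psi\colon B(X)\to\mathbb B(E)$ intertwining the pullback of the partial action on functions with conjugation by $\pi$ on the corners $\mathbb B(E_s)$, and then set $m:=\phi\circ\Psi$, so that $\al$-invariance of $m$ is immediate from invariance of $\phi$. Equivalently and more concretely, I would approximate $\phi$ weak$^*$ by finite-rank normal states $\rho_i=\sum_n\lambda^{(i)}_n\,\xi^{(i)}_n\otimes\eta^{(i)}_n$, attach to each the associated probability distribution on $E$ (respectively on the weak$^*$-compact ball $E^*_1$), observe that conjugation by $\pi_t$ (respectively $\pi^*_t$) corresponds to pushforward under $(\al_\pi)_t$ (respectively under the adjoint map underlying $\al^\pi$), and extract from the approximate invariance of the $\rho_i$ an $\al$-invariant finitely additive probability measure as a weak$^*$ cluster point; by the equivalence $(i)\Leftrightarrow(ii)$ of Theorem \ref{Reiter} this delivers the Greenleaf mean.

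The main obstacle lies entirely in part $(b)$ and in the general Banach (non-Hilbert) nature of $E$: one must make the correspondence ``normal state $\leftrightarrow$ probability measure'' and its $G$-equivariance precise when $\mathbb B(E)$ carries no trace and its predual is only the nuclear operators, verify the requisite weak$^*$-density of finite-rank states approximating $\phi$, and, for $X={\rm PHomeo}(E^*_1)$, bridge from the adjoint action of $G$ on the compact space $E^*_1$ to the conjugation action on the space of partial homeomorphisms that actually carries $\al^\pi$. Part $(a)$, by contrast, reduces to the two displayed operator identities together with a routine weak$^*$-compactness argument.
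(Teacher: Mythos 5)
Your part $(a)$ is correct and is essentially the paper's own proof: for $(i)\Rightarrow(ii)$ the paper likewise invokes Theorem \ref{Reiter} to produce a Reiter $(P_2)$ net, forms the vector states $\phi_i(T)=\langle Tf_i,f_i\rangle$, and passes to a weak$^*$ cluster point, getting $\phi(\kappa^\al_{t}T\kappa^\al_{t^{-1}})=\phi(P_tTP_t)$ by the same triangle-inequality estimate; for $(ii)\Rightarrow(i)$ it uses exactly your identity $\kappa^\al_{t^{-1}}M_\varphi\kappa^\al_t=M_{t^{-1}\cdot\varphi}$ for multiplication operators and sets $m(\varphi):=\phi(M_\varphi)$.

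Part $(b)$ is where you have a genuine gap. Your ``cleanest formulation'' --- find a unital positive $\Psi:B(X)\to\mathbb B(E)$ intertwining the function action with conjugation by $\pi$, and put $m:=\phi\circ\Psi$ --- is precisely the paper's strategy, but constructing $\Psi$ \emph{is} the entire content of $(b)$, and you never construct it. The concrete fallback you propose instead breaks down at two points. First, for a general Banach space $E$ there is no positivity and no trace on $\mathbb B(E)$, so a Bekka mean $\phi$ (merely a norm-one functional with $\phi({\rm id}_E)=1$) need not be a weak$^*$ limit of functionals induced by finite-rank (nuclear) operators; the density of ``normal states'' in the state space, which the paper does use, appears only in the Hilbert-space setting of Proposition \ref{Reiter2}. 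Second, and more fundamentally, even when such $\rho_i$ exist, their approximate conjugation-invariance only controls quadratic observables $\xi\mapsto\langle T\xi,\xi\rangle$; it gives no control of $\sum_n\lambda_n^{(i)}\bigl(f((\al_\pi)_t\xi_n^{(i)})-f(\xi_n^{(i)})\bigr)$ for an arbitrary bounded Borel $f$ on $E$, since such $f$ is not a quadratic observable. Concretely, a single almost-invariant unit vector $\xi$ (so $\|\pi_t\xi-\xi\|$ small) yields the point mass $\delta_\xi$, and $\delta_{\pi_t\xi}$, $\delta_\xi$ are at total-variation distance $2$ whenever $\pi_t\xi\neq\xi$; so a weak$^*$ cluster point of your attached measures need not be $\al_\pi$-invariant, and the detour through Theorem \ref{Reiter}$(i)\Leftrightarrow(ii)$ cannot repair this.

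What the paper actually does avoids measures altogether and is the missing ingredient your ``obstacles'' paragraph senses but does not supply: for $X=E$ and $f\in B(X_{t^{-1}})$ it defines the operator $M_f$ on $E_t={\rm Im}(\pi_t)$ by $M_f(x):=f(\pi_{t^{-1}}(x))\,x$, verifies the one-line identity $\pi_{t^{-1}}M_f\pi_t=M_{t\cdot f}$, and sets $m(f):=\phi(M_f)$, so invariance of $m$ is immediate from invariance of $\phi$; for $X={\rm PHomeo}(E^*_1)$ it first attaches to each $x\in E_{t^{-1}}$ the partial homeomorphism $x_t$ of $(E^*_t)_1$ given by $x_t(y):=\langle x,y\rangle y$, defines $M_f(x):=f(x_t)\,x$ for $f\in B(X_t)$, and runs the same computation using $\al^\pi_{t^{-1}}(x_{t^{-1}})=\pi_t(x)_t$. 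Until you supply these (or equivalent) constructions, your proof of $(b)$ is incomplete.
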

\begin{proof} First let us prove the equivalence in part $(a)$.
	
$(i)\Rightarrow (ii).$ If $\al$ is amenable in the sense of Greenleaf, then by Theorem \ref{Reiter}, there is a net $(f_i)$ of positive, norm one functions in $L^2(X,\nu)$ such that,
$$\sup_{t\in K}\int_{X_{t^{-1}}} |f_i(x)-\sigma^{\frac{1}{2}}_{\rm RN}(x,t)f_i(\al_{t}(x))|^2d\nu(x)\to 0, $$
as $i\to \infty$. Put, $$\phi_i(T):=\langle Tf_i, f_i\rangle, \ \ (T\in \mathbb B(L^2(X,\nu))).$$
Then $(\phi_i)$ is a net in the unit ball of $\mathbb B(L^2(X,\nu))^*$, which has a weak$^*$-cluster point $\phi$ in the same ball. We claim that $\phi$ is a $\pi$-invariant mean. Passing to a subnet, we may assume that $\phi_i\to \phi$ in weak$^*$-topology. Given $t\in G$, the above limit condition on the net $(f_i)$ implies that,
$$\|f_i|_{X_{t}}-\kappa^\al_{t^{-1}}(f_i|_{X_{t}})\|_2\to 0,$$
as $i\to\infty$, in $L^2(X_{t^{-1}}, \nu)$. For a bounded operator $T$ on  $L^2(X,\nu)$, by a triangle inequality argument we have,
\begin{align*}
\phi(\kappa^\al_{t}T\kappa^\al_{t^{-1}})&=\lim_i \langle \kappa^\al_{t}T\kappa^\al_{t^{-1}}(f_i|_{X_{t}}), f_i|_{X_{t}}\rangle=\lim_i \langle T\kappa^\al_{t^{-1}}(f_i|{X_{t}}), \kappa^\al_{t^{-1}}(f_i|_{X_{t}})\rangle\\&=\lim_i \langle T(f_i|_{X_{t}}), f_i|_{X_{t}}\rangle=\lim_i \langle TP_{t}(f_i), P_{t}(f_i)\rangle\\&=\lim_i \langle  P_{t}TP_{t}(f_i),f_i\rangle=\lim_i\phi_i(P_{t}TP_{t})=\phi(P_{t}TP_{t}),
\end{align*}  
where $P_t$ is the orthogonal projection onto $L^2(X_t,\nu)$.

$(ii)\Rightarrow (i).$ Let $\phi$ be a $\kappa^\al$-invariant mean on $\mathbb B(L^2(X,\nu))$. Let $t\in G$. To each $\varphi\in B(X)$ we could associate the multiplication operator $M_\varphi$ on $L^2(X_{t^{-1}},\nu)$, and for $f\in L^2(X_{t^{-1}},\nu)$ and $x\in X_{t^{-1}}$, 
\begin{align*}
\kappa^\al_{t^{-1}}M_\varphi\kappa^\al_tf(x)&=\sig^{\frac{1}{2}}_{\rm RN}(x,t)M_\varphi\kappa^\al_tf(\al_t(x))\\&=\sig^{\frac{1}{2}}_{\rm RN}(x,t)\varphi(\al_t(x))\kappa^\al_tf(\al_t(x))\\&=\sig^{\frac{1}{2}}_{\rm RN}(x,t)\sig^{\frac{1}{2}}_{\rm RN}(\al_t(x),t^{-1})\varphi(\al_t(x))f(\al_{t^{-1}}\al_t(x))\\&=\sig^{\frac{1}{2}}_{\rm RN}(x,e)\varphi(\al_t(x))f(x)\\&=\varphi(\al_t(x))f(x),
\end{align*} 
that is,
$$\kappa^\al_{t^{-1}}M_\varphi\kappa^\al_t=M_{{t^{-1}}\cdot\varphi}.$$  
Let us define $m(\varphi):=\phi(M_\varphi)$. Then, $$m({t^{-1}}\cdot\varphi)=\phi(\kappa^\al_{t^{-1}}M_\varphi\kappa^\al_t)=\phi(M_\varphi)=m(\varphi),$$
i.e., $m$ is an $\al$-invariant mean on $B(X)$, and so $\al$ is amenable in the sense of Greenleaf.

Let us prove the statements in part $(b)$.
Let $\phi$ be a $\pi$-invariant mean on $\mathbb B(E)$. Let $E_t:={\rm Im}(\pi_t)$ and $X_t:={\rm Im}(\pi_t\pi_{t^{-1}})$, for $t\in G$. To each $\varphi\in B(X_{t^{-1}})$ we associate the multiplication operator $M_\varphi: E_t\to E_t$, defined by, 
$$M_\varphi(x):=\varphi\big(\pi_{t^{-1}}(x)\big)x,\ \  (x\in E_{t}),$$ 
then for $x\in E_t$, we have,
\begin{align*}
	\pi_{t^{-1}}M_\varphi\pi_t(x)&=\pi_{t^{-1}}M_\varphi(\pi_t(x))\\&=\pi_{t^{-1}}\big(\varphi(\pi_t(\pi_{t^{-1}}(x)))\pi_t(x)\big)\\&=\varphi\big((\al_\pi)_{t^{-1}}(\pi_{t^{-1}}(x))\big)\pi_{t^{-1}}(\pi_t(x))\\&=(t\cdot\varphi)\big(\pi_{t^{-1}}(x)\big)x\\&=M_{t\cdot\varphi}(x),
\end{align*} 
that is,
$$\pi_{t^{-1}}M_\varphi\pi_t=M_{{t}\cdot\varphi}.$$  
Let us define $m(\varphi):=\phi(M_\varphi)$. Then, $$m({t}\cdot\varphi)=\phi(\pi_{t^{-1}}M_\varphi\pi_t)=\phi(M_\varphi)=m(\varphi),$$
i.e., $m$ is an $\al_\pi$-invariant mean on $B(X)$, and so $\al_\pi$ is amenable in the sense of Greenleaf.

Next, let $\phi$ be a $\pi$-invariant mean on $\mathbb B(E)$. Let, $$X_t:={\rm PHomeo}(\ker(\pi_t)^\perp_1)={\rm PHomeo}({\rm Im}(\pi^*_t)_1)={\rm PHomeo}((E^*_t)_1),$$ where $E^*_t:=(E_{t^{-1}})^*$, for $t\in G$. To each $x\in E_{t^{-1}}$ we associate $x_t\in X_t$, defined by,
$$x_t: (E^*_t)_1\to (E^*_t)_1; \ \ y\mapsto \langle x,y\rangle y, \ \ (y\in (E^*_t)_1).$$ 
Given $\varphi\in B(X_{t})$, we define  the corresponding multiplication operator by, 
 $$M_\varphi: E_{t^{-1}}\to E_{t^{-1}}; \ \ M_\varphi(x):=\varphi(x_t)x,\ \  (x\in E_{t^{-1}}),$$ 
 then for $x\in E_{t^{-1}}$ and $y\in (E^*_t)_1$, we have,
 \begin{align*}
 \al^\pi_{t^{-1}}(x_{t^{-1}})(y)&= \pi^*_{t^{-1}}x_{t^{-1}}\pi^*_t(y)=\pi^*_{t^{-1}}(x_{t^{-1}}(\pi^*_t(y)))\\&=\pi^*_{t^{-1}}\big(\langle x,\pi^*_t(y)\rangle\pi^*_t(y)\big)=\langle x,\pi^*_t(y)\rangle y\\&=\langle \pi_t(x),y\rangle y=\pi_t(x)_t(y), 
 \end{align*}
thus,
  \begin{align*}
 	\pi_{t^{-1}}M_\varphi\pi_t(x)&=\pi_{t^{-1}}M_\varphi(\pi_t(x))=\pi_{t^{-1}}\big(\varphi(\pi_t(x)_t)\pi_t(x)\big)\\&=\varphi( \al^\pi_{t^{-1}}(x_{t^{-1}}))x=(t\cdot\varphi)\big(x_{t^{-1}}\big)x=M_{t\cdot\varphi}(x),
 \end{align*} 
 that is,
 $$\pi_{t^{-1}}M_\varphi\pi_t=M_{{t}\cdot\varphi}.$$  
 Let us define $m(\varphi):=\phi(M_\varphi)$. Then, $$m({t}\cdot\varphi)=\phi(\pi_{t^{-1}}M_\varphi\pi_t)=\phi(M_\varphi)=m(\varphi),$$
 i.e., $m$ is a $\al^\pi$-invariant mean on $B(X)$, and so $\al^\pi$ is amenable in the sense of Greenleaf.
\end{proof}

\begin{rk}
As for the converse implications in part $(b)$, if $\pi$ is a partial representation in a {\it Hilbert} space $E$ and $\al_\pi$ is amenable in the sense of Greenleaf, by Theorem \ref{Reiter}, there is an $\al_\pi$-invariant finitely additive probability measure $\mu$ on $X:=E$. Let  $X_t=E_t={\rm Im}(\pi_t)={\rm Im}(\pi_{t^{-1}}\pi_t)$. Also, there an $\al_\pi$-invariant mean $m$ on $B(X)$. For a bounded Borel function $\varphi\in B(X)$ we use the notation $m_x(\varphi(x)):=m(\varphi)$, to show the steps of calculations easier. We set, 
$$\phi(T):=m_x\Big(\int_{X} \langle Tx,y\rangle d\mu(y)\Big),\ \ (x\in X_t, T\in \mathbb B(E)).$$
Then,
$\phi({\rm id}_E)=m_x\big(\int_X 1d\mu\big).$
If we could guarantee that the above value is {\it non zero}, then after multiplying with an appropriate constant, we may assume that $\phi({\rm id}_E)=1$. The rest is easy, since,
\begin{align*}
	\phi(\pi_{t^{-1}}T\pi_{t})&=m_x\Big(\int_{X} \langle \pi_{t^{-1}}T\pi_{t}(x),y\rangle d\mu(y)\Big)\\&=m_x\Big(\int_{X} \langle  T\pi_{t}(x),\pi_{t}(y)\rangle d\mu(y)\Big)\\&=m_x\Big(\int_{X} \langle  T(\al_\pi)_{t}(x),(\al_\pi)_{t}(y)\rangle d\mu(y)\Big)\\&=m_x\Big(\int_{X} \langle Tx,y\rangle d\mu(y)\Big)=\phi(T),
\end{align*}         
for $T\in\mathbb B(E_t)$, as both $m$ and $\mu$ are $\al_\pi$-invariant. 

A similar argument could be designed when $E$ is a {\it Banach} space and the partial actions $\al_\pi$ on $X=E$ and $\al_{\bar\pi}$ on $E^*$ are amenable in the sense of Greenleaf: we just need a slight modification of the above proof as follows. Let $X:=E$ and $Y:=E^*$, choose  an $\al_\pi$-invariant mean on $B(X)$ and an $\al_\pi$-invariant finitely additive probability measure $\mu$ on $Y$. Set, 
$$\phi(T):=m_x\Big(\int_{Y} \langle Tx,y\rangle d\mu(y)\Big),\ \ (x\in X_t, T\in \mathbb B(E)).$$
Now if one could guarantee that $\phi({\rm id}_E)$ is {\it non zero}, the rest of the proof goes as above.

As for the converse implication for $\al^\pi$, let us consider the case that $G$ is {\it compact}. Let $X={\rm PHomeo}(E^*_1)$. By definition, there an $\al^\pi$-invariant mean $m$ on $B(X)$. Fix non zero elements $y\in E^*$ and $z\in E$. Set, 
$$\phi(T):=m_x\Big(\int_G\int_G \langle x\circ T^*(\pi^*_s(y)), \pi_u(z)\rangle dsdu\Big),\ \ (x\in X, T\in \mathbb B(E)),$$
where the integrals are against a normalized Haar measure on $G$. Then, 
$\phi({\rm id}_E)=m_x\big(\int_G\int_G 1dsdu).$
Again, if we could guarantee that for some choice of $y$ and $z$, the above value is {\it non zero}, we may let it be equal to one (after appropriate scaling), and for each $t\in G$ and $T\in\mathbb B(E)$, 
\begin{align*}
	\phi(\pi_{t^{-1}}T\pi_{t})&=m_x\Big(\int_G\int_G \langle x \pi^*_{t}T^*\pi^*_{t^{-1}}(\pi^*_s(y)), \pi_u(z)\rangle dsdu\Big)\\&=m_x\Big(\int_G\int_G \langle \pi^*_{t}\pi^*_{t^{-1}}x \pi^*_{t}T^*\pi^*_{t^{-1}}(\pi^*_s(y)), \pi_u(z)\rangle dsdu\Big)\\&=m_x\Big(\int_G\int_G \langle \al^\pi_{t^{-1}}(x) T^*(\pi^*_{st^{-1}}(y)), \pi_{tu}(z)\rangle dsdu\Big)\\&=m_x\Big(\int_G\int_G \langle \al^\pi_{t^{-1}}(x) T^*(\pi^*_{s}(y)), \pi_{u}(z)\rangle dsdu\Big)\\&=m_x\Big(\int_G\int_G \langle x\circ T^*(\pi^*_{s}(y)), \pi_{u}(z)\rangle dsdu\Big)=\phi(T),
\end{align*}         
as $m$ is $\al_\pi$-invariant, and the Haar measure on $G$ is both right and left translation invariant.

\end{rk}

We say that a unital Banach algebra $A$ is {\it tracial} if there is a trace on $A$, that is, a bounded (unital) linear functional $\Phi$ satisfying $\Phi(ab)=\Phi(ba)$, for $a,b\in A$. A Banach space $E$ is said to be tracial if the unital Banach algebra $\mathbb B(E)$ is tracial.

\begin{prop}
For a locally compact group $G$, the following are equivalent:

$(i)$ $G$ is amenable,

$(ii)$ every partial representation of $G$ in a tracial Banach space $E$ is amenable in the sense of Bekka,

$(iii)$ every partial representation of $G$ in a Hilbert space $H$ is amenable in the sense of Bekka,

$(iv)$ every Borel (continuous) \pa of $G$ on a standard Borel (topological) space $X$ is amenable in the sense of Greenleaf. 
\end{prop}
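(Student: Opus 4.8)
The plan is to run the cycle $(i)\Rightarrow(ii)\Rightarrow(iii)\Rightarrow(iv)\Rightarrow(i)$, proving the three forward arrows by a single averaging philosophy and obtaining the last arrow by testing the statement on the translation action of $G$ on itself.

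For $(i)\Rightarrow(ii)$, fix a two–sided invariant mean $M$ on $L^\infty(G)$, a trace $\Phi$ on $\mathbb{B}(E)$, and a partial representation $\pi\colon G\to\mathrm{Iso}(E)$. I would set $\phi(T):=M_t\big(\Phi(\pi_{t^{-1}}T\pi_t)\big)$, after checking that $t\mapsto\Phi(\pi_{t^{-1}}T\pi_t)$ is a bounded Borel (continuous) function so that $M$ may be applied; unitality and the norm bound come from $\Phi(\mathrm{id}_E)=1$ and $\|\pi_t\|\le 1$. The invariance $\phi(\pi_{s^{-1}}T\pi_s)=\phi(T)$ for $T\in\mathbb{B}(E_s)=p_s\mathbb{B}(E)p_s$ is the heart of the matter: one writes $T=p_sTp_s$, uses the commuting idempotents $p_t=\pi_t\pi_{t^{-1}}$ together with the partial relations $\pi_t\pi_s\pi_{s^{-1}}=\pi_{ts}\pi_{s^{-1}}$ to rewrite $\pi_{t^{-1}}\pi_{s^{-1}}T\pi_s\pi_t$ in terms of $\pi_{(st)^{-1}}T\pi_{st}$ up to a projection that is absorbed by the trace identity $\Phi(ab)=\Phi(ba)$, and then invokes invariance of $M$.

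For $(iii)\Rightarrow(iv)$ I would apply Proposition \ref{Koopman}$(a)$ directly: each Borel partial action $\al$ on a measure space $(X,\nu)$ carries its partial Koopman representation $\kappa^\al$ on the Hilbert space $L^2(X,\nu)$, which is Bekka–amenable by $(iii)$, whence $\al$ is amenable in the sense of Greenleaf. The step $(ii)\Rightarrow(iii)$ is the one I expect to be delicate, since $\mathbb{B}(H)$ carries no trace when $H$ is infinite dimensional, so $(iii)$ is not a formal specialization of $(ii)$; I would instead realize a given Hilbert partial representation inside a tracial model (for instance by transporting it, via a GNS–type construction attached to a chosen vector state, into a trace–class space) and pull the invariant mean back, replacing the trace of the first paragraph by a vector state $\langle\pi_{t^{-1}}T\pi_t\,\xi,\xi\rangle$ wherever no genuine trace is available.

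Finally, $(iv)\Rightarrow(i)$ follows by feeding into $(iv)$ the (global, hence partial) left translation action of $G$ on the standard Borel space $G$: a Greenleaf–invariant mean on $B(G)$ restricts to a left–invariant mean on $L^\infty(G)$, which is exactly amenability of $G$. The principal obstacle throughout is the bookkeeping of the nontrivial range projections $p_t$ and domains $E_t=\mathrm{Im}(\pi_t)$: unlike the global case there is no identity $\pi_{st}=\pi_s\pi_t$, so every invariance computation must be carried out modulo the $p_t$, and a recurring secondary difficulty is guaranteeing the normalization $\phi(\mathrm{id})=1$ (equivalently, producing a genuine state rather than merely an invariant functional) when passing between $G$, the tracial space, and the Hilbert space.
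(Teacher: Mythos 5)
Your arrows $(i)\Rightarrow(ii)$ and $(iii)\Rightarrow(iv)$ are the paper's own arguments: the averaged functional $\phi(T)=M_t\big(\Phi(\pi_{t^{-1}}T\pi_t)\big)$, with the trace identity absorbing the range projections $p_t$, is exactly the paper's computation that $\varphi_{\pi_{t^{-1}}T\pi_t}=t^{-1}\cdot\varphi_T$ for $\varphi_T(s):=\Phi(\pi_{s^{-1}}T\pi_s)$, and your $(iii)\Rightarrow(iv)$ is verbatim the paper's appeal to Proposition \ref{Koopman}. Your $(iv)\Rightarrow(i)$ is, however, a genuine addition: the paper's proof consists only of the chain $(i)\Rightarrow(ii)\Rightarrow(iii)\Rightarrow(iv)$ and never closes the cycle, so on this point your proposal is more complete than the source. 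Two small repairs there: a mean on $B(G)$ does not ``restrict'' to $L^\infty(G)$, which is a quotient of $B(G)$ (by functions supported on null sets), not a subspace; restrict instead to $C_b(G)\subseteq B(G)$ and invoke the standard fact that a left-invariant mean on $C_b(G)$ already implies amenability of a locally compact group. Also, your claim that unitality of $\phi$ in $(i)\Rightarrow(ii)$ comes from $\Phi(\mathrm{id}_E)=1$ is not accurate, since $\varphi_{\mathrm{id}_E}(t)=\Phi(\pi_{t^{-1}}\pi_t)=\Phi(p_{t^{-1}})$ need not equal $1$ for a partial representation; this normalization defect is present in the paper as well, and you rightly flag it as unresolved.

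The genuine gap is $(ii)\Rightarrow(iii)$, and while you have correctly located the difficulty, your proposed repair does not close it. The concrete substitute you offer --- replacing the trace by a vector state $\langle\pi_{t^{-1}}T\pi_t\,\xi,\xi\rangle$, or transporting $\pi$ into an unspecified ``tracial model'' by a GNS-type construction --- destroys precisely the mechanism that makes $(i)\Rightarrow(ii)$ work: in the partial setting $\pi_{s^{-1}}\pi_{t^{-1}}T\pi_t\pi_s$ agrees with $\pi_{(ts)^{-1}}T\pi_{ts}$ only modulo the projections $p_t$, and it is the identity $\Phi(ab)=\Phi(ba)$ that lets one move these projections past $T$; a vector state has no such property, so the invariance computation collapses. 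You should know that the paper's own treatment of this step is also wrong, and your skepticism is vindicated: the paper asserts that $\Phi_S(T)=\mathrm{Tr}(ST)$ is tracial on $\mathbb B(H)$ for trace-class $S$, but $\Phi_S(TU)-\Phi_S(UT)=\mathrm{Tr}\big((ST-TS)U\big)$, which vanishes for all $U$ only when $S$ is scalar, hence (in infinite dimensions) only when $S=0$; indeed $\mathbb B(H)$ admits no nonzero bounded tracial functional at all when $\dim H=\infty$, since every bounded operator is a sum of two commutators. So for infinite-dimensional $H$ the implication $(ii)\Rightarrow(iii)$ is vacuous as a specialization, and what is actually required --- and what neither you nor the paper supplies --- is a direct proof of $(i)\Rightarrow(iii)$ that controls the projections $p_t$ without a trace, or else a weakening of $(iii)$ to the class of Hilbert-space partial representations (such as the Koopman representations $\kappa^\al$) for which an invariant state can be produced by other means, which is all that the final arrow $(iii)\Rightarrow(iv)$ ever uses.
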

\begin{proof}
$(i)\Rightarrow (ii).$ Let $m$ be a left invariant mean on $L^\infty(G)$, that is, 
$m(t\cdot\varphi)=m(\varphi)$, for $t.\varphi(s):=\varphi(t^{-1}s),$ $s,t\in G$, $\varphi\in L^\infty(G)$. For a  partial representation of $G$ in a tracial Banach space $E$ with trace $\Phi$ on $\mathbb B(E)$, we associate to each $T\in \mathbb B(E)$ a bounded continuous function $\varphi_T\in C_b(G)$, defined by $\varphi_T(t):=\Phi(\pi_{t^{-1}}T\pi_{t})$. Let $\phi$ be the mean on $\mathbb B(E)$ defined by $\phi(T):=m(\varphi_T)$. Then, for $t\in G$, $T\in \mathbb B(E, E_t)$ and $S:=\pi_{t^{-1}}T\pi_{t}$, since the projection $p_t:=\pi_{t}\pi_{t^{-1}}$ acts as identity on the range of $T$, we have $p_tT=T$, thus, as $\Phi$ is a trace,
\begin{align*}
\varphi_S(s)&=\Phi(\pi_{s^{-1}}S\pi_{s})=\Phi(\pi_{s^{-1}}\pi_{t^{-1}}T\pi_{t}\pi_{s})\\&=\Phi(\pi_{s^{-1}}T\pi_{t^{-1}}\pi_{t}\pi_{s})=\Phi(\pi_{s^{-1}}T\pi_{t^{-1}}\pi_{ts})\\&=\Phi(\pi_{s^{-1}}p_tT\pi_{t^{-1}}\pi_{ts})=\Phi(\pi_{s^{-1}}\pi_{t}\pi_{t^{-1}}T\pi_{t^{-1}}\pi_{ts})\\&=\Phi(\pi_{s^{-1}}\pi_{t^{-1}}\pi_{t}T\pi_{t^{-1}}\pi_{ts})=\Phi(\pi_{s^{-1}t^{-1}}\pi_{t}T\pi_{t^{-1}}\pi_{ts})\\&=\Phi(\pi_{s^{-1}t^{-1}}\pi_{t}\pi_{t^{-1}}T\pi_{ts})=\Phi(\pi_{s^{-1}t^{-1}}p_tT\pi_{ts})\\&=\Phi(\pi_{s^{-1}t^{-1}}T\pi_{ts})=\varphi_T(ts),
\end{align*}
for each $s,t\in G$, that is, $\varphi_S=t^{-1}\cdot \varphi_T$, therefore, $$\phi((\pi_{t^{-1}}T\pi_{t})=m(\varphi_S)=m(t^{-1}\cdot \varphi_T)=m(\varphi_T)=\phi(T),$$
i.e., $\phi$ is a $\pi$-invariant mean on $\mathbb B(E)$.

$(ii)\Rightarrow (iii).$ A Hilbert space $H$ is always tracial, as for a trace class operator $S\in L^1(H)$, $\Phi_S(T)=Tr(ST)$ is  tracial on $\mathbb B(H)$, where $Tr$ is the canonical (possibly infinite) Trace of $\mathbb B(H)$.

$(iii)\Rightarrow (iv).$ For an  Borel \pa $\al$ of $G$ on a Borel space $(X,\nu)$  the corresponding Koopman representation $\kappa^\al$ is amenable in the sense of Bekka by $(iii)$, hence $\al$ is amenable in the sense of Greenleaf, by Proposition \ref{Koopman}.   
\end{proof}

Next, we study the permanence properties of amenability of partial representations in the sense of Bekka. For the rest of this section, amenability of partial representations is in the sense of Bekka.

\begin{df}
A partial {\it subrepresentation} of a partial representation $(\pi, E)$ is a pair $(\pi^F, F)$ consisting of a Banach subspace $F\leq E$ which is invariant under each $\pi_t$ and the restriction $\pi^F$ of $\pi$ to $F$. 

When $E=\oplus_i E_i$ such that $E_i$ is $\pi$-invariant and for $\pi_i:=\pi^{E_i}$, we have $\pi_t(\oplus_i x_i)=\oplus_i \pi_i(x_i)$, we write $\pi=\oplus_i \pi_i$, and say that $\pi$ is a {\it direct sum} of partial representations $\pi_i$.  
\end{df} 

The next lemma is an immediate consequence of the definition (c.f., \cite[Remark 1.2, Theorem 1.3]{b}).  

\begin{lem}
Let $\pi$ be a partial representation of a topological (Borel) group $G$ on a Banach space $E$.

$(i)$ If $H$ is a closed subgroup of $G$ and $\pi$ is amenable on $G$, so is its restriction to $H$, 

$(ii)$ if $H$ is
normal in $G$ and $H\subseteq \ker(\pi)$, then $\pi$ is amenable on $G$ if and only if $\dot\pi$ is  amenable on $G/H$, where $\dot\pi_{\dot t}(x):=\pi_t(x)$, for $\dot t:=tH$, $x\in E_{t^{-1}}=:E_{\dot t^{-1}}$,

$(iii)$ if $\pi$ is amenable in $E$, so is its conjugate representation $\bar\pi$ on $E^*$,

$(iv)$ $\pi$ is
amenable on $G$ if and only if $\pi$ is amenable on its discretisation $G_d$,
 
$(v)$ when $E$ is a Hilbert space, $\pi$ is amenable if and only if there exists a ad$_{\pi}$-invariant 
non-trivial bounded functional on $\mathbb B(E)$.

$(vi)$ if $\pi$ has an amenable partial subrepresentation whose invariant subspace is complemented in $E$, then $\pi$ is amenable.

$(vii)$ if a finite direct sum $\pi=\pi_1\oplus\cdots\oplus \pi_n$ of partial representations is
amenable, then $\pi_i$ is amenable, for some $1\leq i\leq n$.
\end{lem}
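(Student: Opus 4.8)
The plan is to treat every part as a direct manipulation of the defining object, namely a norm-one functional $\phi$ on $\mathbb B(E)$ with $\phi(\mathrm{id}_E)=1$ satisfying $\phi(\pi_{t^{-1}}T\pi_t)=\phi(p_tTp_t)$ for all $t\in G$ and $T\in\mathbb B(E)$ (the reformulation recorded right after the definition). Parts $(i)$, $(ii)$ and $(iv)$ then reduce to observing that the relevant family of invariance equations is preserved. For $(i)$ the restriction of $\pi$ to a closed subgroup $H$ keeps the partial isometries $\{\pi_h\}_{h\in H}$, so any $\pi$-invariant mean automatically satisfies the subfamily of conditions indexed by $H$, hence is $\pi|_H$-invariant. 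For $(iv)$ the same $\phi$ serves over $G$ and over $G_d$, since the invariance conditions are indexed by individual elements $t$ and do not see the topology, and a (continuous or Borel) partial representation of $G$ is a fortiori a partial representation of $G_d$. For $(ii)$ I would first check that $\dot\pi$ is well defined: applying the relation $\pi_t\pi_h\pi_{h^{-1}}=\pi_{th}\pi_{h^{-1}}$ with $h\in H\subseteq\ker\pi$ (so $\pi_h=\pi_{h^{-1}}=\mathrm{id}_E$) gives $\pi_{th}=\pi_t$, whence $\dot\pi_{\dot t}:=\pi_t$ is unambiguous; the invariance equations for $\pi$ and for $\dot\pi$ are then literally the same family (reindexed by cosets), so $\phi$ is $\pi$-invariant if and only if it is $\dot\pi$-invariant, yielding both implications.

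Parts $(vi)$ and $(vii)$ I would handle by compressing an invariant mean through an invariant projection. For $(vii)$, write $E=\bigoplus_i E_i$ with coordinate projections $Q_i$; since $\pi_t=\bigoplus_i\pi^i_t$, each $Q_i$ commutes with every $\pi_t$. From $1=\phi(\mathrm{id}_E)=\sum_i\phi(Q_i)$ I select an index $i$ with $\phi(Q_i)\neq 0$ and set $\phi_i(T):=\phi(Q_iTQ_i)/\phi(Q_i)$ for $T\in\mathbb B(E_i)$ (viewed inside $\mathbb B(E)$). Invariance of $\phi_i$ follows because $Q_i$ commutes with $\pi_t$ and $\phi$ is invariant, while $\phi_i(\mathrm{id}_{E_i})=1$ by construction; contractivity of the coordinate projections makes $\phi_i$ a mean. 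For $(vi)$, let $F\leq E$ be the $\pi$-invariant complemented subspace carrying the amenable subrepresentation $\pi^F$ with invariant mean $\phi_F$, and let $P$ be a projection onto $F$. Setting $\phi(T):=\phi_F(P\,T|_F)$ gives $\phi(\mathrm{id}_E)=\phi_F(\mathrm{id}_F)=1$, and the $\pi$-invariance of $\phi$ is inherited from that of $\phi_F$ provided $P$ commutes with the $\pi_t$; this is exactly where I use that the complement may be taken invariant, which is automatic in a Hilbert space, where invariance of $F$ under all $\pi_t=\pi_{t^{-1}}^{*}$ forces $F^{\perp}$ to be invariant and $P$ to be the orthogonal projection.

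For $(iii)$ I would transport $\phi$ along the adjoint map $\theta:\mathbb B(E)\to\mathbb B(E^*)$, $\theta(T)=T^*$, which is a unital isometric anti-homomorphism intertwining the adjoint actions: with $\bar\pi_t:=(\pi_{t^{-1}})^{*}$ one computes $\theta(\pi_{t^{-1}}T\pi_t)=\bar\pi_{t^{-1}}\,\theta(T)\,\bar\pi_t$. Defining $\bar\phi:=\phi\circ\theta^{-1}$ on the range $\theta(\mathbb B(E))$ produces a $\bar\pi$-invariant, unital, norm-one functional there; when $\theta$ is onto (in particular for Hilbert $E$, where $E^*\cong\bar E$ and $\theta$ is an anti-isomorphism onto $\mathbb B(\bar E)$) this is already the desired mean, and in the general Banach case one extends it using an equivariant norm-one projection of $\mathbb B(E^*)$ onto the adjoint operators.

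The part I expect to be the main obstacle is the nontrivial (reverse) implication of $(v)$: producing an invariant mean from a merely nonzero $\mathrm{ad}_\pi$-invariant bounded functional $\psi$ on $\mathbb B(H)$. My plan is to pass to the self-adjoint functionals $\psi+\psi^{\ast}$ and $i(\psi-\psi^{\ast})$, where $\psi^{\ast}(T)=\overline{\psi(T^*)}$, at least one of which is nonzero and still invariant, and then to extract its positive part in the Jordan decomposition. Because each $T\mapsto\pi_{t^{-1}}T\pi_t$ is a $*$-isomorphism between the corners $p_t\mathbb B(H)p_t$ and $p_{t^{-1}}\mathbb B(H)p_{t^{-1}}$, the Jordan decomposition is canonical and hence preserved, so the positive part $\rho$ stays invariant; since $\rho$ is a nonzero positive functional on the unital C*-algebra $\mathbb B(H)$ we have $\rho(\mathrm{id})=\|\rho\|>0$, and normalizing yields an invariant state. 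The delicate points to get right are that invariance only passes through the corners (so all positivity and Jordan-decomposition arguments must be phrased corner by corner) and that $\rho$ does not accidentally annihilate $\mathrm{id}$.
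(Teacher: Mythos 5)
The paper itself offers no argument for this lemma (it is declared an ``immediate consequence of the definition,'' with a pointer to Bekka's Remark~1.2 and Theorem~1.3), so the only question is whether your details actually close the claims \emph{as stated}: parts (i)--(iv), (vi), (vii) for Banach spaces, part (v) for Hilbert spaces. Your (i), (ii), (iv) are correct, and your compressions in (vi), (vii) are sound on Hilbert spaces. The main gap is the reverse direction of (v). Bekka's argument, which you are adapting, works because for a global unitary representation each $T\mapsto\pi_{t^{-1}}T\pi_t$ is a $*$-automorphism of $\mathbb B(E)$: composing the Jordan parts with an automorphism preserves positivity \emph{and} norm additivity, so uniqueness of the Jordan decomposition forces invariance of $\psi_\pm$. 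For a partial representation the invariance identity $\psi(\pi_{t^{-1}}T\pi_t)=\psi(p_tTp_t)$ equates $\psi$ composed with two different \emph{non-surjective} positive maps --- a corner isomorphism $\Theta_t\colon p_t\mathbb B(E)p_t\to p_{t^{-1}}\mathbb B(E)p_{t^{-1}}$ and the compression $C_t$. Precomposing a Jordan decomposition with such maps destroys norm additivity, and restriction to a corner does not commute with taking Jordan parts: for $\psi=\omega_x-\omega_y$ on $M_2$ with $x=e_1$, $y=(e_1+e_2)/\sqrt2$, the restriction of $\psi$ to the corner $\mathbb C e_{11}$ is positive (its Jordan negative part is $0$), while the restrictions of $\psi_\pm$ are both nonzero. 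So ``the Jordan decomposition is canonical and hence preserved'' is precisely the unproved step; your corner-by-corner plan only yields that the Jordan parts of the \emph{restrictions} $\psi|_{p_t\mathbb B(E)p_t}$ correspond under $\Theta_t$, and these corner functionals do not glue to a single positive invariant functional on $\mathbb B(E)$. You flagged this as delicate, but the proposal does not resolve it.

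The second gap is (iii) in the stated Banach generality. Your functional $\phi\circ\theta^{-1}$ is defined only on the subalgebra $\theta(\mathbb B(E))$ of adjoint (weak$^*$-continuous) operators, whereas $\bar\pi$-invariance must hold for all $S\in\mathbb B(E^*_t)$, most of which are not adjoints. The ``equivariant norm-one projection of $\mathbb B(E^*)$ onto the adjoint operators'' that you invoke does not exist in general: the natural candidate $S\mapsto\bigl(j^{-1}QS^*j\bigr)^*$ requires a norm-one projection $Q\colon E^{**}\to E$, i.e.\ $E$ being $1$-complemented in its bidual, which fails for instance for $E=c_0$; and even where such a $Q$ exists, its equivariance is unjustified. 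A plain Hahn--Banach extension of $\phi\circ\theta^{-1}$ is unital and of norm one but carries no invariance off the range of $\theta$, so the Banach case of (iii) remains open in your write-up (it is fine for Hilbert spaces, where $\theta$ is onto $\mathbb B(\bar E)$). Finally, a smaller but real defect in (vi), (vii) for Banach $E$: your functionals $T\mapsto\phi(Q_iTQ_i)/\phi(Q_i)$ and $T\mapsto\phi_F(PTP|_F)$ are unital and invariant, but a mean must have norm one, and since $\phi$ is not a state ($\mathbb B(E)$ is not a C$^*$-algebra, and $\phi(Q_i)$ need not even be positive) you only get $\|\phi_i\|\le\|Q_i\|^2/|\phi(Q_i)|$; positivity rescues this on Hilbert spaces, but not in the Banach setting in which these parts are stated, and in (vi) you additionally need the complement of $F$ to be invariant, which the hypothesis does not provide outside the Hilbert case.
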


Following Bekka \cite{b}, we define an analog of the Reiter condition for partial representations on {\it Hilbert} spaces. 
Let $E$ be a Hilbert space. Let $L^p(E)$ denote the Schatten $p$-class on $E$, namely,
$$L^p(E):=\{T\in \mathbb B(E): {\rm Tr}(|T|^p)<\infty\},$$
for $1\leq p<\infty$, and put $L^\infty(E):=\mathbb B(E)$. We define the Schatten $p$-norm by $\|T\|_p:={\rm Tr}(|T|^p)^{\frac{1}{p}}$.

The algebra $L^1(E)$ of trace-class operators is 
a Banach $L^l(G)$-module via the action,
$$f\cdot T := \int_G f(t)\pi_tT\pi_{t^{-1}}dt,\ \  (f\in L^1(G), T\in L^1(E)).$$ Recall that $P^1(G)$ consists of norm one positive functions in $L^1(G)$. 

\begin{df} \label{rc}
We say that a partial representation $\pi$ on a Hilbert space $E$ satisfies {\it Reiter condition} $(P_p)$, for $1\leq p<\infty$, if for each $\varepsilon > 0$ and $K\subseteq  G$ compact,  there exists a positive norm one operator $T\in L^p(E)$ with $\|\pi_tT\pi_{t^{-1}}-T\|_p<\varepsilon$, for $t\in K$.
\end{df}

Note that when $E$ is a Banach space, it does not make sense to ask for the existence of a ``positive'' operator $T$ satisfying the above condition. We say that a partial representation $\pi$ on a {\it Banach} space $E$ satisfies {\it Reiter condition} $(\tilde P_p)$, for $1\leq p<\infty$, if for each $\varepsilon > 0$ and $K\subseteq  G$ compact,  there exists a  norm one operator $T\in L^p(E)$ with $\|\pi_tT\pi_{t^{-1}}-T\|_p<\varepsilon$, for $t\in K$. For global representations on a Hilbert space, conditions $(P_2)$ and $(\tilde P_2)$ are known to be equivalent (by Powers-St{\o}rmer inequality). In the case of partial representations on Hilbert spaces, $(P_2)$ might be stronger than $(\tilde P_2)$. 

\begin{prop} \label{Reiter2}
Let $\pi$ be a partial representation on a Hilbert space $E$. Then the following are equivalent.

$(i)$ $\pi$ is amenable in the sense of Bekka,

$(ii)$ there is a net $(S_i)$ of norm one, positive operators in $L^1(E)$ satisfying,  $$\|f\cdot S_i-S_i\|_1\to 0,\ \ (f\in P^1(G)),$$

$(iii)$ there is a net $(S_i)$ of norm one, positive operators in $L^1(E)$ satisfying,  $$\|\pi_tS_i\pi_{t^{-1}}-S_i\|_1\to 0,\ \ (t\in G),$$

$(iv)$ $\pi$ satisfies Reiter condition $(P_1)$.
\end{prop}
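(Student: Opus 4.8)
The plan is to prove the cycle $(i)\Rightarrow(ii)\Rightarrow(iv)\Rightarrow(iii)\Rightarrow(i)$, throughout reading $\mathbb{B}(E)$ as the dual of the trace ideal $L^1(E)$ and exploiting the $L^1(G)$-module structure $f\cdot S=\int_G f(t)\pi_t S\pi_{t^{-1}}\,dt$ introduced before Definition \ref{rc}. Two of the four arrows are essentially formal. For $(iv)\Rightarrow(iii)$ I would index a net by the directed set of pairs $(K,\varepsilon)$, with $K\subseteq G$ compact and $\varepsilon>0$, taking $S_{(K,\varepsilon)}$ to be the operator furnished by Reiter's condition $(P_1)$; for fixed $t$ one eventually has $t\in K$, whence $\|\pi_t S_{(K,\varepsilon)}\pi_{t^{-1}}-S_{(K,\varepsilon)}\|_1<\varepsilon\to0$. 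For $(iii)\Rightarrow(i)$ I would let $\phi$ be a weak$^*$ cluster point in $\mathbb{B}(E)^*$ of the net $(S_i)$, viewed through $T\mapsto\mathrm{Tr}(S_iT)$; since $\phi(\mathrm{id}_E)=\lim\mathrm{Tr}(S_i)=1$ and $\|\phi\|\le1$, $\phi$ is a state, and the estimate $|\mathrm{Tr}((\pi_tS_i\pi_{t^{-1}}-S_i)T)|\le\|\pi_tS_i\pi_{t^{-1}}-S_i\|_1\,\|T\|\to0$ converts $\mathrm{Tr}(S_i\,\pi_{t^{-1}}T\pi_t)=\mathrm{Tr}(\pi_tS_i\pi_{t^{-1}}\,T)$ into the invariance $\phi(\pi_{t^{-1}}T\pi_t)=\phi(T)$.

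For $(i)\Rightarrow(ii)$ I would follow Bekka's scheme. Starting from a $\pi$-invariant mean $\phi$, I approximate it weak$^*$ by a net $(S_i)$ of density operators (positive, with $\mathrm{Tr}(S_i)=1$), which is legitimate because the normal states are weak$^*$-dense in the state space of $\mathbb{B}(E)$. For fixed $f\in P^1(G)$ one then computes $\langle f\cdot S_i-S_i,T\rangle\to\int_G f(t)\,\phi(p_tTp_t)\,dt-\phi(T)$, using $\mathrm{Tr}(\pi_tS_i\pi_{t^{-1}}T)=\mathrm{Tr}(S_i\,\pi_{t^{-1}}T\pi_t)$ and the remark following the definition of amenability, $\phi(\pi_{t^{-1}}T\pi_t)=\phi(p_tTp_t)$. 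Once this weak limit is seen to be zero, Day's convexity argument, run simultaneously over all $f$ in the product space $\prod_{f\in P^1(G)}L^1(E)$ exactly as in the proof of Theorem \ref{Reiter}, upgrades weak convergence to norm convergence and produces a net with $\|f\cdot S_i-S_i\|_1\to0$ for every $f$; a final renormalization makes the operators norm one. The step $(ii)\Rightarrow(iv)$ then mirrors the passage $(i)\Rightarrow(v)$ of Theorem \ref{Reiter}: fixing $h\in P^1(G)$, the set $L=\{L_th:t\in K\}$ is norm-compact in $L^1(G)$ by \cite[Proposition D.2.1]{ru}, and since $g\mapsto g\cdot S$ is $1$-Lipschitz uniformly in $S$, pointwise convergence on $L$ promotes to $\sup_{g\in L}\|g\cdot S_i-S_i\|_1\to0$; taking $S:=h\cdot S_i$ (renormalized) and invoking the module identity $\delta_t\cdot(h\cdot S)=(L_th)\cdot S$ yields $\sup_{t\in K}\|\pi_tS\pi_{t^{-1}}-S\|_1<\varepsilon$, which is Reiter's $(P_1)$.

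The genuinely new difficulty, absent in Bekka's unitary setting, is the source projection $p_t=\pi_t\pi_{t^{-1}}$. Because $\pi_t$ is only a partial isometry, $\mathrm{Tr}(\pi_tS\pi_{t^{-1}})=\mathrm{Tr}(Sp_{t^{-1}})\le1$, so conjugation loses mass; consequently the weak limit in $(i)\Rightarrow(ii)$ is $\int_G f(t)\,\phi(p_tTp_t)\,dt-\phi(T)$ rather than $0$. This defect vanishes precisely when $\phi(p_t)=1$ for all $t$, for then $\phi(\mathrm{id}_E-p_t)=0$ and the Cauchy--Schwarz inequality for states gives $\phi(p_tTp_t)=\phi(T)$. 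I therefore expect the main obstacle to be the reduction of an arbitrary invariant mean to one of \emph{full support}, $\phi(p_t)=1$ for every $t$ (note that the invariance already forces $\phi(p_t)=\phi(p_{t^{-1}})$, and in the isometric case $\pi_{t^{-1}}\pi_t=\mathrm{id}$ it forces $\phi(p_t)=1$ outright, but in general it does not). Equivalently, one must observe that the asymptotic invariance in $(iii)$ automatically entails the asymptotic support condition $\mathrm{Tr}(S_ip_t)\to1$, the analogue of the normalization $\lambda(g_i)(u)\to1$ in Lemma \ref{am}.

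The same projections reappear in $(ii)\Rightarrow(iv)$: the module identity $\delta_t\cdot(h\cdot S)=(L_th)\cdot S$ fails verbatim since $\pi_t\pi_u\neq\pi_{tu}$ for a partial representation. Here I would use the defining relations $\pi_t\pi_u\pi_{u^{-1}}=\pi_{tu}\pi_{u^{-1}}$ together with the (asymptotic) support of $S$ on the spaces $E_{u^{-1}}=\mathrm{Im}(\pi_{u^{-1}})$, so that $\pi_uS=\pi_up_{u^{-1}}S$ allows the rewriting $\pi_t\pi_uS\pi_{u^{-1}}\pi_{t^{-1}}=\pi_{tu}S\pi_{(tu)^{-1}}$ up to a vanishing error. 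In short, every step of Bekka's argument goes through, but the unitarity that in the global case makes conjugation trace-preserving must be replaced, at each occurrence, by a careful bookkeeping of the projections $p_t$ and of the full-support normalization of the approximating net.
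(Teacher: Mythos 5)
Your two ``formal'' arrows are fine and coincide with the paper's: $(iv)\Rightarrow(iii)$ is the paper's one-line observation, and your cluster-point argument for $(iii)\Rightarrow(i)$, via $\mathrm{Tr}(S_i\pi_{t^{-1}}T\pi_t)=\mathrm{Tr}(\pi_tS_i\pi_{t^{-1}}T)$, is exactly the paper's. The problem is that the two arrows where the real work lies are not proved in your proposal; they are only diagnosed and deferred. For $(i)\Rightarrow(ii)$ your argument hinges on the phrase ``once this weak limit is seen to be zero,'' but you yourself then compute that the limit is $\int_G f(t)\phi(p_tTp_t)\,dt-\phi(T)$, which is not zero in general, and the repair you anticipate --- a reduction to a mean of full support, $\phi(p_t)=1$ for all $t$ --- is never carried out. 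Worse, no such reduction can exist: invariance in the paper's sense only forces $\phi(p_t)=\phi(p_{t^{-1}})$, and for a discrete group with $\pi_t=0$ for every $t\neq e$ (a legitimate partial representation) \emph{every} state on $\mathbb B(E)$ is an invariant mean while $p_t=0$ for $t\neq e$, so the defect cannot be removed by choosing a better $\phi$; whatever closes this gap must use structure beyond the bare existence of an invariant mean. Similarly, your $(ii)\Rightarrow(iv)$ invokes the module identity $\delta_t\cdot(h\cdot S)=(L_th)\cdot S$, which you correctly note is false for partial representations; the ``up to a vanishing error'' fix requires the asymptotic support condition $\|S_i-p_{t^{-1}}S_ip_{t^{-1}}\|_1\to 0$, which falls out easily from $(iii)$ (since $\pi_tS_i\pi_{t^{-1}}$ is exactly $p_t$-supported) but is not obviously available from $(ii)$ alone, which is all your cycle permits you to use at that stage; you do not derive it.

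It is worth recording how the paper negotiates both points, since its route differs from the one you anticipate. For $(i)\Rightarrow(ii)$ it never tries to make the weak limit vanish for arbitrary $T$: it proves the exact conjugation identity $p_t(f\cdot T)p_t=\pi_t\bigl((L_{t^{-1}}f)\cdot T\bigr)\pi_{t^{-1}}$, deduces from invariance of $\phi$ that $m_T(f):=\phi(f\cdot T)$ is a left-invariant functional on $L^1(G)$, hence of the form $c(T)\int_G f\,dt$, and then identifies $c(T)=\phi(T)$ for $T$ in the essential submodule $L^1(G)\cdot L^1(E)$ via the convolution identity $\phi(g\cdot(f\cdot T))=\phi((g*f)\cdot T)$ and an approximate identity of $L^1(G)$; only after this ``topological invariance'' is in hand does the Day convexity argument over $\prod_{f\in P^1(G)}L^1(E)$ (as in Theorem \ref{Reiter}) enter. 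For the Reiter condition, the paper does not prove your $(ii)\Rightarrow(iv)$ at all: it proves $(iii)\Rightarrow(iv)$ using a single operator $S$ satisfying simultaneously $(iii)$-type and $(ii)$-type inequalities, namely $\|\pi_{t_i^{\pm 1}}S\pi_{t_i^{\mp 1}}-S\|_1<\varepsilon$, $\|f\cdot S-S\|_1<\varepsilon$ and $\|\frac{1}{m(U)}(L_{t_i}\mathds{1}_U)\cdot S-S\|_1<\varepsilon$ over a finite cover $K=\bigcup_i t_iU$, combined with the exact identity $f\cdot(p_tTp_t)=(L_{t^{-1}}f)\cdot(\pi_{t^{-1}}T\pi_t)$ and a chain of telescoping estimates; the false identity $\delta_t\cdot(h\cdot S)=(L_th)\cdot S$ is never needed. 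So the skeleton of your cycle is reasonable, but both implications in which the partial-isometry bookkeeping actually matters are missing their key lemmas, and the fix you propose for the first one is provably unavailable.
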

\begin{proof}
$(i)\Rightarrow (ii)$.  Let $\phi$ be an invariant mean on $\mathbb B(E)$. We have,
\begin{align*}
p_{t}(f\cdot T)p_t&=\int_G f(s)p_t\pi_{s^{-1}}T\pi_sp_tds=\int_G f(s)\pi_t\pi_{t^{-1}}\pi_sT\pi_{s^{-1}}\pi_t\pi_{t^{-1}}ds\\&=\int_G f(s)\pi_t\pi_{t^{-1}s}T\pi_{s^{-1}t}\pi_{t^{-1}}ds=\int_G f(ts)\pi_t\pi_{s}T\pi_{s^{-1}}\pi_{t^{-1}}ds\\&=\pi_t(L_{t^{-1}}f\cdot T)\pi_{t^{-1}},
\end{align*} 
thus, $\phi(f\cdot T)=\phi(L_{t^{-1}}f\cdot T),$ for $f\in P^1(G)$ and $T\in \mathbb B(E_t)$. Put $m_T(f)=\phi(f\cdot T)$, then $m_T(L_{t^{-1}})=m_T(f)$, for $f\in P^1(G)$, therefore, $m_T(f)=c(T)\int_G fdt,$ for a constant $c(T)$ depending only on $T$. Next, for $f,g\in L^1(G)$, 
\begin{align*}
\phi(g\cdot (f\cdot T))&=\int_G\int_G g(u)f(s)\phi(\pi_u\pi_sT\pi_{s^{-1}}\pi_{u^{-1}})duds\\&=\int_G\int_G g(u)f(s)\phi(\pi_{u^{-1}}\pi_u\pi_sT\pi_{s^{-1}}\pi_{u^{-1}}\pi_u)duds\\&=\int_G\int_G g(u)f(s)\phi(\pi_{u^{-1}}\pi_{us}T\pi_{s^{-1}u^{-1}}\pi_{u})dsdu\\&=\int_G\int_G g(u)f(u^{-1}s)\phi(\pi_{u^{-1}}\pi_{s}T\pi_{s^{-1}}\pi_{u})dsdu\\&=\int_G g*f(s)\phi(\pi_{s}T\pi_{s^{-1}})ds=\phi((g*f)\cdot T).
\end{align*}
If $T\in L^1(G)\cdot L^1(E)$, then as in \cite[Lemma 3.2]{b}, the map $t\mapsto \pi_tT\pi_{t^{-1}}$ is again into $L^1(G)\cdot L^1(E)$, and is norm continuous. Moreover, for $f\in P^1(G)$, $T=g\cdot S$ in this space, and approximate identity $(e_i)$ of $L^1(G)$ inside $P^1(G)$,   
\begin{align*}
\phi(f\cdot T)&=\lim_i\phi((f*e_i)\cdot T)=c(T)\lim_i\int_G(f*e_i)dt\\&=c(T)=c(g\cdot S)=\lim_i\phi((g*e_i)\cdot S)\\&=\phi(g\cdot S)=\phi(T).
\end{align*}
Next, by a Hahn-Banach extension argument, the set of norm one, positive operators in $L^1(E)$ is weak$^*$-dense in the state space of $\mathbb B(E)$ (c.f., \cite[page 391]{b}). Choose a net $(T_j)$ of norm one positive operators in $L^1(E)$ with $T_j\to \phi$ in weak$^*$-topology. By an argument similar the one used in the proof of the implication $(i)\Rightarrow(v)$ in Theorem \ref{Reiter}, there is another net $(S_j)$ of norm one positive operators in $L^1(E)$ with $\|f\cdot S_j-S_j\|_1\to 0$, for $f\in P^1(G)$. 

$(ii)\Rightarrow (iii)$. If $\phi$ is a weak$^*$-cluster point of the net $(S_i)$, then $\phi(f\cdot T)=\phi(T)$, for each $f\in P^1(G)$ and $T\in \mathbb B(E)$. For $f\in P^1(G)$ and $T\in \mathbb B(E)$,
 \begin{align*}
 	f\cdot(p_{t}Tp_t)&=\int_G f(s)\pi_{s^{-1}}p_tTp_t\pi_sds\\&=\int_G f(s)\pi_{s^{-1}}\pi_t\pi_{t^{-1}}T\pi_t\pi_{t^{-1}}\pi_{s}ds\\&=\int_G f(s)\pi_{s^{-1}t}\pi_{t^{-1}}T\pi_{t}\pi_{t^{-1}s}ds\\&=\int_G f(ts)\pi_{s^{-1}}\pi_{t^{-1}}T\pi_{t}\pi_{s}ds\\&=L_{t^{-1}}f\cdot (\pi_{t^{-1}}T\pi_t),
 \end{align*} 
Thus, $$\phi(p_{t}Tp_t)=\phi(f\cdot(p_{t}Tp_t))=\phi(L_{t^{-1}}f\cdot (\pi_{t^{-1}}T\pi_t))=\phi(\pi_{t^{-1}}T\pi_t),$$
that is, $\phi$ is invariant. 

$(iii)\Rightarrow (i)$. Any cluster point $\phi$ of the net $(S_i)$ is an invariant functional on $\mathbb B(E)$.

$(iii)\Rightarrow (iv)$.  Given $\varepsilon > 0$ and  $K\subseteq G$ compact, and $f\in P^1(G)$, choose a compact symmetric neighborhood
$U$ of $e$  such that $\|\frac{1}{m(U)}\mathds{1}_U*f - f\|_1 < \varepsilon$ and $\|L_sf - f\|_1 < \varepsilon$, for $s\in U$. Choose finitely many points $t_i\in K$ with $K= \bigcup_{i}  t_iU$. Choose norm one positive operator $S\in L^1(E)$ with,
$$ \|\pi_{t_i^{\pm 1}}S\pi_{t^{\mp 1}_i}-S\|_1<\varepsilon, \ \ \|f\cdot S-S\|_1<\varepsilon, \ \ \|\frac{1}{m(U)}(L_{t_i}\mathds{1}_U)\cdot S-S\|_1<\varepsilon,$$
for each of finitely many indices $i$. Put $T:=f\cdot S$, then,
from the first and second inequalities above, it follows that,
$$ \|\pi_{t_i^{\pm 1}}T\pi_{t^{\mp 1}_i}-T\|_1<3\varepsilon, \ \ \|p_{t_i^{\pm 1}}Tp_{t^{\pm 1}_i}-T\|_1<9\varepsilon, $$
for each index $i$. 

Also, for $s\in U$,
\begin{align*}
\|\frac{1}{m(U)}\mathds{1}_U\cdot T-\pi_{s^{-1}}T\pi_s\|_1&\leq \|\frac{1}{m(U)}\mathds{1}_U\cdot (f\cdot S)-f\cdot S\|_1\\&+\|\pi_{s^{-1}}(f\cdot S)\pi_s-f\cdot S\|_1\\&=\|(\frac{1}{m(U)}\mathds{1}_U\cdot f-f)\cdot S\|_1\\&+\|\pi_{s^{-1}}\big((L_sf-f)\cdot S\big)\pi_s\|_1<2\varepsilon. 
\end{align*}
Since $\|\frac{1}{m(U)}\mathds{1}_U\cdot (p_{t^{-1}_i}Tp_{t^{-1}_i}-T)-\pi_{s^{-1}}(p_{t^{-1}_i}Tp_{t^{-1}_i}-T)\pi_s\|_1$ is at most $18\varepsilon$, it follows from the above inequality that,
$$\|\frac{1}{m(U)}\mathds{1}_U\cdot (p_{t^{-1}_i}Tp_{t^{-1}_i})-\pi_{s^{-1}}(p_{t^{-1}_i}Tp_{t^{-1}_i})\pi_s\|_1<20\varepsilon.$$ 
Now, from the calculations in the proof of implication $(ii)\Rightarrow(iii)$,
$$\mathds{1}_U\cdot (p_{t^{-1}_i}Tp_{t^{-1}_i})=L_{t_i}\mathds{1}_U\cdot(\pi_{t_i}T\pi_{t^{-1}_i})=\mathds{1}_{t_iU}\cdot(\pi_{t_i}T\pi_{t^{-1}_i}),$$
and,
$$\pi_{s^{-1}}(p_{t^{-1}_i}Tp_{t^{-1}_i})\pi_s=\pi_{s^{-1}}\pi_{t^{-1}_i}\pi_{t_i}T\pi_{t^{-1}_i}\pi_{t_i}\pi_s=\pi_{(t_is)^{-1}}(\pi_{t_i}T\pi_{t^{-1}_i})\pi_{t_is},$$
and we could rewrite the above inequality as,
$$\|\frac{1}{m(U)}\mathds{1}_{t_iU}\cdot(\pi_{t_i}T\pi_{t^{-1}_i})-\pi_{(t_is)^{-1}}(\pi_{t_i}T\pi_{t^{-1}_i})\pi_{t_is}\|_1<20\varepsilon.$$ 
Again, since $\|\pi_{t_i}T\pi_{t^{-1}_i}-T\|_1<\varepsilon,$ it follows that,
$$\|\frac{1}{m(U)}\mathds{1}_{t_iU}\cdot T-\pi_{(t_is)^{-1}}T\pi_{t_is}\|_1<22\varepsilon.$$ 
Finally,
\begin{align*}
\|\pi_{(t_is)^{-1}}T\pi_{t_is}-T\|_1&\leq22\varepsilon+\|\frac{1}{m(U)}\mathds{1}_{t_iU}\cdot T-T\|_1\\&\leq22\varepsilon+\|\frac{1}{m(U)}\mathds{1}_{t_iU}\cdot S-S\|_1+2\|T-S\|_1\\&=22\varepsilon+
\|\frac{1}{m(U)}(L_{t_i}\mathds{1}_U)\cdot S-S\|_1+2\|f\cdot S-S\|_1\\&<25\varepsilon,
\end{align*}
for each $i$ and each $s\in U$. This means that $\|\pi_{t^{-1}}T\pi_{t}-T\|_1<25\varepsilon$, for each $t\in K$.

$(iv)\Rightarrow (iii)$. This is immediate. 
   \end{proof}

\begin{cor}
A \pa $\al$ satisfies  the Reiter condition $(P_1)$ iff its Koopman partial representation satisfies $(P_1)$. 
\end{cor}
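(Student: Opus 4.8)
The plan is to avoid any direct computation and instead assemble the corollary from three equivalences already established in the paper, since the Koopman partial representation $\kappa^\al$ lives on the \emph{Hilbert} space $E:=L^2(X,\nu)$ (Lemma \ref{key}$(i)$), so every tool we need is available on both sides. Note first that the two occurrences of ``$(P_1)$'' in the statement refer to \emph{different} definitions: on the left it is the Reiter condition for the partial action (the condition appearing as item $(v)$ of Theorem \ref{Reiter}), and on the right it is the Reiter condition for the partial representation (Definition \ref{rc}, equivalently item $(iv)$ of Proposition \ref{Reiter2}). I would read these as the two ends of a chain whose internal links are Greenleaf amenability of $\al$ and Bekka amenability of $\kappa^\al$.

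The chain runs as follows. First, Theorem \ref{Reiter}, equivalence $(i)\Leftrightarrow(v)$, identifies the action-side Reiter condition $(P_1)$ with amenability of $\al$ in the sense of Greenleaf. Next, Proposition \ref{Koopman}$(a)$ identifies Greenleaf amenability of $\al$ with amenability of the Koopman partial representation $\kappa^\al$ on $L^2(X,\nu)$ in the sense of Bekka. Finally, because $L^2(X,\nu)$ is a Hilbert space, Proposition \ref{Reiter2}, equivalence $(i)\Leftrightarrow(iv)$, identifies Bekka amenability of $\kappa^\al$ with the representation-side Reiter condition $(P_1)$. Composing the three biconditionals yields exactly the asserted equivalence, and no new estimate is required.

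For an alternative, more hands-on route one could try to transport a Reiter net directly across the correspondence of Lemma \ref{key}$(i)$: given a norm one positive $f\in L^1(X,\nu)$ witnessing the action condition, set $g:=f^{1/2}\in L^2(X,\nu)$ and test the representation condition with the rank one projection $T_g:=\langle\,\cdot\,,g\rangle g$. A short computation using $(\kappa^\al_{t^{-1}})^{*}=\kappa^\al_t$ gives $\kappa^\al_t T_g\kappa^\al_{t^{-1}}=T_{\kappa^\al_t g}$, and the elementary trace-norm bound $\|T_{g_1}-T_{g_2}\|_1\le(\|g_1\|_2+\|g_2\|_2)\|g_1-g_2\|_2$ reduces matters to estimating $\|\kappa^\al_t g-g\|_2$. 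Here the \emph{main obstacle} appears: after the change of variables $x=\al_t(y)$ and the cocycle identity $\sigma_{\rm RN}(\al_t(y),t^{-1})=\sigma_{\rm RN}(y,t)^{-1}$, one finds
\[
\|\kappa^\al_t g-g\|_2^2=\int_{X_{t^{-1}}}\big|g(y)-\sigma_{\rm RN}^{1/2}(y,t)\,g(\al_t(y))\big|^2\,d\nu(y)+\int_{X\setminus X_t}|g(x)|^2\,d\nu(x).
\]
The first summand is precisely the action-side $(P_2)$ integrand, but the second is a genuine boundary term measuring the mass of $g$ outside $X_t$, which is \emph{not} controlled by the action condition alone. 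Taming this term (for instance by first passing to condition $(P_2)$ of Theorem \ref{Reiter} or by arranging the witness to concentrate where the partial sets overlap) is exactly the sort of technicality the conceptual chain sidesteps, which is why I would present the corollary via the three-step composition above.
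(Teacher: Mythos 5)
Your proof is correct and takes essentially the same route as the paper: the paper's own proof consists precisely of citing Theorem \ref{Reiter}, Proposition \ref{Koopman} and Proposition \ref{Reiter2}, i.e.\ exactly the three-step chain through Greenleaf amenability of $\al$ and Bekka amenability of $\kappa^\al$ that you compose. The alternative hands-on transport argument you sketch (and its boundary-term obstacle) is not needed and is not what the paper does.
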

 \begin{proof}
 This follows from Theorem \ref{Reiter}, Propositions \ref{Koopman} and \ref{Reiter2}
 \end{proof}
 \begin{rk}
 Recall that from the Cauchy-Schwartz and Powers-St{\o}rmer \cite{ps} inequalities, 
 $$\|SS^*-TT^*\|_1\leq\|S-T\|_2(\|S\|_2+\|T\|_2),\ \ \||S|-|T|\|^2_2\leq\||S|^2-|T|^2\|_1,$$ 
 for $S,T\in L^2(E)$. It is plausible that by these inequalities, Reiter conditions $(P_1)$ and $(P_2)$ are equivalent for partial representations, but the naive argument (as in the proof of \cite[Theorem 4.3]{b}) does not seem to work in this case.    
 \end{rk}

\begin{df}
A partial representation $\pi$ on a Hilbert space $E$ satisfies {\it F{\o}lner condition}
if  given $\varepsilon > 0$ and $K\subseteq  G$ compact, there exists
a nonzero finite rank projection $P\in\mathbb B(E)$ such that $\|\pi_tP\pi_{t^{-1}}-P\|_1<\varepsilon\|P\|_1$, for $t\in K$. It satisfies {\it weak F{\o}lner condition} if  given $\varepsilon > 0$, $\delta>0$  and $K\subseteq  G$ non-null compact, there exists
	a nonzero finite rank projection $P\in\mathbb B(E)$ and measurable subset $N\subseteq K$ such that $m(N)<\delta$ and  $\|\pi_tP\pi_{t^{-1}}-P\|_1<\varepsilon\|P\|_1$, for $t\in K\backslash N$.  
We say that $\pi$  has {\it Dieudonn\'e condition} $(D_p)$ if for the integral operator
	$$\Phi^p_\pi: M(G)\to \mathbb B(L^p(E));\ \ \Phi^p_\pi(\mu)T:=\int_G \pi_tT\pi_{t^{-1}}d\mu(t), \ \ (T\in L^p(E)),$$
	we have $\|\Phi^p_\pi(\mu)\|=\|\mu\|,$ for each positive bounded Radon measure $\mu\in M(G)_+$. 
\end{df}

The last result of this section is proved similar to \cite[Theorems 6.2, 6.3, 6.5]{b} (with the difference that for global action all these conditions are equivalent, whereas at this point we don't know this).

\begin{lem} \label{wf}
Let $\pi$ be a partial representation on a Hilbert space $E$ then,

$(i)$ if $\pi$ satisfies F{\o}lner condition, then it satisfies Reiter condition $(P_p)$, for each $1\leq p<\infty$,

$(ii)$ if $\pi$ satisfies Reiter condition $(P_2)$, it satisfies weak F{\o}lner condition,

$(iii)$ if $\pi$ satisfies Reiter condition $(P_p)$, it satisfies Dieudonn\'e condition $(D_p)$.
\end{lem}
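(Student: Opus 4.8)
The plan is to follow Bekka's scheme for global representations (his Theorems 6.2, 6.3, 6.5), replacing the unitaries by the partial isometries $\pi_t$. I use throughout that on a Hilbert space the generalized inverse is the adjoint, so $\pi_{t^{-1}}=\pi_t^*$ and the map $X\mapsto\pi_tX\pi_{t^{-1}}$ is a compression by a partial isometry (rather than a unitary conjugation); this is precisely what degrades the clean global equivalences to the one-directional implications stated here.

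For part $(i)$ I would start from a Følner projection. Given $\varepsilon>0$ and $K\subseteq G$ compact, apply the Følner condition with tolerance $\varepsilon^p$ to get a nonzero finite-rank projection $P$ with $\|\pi_tP\pi_{t^{-1}}-P\|_1<\varepsilon^p\|P\|_1$ for $t\in K$, and set $T:=P/\|P\|_p=P/({\rm Tr}\,P)^{1/p}$, a positive operator with $\|T\|_p=1$. The elementary inequality $\|A\|_p^p\le\|A\|_1$, valid for any selfadjoint $A$ whose spectrum lies in $[-1,1]$ (there $|\lambda|^p\le|\lambda|$), applies to $A=\pi_tP\pi_{t^{-1}}-P$: indeed $0\le\pi_tP\pi_{t^{-1}}\le p_t\le{\rm id}$, so $A$ is a difference of two positive contractions and has spectrum in $[-1,1]$. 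Hence $\|\pi_tT\pi_{t^{-1}}-T\|_p=\|P\|_p^{-1}\|\pi_tP\pi_{t^{-1}}-P\|_p\le(\|\pi_tP\pi_{t^{-1}}-P\|_1/\|P\|_1)^{1/p}<\varepsilon$, which is $(P_p)$.

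Part $(iii)$ is the most tractable. First, $\Phi^p_\pi$ is contractive on $M(G)_+$: each $\pi_t$ is a contraction, so $\|\pi_tT\pi_{t^{-1}}\|_p\le\|T\|_p$, whence $\|\Phi^p_\pi(\mu)T\|_p\le\int_G\|\pi_tT\pi_{t^{-1}}\|_p\,d\mu(t)\le\|\mu\|\,\|T\|_p$ and $\|\Phi^p_\pi(\mu)\|\le\|\mu\|$. For the reverse inequality I would reduce to compactly supported $\mu$ by inner regularity (if $\mu(G\setminus K)\to0$ then $\|\Phi^p_\pi(\mu-\mu|_K)\|\le\mu(G\setminus K)\to0$), and then feed in the Reiter operator: for $\mu$ supported on compact $K$ and a positive $T$ with $\|T\|_p=1$ and $\sup_{t\in K}\|\pi_tT\pi_{t^{-1}}-T\|_p<\varepsilon$, one has $\|\mu\|T=\int_KT\,d\mu$, so $\|\Phi^p_\pi(\mu)T-\|\mu\|T\|_p\le\int_K\|\pi_tT\pi_{t^{-1}}-T\|_p\,d\mu(t)<\varepsilon\|\mu\|$, giving $\|\Phi^p_\pi(\mu)\|\ge\|\Phi^p_\pi(\mu)T\|_p\ge\|\mu\|(1-\varepsilon)$. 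Letting $\varepsilon\to0$ yields $(D_p)$.

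Part $(ii)$ carries the real difficulty. Following the layer-cake (coarea) idea, take the positive norm-one $T\in L^2(E)$ supplied by $(P_2)$ with $\sup_{t\in K}\|\pi_tT\pi_{t^{-1}}-T\|_2<\varepsilon'$, pass to $T^2\in L^1(E)$ with $\|T^2\|_1=1$, and use its spectral projections $P_s:=\mathds{1}_{(s,\infty)}(T^2)$, which are finite rank and satisfy $T^2=\int_0^\infty P_s\,ds$ and $\int_0^\infty\|P_s\|_1\,ds=1$. The aim is to locate a level $s_0$ for which $P_{s_0}$ is almost a Følner projection for most $t\in K$, by integrating the defect $\|\pi_tP_s\pi_{t^{-1}}-P_s\|_1$ against $ds$ and against Haar measure $dm(t)$ and then extracting $s_0$ together with an exceptional set $N$ of small measure via a Chebyshev/Markov argument; this averaging is exactly why only the \emph{weak} Følner condition is obtained. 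The main obstacle is that both bridging steps that are automatic in the global case fail for genuine partial isometries. First, since $\pi_{t^{-1}}\pi_t=p_{t^{-1}}\ne{\rm id}$, one has $\pi_tT^2\pi_{t^{-1}}\ne(\pi_tT\pi_{t^{-1}})^2$, so the passage from the $L^2$-control of $T$ to the Powers--St\"ormer/Cauchy--Schwarz estimate $\|\pi_tT^2\pi_{t^{-1}}-T^2\|_1\le\|\pi_tT\pi_{t^{-1}}-T\|_2\,(\|\pi_tT\pi_{t^{-1}}\|_2+\|T\|_2)\le2\varepsilon'$ must be reworked on the subspaces $E_t={\rm Im}(\pi_t)$. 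Second, spectral projections are no longer covariant under compression, i.e.\ $\mathds{1}_{(s,\infty)}(\pi_tT^2\pi_{t^{-1}})\ne\pi_tP_s\pi_{t^{-1}}$ in general, and even the noncommutative coarea inequality controlling $\int_0^\infty\|\pi_tP_s\pi_{t^{-1}}-P_s\|_1\,ds$ by $\|\pi_tT^2\pi_{t^{-1}}-T^2\|_1$ is delicate (the naive layer-cake bound runs in the wrong direction once spectra are not preserved). Navigating these two points, with careful bookkeeping of the ranges $E_t,E_{t^{-1}}$, is where the argument genuinely departs from Bekka's and where the loss to the exceptional-set conclusion is incurred.
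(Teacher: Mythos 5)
Your parts (i) and (iii) are correct and complete, and they are exactly the kind of adaptation the paper intends: the paper's own ``proof'' of this lemma is the single remark that it is proved like Bekka's Theorems 6.2, 6.3, 6.5, so for these two parts you have in fact supplied details the paper omits. The premise you rely on is also sound: on a Hilbert space $\pi_{t^{-1}}=\pi_t^*$ does hold, since taking $s=t^{-1}$ in the axioms shows $\pi_{t^{-1}}$ is a contractive generalized inverse of $\pi_t$, the idempotent contractions $\pi_t\pi_{t^{-1}}$ and $\pi_{t^{-1}}\pi_t$ are then orthogonal projections, and a contractive generalized inverse of a Hilbert space contraction is unique and equals the adjoint. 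Hence $\pi_tP\pi_{t^{-1}}$ and $P$ are positive contractions, the spectrum of their difference lies in $[-1,1]$, and your bound $\|A\|_p^p\le\|A\|_1$ legitimately gives (i); your two-sided estimate plus inner regularity gives (iii).

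The genuine gap is part (ii): you never prove it. You set up the layer-cake/Chebyshev scheme, then list two obstructions and stop, declaring that navigating them is where the argument departs from Bekka's. Naming the obstacles is not overcoming them, and they can be overcome --- that is precisely what a proof of (ii) must do. The structural fact you are missing is that $X\mapsto\pi_tX\pi_{t^{-1}}$ is a trace-preserving $*$-isomorphism from the corner $p_{t^{-1}}\mathbb{B}(E)p_{t^{-1}}$ onto $p_t\mathbb{B}(E)p_t$, so it does commute with spectral calculus there (for functions vanishing at $0$). Writing $T_t:=p_{t^{-1}}Tp_{t^{-1}}$, one has $\pi_tT\pi_{t^{-1}}=\pi_tT_t\pi_{t^{-1}}$, $(\pi_tT\pi_{t^{-1}})^2=\pi_tT_t^2\pi_{t^{-1}}$, and $\chi_{(s,\infty)}\big((\pi_tT\pi_{t^{-1}})^2\big)=\pi_t\,\chi_{(s,\infty)}(T_t^2)\,\pi_{t^{-1}}$ for $s>0$; moreover the $(P_2)$ hypothesis forces $T$ to almost lie in this corner, since
\[
\|T-T_t\|_2^2=\|T\|_2^2-\|T_t\|_2^2\le 1-(1-\varepsilon')^2\le 2\varepsilon',
\]
using $\|T_t\|_2=\|\pi_tT\pi_{t^{-1}}\|_2\ge 1-\varepsilon'$. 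With these two facts your stated worries dissolve: the inequality $\|A^2-B^2\|_1\le\|A-B\|_2\,\|A+B\|_2$ holds for arbitrary self-adjoint $A,B$ (no unitarity enters), so it controls both $\|(\pi_tT\pi_{t^{-1}})^2-T^2\|_1$ and $\|T_t^2-T^2\|_1$; and the Namioka--Connes spectral (coarea) lemma concerns an arbitrary pair of positive trace-class operators, so it needs no covariance --- covariance is needed only to recognize $\chi_{(s,\infty)}\big((\pi_tT\pi_{t^{-1}})^2\big)$ as a conjugated spectral projection, which the corner isomorphism supplies with $T_t^2$ in place of $T^2$, at a cost of order $\sqrt{\varepsilon'}$ by a triangle inequality. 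After that, Fubini over $K$ against Haar measure and a Chebyshev estimate extract the level $s_0$ and the exceptional set $N$. A final inaccuracy: you attribute the ``weak'' conclusion to the partial-isometry technicalities, but the exceptional set is an artifact of the averaging step and appears already in the global case; what is genuinely unavailable for partial representations is the subsequent covering argument upgrading weak F{\o}lner to F{\o}lner, which is exactly the equivalence the paper flags as unknown.
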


\section{Induced partial representations and weak containment} \label{sec:ind}

In this section we introduce and study the notion of induced partial representations from a closed subgroup and use it to study the perseverance of amenability under weak containment. 

Let $H\leq G$ be a closed subgroup of a locally compact group $G$ and $\pi$ be a partial representation of $H$ in a Banach space $E$. Let $p_t:=\pi_t\pi_{t^{-1}}$ be the projection onto the closed subspace $E_t:={\rm Im}(\pi_t)$, for $t\in H$. 

Let $q: G\to G/H$ be the quotient map onto the homogeneous space of left cosets of $H$. Let $G=\bigsqcup_{x\in J} xH$ be a transversal decomposition of $G$ into disjoint union of left cosets of $H$. We call $J$ a {\it transversal set} for $H$ and fix it for the rest of our construction of the induced partial representation. 

We denote by $\mathcal F_\pi(G)$ the set of all norm continuous maps $\xi: G\to E$ with $q({\rm supp}\xi)\subseteq G/H$ compact, satisfying,
$$\pi_t\xi(st)=p_{t}\xi(s),\ \ \xi(xt)\in E_{t^{-1}}, \ \ (s\in G, x\in J, t\in H).$$ 
Note that for $x\in J$ and $t\in H$, the condition 
$\pi_t\xi(xt)=p_{t}\xi(x)$ is equivalent to $\xi(xt)=\pi_{t^{-1}}\xi(x)$, since $p_{t^{-1}}$ acts as identity on $E_{t^{-1}}$.  

Let us first observe that $\mathcal F_\pi(G)$ is non empty. For an $E$-valued continuous function $f\in C_c(G, E)$ of compact support, we put,
$$\xi_f(s):=\int_H \pi_t f(st)dt,$$
where the integral is against a left Haar measure on $H$. We denote the set of those $f\in C_c(G, E)$ satisfying, $f(xt)\in E_{t^{-1}}$ for $x\in J$, $t\in H$, with $C^\pi_c(G,E)$.

\begin{lem} For each closed subgroup $H$,
	
	$(i)$	$\mathcal F_\pi(G)\supseteq\{\xi_f: f\in C^\pi_c(G, E)\}=:\mathcal F^0_\pi(G),$
	
	$(ii)$ when $H=\{e\}$, 	$\mathcal F_\pi(G)=\mathcal F^0_\pi(G).$
\end{lem}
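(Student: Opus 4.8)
The plan is to prove part $(i)$ by checking, for an arbitrary $f\in C^\pi_c(G,E)$, that $\xi_f$ satisfies every clause in the definition of $\mathcal{F}_\pi(G)$, and to prove part $(ii)$ by observing that when $H=\{e\}$ all the structural conditions degenerate and both sides collapse to $C_c(G,E)$.

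First I would dispose of the analytic points. Since each $\pi_r$ is a contraction ($\|\pi_r\|\le 1$) and, for fixed $s$, the map $r\mapsto f(sr)$ is supported in the compact set $s^{-1}\mathrm{supp}(f)\cap H$, the integral defining $\xi_f(s)$ converges. Norm continuity of $\xi_f$ follows from the (left and right) uniform continuity of the compactly supported $f$ together with the estimate $\|\xi_f(s)-\xi_f(s')\|\le\int_{C_0}\|f(sr)-f(s'r)\|\,dr$ over a fixed compact $C_0\subseteq H$ valid for $s'$ in a neighborhood of $s$. Finally, $\xi_f(s)\neq 0$ forces $sr\in\mathrm{supp}(f)$ for some $r\in H$, so $\xi_f$ vanishes off $\mathrm{supp}(f)\,H$; as $q$ is right $H$-invariant, $q(\mathrm{supp}\,\xi_f)\subseteq q(\mathrm{supp}\,f)$, which is compact, giving the required support condition.

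The two equivariance identities are the heart of part $(i)$. For $\pi_t\xi_f(st)=p_t\xi_f(s)$, I would substitute $r=t^{-1}u$ in $\xi_f(st)=\int_H\pi_r f(str)\,dr$, using left invariance of the Haar measure of $H$, to obtain $\pi_t\xi_f(st)=\int_H\pi_t\pi_{t^{-1}u}f(su)\,du$; the relation of Definition \ref{df:pr}(3) with $t$ replaced by $t^{-1}$ gives $\pi_t\pi_{t^{-1}u}=\pi_t\pi_{t^{-1}}\pi_u=p_t\pi_u$, whence $\pi_t\xi_f(st)=p_t\xi_f(s)$. For the membership $\xi_f(xt)\in E_{t^{-1}}$, which is exactly where the hypothesis $f\in C^\pi_c(G,E)$ enters, I write $\xi_f(xt)=\int_H\pi_r f(xtr)\,dr$ and note that, since $x\in J$ and $tr\in H$, the defining property of $C^\pi_c(G,E)$ gives $f(xtr)\in E_{(tr)^{-1}}=E_{r^{-1}t^{-1}}$. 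Writing $f(xtr)=\pi_{r^{-1}t^{-1}}w$ and applying the same relation yields $\pi_r f(xtr)=\pi_r\pi_{r^{-1}}\pi_{t^{-1}}w=p_r\,\pi_{t^{-1}}w$.

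The main obstacle is the final step of the membership: to conclude $p_r\pi_{t^{-1}}w\in E_{t^{-1}}$ I need the range projections $p_r$ and $p_{t^{-1}}$ to commute, so that $p_r\pi_{t^{-1}}w=p_rp_{t^{-1}}\pi_{t^{-1}}w=p_{t^{-1}}p_r\pi_{t^{-1}}w\in\mathrm{Im}(p_{t^{-1}})=E_{t^{-1}}$. Commutativity of the idempotents $p_t=\pi_t\pi_{t^{-1}}$ is a purely algebraic consequence of the partial representation relations, and I would invoke it from \cite{exl}; integrating the pointwise membership over $r$ then gives $\xi_f(xt)\in E_{t^{-1}}$ and completes $(i)$. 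Part $(ii)$ is then immediate: with $H=\{e\}$ the transversal is $J=G$, the map $q$ is the identity, $\pi_e=\mathrm{id}_E$ and $E_e=E$, so both equivariance conditions become vacuous and $\mathcal{F}_\pi(G)=C_c(G,E)$; simultaneously $C^\pi_c(G,E)=C_c(G,E)$ and $\xi_f=f$ (the Haar integral over the trivial group being evaluation at $e$), so $\mathcal{F}^0_\pi(G)=C_c(G,E)$ as well, and the two sets coincide.
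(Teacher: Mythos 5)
Your proof is correct, and it follows the paper's argument for the analytic preliminaries (convergence, norm continuity, the support condition) and for the first covariance identity $\pi_t\xi_f(st)=p_t\xi_f(s)$; it genuinely differs at the two places where the hypothesis $f\in C^\pi_c(G,E)$ enters. For the membership $\xi_f(xt)\in E_{t^{-1}}$, the paper does not need any commutativity lemma: after the change of variables it inserts $p_{u^{-1}}$, writing $\pi_{t^{-1}u}f(xu)=\pi_{t^{-1}u}\pi_{u^{-1}}\pi_u f(xu)=\pi_{t^{-1}}\pi_u\pi_{u^{-1}}\pi_u f(xu)=\pi_{t^{-1}}\pi_u f(xu)$ (legitimate because $f(xu)\in E_{u^{-1}}$ and $p_{u^{-1}}$ acts as the identity there), and thus obtains the explicit formula $\xi_f(xt)=\pi_{t^{-1}}\big(\xi_f(x)\big)$, from which membership in $E_{t^{-1}}={\rm Im}(\pi_{t^{-1}})$ is immediate. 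You instead factor the integrand as $\pi_r f(xtr)=p_r\pi_{t^{-1}}w$ and conclude pointwise via commutativity of the range idempotents; that commutativity is indeed a purely algebraic consequence of the axioms of Definition \ref{df:pr} (from $\pi_t\pi_s\pi_{s^{-1}}=\pi_{ts}\pi_{s^{-1}}$ and $\pi_{t^{-1}}\pi_t\pi_s=\pi_{t^{-1}}\pi_{ts}$ one gets $\pi_s p_t=p_{st}\pi_s$, hence $p_sp_t=\pi_s p_{s^{-1}t}\pi_{s^{-1}}=p_t\pi_s\pi_{s^{-1}}=p_tp_s$, with no adjoints needed), so your appeal to \cite{exl} is sound; the trade-off is that you prove a reusable algebraic lemma but extract only the membership, while the paper's computation is self-contained and yields the stronger identity $\xi_f(xt)=\pi_{t^{-1}}\xi_f(x)$, which is exactly the equivalent form of the defining condition noted right after the definition of $\mathcal F_\pi(G)$. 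For part (ii), the paper proves more than the statement: for any closed $H$ on which $\pi$ is a global representation, every $\xi\in\mathcal F_\pi(G)$ is recovered as $\xi_f$ with $f=h\xi$, where $h\in C_c(G)$ is a Bruhat-type function satisfying $\int_H h(st)\,dt=1$ on ${\rm supp}\,\xi$ (via \cite{kt}), and then specializes to $H=\{e\}$; your degeneration argument (for trivial $H$ one has $\xi_f=f$, all conditions are vacuous, and both sides equal $C_c(G,E)$) is shorter and entirely adequate for the lemma as stated, though it does not yield the global-$H$ information that the paper's proof provides.
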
 
\begin{proof}
	$(i)$ First let us observe that $\xi_f$ is norm continuous. Put $S:={\rm supp}f$. Since $f$ is uniformly continuous on $G$, given $\varepsilon>0$ and a compact neighborhood $V$ of $e$, there is a symmetric compact neighborhood with $U\subseteq V$ and $f(s)$ is $\varepsilon$-close to $f(t)$, whenever $s^{-1}t\in U$. For $s\in G\backslash VSH$ and $t\in Us$, $f(su)=f(tu)=0$, for $u\in H$, thus, $\xi_f(s)=\xi_f(t)=0$. On the other hand, for $s\in VSH$, $t\in Us$ and $u\in H\cap s^{-1}VS$, if $tuv\in S$, then $suv=(st^{-1})tuv\in VS,$ that is, $v\in (VS)^{-1}(VS)\cap H=:K$. Similarly, if $tuv\in S$, then $v\in K$. This means that $f(suv)=f(tuv)=0$, whenever $v\in H\backslash K$. Therefore,\begin{align*}
		\|\xi_f(s)-\xi_f(t)\|&=\Big\|\int_H\pi_v(f(sv)-f(tv))dv\Big\|\\&\leq\int_H\|\pi_v(f(sv)-f(tv))\|dv\\&=\int_H\|\pi_v(f(suv)-f(tuv))\|dv\\&\leq \int_K\|\pi_v(f(suv)-f(tuv))\|dv< m_H(K)\varepsilon,
	\end{align*}
	where $m_H$ is the left Haar measure on $H$, as required. Next, since $q({\rm supp}\xi_f)$ is a closed and so compact subset of $q({\rm supp}f)$. Finally, for $s\in G, t\in H$,
	\begin{align*}
		\pi_t\xi_f(st)&=\int_H \pi_t\pi_u f(stu)du=\int_H \pi_t\pi_{t^{-1}u} f(su)du\\&=\int_H \pi_t\pi_{t^{-1}}\pi_u f(su)du=\int_H p_t\pi_u f(su)du\\&=p_t\xi_f(s),
	\end{align*}  
and, for $x\in J, t\in H$,
\begin{align*}
	\xi_f(xt)&=\int_H \pi_u f(xtu)du=\int_H \pi_{t^{-1}u} f(xu)du\\&=\int_H \pi_{t^{-1}u} \pi_{u^{-1}}\pi_uf(xu)du=\int_H \pi_{t^{-1}}\pi_u f(xu)du\\&=\pi_{t^{-1}}\big(\xi_f(x)\big)\in E_{t^{-1}},
\end{align*}  	
where the third equality uses the fact that $f\in C^\pi_c(G, E)$ and $p_{u^{-1}}$ acts as the identity on $E_{u^{-1}}$. 
Summing up, $\xi_f\in\mathcal{F}_\pi(G)$. 
	
	$(ii)$ When $\pi$ is a global representation, for $\xi\in \mathcal{F}_\pi(G)$, we may choose $h\in C_c(G)$ with $\int_H h(st)dt=1$, for $s\in{\rm supp}\xi$ \cite[Proposition 1.9]{kt}. Put $f=h\xi$, and observe that,
	\begin{align*}
		\xi_f(s)&=\int_H h(st) \pi_t\xi(st)dt=\int_H h(st)\xi(s)dt\\&=\xi(s)\int_H h(st)dt=\xi(s),
	\end{align*}
	for $s\in G$. This in particular holds when $H$ is trivial.  
\end{proof}

In the above lemma, one way to make sure that $C^\pi_c(G,E)$ contains a nonzero element is to assume that $E_0:=\bigcap_{t\in H} E_t$ (which always contain zero) is a nonzero subspace of $E$. If $v\in E_0$ and $f_0\in C_c(G)$ is nonzero, then the map defined by $f(s):=f_0(s)v_0$ is a nonzero element of $C^\pi_c(G,E)$.
    
Let $\mu$ be a quasi-invariant regular Borel measure on $G/H$, and for $\xi\in \mathcal F_\pi(G)$, observe that, since $\pi_t$ acts as an isometry on $E_{t^{-1}}$,
$$\|\xi(xt)\|=\|\pi_t\xi(xt)\|=\|\xi(x)\|,\ \ (x\in J, t\in H),$$
thus the map $x\in J\mapsto \|\xi(x)\|\in\mathbb R$ could be regarded as a (bounded Borel) map on $G/H$. We put, $\|\xi\|:=\int_{G/H} \|\xi(x)\|d\mu(\dot x)$, where $\dot x:=q(x)$. The completion $\dot E$ of $\mathcal F_\pi(G)$ in this norm is a Banach space. When $E$ is a Hilbert space to start with, $\dot E$ is also a Hilbert space under the inner product $\langle\xi,\eta\rangle:=:=\int_{G/H} \langle\xi(x),\eta(x)\rangle d\mu(\dot x)$. 

As in the classical case \cite[Remark E.1.2]{bdv}, $\dot E$ could be identified with the space of all locally measurable maps $\xi: G\to E$, satisfying the specified conditions for all $t\in G$ and locally almost all $s\in G$ and $x\in J$, and with finite norm. When $G$ is $\sigma$-compact, we may drop ``locally'' in this statement.  

To define the {\it induced} partial representation, a natural approach is that, as in the classical case, for $s\in G$ and $\xi\in\mathcal F_\pi(G)$, we consider the left translation operators $L_s\xi(s'):=\sigma_\mu(s',s)\xi(s^{-1}s')$, for $s, s'\in G$, where on the RHS, the cocycle $\sigma_\mu(s',s):=[\frac{ds^{-1}_*\mu}{d\mu}]^{\frac{1}{2}}({\dot{s}}')$ is the Radon-Nikodym derivative of the push forward of $\mu$ w.r.t. $\mu$, and observe that,
\begin{align*}
\|L_s\xi\|&=\int_{G/H} \|\xi(s^{-1}s')\|\sigma_\mu(s', s)d\mu({\dot s}')\\&=\int_{G/H} \|\xi(s^{-1}s')\|d(s^{-1}_*\mu)({\dot s}')\\&=\int_{G/H} \|\xi(s')\|d\mu({\dot s}')=\|\xi\|.
\end{align*}
The problem here is that $L_s\xi\notin\mathcal F_\pi(G)$, in general. Indeed, the condition $L_s\xi(xt)\in E_{t^{-1}}$, for $x\in J$ and $t\in H$, is no longer satisfied. This is exactly why we get an induced {\it partial} representation on $G$.  
For $s\in G$, let $\dot E_s$ be the closure in $\dot E$ of those $\xi\in\mathcal F_\pi(G)$ which satisfy 
$\xi(sxt)\in E_{t^{-1}},$ for each $x\in J, t\in H$, then by the above calculation, $L_s$ extends to an isometric surjection from $\dot E_{s^{-1}}$ onto $\dot E_s$. If each $\dot E_s$ is {\it complemented} in $\dot E$ (which is the case when $E$ is a Hilbert space), the induced partial representation ${\rm Ind}^G_H\pi$ is defined by the extension of each $L_s$ to a partial isometry on $\dot E$. Now by the cocycle identity for $\sigma_\mu$, we have $L_{s_1}L_{s_2}=L_{s_1s_2}$ on $\dot E_{s^{-1}_2}={\rm Im}(L_{s^{-1}_2})$, that is, $L_{s_1}L_{s_2}L_{s^{-1}_2}=L_{s_1s_2}L_{s^{-1}_2},$  that is, ${\rm ind}^G_H\pi$ is a partial representation of $G$ on $\dot E$. Finally, it is straightforward to check that ${\rm ind}^G_H\pi$ is continuous (Borel) if $\pi$ is so. Summing up, we have the following result. 

\begin{prop}
Each continuous (Borel) partial representation $\pi$ of a closed subgroup $H\leq G$ in a Hilbert space $E$ induces a continuous (Borel) partial representation ${\rm Ind}^G_H\pi$ of $G$ in a Hilbert space $\dot E$. 
\end{prop}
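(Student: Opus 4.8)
The proposition collects the construction carried out in the paragraphs above, so the plan is to organize those facts into a verification of the axioms of Definition \ref{df:pr}. First I would record that $\dot E$ is a Hilbert space: because $E$ is Hilbert, the form $\langle\xi,\eta\rangle=\int_{G/H}\langle\xi(x),\eta(x)\rangle\,d\mu(\dot x)$ is an inner product on $\mathcal F_\pi(G)$ whose associated norm is the $\|\cdot\|$ defined above, and $\dot E$ is its completion.

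The key structural observation I would make explicit is that the translation operators $L_s$ assemble into a genuine \emph{global} unitary representation $L\colon G\to\mathcal U(\dot E)$. Indeed, the computation $\|L_s\xi\|=\|\xi\|$ uses only the change of variables for $\mu$ and therefore holds for every $\xi\in\dot E$, while the chain rule for Radon--Nikodym derivatives yields the cocycle identity $\sigma_\mu(s',s)\,\sigma_\mu(s^{-1}s',t)=\sigma_\mu(s',st)$, which gives $L_sL_t=L_{st}$ as operators on all of $\dot E$; in particular each $L_s$ is unitary with $L_s^{-1}=L_{s^{-1}}$. What fails for $L$ is only that $L_s$ need not preserve $\mathcal F_\pi(G)$: the defining conditions single out the closed subspaces $\dot E_s$, and the calculation in the text shows that $L_s$ carries $\dot E_{s^{-1}}$ isometrically onto $\dot E_s$. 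This failure is precisely the source of the \emph{partial} (rather than global) representation.

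With this in hand the partial representation is obtained by compression. Since $\dot E$ is Hilbert, each closed subspace $\dot E_s$ is the range of a norm-one orthogonal projection $P_s$, so the family $(\{\dot E_s\},\{L_s\})$ is a partial action of $G$ on $\dot E$ by partial isometries, and I would set $({\rm Ind}^G_H\pi)_s:=L_sP_{s^{-1}}$. This is a partial isometry with initial space $\dot E_{s^{-1}}$ and final space $\dot E_s$, satisfying $({\rm Ind}^G_H\pi)_s({\rm Ind}^G_H\pi)_{s^{-1}}=P_s$. The normalization $({\rm Ind}^G_H\pi)_e=\mathrm{id}_{\dot E}$ is clear, and the two relations of Definition \ref{df:pr}(3) follow from $L_sL_t=L_{st}$ together with the bookkeeping of the projections $P_s$, by the same computation --- dualized --- that was used to pass between partial actions and partial representations in Lemma \ref{key}(ii) via \cite[Proposition 4.5]{exl}.

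The remaining point, and the step I expect to be the genuine obstacle, is strong continuity (Borel measurability) of $s\mapsto({\rm Ind}^G_H\pi)_s\xi$. The map $s\mapsto L_s\xi$ is strongly continuous because $\xi$ is norm continuous with $q(\mathrm{supp}\,\xi)$ compact and $\sigma_\mu$ is locally continuous, so $L$ itself is a continuous unitary representation; the difficulty is that the projections $P_s$ need not depend continuously on $s$, so that $s\mapsto P_{s^{-1}}\xi$ --- equivalently, the variation of the subspaces $\dot E_s$ --- must be controlled separately. I would handle this first on the sections of the form $\xi_f$ spanning the dense subspace $\mathcal F^0_\pi(G)$, where the condition $\xi(sxt)\in E_{t^{-1}}$ is governed by the compactly supported datum $f$, and then extend to all of $\dot E$ using the uniform bound $\|({\rm Ind}^G_H\pi)_s\|\le 1$. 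In the Borel category the identical scheme applies with ``continuous'' replaced by ``Borel'' throughout, the measurability of $\pi$ feeding through $L$ and the projections in the same way.
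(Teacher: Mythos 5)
Your plan rests on a claim that is false, and it is precisely the point this construction is designed to get around: the translation operators $L_s$ do \emph{not} assemble into a global unitary representation of $G$ on $\dot E$. The computation $\|L_s\xi\|=\|\xi\|$ only shows that the \emph{function} $L_s\xi$ has the same norm as $\xi$; it does not show that $L_s\xi$ belongs to $\dot E$. Recall that $\dot E$ is the completion of $\mathcal F_\pi(G)$, realized as (locally) measurable sections satisfying \emph{both} defining conditions, and the second condition $\xi(xt)\in E_{t^{-1}}$ (for $x\in J$, $t\in H$) is not translation invariant: writing $s^{-1}x=yu$ with $y\in J$, $u\in H$, one gets $L_s\xi(xt)=\sigma_\mu(xt,s)\,\xi(yut)\in E_{t^{-1}u^{-1}}$, and for a genuinely partial $\pi$ the subspace $E_{t^{-1}u^{-1}}$ differs from $E_{t^{-1}}$ in general. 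This is exactly what the paper emphasizes just before the statement (``The problem here is that $L_s\xi\notin\mathcal F_\pi(G)$, in general\dots This is exactly why we get an induced \emph{partial} representation''). So for $\xi\notin\dot E_{s^{-1}}$ the section $L_s\xi$ lies outside $\dot E$ altogether: $L_s$ is not an everywhere-defined operator on $\dot E$, let alone unitary, and there is no global representation to compress. Consequently your formula $({\rm Ind}^G_H\pi)_s:=L_sP_{s^{-1}}$ has no meaning as you interpret it, the deduction of the relations of Definition~\ref{df:pr}(3) from ``$L_sL_t=L_{st}$ on all of $\dot E$ plus bookkeeping of projections'' collapses, and so does your continuity argument, which invokes strong continuity of a global $L$ that does not exist.

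What the paper actually does is the reverse of compression: the translation formula is shown to carry the closed subspace $\dot E_{s^{-1}}$ isometrically \emph{onto} $\dot E_s$ (here the extra condition $\xi(sxt)\in E_{t^{-1}}$ defining $\dot E_s$ is what makes the image land back inside $\dot E$), and this densely defined isometry is then \emph{extended by zero} on the orthogonal complement of $\dot E_{s^{-1}}$ --- this is why complementedness of the subspaces $\dot E_s$, automatic when $E$ is Hilbert, is hypothesized --- yielding a partial isometry $({\rm Ind}^G_H\pi)_s$. The partial-representation axioms are then checked from the identity $L_{s_1}L_{s_2}=L_{s_1s_2}$ holding \emph{on} $\dot E_{s_2^{-1}}={\rm Im}(L_{s_2^{-1}})$, i.e.\ $L_{s_1}L_{s_2}L_{s_2^{-1}}=L_{s_1s_2}L_{s_2^{-1}}$; the $L$'s are never composed on the whole space. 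If you wish to salvage your ``restriction of a global object'' picture, you would have to first construct a strictly larger Hilbert space (for instance the completion of the sections satisfying only the equivariance $\pi_t\xi(st)=p_t\xi(s)$, a condition which \emph{is} preserved by translation), verify that $L$ is a continuous unitary representation there, and then show that $\dot E$ sits inside it with $L_s(\dot E_{s^{-1}})=\dot E_s$ so that the cut-down operators satisfy Definition~\ref{df:pr}. That is a genuinely different, additional construction which your proposal neither states nor proves.
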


For partial representations, there is an alternative construction of the induced representation (while these coincide for global representations) as follows: if we replace the condition $\xi(xt)=\pi_{t^{-1}}\xi(x)\in E_{t^{-1}}$, for $x\in J, t\in H$,  with the stronger condition that $\pi_t\xi(xt)=\xi(x)$, and $\xi(xt)\in E_{t^{-1}}$, for  $x\in J, t\in H$, we get a subspace $\mathcal F^0_\pi(G)$, and since we again have,
$$\|\xi(xt)\|=\|\pi_t\xi(xt)\|=\|\xi(x)\|,\ \ (x\in J, t\in H),$$
where the first equality follows from the fact that $\pi_t$ acts isometrically on $E_{t^{-1}}$. By the same argument, the completion $\dot E^0$ of $\mathcal F^0_\pi(G)$ is a Hilbert space (when $E$ is a Hilbert space) and $L_s$ lifts to a partial representation ${\rm ind}^G_H\pi$ on $\dot E^0$. As above $\dot E^0$ could be identified with the subspace of $\dot E$ consisting of all locally measurable maps $\xi: G\to E$, satisfying the specified conditions for all $t\in G$ and locally almost all $s\in G$ and $x\in J$ (with the stronger condition above replaced for its weaker version), and with finite norm. 

When $\pi$ is a global representations, these conditions are equivalent and we have ${\rm Ind}^G_H\pi={\rm ind}^G_H\pi$. In general, while $\dot E^0$ is a subspace of $\dot E$, since it is not necessarily invariant under operators $L_s$, ${\rm ind}^G_H\pi$ is not a partial subrepresentation of ${\rm Ind}^G_H\pi$. Also note that $\xi_f$ is not necessarily in $\mathcal F^0_\pi(G)$, for $f\in C^\pi_c(G, E)$. However, when $v\in E_0:=\bigcap_{t\in H} E_t$ and $f_0\in C_c(G)$, then $f(s):=f_0(s)v$ defines an element in $C^\pi_c(G, E)$ for which $\xi_f\in \mathcal F^0_\pi(G)$. Here, under some Urysohn type condition, we construct a total set in elements in $\dot E^0$. This is then used to prove a version of continuity of induction for ${\rm ind}^G_H\pi$.   

Recall that by Urysohn lemma, for respectively compact and open subsets $K$ and $U$ of $G$ with $K\subseteq U$, there is a continuous function $f$ of compact support on $G$ with $0\leq f\leq 1$ such that $f=1$ on $K$ and $f=0$ off $U$. We write $K\prec f\prec U$. 

Consider a partial representation $\pi$ of the subgroup $H$ on a Banach space $E$. Given $v\in E$, let $H_v:=\{(u,t)\in H\times H: \pi_u(v)\in E_{t^{-1}}\}.$ This is non empty, as $H\times \{e\}\subseteq H_v$. Put $H^v:=(H\times H)\backslash H_v$. This set could be empty, and indeed it is always empty when $\pi$ is a global representation. As before, we fix a transversal set $J$ with transversal decomposition $G=\bigsqcup_{x\in J} xH$. We put,
$$C^v_c(G):=\{f\in C_c(G): f(xtu)=f(xu)=0, \ (x\in J, (u,t)\in H^v)\}.$$
\noindent If $U\subseteq G$ is open, we write $C^v_c(G)$ to denote the set of those elements in $C^v_c(G)$ supported inside $U$. 

For $v\in E$ and $f\in C^v_c(G)$, we put,
$$\xi_{f,v}(s):=\int_H f(st)\pi_t(v)dt,\ \ (s\in G),$$ 
and observe that $\xi_{f,v}\in \mathcal F^0_\pi(G)$: for $s\in G$, $t\in H$,
\begin{align*}
	\pi_t\xi_{f,v}(st)&=\int_H  f(stu)\pi_t\pi_u(v)du=\int_H f(su)\pi_t\pi_{t^{-1}u}(v) du\\&=\int_H f(su)\pi_t\pi_{t^{-1}}\pi_u(v) du=\int_H f(su)p_t\pi_u(v) du=p_t\xi_{f,v}(s),
\end{align*}  
and, for $x\in J, t\in H$,
\begin{align*}
	\xi_{f,v}(xt)&=\int_H f(xtu)\pi_u(v) du=\int_H f(xtu)\pi_{t^{-1}} \pi_{t}\pi_u(v)du\\&=\int_H f(xtu)\pi_{t^{-1}}\pi_{tu}(v) du=\int_H f(xu)\pi_{t^{-1}}\pi_{u}(v) du\\&=\pi_{t^{-1}}\big(\xi_{f,v}(x)\big)\in E_{t^{-1}},
\end{align*}  	
where the second equality uses the fact that $f(xtu)=0$ whenever $\pi_u(v)$ is not in $E_{t^{-1}}$, and that $p_{t^{-1}}$ acts as the identity on $E_{t^{-1}}$. Finally, 
\begin{align*}
	\pi_t\xi_{f,v}(xt)&=\int_H f(xtu)\pi_t\pi_u(v) du=\int_H f(xtu) \pi_{t}p_{t^{-1}}\pi_u(v)du\\&=\int_H f(xtu)\pi_t\pi_{t^{-1}}\pi_{tu}(v) du=\int_H f(xu)\pi_t\pi_{t^{-1}}\pi_{u}(v) du\\&=\int_H f(xu)\pi_{u}(v)du=\xi_{f,v}(x),
\end{align*}  
for $x\in J, t\in H$, where the second and fifth equalities follow from the facts that $f(xtu)=f(xu)=0$ whenever $\pi_u(v)\notin E_{t^{-1}}$, and that $p_{t^{-1}}$ acts as the identity on $E_{t^{-1}}$.

\begin{df}\label{ury}
We say that $\pi$ has {\it Urysohn property} if  given $K\subseteq G$ compact,  and finite open cover $K\subseteq U_1\cup\cdots\cup U_n=:U$, for each $v_1,\cdots, v_n$ in $E$, there are functions $f_i\in C^{v_i}_c(U_i)_+$, $1\leq i\leq n$, with $K\prec f_1+\cdots+f_n\prec U$.   
\end{df} 

When $\pi$ is a global representation, it automatically has Urysohn property. Indeed,  by the Urysohn lemma, for a compact set $K$, open neighborhood $V$ of $e$,  and open cover $U:=\bigcup_{i=1}^{n}Vs_i$ of $K$, we may find $f_i\in C_c(Vs_i)$ with $K\prec f_1+\cdots+f_n\prec U$.

\begin{ex}
It is illustrative to see what the Urysohn property says for the Koopman partial representation of a given partial action $\al$ of $G$ on a standard Borel space $(X,\nu)$. In this case, $E:=L^2(X,\nu)$ and $E_t:=L^2(X_{t^{-1}},\nu)$, which is identified canonically with a closed subspace of $E$. Let $H\leq G$ be a closed subgroup with transversal set $J$. Given open subset $U\subseteq G$ and $v\in L^2(X,\nu)$,
$$C^v_c(U):=\{f\in C_c(U): f(xtu)=f(xu)=0, \ \ (x\in J, (u,t)\in E^v)\},$$
where $E^v$ is the complement of the set
$$E_v:=\{(u,t)\in H\times H: \kappa^\al_uv\in L^2(X_t, \nu)\},$$
where the above condition simply means that $\kappa^\al_uv=0$ off $X_t$, that is, $v(\al_{u^{-1}}(z))=0$, for $z\notin X_t$. Let us write ${\rm supp}^0v:=\{z\in X: v(z)\neq 0\}.$ Here there is no sense to take closure of this set, as $X$ is only a measure space (we use the same notation for continuous functions on $G$). Using this notation, the last condition could be written as, ${\rm supp}^0v\subseteq \al_{u^{-1}}(X_u\cap X^c_t)$. Thus, $f\in C^v_c(U)$ simply means that $Jtu\cup Ju\subseteq G\backslash {\rm supp}^0f$, whenever ${\rm supp}^0v\nsubseteq \al_{u^{-1}}(X_u\cap X^c_t)$, which is a void condition when $\al$ is a global action.
\end{ex}

\begin{lem}\label{total}
If $\pi$ has Urysohn property, the set $\{\xi_{f,v}: v\in E, f\in C^v_c(G)\}$ is a total set in $\dot E^0$. 
\end{lem}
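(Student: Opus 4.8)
The plan is to prove that the closed linear span of $\{\xi_{f,v}:v\in E,\ f\in C^v_c(G)\}$ is all of $\dot E^0$. Since $\dot E^0$ is by construction the completion of $\mathcal F^0_\pi(G)$ and each $\xi_{f,v}$ lies in $\mathcal F^0_\pi(G)$, it suffices to approximate an arbitrary $\xi\in\mathcal F^0_\pi(G)$, in the norm $\|\eta\|=\int_{G/H}\|\eta(x)\|\,d\mu(\dot x)$, by finite sums of such functions. Two facts drive the argument. First, for $x\in J$ and $t\in H$ the defining relation of $\mathcal F^0_\pi(G)$ gives $\xi(xt)=\pi_{t^{-1}}\xi(x)$, equivalently $\pi_t\xi(x)=\xi(xt^{-1})$; thus if $v_i:=\xi(x_i)$ with $x_i\in J$, then $\pi_t(v_i)=\xi(x_it^{-1})$. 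Second, for $\eta\in\mathcal F^0_\pi(G)$ the scalar $\|\eta(x)\|$ depends only on the coset $\dot x$ (because $\pi_t$ is isometric on $E_{t^{-1}}$), and the same holds for $\|\eta(x)-\xi(x)\|$; hence in estimating $\|\eta-\xi\|$ I am free to evaluate at representatives lying in a fixed compact lift of the support, where $\xi$ is uniformly continuous.

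Fix $K:=q(\mathrm{supp}\,\xi)$, compact in $G/H$, and a compact set $C_1\subseteq G$ large enough to contain all the ``tubes'' used below. Any nonnegative $g\in C_c(G)$ with $\mathrm{supp}\,g\subseteq C_1$ and $0\le\int_H g(st)\,dt\le1$ satisfies $\int_{G/H}\int_H g(xt)\,dt\,d\mu(\dot x)\le\mu(q(C_1))=:C_*$, a bound independent of every later choice. Given $\varepsilon>0$ I set $\varepsilon':=\varepsilon/C_*$, and by uniform continuity of $\xi$ on a compact neighbourhood of $C_1$ I choose a symmetric open neighbourhood $W\ni e$ with $\|\xi(w^{-1}x)-\xi(x)\|<\varepsilon'$ for $w\in W$ and $x$ in that neighbourhood. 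I then cover $K$ by finitely many $q(Wx_1),\dots,q(Wx_n)$ with $x_i\in J$ and $Wx_i\subseteq C_1$, put $U_i:=Wx_i$, $v_i:=\xi(x_i)$, $\Omega:=\bigcup_iU_i$, and (by a standard Bruhat function argument, cf. \cite[Proposition 1.9]{kt}) choose $g\in C_c(\Omega)_+$ with $\int_H g(st)\,dt=1$ on $q^{-1}(K)$ and $\le1$ elsewhere.

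Applying the Urysohn property (Definition \ref{ury}) to the compact set $\mathrm{supp}\,g\subseteq\Omega$, the cover $\{U_i\}$ and the vectors $v_i$ yields $f_i\in C^{v_i}_c(U_i)_+$ with $\mathrm{supp}\,g\prec\sum_if_i\prec\Omega$. Since $f_i$ already vanishes on the sets defining membership in $C^{v_i}_c(U_i)$, so does $\tilde f_i:=g\,f_i$, whence $\tilde f_i\in C^{v_i}_c(U_i)$; moreover $\sum_i\tilde f_i=g$ because $\sum_if_i\equiv1$ on $\mathrm{supp}\,g$. Set $\eta:=\sum_i\xi_{\tilde f_i,v_i}\in\mathcal F^0_\pi(G)$. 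Evaluating at a compact representative $x$ and using $\pi_t(v_i)=\xi(x_it^{-1})$ together with $\sum_i\tilde f_i=g$,
\[
\eta(x)-\xi(x)=\sum_i\int_H\tilde f_i(xt)\big[\xi(x_it^{-1})-\xi(x)\big]\,dt+\Big(\int_H g(xt)\,dt-1\Big)\xi(x).
\]
Whenever $\tilde f_i(xt)\neq0$ one has $xt\in U_i=Wx_i$, so $x_it^{-1}=w^{-1}x$ with $w\in W$ and hence $\|\xi(x_it^{-1})-\xi(x)\|<\varepsilon'$; the last summand vanishes, being zero on $q^{-1}(K)$ and multiplied by $\xi(x)=0$ off it. Thus $\|\eta(x)-\xi(x)\|\le\varepsilon'\int_H g(xt)\,dt$ for every $x$, and integrating over $G/H$ gives $\|\eta-\xi\|\le\varepsilon'C_*=\varepsilon$.

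The step I expect to be the crux is reconciling the normalization with the constrained function classes. The norm on $\dot E^0$ is fibrewise over $G/H$, so what is needed is $\int_H(\sum_i\tilde f_i)(xt)\,dt\equiv1$ on the support, whereas the Urysohn property only delivers the pointwise partition of unity $\sum_if_i\equiv1$ on a compact subset of $G$. Bridging the two forces the introduction of the Bruhat factor $g$ and the observation that $g f_i$ remains in $C^{v_i}_c(U_i)$ precisely because $f_i$ already carries the required vanishing. A secondary point, handled above by fixing the ambient compact set $C_1$ before choosing $W$, is to keep the constant $C_*$ uniform, so that refining the cover as $\varepsilon\to0$ does not feed back into the estimate; and one must remember to exploit the $H$-invariance of $x\mapsto\|\eta(x)-\xi(x)\|$ in order to run the uniform-continuity estimate at compact representatives rather than at the possibly wild points of $J$.
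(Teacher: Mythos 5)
Your overall strategy is sound and runs essentially parallel to the paper's own proof: both arguments combine a Bruhat-type function $g$ with $\dot g\equiv 1$ on $q(\mathrm{supp}\,\xi)$, uniform continuity of $\xi$ on a compact set, a cover by translates of a small neighbourhood, and the Urysohn partition of unity with vectors taken from values of $\xi$, followed by a fibrewise estimate integrated over $G/H$. Your two twists are legitimate and algebraically correct: taking the base points $x_i$ in the transversal $J$ so that $\pi_t(v_i)=\xi(x_it^{-1})$ holds \emph{exactly} (this does follow from the $\mathcal F^0_\pi(G)$ conditions), and multiplying the Urysohn functions by $g$ so that $\tilde f_i=gf_i\in C^{v_i}_c(U_i)$ (the vanishing conditions defining $C^{v_i}_c$ are stable under multiplication) and $\sum_i\tilde f_i=g$. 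The displayed identity for $\eta(x)-\xi(x)$, the vanishing of the term $(\dot g(\dot x)-1)\xi(x)$, and the use of right-$H$-invariance of $\dot x\mapsto\|\eta(x)-\xi(x)\|$ are all fine. (By contrast, the paper sets $\eta:=g\xi$, approximates $\eta$ pointwise by $\sum_i f_i(\cdot)v_i$ with $v_i=\eta(s_i)$ at \emph{freely chosen} points $s_i$ of the compact support, and recovers $\xi$ via $\xi(x)=\int_H\pi_t\eta(xt)\,dt$.)

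There is, however, a step that fails as written: you demand simultaneously $x_i\in J$ and $Wx_i\subseteq C_1$, with $C_1$ compact and fixed \emph{before} $W$. The points of $J$ are not at your disposal: each coset has exactly one representative in $J$, and a transversal can be arranged so that, for any fixed compact $C_1$, all cosets in some arc of $q(C_1W^{-1})$-complement have representatives escaping to infinity; then, once $W$ is small enough, no family $x_i\in J$ with $Wx_i\subseteq C_1$ can have $\{q(Wx_i)\}$ covering $K$ (e.g.\ $G=\mathbb R$, $H=\mathbb Z$, with representatives of cosets $\dot t$, $t\in(p,p+1/n)$, pushed out beyond $n$). Since both the constant $C_*=\mu(q(C_1))$ and your uniform-continuity neighbourhood ride on $C_1$, the estimate collapses at exactly the point you flagged as the place needing care. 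The gap is repairable without changing your strategy: drop the constraint $Wx_i\subseteq C_1$ entirely; note that $q(Wx_i)=W\cdot\dot x_i$ with $\dot x_i\in W\cdot K$, so $q(\Omega)\subseteq V_0^2\cdot K$ for any $W$ inside a fixed compact neighbourhood $V_0$, and take $C_*:=\mu(V_0^2\cdot K)$ a priori; then run uniform continuity of $\xi$ on $V_0C$, where $C$ is a compact \emph{lift} of $V_0^2\cdot K$ in which you choose your coset representatives $x$ — the comparison points $x_it^{-1}=w^{-1}x$, $w\in W$, then lie in $V_0C$ automatically, so the wildness of the $x_i$ never enters the geometry. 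The paper avoids this issue altogether because its pointwise bound $\|\eta(s)-\sum_if_i(s)v_i\|<\varepsilon$ holds for \emph{every} $s\in G$, so evaluating at transversal points costs nothing.
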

\begin{proof}
Let $\xi\in\mathcal F^0_\pi(G)$. Choose $g\in C_c(G)$ with $\int_H g(st)dt=1$, whenever $\dot s:=sH\in q({\rm supp}(\xi))$, where $q: G\to G/H$ is the quotient map \cite[B.1.2]{bdv}. Set $\eta:=g\xi$, then $K:={\rm supp}(\eta)$ is compact and,
$$\int_H\pi_t \eta(xt)dt=\int_H g(xt)\pi_t\xi(xt)dt=\xi(x)\int_H g(xt)dt=\xi(x),
$$
for $x\in J$. Let $S:={\rm supp}\eta$ and $K$ be a compact neighborhood of $S$. Since $\eta$ is compactly supported, it is uniformly continuous, that is, given $\varepsilon>0$, there is a neighborhood $V$ of $e$ with,
$$\|\eta(us)-\eta(s)\|<\varepsilon, \ \ (u\in V, s\in G). 
$$ 
Cover $S$ by $U:=\bigcup_{i=1}^{n}Vs_i$, with $s_i\in S$, and use the assumption for $v_i:=\eta(s_i)$ to choose $f_i\in C^{v_i}_c(Vs_i)_+$, $1\leq i\leq n$, with $f_1+\cdots+f_n\leq 1$. The above inequality for $\eta$ guarantees that $\|\eta(s)-v_i\|<\varepsilon$, whenever $f_i(s)>0$, thus,
$$
\big\|\eta(s)-\sum_{i=1}^{n}f_i(s)v_i\big\|\leq \sum_{i=1}^{n}f_i(s)\|\eta(s)-v_i\|<\varepsilon,$$
for $s\in G$. 

Next, by the observation before Definition \ref{ury}, each $\xi_{f_i,v_i}$ is in $\mathcal F^0_\pi(G)$, and since $\eta$ and all $f_i$'s are supported in $K$, and $q(KH\cap J)=q(K)$ is compact in $G/H$, for the left Haar measure $m_H$ on $H$ and quasi-invariant measure $\mu$ on $G/H$, we have,
\begin{align*}
\Big\|\xi-\sum_{i=1}^{n}\xi_{f_i,v_i}\Big\|^2&\leq \int_{q(KH\cap J)}\Big(\int_H\big\|\pi_t\eta(xt)-\sum_{i=1}^{n}\xi_{f_i,v_i}(x)\big\|dt\Big)^2d\mu(\dot x)\\&= \int_{q(KH\cap J)}\Big(\int_H\big\|\pi_t\eta(xt)-\sum_{i=1}^{n}\pi_t\xi_{f_i,v_i}(xt)\big\|dt\Big)^2d\mu(\dot x)\\&\leq \int_{q(KH\cap J)}\Big(\int_H\big\|\eta(xt)-\sum_{i=1}^{n}\xi_{f_i,v_i}(x)\big\|dt\Big)^2d\mu(\dot x)\\&\leq \mu\big(q(KH\cap J)\big)m_H\big(K^{-1}K\cap H\big)^2\varepsilon^2,
\end{align*}
as required.
\end{proof}

\begin{rk} \label{total2}
In the above lemma, one could have a control on the norms of $v_i$'s and $f_i$'s. Indeed, by construction, we have $\|v_i\|\leq \|\eta\|_\infty$, and we may arrange (by multiplying $f_i$'s with appropriate constant factors) to have $\Big\|\xi-\sum_{1}^{n}\xi_{f_i,v_i}\Big\|<\varepsilon$, with $\|f_i\|_\infty<\varepsilon$, for $1\leq i\leq n$. Also, we may arrange (by multiplying $f_i$'s with appropriate continuous functions of norm at most one) that there are compact subsets $Q_i\subseteq H$ such that $f_i(xt)=0$, for $x\in J$ and $t\in H\backslash Q_i$.   
\end{rk}

\begin{df}
Let $\pi$ and $\sigma$ be partial representations of a topological (Borel) group $G$ in  Banach spaces $E$ and $F$, respectively. Let $E_t:={\rm Im}(\pi_t)$ and $F_t:={\rm Im}(\sigma_t)$, for $t\in G$. We say that $\pi$ and $\sigma$ are {\it equivalent}, and write $\pi\sim_u\sigma$, if there is an intertwining algebraic isomorphism $\phi: E\to F$ with $\phi(E_t)=F_t$, for each $t\in G$, and $\phi$ is isometric on each $E_t$. Here, being intertwining means that $\sigma_t\circ\phi=\phi\circ\pi_t$, for $t\in G$. We say that $\pi$ is {\it weakly contained} in $\sigma$, and write $\pi\preceq\sigma$, if given $\varepsilon>0$, $K\subseteq G$ compact, $x\in E$ and $x^*\in E^*$, there are finitely many elements $y_1,\cdots,y_n\in F$ and $y^*_1,\cdots,y^*_n\in F^*$ such that,
$$\big|\langle \pi_t(x), x^*\rangle-\sum_{i=1}^{n}\langle \sigma_t(y_i), y^*_i\rangle\big|<\varepsilon, \ \ (t\in K).$$
When $\pi\preceq\sigma$ and $\sigma\preceq\pi$, we say that $\pi$ and $\sigma$ are {\it weakly equivalent} and write $\pi\sim\sigma$.  
\end{df}

When $E$ is a Hilbert space, each isomorphism gives a family $\phi|_{E_t}: E_t\to F_t$ of  unitary operators, so $\pi\sim_u\sigma$ is indeed {\it unitary equivalence} in this case. Also, for partial representations on Hilbert spaces, $\pi\preceq\sigma$ is equivalent to the requirement that given $\varepsilon>0$, $K\subseteq G$ compact, $\xi\in E$, there are finitely many points $\eta_1,\cdots,\eta_n\in F$ such that,
$$\big|\langle \pi_t(\xi), \xi\rangle-\sum_{i=1}^{n}\langle \sigma_t(\eta_i), \eta_i\rangle\big|<\varepsilon, \ \ (t\in K).$$  
Let $1_G$ be the {\it trivial} representation on $G$. By an argument as in \cite[Corollary F.1.5]{bdv}, using inequalities,
$$\frac{1}{2}\|\pi_t(\xi)-\xi\|^2\leq\big|1-\langle \pi_t(\xi), \xi\rangle\big|\leq \|\pi_t(\xi)-\xi\|^2,$$
we have the following result.

\begin{lem} \label{wc}
If $\pi$ is a partial representation of $G$ in a Hilbert space $E$, then $1_G\preceq\pi$ if and only if for each $\varepsilon>0$ and $K\subseteq G$ compact, there is a unit vector $\xi\in E$ with 
$$\sup_{t\in K}\|\pi_t(\xi)-\xi\|<\varepsilon.$$
\end{lem}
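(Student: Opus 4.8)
The plan is to prove the two implications separately, using the displayed inequalities to translate between the size of $\|\pi_t(\xi)-\xi\|$ and the deviation of the diagonal coefficient $\langle\pi_t(\xi),\xi\rangle$ from $1$. I first record that both estimates survive the passage to partial isometries. For a unit vector $\xi$ one has $|\langle\pi_t(\xi),\xi\rangle|\le\|\pi_t(\xi)\|\le 1$, so $\mathrm{Re}\langle\pi_t(\xi),\xi\rangle\le 1$; since $\pi_t$ is a contraction, $\tfrac12\|\pi_t(\xi)-\xi\|^2=\tfrac12(\|\pi_t(\xi)\|^2+1)-\mathrm{Re}\langle\pi_t(\xi),\xi\rangle\le 1-\mathrm{Re}\langle\pi_t(\xi),\xi\rangle\le|1-\langle\pi_t(\xi),\xi\rangle|$, while $|1-\langle\pi_t(\xi),\xi\rangle|=|\langle\xi-\pi_t(\xi),\xi\rangle|\le\|\pi_t(\xi)-\xi\|$ by Cauchy--Schwarz. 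These are the estimates the argument rests on, and the only place where the partial (rather than unitary) nature of $\pi_t$ must be checked.

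For the implication ($\Leftarrow$), suppose almost invariant unit vectors exist. Given $\varepsilon>0$ and $K\subseteq G$ compact, the trivial representation $1_G$ has the single normalized coefficient $t\mapsto\langle(1_G)_t\xi_0,\xi_0\rangle\equiv 1$. Choosing a unit vector $\zeta\in E$ with $\sup_{t\in K}\|\pi_t(\zeta)-\zeta\|<\varepsilon$ and feeding the single vector $\zeta$ into the definition of weak containment, the upper estimate gives $|1-\langle\pi_t(\zeta),\zeta\rangle|\le\|\pi_t(\zeta)-\zeta\|<\varepsilon$ for all $t\in K$; hence $1_G\preceq\pi$.

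For the converse ($\Rightarrow$), assume $1_G\preceq\pi$ and fix $\varepsilon>0$ and a compact $K\ni e$. Weak containment produces, for each $\delta>0$, vectors $\eta_1,\dots,\eta_n\in E$ with $\sup_{t\in K}|1-\sum_i\langle\pi_t(\eta_i),\eta_i\rangle|<\delta$. Evaluating at $t=e$ (where $\pi_e=\mathrm{id}_E$) shows $\sum_i\|\eta_i\|^2$ is within $\delta$ of $1$, so after rescaling I may assume $\sum_i\lambda_i=1$ with $\lambda_i:=\|\eta_i\|^2$ and write $\sum_i\langle\pi_t(\eta_i),\eta_i\rangle=\sum_i\lambda_i\langle\pi_t(\hat\eta_i),\hat\eta_i\rangle$, where $\hat\eta_i:=\eta_i/\|\eta_i\|$. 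Thus a convex combination of normalized diagonal coefficients of $\pi$ converges to the constant function $1$ uniformly on $K$. The decisive step is to replace this convex combination by a single normalized coefficient: since $1_G$ is one--dimensional, hence irreducible, the constant function $1$ is a pure (extreme) normalized function of positive type, and the argument of \cite[Corollary F.1.5]{bdv} (via Raikov's theorem, identifying the weak-$*$ and uniform-on-compacta topologies on normalized functions of positive type) then yields a single unit vector $\zeta\in E$ with $\sup_{t\in K}|1-\langle\pi_t(\zeta),\zeta\rangle|$ as small as desired. Finally the lower estimate gives $\tfrac12\sup_{t\in K}\|\pi_t(\zeta)-\zeta\|^2\le\sup_{t\in K}|1-\langle\pi_t(\zeta),\zeta\rangle|$, exhibiting $\zeta$ as the required almost invariant unit vector.

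I expect the sum--to--single--coefficient reduction in the last paragraph to be the main obstacle. A naive estimate—selecting the index $i$ of largest weight $\lambda_i\ge 1/n$ and invoking $\lambda_i(1-\mathrm{Re}\langle\pi_t(\hat\eta_i),\hat\eta_i\rangle)<\delta$—only bounds $\sup_{t\in K}\|\pi_t(\hat\eta_i)-\hat\eta_i\|$ by a multiple of $\sqrt{n\delta}$, and since $n=n(\delta)$ is not controlled as $\delta\to 0$ this does not close the argument. The genuine input is the extremality of the trivial coefficient $1$ together with Raikov's theorem, exactly as in the global case; the partial setting contributes nothing new here beyond the contraction computation recorded at the outset, which guarantees that the two comparison inequalities remain valid when each $\pi_t$ is only a partial isometry.
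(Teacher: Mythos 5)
Your overall architecture coincides with the paper's own proof of this lemma: the paper proves it by exactly your two comparison inequalities together with a one-line appeal to the argument of \cite[Corollary F.1.5]{bdv}. Your verification that the inequalities survive when each $\pi_t$ is merely a contraction is correct, and so is the direction from almost invariant vectors to $1_G\preceq\pi$; in fact your right-hand inequality $|1-\langle \pi_t(\xi),\xi\rangle|\leq\|\pi_t(\xi)-\xi\|$ is the correct one, the square appearing in the paper's display being a typo.

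The gap sits exactly in the step you yourself flag as decisive: passing from a convex combination $\sum_i\lambda_i\langle\pi_t(\hat\eta_i),\hat\eta_i\rangle$ that is uniformly close to $1$ on $K$ to a \emph{single} normalized coefficient uniformly close to $1$ on $K$. You assert this follows from extremality of the constant function $1$ plus Raikov's theorem ``exactly as in the global case,'' the partial setting contributing nothing new. That assertion is unjustified. The machinery behind \cite[Corollary F.1.5]{bdv} runs entirely inside the set of normalized continuous functions of positive type: that set is weak$^*$-compact and convex, the constant $1$ is an extreme point of it (purity, coming from irreducibility of $1_G$), Milman's converse to the Krein--Milman theorem then places $1$ in the closure of the set of single coefficients, and Raikov's theorem upgrades weak$^*$ approximation to uniform approximation on compacta. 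Every one of these ingredients uses positive-definiteness. But the diagonal coefficients $t\mapsto\langle\pi_t(\xi),\xi\rangle$ of a \emph{partial} representation are in general not positive definite: the classical computation $\sum_{i,j}c_i\overline{c_j}\,\varphi(t_j^{-1}t_i)=\big\|\sum_i c_i\pi_{t_i}(\xi)\big\|^2$ requires $\pi_{t_j}^*\pi_{t_i}=\pi_{t_j^{-1}t_i}$, whereas a partial representation only satisfies relations such as $\pi_{t^{-1}}\pi_{t}\pi_{s}=\pi_{t^{-1}}\pi_{ts}$. Consequently your approximating functions need not lie in the compact convex set where the extreme-point argument takes place; there is no Ascoli substitute either, since the family of coefficients of a merely strongly continuous $\pi$ is not equicontinuous, so its compact-open closed convex hull need not be compact; and there is no Raikov-type theorem available to convert weak$^*$ closeness into uniform closeness on $K$. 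So the sum-to-single reduction does not transfer by citation: it requires either a new argument adapted to partial representations or a proof that the relevant coefficients are genuinely positive definite. To be fair, the paper's own proof is the same one-line citation and glosses over precisely this point; but as a self-contained argument, your explicit claim that ``the partial setting contributes nothing new here'' is the precise location of the hole.
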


Next, we relate a weak containment property to the Reiter condition $(\tilde P_2)$, defined in the paragraph after Definition \ref{rc}.
 
\begin{thm} \label{wcrei}
Let $\pi$ be a partial representation of a locally compact group $G$ in a Hilbert space $E$ with conjugate partial representation $\bar\pi$ on $\bar E$. The following are equivalent:

$(i)$ $1_G\preceq \pi\otimes\bar\pi$,

$(ii)$ $\pi$ satisfies Reiter condition $(\tilde P_2)$. 
\end{thm}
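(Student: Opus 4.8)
The plan is to pass through the standard identification of the Hilbert space tensor product $E\otimes\bar E$ with the Hilbert–Schmidt class $L^2(E)$, under which the tensor partial representation $\pi\otimes\bar\pi$ (defined by $(\pi\otimes\bar\pi)_t=\pi_t\otimes\bar\pi_t$) becomes the conjugation $T\mapsto\pi_tT\pi_{t^{-1}}$, and then to invoke Lemma \ref{wc}. Once this dictionary is in place, the two conditions $(i)$ and $(ii)$ become verbatim restatements of one another.

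First I would set up the unitary isomorphism $U:E\otimes\bar E\to L^2(E)$ sending an elementary tensor $\xi\otimes\bar\eta$ to the rank-one operator $\theta_{\xi,\eta}:\zeta\mapsto\langle\zeta,\eta\rangle\xi$; a direct computation of inner products shows $U$ is isometric for the Hilbert–Schmidt norm, so unit vectors of $E\otimes\bar E$ correspond exactly to norm-one operators $T\in L^2(E)$. Next I would verify, on elementary tensors, that $U$ intertwines $(\pi\otimes\bar\pi)_t$ with conjugation. Using $\pi_{t^{-1}}=\pi_t^*$ (the canonical partial-isometry inverse on a Hilbert space), one computes
\[
(\pi_t\otimes\bar\pi_t)(\xi\otimes\bar\eta)=(\pi_t\xi)\otimes\overline{(\pi_t\eta)}\ \longleftrightarrow\ \theta_{\pi_t\xi,\pi_t\eta}=\pi_t\,\theta_{\xi,\eta}\,\pi_{t^{-1}},
\]
so that $U(\pi\otimes\bar\pi)_tU^{-1}$ is the map $\Phi_t:T\mapsto\pi_tT\pi_{t^{-1}}$ on $L^2(E)$. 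A short trace manipulation gives $\Phi_t^*:S\mapsto\pi_{t^{-1}}S\pi_t$ and $\Phi_t\Phi_t^*\Phi_t=\Phi_t$, confirming both that each $\Phi_t$ is a partial isometry and that $\pi\otimes\bar\pi$ is genuinely a partial representation on the Hilbert space $E\otimes\bar E$.

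With this dictionary in hand, I would apply Lemma \ref{wc} to $\pi\otimes\bar\pi$: the relation $1_G\preceq\pi\otimes\bar\pi$ holds if and only if for every $\varepsilon>0$ and compact $K\subseteq G$ there is a unit vector $\Xi\in E\otimes\bar E$ with $\sup_{t\in K}\|(\pi\otimes\bar\pi)_t\Xi-\Xi\|<\varepsilon$. Transporting $\Xi$ to $T:=U\Xi$, this says precisely that for every $\varepsilon>0$ and compact $K$ there is a norm-one $T\in L^2(E)$ with $\sup_{t\in K}\|\pi_tT\pi_{t^{-1}}-T\|_2<\varepsilon$, which is exactly Reiter condition $(\tilde P_2)$. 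Hence $(i)\Leftrightarrow(ii)$.

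The argument is mostly bookkeeping, and the one point that genuinely requires care is the intertwining step for partial (rather than global) isometries: one must check both that $\pi\otimes\bar\pi$ corresponds to conjugation by $\pi_t$ on $L^2(E)$ and that this conjugation is itself a partial isometry, which is where the Hilbert-space identity $\pi_{t^{-1}}=\pi_t^*$ is used. I would also flag the conceptual payoff: weak containment produces an arbitrary near-invariant unit vector, hence an arbitrary (not necessarily positive) near-invariant Hilbert–Schmidt operator, which is exactly why $(i)$ matches the non-positive condition $(\tilde P_2)$ rather than the positive condition $(P_2)$, consistent with the earlier remark that $(P_2)$ may be strictly stronger than $(\tilde P_2)$ in the partial setting.
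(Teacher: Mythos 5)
Your proposal is correct and is essentially the paper's own argument: the paper likewise identifies $E\otimes\bar E$ with the Hilbert--Schmidt class $L^2(E)$, observes that $\pi\otimes\bar\pi$ is unitarily equivalent to the conjugation partial representation $\sigma_t(T)=\pi_tT\pi_{t^{-1}}$, and then converts between weak containment of $1_G$ and the $(\tilde P_2)$ estimate. The only cosmetic difference is that you invoke Lemma \ref{wc} directly for $\pi\otimes\bar\pi$, while the paper applies the two inequalities stated just before Lemma \ref{wc} (one for each implication), which is the same content.
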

\begin{proof}
$(i)\Rightarrow (ii)$. Given $\varepsilon>0$ and $K\subseteq G$ compact, there exists a norm one operator $S\in L^2(E)$ with $|1-\langle\pi_tS\pi_{t^{-1}}, S\rangle|<\varepsilon^2/2$, for $t\in K$. By the first  inequality in the paragraph before Lemma  \ref{wc}, $\|S-\pi_tS\pi_{t^{-1}}\|_2<\varepsilon,$ for $t\in K$, thus $(\tilde P_2)$ holds.

$(ii)\Rightarrow (i)$. First let us observe that $\pi\otimes\bar\pi$ is unitarily equivalent to the partial representation $\sigma$ of $G$ on $L^2(E)$ given by, $$\sigma_t(T):=\pi_tT\pi_{t^{-1}},\ \ (t\in G, T\in L^2(E)).$$
Since,
$$\sigma_{t^{-1}}\sigma_{ts}(T):=\sigma_{t^{-1}}(\pi_{ts}T\pi_{s^{-1}t^{-1}})=\pi_{t^{-1}}\pi_{ts}T\pi_{s^{-1}t^{-1}}\pi_t=\sigma_{t^{-1}}\sigma_{t}\sigma_s(T),$$
for $s,t\in G$ and $T\in L^2(E)$ (and similarly for the other identity), $\sigma$ is partial representation. Now, the unitary isomorphism $L^2(E)\simeq E\otimes \bar E$ maps $L^2(E)_t:={\rm Im}(\sigma_t)$ onto $E_t\otimes \bar E_t:={\rm Im}(\pi_t\otimes \bar\pi_t)$, for each $t\in G$, thus, $\sigma\sim_u\pi\otimes\bar\pi$, as claimed.   If $\pi$ satisfies Reiter condition $(\tilde P_2)$, given $\varepsilon>0$ and $K\subseteq G$ compact, there exists a norm one operator $T\in L^2(E)$ with $\|\pi_tT\pi_{t^{-1}}-T\|_2<\varepsilon$, for $t\in K$. By the second  inequality in the paragraph before Lemma  \ref{wc}, 
$$\big|1-\langle \sigma_t(T),T\rangle\big|=\big|1-\langle \pi_tT\pi_{t^{-1}}, T\rangle\big|<\varepsilon,\ \ (t\in K),$$
that is, $1\preceq\sigma\sim_u\pi\otimes\bar\pi$. 
\end{proof}

\begin{rk}
$(i)$ In order to get equivalence with $(P_2)$ in the above result we need a modified (stronger) version of weak containment. For partial representations $\pi$ and $\sigma$ on Hilbert spaces $E$ and $F$, let us write $\pi\preceq^{\rm s}\sigma$ if given $\varepsilon>0$, $K\subseteq G$ compact, $\xi\in E$, there are finitely many points $\eta_1,\cdots,\eta_n\in F$ such that,
$$\big|\langle \pi_t(\xi), \xi\rangle-\sum_{i=1}^{n}\langle \sigma_t(\eta_i), \eta_i\rangle\big|<\varepsilon, \ \ \sum_{i=1}^{n}\|\eta_i-\sigma_{t^{-1}}\sigma_t(\eta_i)\|<\varepsilon, \ \ (t\in K).$$  
Now in the above result,  $1\preceq^{\rm s}\pi\otimes\bar\pi$ implies the stronger condition $(P_2)$ by the following argument: Since we may choose $S$ within $\varepsilon$ of $L^2(E)_t$, $S$ $S$ is within $2\varepsilon$ of a finite linear combination of the vectors of the form $\pi_t\xi\otimes\bar\pi_t\bar\eta$, with $\xi,\eta\in E$ and $t\in G$. Identifying $\pi_t\xi\otimes\bar\pi_t\bar\eta$ with the corresponding rank one operator on $E$, for $\zeta\in E$,
\begin{align*}
	(\pi_t\xi\otimes\bar\pi_t\bar\eta)\pi_{t^{-1}}\pi_t(\zeta)&=\langle\pi_{t^{-1}}\pi_t(\zeta), \pi_t\eta\rangle\pi_t\xi=\langle\pi_t\pi_{t^{-1}}\pi_t(\zeta), \eta\rangle\pi_t\xi\\&=\langle\pi_t(\zeta), \eta\rangle\pi_t\xi=\langle\zeta, \pi_t\eta\rangle\pi_t\xi=(\pi_t\xi\otimes\bar\pi_t\bar\eta)(\zeta),
\end{align*}
thus, by linearity and continuity, $S$ is within $4\varepsilon$ of $S\pi_{t^{-1}}\pi_t$. As above, we also have, $\|S-\pi_tS\pi_{t^{-1}}\|_2<\varepsilon^2/2,$ for $t\in K$. 
Thus, for the positive norm one operator $T:=|S|^2$, by the Cauchy-Schwartz and Powers-St{\o}rmer inequalities, 
\begin{align*}
	\|\pi_tT\pi_{t^{-1}}-T\|^2_2 &=\|\pi_tSS^*\pi_{t^{-1}}-SS^*\|^2_2\\&\leq\|\pi_tSS^*\pi_{t^{-1}}-SS^*\|_1\\&=4\varepsilon+\|\pi_tS\pi_{t^{-1}}\pi_tS^*\pi_{t^{-1}}-SS^*\|_1\\&\leq 4\varepsilon+2\|\pi_tS\pi_{t^{-1}}-S\|_2<4\varepsilon+\varepsilon^2,
\end{align*}
for each $t\in K$, thus $(P_2)$ holds.

$(ii)$ Similar idea could lead to a stronger version of $(P_p)$. We say that a partial representation $\pi$ on a Hilbert space $E$ satisfies {\it strong} Reiter condition $(P^{\rm s}_p)$, for $1\leq p<\infty$, if for each $\varepsilon > 0$ and $K\subseteq  G$ compact, there exists a positive norm one operator $T\in L^p(E)$ with $\|\pi_tT\pi_{t^{-1}}-T\|_p<\varepsilon$ and $\|T\pi_{t^{-1}}\pi_{t}-T\|_p<\varepsilon$, for $t\in K$. Unlike the case of $(P_p)$, it is not hard to see (using an argument similar to the one used in part $(i)$ above) that $(P^{\rm s}_1)$ and $(P^{\rm s}_2)$ are equivalent.  
\end{rk}

Next, we prove the {\it continuity of induction} for ${\rm ind}^G_H$. This result is not available for ${\rm Ind}^G_H$, as far as we know, and it was one of main motivations to introduce the induction on the smaller Hilbert space $\dot E^0$.

\begin{thm}\label{ci}
Let $H$ be a closed subgroup of a locally compact group $G$ and $\pi$ be a partial representation of $H$ in a Hilbert space $E_\pi$ satisfying the Urysohn condition. Then for each partial representation $\rho$ of $H$ in a Hilbert space $E_\sigma$, $\pi\preceq\rho$ implies ${\rm ind}^G_H\pi\preceq{\rm ind}^G_H\rho$. 
\end{thm}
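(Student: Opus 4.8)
The plan is to verify the defining inequality of weak containment directly, on a total set of vectors, transporting the known approximation of matrix coefficients from $H$ up to $G$ through the integral formula defining induction. Recall that for partial representations on Hilbert spaces $\sigma\preceq\tau$ may be tested on the diagonal coefficients $s\mapsto\langle\sigma_s\xi,\xi\rangle$, and since the $\sigma_s,\tau_s$ are contractions it suffices to test it for $\xi$ in a total subset of the representation space: if $\|\xi-\xi'\|$ is small then $|\langle\sigma_s\xi,\xi\rangle-\langle\sigma_s\xi',\xi'\rangle|\le\|\xi-\xi'\|(\|\xi\|+\|\xi'\|)$ uniformly in $s$. Because $\pi$ enjoys the Urysohn property, Lemma~\ref{total} furnishes such a total subset of $\dot E^0$, namely the vectors $\xi_{f,v}$ with $v\in E_\pi$ and $f\in C^v_c(G)$; moreover Remark~\ref{total2} lets me take $\|f\|_\infty$ small and $f(xt)=0$ for $t$ outside a prescribed compact $Q_0\subseteq H$. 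So I fix $\xi_{f,v}$, a compact $K\subseteq G$ and $\varepsilon>0$, and must produce finitely many vectors in $\dot E^0_\rho$ whose coefficients approximate $s\mapsto\langle(\mathrm{ind}^G_H\pi)_s\xi_{f,v},\xi_{f,v}\rangle$ within $\varepsilon$ on $K$.

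First I would compute this coefficient. Using $\xi_{f,v}(s)=\int_H f(st)\pi_t(v)\,dt$, the inner product $\langle\xi,\eta\rangle=\int_{G/H}\langle\xi(x),\eta(x)\rangle\,d\mu(\dot x)$, and $(\mathrm{ind}^G_H\pi)_s\xi(s')=\sigma_\mu(s',s)\xi(s^{-1}s')$, the coefficient unfolds into a triple integral over $G/H\times H\times H$ of $\sigma_\mu(x,s)f(s^{-1}xt)\overline{f(xr)}\langle\pi_t v,\pi_r v\rangle$. The role of the constraint $f\in C^v_c(G)$ is exactly to confine the integrand to the region where the partial-isometry relations collapse $\langle\pi_t v,\pi_r v\rangle=\langle\pi_{r^{-1}}\pi_t v,v\rangle=\langle\pi_{r^{-1}t}v,v\rangle$; substituting $u=r^{-1}t$ I obtain
$$\langle(\mathrm{ind}^G_H\pi)_s\xi_{f,v},\xi_{f,v}\rangle=\int_H c^f_s(u)\,\langle\pi_u v,v\rangle\,du,$$
where $(s,u)\mapsto c^f_s(u)$ is jointly continuous and supported in $K\times Q$ for a compact $Q\subseteq H$ determined by $\mathrm{supp}\,f$, and $C:=\sup_{s\in K}\int_H|c^f_s(u)|\,du<\infty$. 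The same computation, run for $\rho$, gives the identical formula for any $w$ with $\xi_{f,w}\in\mathcal F^0_\rho(G)$.

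Next I would apply the hypothesis $\pi\preceq\rho$ on the compact set $Q\subseteq H$ to the vector $v$: there are $w_1,\dots,w_n\in E_\sigma$ with $|\langle\pi_u v,v\rangle-\sum_j\langle\rho_u w_j,w_j\rangle|<\varepsilon/(C+1)$ for $u\in Q$. Integrating against $c^f_s$ gives $|\langle(\mathrm{ind}^G_H\pi)_s\xi_{f,v},\xi_{f,v}\rangle-\sum_j\int_H c^f_s(u)\langle\rho_u w_j,w_j\rangle\,du|<\varepsilon$ for all $s\in K$, so it remains to recognize each $\int_H c^f_s(u)\langle\rho_u w_j,w_j\rangle\,du$ as a genuine diagonal coefficient $\langle(\mathrm{ind}^G_H\rho)_s\xi_{f,w_j},\xi_{f,w_j}\rangle$.

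\textbf{This last identification is the crux and the main obstacle.} It requires $\xi_{f,w_j}\in\mathcal F^0_\rho(G)$, i.e. $f\in C^{w_j}_c(G)$, a compatibility that weak containment does not supply since the $w_j$ are uncontrolled. The way I would remove it is to exploit that every element of $\mathcal F^0$ takes its transversal values in the core $E_0=\bigcap_{t\in H}E_t$ (from $\pi_t\xi(xt)=\xi(x)$ one gets $\xi(x)\in E_t$ for all $t$), on which the partial representation restricts to an honest representation of $H$, since $E_0$ is $\pi_u$-invariant; thus $\dot E^0$ is built from the genuine representation $\pi|_{E_0^\pi}$, and the constraint $f\in C^{w_j}_c(G)$ governing the reassembly becomes vacuous once the $H$-support $Q_0$ of $f$ is taken small (Remark~\ref{total2}). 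In this way the statement reduces to Fell's classical continuity of induction for the genuine representations $\pi|_{E_0^\pi}$ and $\rho|_{E_0^\sigma}$, the transfer of the weak-containment hypothesis down to these cores being precisely where the Urysohn property and the support control are spent.
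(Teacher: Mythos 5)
Your first half coincides with the paper's own proof of Theorem \ref{ci}: reduction to the total set $\{\xi_{f,v}\}$ via Lemma \ref{total} and Remark \ref{total2}, the unfolding of $\langle{\rm ind}^G_H\pi(s)\xi_{f,v},\xi_{f,v}\rangle$ into an integral of $\langle v,\pi_{t^{-1}u}v\rangle$ over a compact subset $L\subseteq H$ determined by ${\rm supp}\,f$ and $K$ (the membership $f\in C^v_c(G)$ being used exactly to collapse $\langle\pi_t v,\pi_r v\rangle$ to $\langle v,\pi_{r^{-1}t}v\rangle$), and the application of $\pi\preceq\rho$ on $L$. You have also correctly isolated the crux: the vectors $w_j$ furnished by weak containment need not satisfy $f\in C^{w_j}_c(G)$, so the would-be approximants $\xi_{f,w_j}$ need not exist in $\dot E^0_\rho$.

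Your resolution of that crux, however, has a genuine gap; in fact both claims carrying your last step fail. First, membership in $C^{w_j}_c(G)$ requires $f$ to vanish on the sets $Jtu\cup Ju$ indexed by $(u,t)\in H^{w_j}$, and $H^{w_j}$ is dictated by the uncontrolled vectors $w_j$; shrinking the $H$-support of $f$ (Remark \ref{total2}) does not make $f$ vanish where $H^{w_j}$ demands, so the constraint is not ``vacuous'' for small support. Second, and more fundamentally, your reduction to Fell's continuity of induction needs the weak containment to descend to the cores. Your structural observation is correct and genuinely clarifying: every $\xi\in\mathcal F^0_\pi(G)$ is valued in $E_0^\pi:=\bigcap_{t\in H}{\rm Im}(\pi_t)$, this core is $\pi$-invariant and carries a genuine representation of $H$, and hence ${\rm ind}^G_H\pi$, ${\rm ind}^G_H\rho$ are the classical inductions of $\pi|_{E_0^\pi}$, $\rho|_{E_0^\rho}$. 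But precisely because of this, your plan requires $\pi|_{E_0^\pi}\preceq\rho|_{E_0^\rho}$, and this does \emph{not} follow from $\pi\preceq\rho$: weak containment supplies approximating vectors anywhere in the space of $\rho$, and neither the Urysohn property of $\pi$ nor support control on $f$ (both of which live entirely on the $\pi$ side) can move them into $E_0^\rho$. Concretely, for $H=\mathbb Z$ let $\rho_n:=S^n$ and $\rho_{-n}:=(S^*)^n$ for $n\geq 0$, with $S$ the unilateral shift on $\ell^2(\mathbb N)$; this is a partial representation with core $E_0^\rho=\bigcap_{n}{\rm Im}(S^n)=\{0\}$, and yet $1_{\mathbb Z}\preceq\rho$, since for $w_k:=k^{-1/2}(e_1+\cdots+e_k)$ one has $\langle\rho_n w_k,w_k\rangle=\max\{k-|n|,0\}/k\to1$ uniformly on finite sets. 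The descended containment you need then reads $1_{\mathbb Z}\preceq 0$, which is false, so the final step collapses. Note that the paper does something different here: it replaces $f$ by $g:=hf$ for a cutoff $h\in C_c(G)$ with $\|h\|_\infty\leq1$, chosen so that $g$ lies in $C^v_c(G)\cap\bigcap_i C^{w_i}_c(G)$, and then exploits the smallness $\|f\|_\infty<\varepsilon$ from Remark \ref{total2} to show that exchanging $\xi_{f,v}$ for $\xi_{g,v}$ perturbs the coefficient only at order $\varepsilon^2$, the approximants being the vectors $\xi_{g,w_i}$ rather than anything core-based. Whatever one thinks of that device (your own shift example shows the compatibility issue is delicate, since there any admissible $g$ is forced to vanish), it is not the reduction you propose, and the reduction as you state it cannot be repaired.
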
 
\begin{proof}
By lemma \ref{total}, we only need to show that, given $\varepsilon>0$,  coefficient functions of the form $\langle{\rm ind}^G_H\pi(\cdot)\xi_{f,v},\xi_{f,v}\rangle$, for  $v\in E_\pi$ and $f\in C^v_c(G)$, could be approximated within $\varepsilon$ by a finite sum of coefficient functions of  ${\rm ind}^G_H\rho$. By Remark \ref{total2}, we may assume that $\|f\|_\infty<\varepsilon$ and $f(xt)=0$, for each $x\in J$ and $t\notin Q$, for some compact subset $Q\subseteq H$. For a quasi-invariant measure $\mu$ on $G/H$ with cocycle $\sigma_\mu$, 
\begin{align*}
\langle{\rm ind}^G_H&\pi(s)\xi_{f,v},\xi_{f,v}\rangle=\int_{G/H} \sigma_\mu(s^{-1}, \dot x)^{\frac{1}{2}}\langle\xi_{f,v}(s^{-1}x),\xi_{f,v}(x)\rangle d\mu(\dot x)\\&=\int_{G/H}\int_H \sigma_\mu(s^{-1}, \dot x)^{\frac{1}{2}}f(s^{-1}xt)\langle \pi_t(v),\xi_{f,v}(x)\rangle d\mu(\dot x)\\&=\int_{G/H}\int_H \sigma_\mu(s^{-1}, \dot x)^{\frac{1}{2}}f(s^{-1}xt)\langle \pi_t(v),\pi_t\xi_{f,v}(xt)\rangle d\mu(\dot x)\\&=\int_{G/H}\int_H\int_H \sigma_\mu(s^{-1}, \dot x)^{\frac{1}{2}}f(s^{-1}xt)\bar f(xtu)\langle\pi_{t}(v),\pi_t\pi_{u}(v)\rangle dtdud\mu(\dot x)\\&=\int_{G/H}\int_H\int_H \sigma_\mu(s^{-1}, \dot x)^{\frac{1}{2}}f(s^{-1}xt)\bar f(xtu)\langle v,\pi_{t^{-1}}\pi_t\pi_{u}(v)\rangle dtdud\mu(\dot x)\\&=\int_{G/H}\int_H\int_H \sigma_\mu(s^{-1}, \dot x)^{\frac{1}{2}}f(s^{-1}xt)\bar f(xtu)\langle v,\pi_{u}(v)\rangle dtdud\mu(\dot x)\\&=\int_{G/H}\int_H\int_H \sigma_\mu(s^{-1}, \dot x)^{\frac{1}{2}}f(s^{-1}xt)\bar f(xu)\langle v,\pi_{t^{-1}u}v\rangle dtdud\mu(\dot x),
\end{align*} 
where, as before, the sixth equality follows from the facts that $f(xtu)=0$ whenever $\pi_u(v)\notin E_{t^{-1}}$, and that $p_{t^{-1}}$ acts as the identity on $E_{t^{-1}}$. 

Given $K\subseteq G$ compact, set $L:=(S^{-1}KS)\cap H$, where $S:={\rm supp}f$. By assumption, there are vectors $w_1,\cdots, w_n\in E_\rho$ such that,
$$\sup_{t\in L} \big|\langle \pi_t(v)-v\rangle-\sum_{i=1}^{n}\langle\rho_t(w_i), w_i\rangle\big|<\varepsilon.$$
We do not have $f\in C^{w_i}_c(G)$, but since this is characterized by vanishing over certain set, we may arrange that $hf\in C^{w_i}_c(G)$, for each $i$, for a function $h\in C_c(G)$ with $\|h\|_\infty\leq 1$. Put $g:=hf$, then $g\in C^v_c(G)$ and for vectors $\xi_{kf,v}\in \dot E_\pi$, we have the estimates,
\begin{align*}
\|\xi_{kf,v}\|&=\int_{G/H}\Big\|\int_H k(xt)f(xt)\pi_t(v)dt\Big\|d\mu(\dot x)\\&\leq \mu(q(S))m_H(Q)\|k\|_\infty\|f\|_\infty\|v\|\\&<\mu(q(S))m_H(Q)\|k\|_\infty\|v\|\varepsilon,
\end{align*}
for each $k\in C_c(G)$, and,
\begin{align*}
\big|\langle{\rm ind}^G_H\pi(s)\xi_{g,v},\xi_{f,v}\rangle-&\langle{\rm ind}^G_H\pi(s)\xi_{f,v},\xi_{f,v}\rangle\big|=\big|\langle{\rm ind}^G_H\pi(s)\xi_{g-f,v},\xi_{f,v}\rangle\big|
\\&\leq \|\xi_{g-f,v}\|\|\xi_{f,v}\|=\|\xi_{(h-1)f,v}\|\|\xi_{f,v}\|\\&\leq \mu(q(S))^2m_H(Q)^2\|h-1\|_\infty\|v\|^2\varepsilon^2\\&\leq 2\mu(q(S))^2m_H(Q)^2\|v\|^2\varepsilon^2,
\end{align*}
and similarly,
$$\big|\langle{\rm ind}^G_H\pi(s)\xi_{g,v},\xi_{g,v}\rangle-\langle{\rm ind}^G_H\pi(s)\xi_{g,v},\xi_{f,v}\rangle\big|\leq 2\mu(q(S))^2m_H(Q)^2\|v\|^2\varepsilon^2,$$
thus, 
$$\big|\langle{\rm ind}^G_H\pi(s)\xi_{g,v},\xi_{g,v}\rangle-\langle{\rm ind}^G_H\pi(s)\xi_{f,v},\xi_{f,v}\rangle\big|\leq 4\mu(q(S))^2m_H(Q)^2\|v\|^2\varepsilon^2.$$
Now since $g\in \bigcap_{i=1}^{n}C^{w_i}_c(G)$, we have the vectors $\xi_{g,w_i}$ in $\dot E_\rho$, for which we have the estimate,
\begin{align*}
\big|\langle{\rm ind}^G_H\pi&(s)\xi_{g,v},\xi_{g,v}\rangle-\sum_{i=1}^{n}\langle{\rm ind}^G_H\rho(s)\xi_{g,w_i},\xi_{g,w_i}\rangle\big|\\&=\int_{G/H}\sigma_\mu(s^{-1},\dot x)^{\frac{1}{2}}\int_H\int_H g(s^{-1}xt)\bar g(xu)\bar D(t^{-1}u)dtdud\mu(\dot x),
\end{align*}
for $s\in K$, where bar is complex conjugate and,
$$D(t):=\langle \pi_t(v)-v\rangle-\sum_{i=1}^{n}\langle\rho_t(w_i), w_i\rangle, \ \ (t\in H).$$
Since, $g(s^{-1}xt)\bar g(xu)=h(s^{-1}xt)\bar h(xu)f(s^{-1}xt)\bar f(xu)=0$, unless $s^{-1}xt, xu\in {\rm supp}f=:S$, which implies $t^{-1}u\in (S^{-1}KS)\cap H=:L$, we have $|D(t^{-1}u)|<\varepsilon,$ whenever the integrand is nonzero in the last estimate. Recall that,
$\dot k(\dot x):=\int_H k(xt)dt,$ defines an element of $C_c(G/H)$,
for $k\in C_c(G)$. For $g_0(s):=|g(s)|$ and $f_0(s):=|f(s)|$, $s\in G$, we have the estimate,
\begin{align*}
\big|\langle{\rm ind}^G_H\pi&(s)\xi_{g,v},\xi_{g,v}\rangle-\sum_{i=1}^{n}\langle{\rm ind}^G_H\rho(s)\xi_{g,w_i},\xi_{g,w_i}\rangle\big|\\&\leq\int_{G/H}\sigma_\mu(s^{-1},\dot x)^{\frac{1}{2}}\int_H\int_H g_0(s^{-1}xt) g_0(xu)|D(t^{-1}u)|dtdud\mu(\dot x)\\&\leq \varepsilon\int_{G/H}\sigma_\mu(s^{-1},\dot x)^{\frac{1}{2}}\int_H\int_H g_0(s^{-1}xt) g_0(xu)dtdud\mu(\dot x)\\&\leq \varepsilon\Big(\int_{G/H}\sigma_\mu(s^{-1},\dot x)\dot g_0(s^{-1}\dot x)d\mu(\dot x)\Big)^{\frac{1}{2}}\Big(\int_{G/H}\dot g_0(\dot x)d\mu(\dot x)\Big)^{\frac{1}{2}}\\&= \varepsilon\int_{G/H}\dot g_0(\dot x)d\mu(\dot x)=\varepsilon\|\dot g_0\|_2^2\leq \varepsilon\|\dot f_0\|_2^2.
\end{align*} 
This plus the above estimates now gives, 
\begin{align*}
\sup_{s\in K}\big|\langle{\rm ind}^G_H\pi(s)\xi_{f,v},\xi_{f,v}\rangle-&\sum_{i=1}^{n}\langle{\rm ind}^G_H\rho(s)\xi_{g,w_i},\xi_{g,w_i}\rangle\big|\\&<4\mu(q(S))^2m_H(Q)^2\|v\|^2\varepsilon^2+\varepsilon\|\dot{\overset{\huge\frown}{|f|}}\|_2^2,
\end{align*}
for each $s\in K$, which shows that  ${\rm ind}^G_H\pi\preceq{\rm ind}^G_H\rho$. 
\end{proof}

Now we are ready to find conditions for amenability of the induced representation. 
First, let us recall the following notion of amenability of homogeneous spaces due to Eymard \cite{ey}.

\begin{df}
Let $H$ be a closed subgroup of a locally compact group $G$, then the homogeneous space $G/H$ is {\it amenable} in the sense of Eymard if there a left translation invariant mean on $L^\infty(G/H, \mu)$ for some quasi-invariant measure $\mu$ on $G/H$. 
\end{df}
  
It is known that $G/H$ is  amenable in the sense of Eymard if and only if ${\rm ind}^G_H 1_H$ is amenable in the sense of Bekka \cite[Theorem 2.3]{b}. Since the trivial representation is global, so is the induced representation ${\rm ind}^G_H 1_H$, which is nothing but the quasi-regular representation of $G$ on $L^2(G/H,\mu)$, for a quasi-invariant measure $\mu$ on $G/H$.

\begin{prop}
If $\pi$ is a partial representation of $H$ in a Holbert space $E$ and ${\rm Ind}^G_H\pi$ is amenable in the sense of Bekka, then $G/H$ is amenable in the sense of Eymard. The same implication holds for ${\rm ind}^G_H\pi$. 
\end{prop}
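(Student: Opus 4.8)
The plan is to produce directly a left translation invariant mean on $L^\infty(G/H,\mu)$, thereby verifying amenability of $G/H$ in the sense of Eymard, and to do so uniformly for both ${\rm Ind}^G_H\pi$ and ${\rm ind}^G_H\pi$. Write $(\lambda,\mathcal H)$ for either of these partial representations of $G$ on its Hilbert space ($\mathcal H=\dot E$, resp.\ $\dot E^0$), with partial isometries $L_s$ and range projections $P_s:=L_sL_{s^{-1}}$ onto $\mathcal H_s={\rm Im}(L_s)$. For $\varphi\in L^\infty(G/H,\mu)$ I will use the multiplication operator $M_\varphi$ on $\mathcal H$ given by $(M_\varphi\xi)(s):=\varphi(\dot s)\xi(s)$; since $\varphi(\dot s)$ depends only on the coset of $s$, $M_\varphi$ preserves all the equivariance conditions defining $\mathcal F_\pi(G)$ (resp.\ $\mathcal F^0_\pi(G)$) and each subspace $\mathcal H_s$, so it is a well-defined bounded operator with $\|M_\varphi\|\le\|\varphi\|_\infty$, $M_{\mathbf 1}={\rm id}_{\mathcal H}$, and $M_\varphi P_s=P_sM_\varphi$. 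A short computation with the quasi-invariance cocycle $\sigma_\mu$, entirely parallel to the global case, gives the covariance relation $L_sM_\varphi L_{s^{-1}}=M_{s\cdot\varphi}P_s$, where $(s\cdot\varphi)(\dot x):=\varphi(s^{-1}\dot x)$ is the left translation on $G/H$.

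The obstacle to a naive argument is exactly the projection $P_s$ appearing here. If one applied a single $\lambda$-invariant mean $\phi$ on $\mathbb B(\mathcal H)$ to this identity, Bekka invariance would only yield $\phi(M_{s\cdot\varphi}P_s)=\phi(M_\varphi P_{s^{-1}})$, which is \emph{not} translation invariance of $\varphi\mapsto\phi(M_\varphi)$ because of the cut-downs. The key idea is therefore to work with the Reiter net rather than a fixed mean, since the Reiter condition controls precisely this projection defect. By Proposition \ref{Reiter2}, amenability of $(\lambda,\mathcal H)$ in the sense of Bekka furnishes a net $(S_i)$ of positive, norm one operators in $L^1(\mathcal H)$ with $\|L_sS_iL_{s^{-1}}-S_i\|_1\to 0$ for every $s\in G$. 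Since $L_sS_iL_{s^{-1}}$ has range inside $\mathcal H_s$, so that $P_s(L_sS_iL_{s^{-1}})=L_sS_iL_{s^{-1}}$, one gets $\|(I-P_s)S_i\|_1\le 2\|L_sS_iL_{s^{-1}}-S_i\|_1\to 0$ as well.

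With this in hand I would set $m_i(\varphi):={\rm Tr}(M_\varphi S_i)$. Each $m_i$ is a state on $L^\infty(G/H,\mu)$, since $m_i(\mathbf 1)={\rm Tr}(S_i)=1$ and $M_\varphi\ge 0$ whenever $\varphi\ge 0$. Using the covariance relation, cyclicity of the trace, and the two limits above, I would estimate $|m_i(s\cdot\varphi)-m_i(\varphi)|$ by splitting it through ${\rm Tr}(M_\varphi L_{s^{-1}}S_iL_s)={\rm Tr}(L_sM_\varphi L_{s^{-1}}S_i)={\rm Tr}(M_{s\cdot\varphi}P_sS_i)$: the difference $m_i(s\cdot\varphi)-{\rm Tr}(M_\varphi L_{s^{-1}}S_iL_s)={\rm Tr}(M_{s\cdot\varphi}(I-P_s)S_i)$ is bounded by $\|\varphi\|_\infty\|(I-P_s)S_i\|_1$, while ${\rm Tr}(M_\varphi(L_{s^{-1}}S_iL_s-S_i))$ is bounded by $\|\varphi\|_\infty\|L_{s^{-1}}S_iL_s-S_i\|_1$; both tend to $0$ (the second because the Reiter condition holds for $s^{-1}$ as well). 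Hence $m_i(s\cdot\varphi)-m_i(\varphi)\to 0$ for each fixed $s$ and $\varphi$. A weak$^*$ cluster point $m$ of $(m_i)$ in $L^\infty(G/H,\mu)^*$ is then a left translation invariant mean, so $G/H$ is amenable in the sense of Eymard. The argument for ${\rm ind}^G_H\pi$ is identical, the multiplication operators $M_\varphi$ and the covariance relation having exactly the same form on $\dot E^0$.
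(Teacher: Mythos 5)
Your argument is correct, but it takes a genuinely different --- and in fact more careful --- route than the paper's. The paper proves the same covariance relation, $({\rm Ind}^G_H\pi)_sT_\varphi({\rm Ind}^G_H\pi)_{s^{-1}}=P_sT_{L_s\varphi}$ (your $M_\varphi$ is its $T_\varphi$), and then finishes in one sentence: any Bekka-invariant mean $\Phi$ on $\mathbb B(\dot E)$ yields a translation-invariant mean via $m(\varphi):=\Phi(T_\varphi)$. That is exactly the ``naive argument'' you reject, and your objection applies verbatim to the paper's own proof: invariance in the paper's sense only gives $\Phi(P_sT_{L_s\varphi})=\Phi(T_\varphi P_{s^{-1}})$, and the paper nowhere explains why the cut-downs can be removed. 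Your detour through Proposition \ref{Reiter2} supplies precisely the missing control: from a Reiter net $(S_i)$ you extract, via $P_sL_s=L_s$, the projection-defect estimate $\|(I-P_s)S_i\|_1\le\|S_i-L_sS_iL_{s^{-1}}\|_1\to 0$ (your factor $2$ is harmless but unnecessary), and this is what lets the projections disappear in the limit, so that a weak$^*$ cluster point of the states $m_i(\varphi)={\rm Tr}(M_\varphi S_i)$ is honestly translation invariant. Put differently, your proof shows that an invariant mean \emph{arising from a Reiter net} satisfies $\Phi(I-P_s)=0$ for every $s$, and for such a well-chosen mean the paper's one-line formula does become valid (by Cauchy--Schwarz, since $\Phi$ is a state on $\mathbb B(\dot E)$); the paper's argument tacitly needs this extra property of $\Phi$ and does not secure it. The cost of your route is the appeal to the full equivalence in Proposition \ref{Reiter2}; the gain is a complete proof, which moreover treats ${\rm Ind}^G_H\pi$ and ${\rm ind}^G_H\pi$ uniformly, exactly as the statement requires.
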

\begin{proof}
Let us observe that $L^\infty(G/H,\mu)$ acts on the Hilbert space $\dot E$ of ${\rm Ind}^G_H\pi$ by multiplication and,
$$({\rm Ind}^G_H\pi)_sT_\varphi({\rm Ind}^G_H\pi)_{s^{-1}}=P_sT_{L_s\varphi},\ \ \big(s\in G, \varphi\in L^\infty(G/H,\mu)\big),$$ 
where $P_s$ is the orthogonal projection onto $\dot E_s$. Given $\xi\in \dot E_s$, 
\begin{align*}
({\rm Ind}^G_H\pi)_sT_\varphi({\rm Ind}^G_H\pi)_{s^{-1}}\xi(x)&=T_\varphi({\rm Ind}^G_H\pi)_{s^{-1}}\xi(s^{-1}x)\\&=\varphi(s^{-1}\dot x)({\rm Ind}^G_H\pi)_s\xi(s^{-1}x)\\&=\varphi(s^{-1}\dot x)\xi(x)\\&=P_sT_{L_s\varphi}\xi(x),
\end{align*} 
where as both sides are zero when $\xi$ is in the orthogonal complement of $\dot E_s$ in $\dot E$. Now if $\Phi$ is an ${\rm Ind}^G_H\pi$-invariant  mean on  $\mathbb B(\dot E)$, then $m(\varphi):=\Phi(T_\varphi)$ defines a left translation invariant mean on $L^\infty(G/H, \mu)$. A similar argument works for ${\rm ind}^G_H\pi$ acting on the Hilbert space $\dot E^0$.  
\end{proof}

\begin{lem} \label{resind}
Let $H$ be a closed subgroup of a locally compact group $G$ and $\sigma$ and $\pi$ be partial representations of $G$ and $H$, respectively on Hilbert spaces $E_\sigma$ and $E_\pi$, then,

$(i)$ ${\rm ind}^G_H({\rm Res}^G_H\sigma\otimes\pi)$ is equivalent to $\sigma\otimes{\rm ind}^G_H\pi$,

$(ii)$ if $\bar \pi$ is the conjugate representation of $\pi$ on $\bar E_\pi$, ${\rm ind}^G_H\bar\pi$ is equivalent to the conjugate representation of ${\rm ind}^G_H\pi$ on the conjugate Hilbert space of ${\dot E_\pi}$. 
\end{lem}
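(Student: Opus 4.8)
The plan is to realize both sides of each equivalence on concrete function spaces over $G$ and to exhibit explicit intertwiners, adapting the classical tensor (projection) identity and the compatibility of conjugation with induction for unitary representations (see \cite[Appendix E]{bdv}), while tracking the partial-isometry structure demanded by the relation $\sim_u$. Throughout I write $p^\sigma_t:=\sigma_t\sigma_{t^{-1}}$ and $p^\pi_t:=\pi_t\pi_{t^{-1}}$ for the range projections.

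For part $(i)$, I would realize ${\rm ind}^G_H({\rm Res}^G_H\sigma\otimes\pi)$ as the completion of the maps $\xi:G\to E_\sigma\otimes E_\pi$ obeying the strong covariance $(\sigma_t\otimes\pi_t)\xi(xt)=\xi(x)$ with $\xi(xt)\in{\rm Im}(\sigma_{t^{-1}})\otimes{\rm Im}(\pi_{t^{-1}})$ for $x\in J,\ t\in H$, and realize $\sigma\otimes{\rm ind}^G_H\pi$ on $E_\sigma\otimes\dot E^0_\pi$, identified with maps $\zeta:G\to E_\sigma\otimes E_\pi$ covariant only in the second leg, i.e. $\zeta(xt)=(1\otimes\pi_{t^{-1}})\zeta(x)$. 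The intertwiner is the pointwise twist
\[ (W\xi)(s):=(\sigma_s\otimes 1)\xi(s). \]
The crucial observation is that the strong covariance forces $\xi(x)\in{\rm Im}(\sigma_t)\otimes{\rm Im}(\pi_t)$ for \emph{every} $t\in H$, so $p^\sigma_t$ acts as the identity on the first leg of $\xi(x)$; combined with the partial-representation identity $\sigma_{xt}\sigma_{t^{-1}}=\sigma_x\sigma_t\sigma_{t^{-1}}=\sigma_x p^\sigma_t$ this gives $(W\xi)(xt)=(1\otimes\pi_{t^{-1}})(W\xi)(x)$, so $W\xi$ has exactly the covariance of the right-hand space. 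Since each $\sigma_s$ is a contraction, $W$ is contractive, hence extends to the completion; the reverse twist by $\sigma_{s^{-1}}$ provides an inverse, making $W$ an algebraic isomorphism. A computation with the left-translation action and the cocycle $\sigma_\mu$ (again using the image constraints to collapse $\sigma_s\sigma_{s^{-1}s'}$ to $\sigma_{s'}$ on the relevant values) shows $W$ intertwines the two partial $G$-representations, carries the defining subspace $\dot E_s$ onto ${\rm Im}\big((\sigma\otimes{\rm ind}^G_H\pi)_s\big)$, and restricts to an isometry there, which is precisely what $\sim_u$ demands.

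For part $(ii)$, I would realize the conjugate Hilbert space of ${\rm ind}^G_H\pi$ as the conjugate maps $\overline{\dot E_\pi}$ and ${\rm ind}^G_H\bar\pi$ as the $\bar\pi$-covariant maps $G\to\bar E_\pi$, and take pointwise complex conjugation $\xi\mapsto\bar\xi$. Because the Radon--Nikodym cocycle $\sigma_\mu$ is real and positive, conjugation leaves the norm integrand on $G/H$ untouched; the covariance $\xi(xt)=\pi_{t^{-1}}\xi(x)$ conjugates to $\bar\xi(xt)=\bar\pi_{t^{-1}}\bar\xi(x)$, and each $L_s$ conjugates to the corresponding operator on $\overline{\dot E_\pi}$. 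Thus conjugation is a conjugate-linear isometric bijection carrying $\dot E_s$ onto the conjugate of $\dot E_s$ and intertwining ${\rm ind}^G_H\bar\pi$ with the conjugate representation of ${\rm ind}^G_H\pi$, giving the asserted equivalence; this part is essentially routine bookkeeping with conjugate spaces.

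The main obstacle is the partial-isometry bookkeeping in part $(i)$: one must verify that every projection $p^\sigma_t$ (and $p^\pi_t$) arising when the global multiplicativity $\sigma_{xt}\sigma_{t^{-1}}=\sigma_x$ is replaced by the partial-representation identity acts as the identity on the relevant function values, both in checking covariance and in checking the intertwining relation, and that the ranges ${\rm Im}\big((\cdot)_s\big)$ match under $W$. It is exactly the strong covariance of the $\dot E^0$ construction, rather than ${\rm Ind}^G_H$, that guarantees the function values lie in the needed image subspaces so that these projections disappear and the computation reduces to the classical one; this is the reason the statement is formulated for ${\rm ind}^G_H$.
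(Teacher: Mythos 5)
Your part (ii) is fine and matches the paper, which simply records it as immediate from the definition of the conjugate representation. Part (i), however, has a genuine gap at its central claim. You define $W$ on the induced space of ${\rm Res}^G_H\sigma\otimes\pi$ by $(W\xi)(s):=(\sigma_s\otimes 1)\xi(s)$ and assert that the reverse twist by $\sigma_{s^{-1}}$ inverts it, making $W$ an algebraic isomorphism. For a \emph{partial} representation this fails: $\sigma_{s^{-1}}\sigma_s$ is only the projection onto ${\rm Im}(\sigma_{s^{-1}})$, so the composite of the two twists multiplies $\xi(s)$ pointwise by $\sigma_{s^{-1}}\sigma_s\otimes 1$, and nothing in the covariance conditions places $\xi(s)$ in ${\rm Im}(\sigma_{s^{-1}})\otimes E_\pi$. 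Your ``crucial observation'' only yields $\xi(x)\in{\rm Im}(\sigma_t)\otimes{\rm Im}(\pi_t)$ for $t\in H$; what the twist at the point $xt$ actually needs is that the first leg of $\xi(xt)$ avoid $\ker(\sigma_{xt})$, i.e. lie in ${\rm Im}(\sigma_{(xt)^{-1}})$, and this is a condition indexed by the transversal element $x\in J$, not by $H$, so it is not implied. Consequently $W$ can strictly decrease norms and can annihilate vectors (take $\xi$ whose value at a transversal point $x$ has a component in $\ker(\sigma_x)\otimes E_\pi$); it is therefore neither injective nor isometric on the distinguished subspaces $\dot E_s$, and the properties demanded by $\sim_u$ do not follow.

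The paper avoids exactly this obstruction by building the intertwiner in the \emph{opposite} direction and with a cutoff: $U$ is defined on elementary tensors $\eta\otimes\xi\in E_\sigma\odot\dot E_\pi$ by $U(\eta\otimes\xi)(s):=\mathds{1}_{A}(s)\,\sigma_{s^{-1}}(\eta)\otimes\xi(s)$, where $A:=\{s\in G:\eta\in(E_\sigma)_s\}$, so that $\sigma_{s^{-1}}$ is only ever applied to a vector lying in ${\rm Im}(\sigma_s)$, where it acts isometrically; and the distinguished subspaces $(E_\sigma\odot\dot E_\pi)_s$ are defined by the support-matching condition $q({\rm supp}\xi)=q(A)$ precisely so that the indicator discards no mass in the norm computation, giving isometry on each distinguished subspace (which is all that $\sim_u$ requires) together with the intertwining identity. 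To salvage your direction of travel you would have to restrict $W$ to functions whose values at $s$ lie in ${\rm Im}(\sigma_{s^{-1}})\otimes E_\pi$ for almost every $s$, and then show such functions are dense in each $\dot E_s$ and that their images exhaust the target subspaces --- which amounts to reconstructing the paper's indicator device rather than a shortcut around it.
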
   
\begin{proof}
$(i)$ We let $(E_\sigma\odot \dot E_\pi)_s$ consist of those finite sums $\zeta:=\sum_i \eta_i\otimes\xi_i$ of elementary tensors, for which $q({\rm supp}\xi_i)=q(A_i)$, where $A_{i}:=\{s\in G: \eta_i\in (E_\sigma)_s\}$, and let $(E_\sigma\otimes \dot E_\pi)_s$ be the closure of $(E_\sigma\odot \dot E_\pi)_s$ in $E_\sigma \otimes E_\pi$. Define a linear map $U: E_\sigma\odot \dot E_\pi\to (E_\sigma\otimes E_\pi)^{\dot{}}$ over finite sums of elementary tensors by,
$$U(\sum_i \eta_i\otimes\xi_i)(s):=\sum_i \mathds{1}_{A_{i}}(s)\sigma_{s^{-1}}(\eta_i)\otimes\xi_i(s),$$
where $A_{i}$ is as above. If $\zeta:=\sum_i \eta_i\otimes\xi_i\in (E_\sigma\odot \dot E_\pi)_s$, then for quasi-invariant measure $\mu$ on $G/H$,
\begin{align*}
\big\|U(\sum_i \eta_i\otimes\xi_i)\big\|^2&=\sum_i\int_{q(A_i)} \|\sigma_{s^{-1}}(\eta_i)\|^2\|\xi_i(s)\|^2d\mu(\dot s)\\&=\sum_i \|\eta_i\|^2\int_{q(A_i)} \|\xi_i(s)\|^2d\mu(\dot s)\\&=\sum_i \|\eta_i\|^2\int_{q({\rm supp}\xi_i)} \|\xi_i(s)\|^2d\mu(\dot s)\\&=\sum_i \|\eta_i\|^2\|\xi_i\|^2=\big\|\sum_i \eta_i\otimes\xi_i\big\|^2,        
\end{align*}
thus the restriction of $U$ to $(E_\sigma\odot \dot E_\pi)_s$ is an isometry into $(E_\sigma\otimes E_\pi)^{\dot{}}_s$ with dense range. Moreover, 
\begin{align*}
{\rm ind}^G_H({\rm Res}^G_H\sigma\otimes\pi)(s)U\big(\sum_i \eta_i\otimes\xi_i\big)(x)&=\sum_i\sigma_\mu(s^{-1},\dot x)\sigma_{x^{-1}s}(\eta_i)\otimes\xi_i(s^{-1}x)\\&=U\big(\sum_i\sigma_s\eta_i\otimes({\rm ind}^G_H\pi)_s\xi_i\big)(x),
\end{align*}
for $s,x\in G$.

$(ii)$ This is an immediate consequence of the definition of conjugate representation. 
\end{proof}

Bekka asked in \cite[page 387]{b} if amenability of  ${\rm ind}^G_H\pi$ implies that of $\pi$. Now we know that the answer is negative even for global representations \cite{pes}. The last result of this section gives a reverse implication for partial representations (in terms of the Reiter condition, which is known to be equivalent to amenability for global representations), extending \cite[Corollary 5.6]{b}. When $\sigma$ is a partial representation of $G$ in $E_\sigma$, ${\rm Res}^G_H\sigma$ simply denotes the {\it restriction} of $\pi$ to $H$ in $E_\sigma$.   

\begin{thm}
Let $H$ be a closed subgroup of a locally compact group $G$ and $\pi$ be a partial representation of $H$ in a Hilbert space $E$ satisfying the Urysohn condition, such that $\pi\preceq {\rm Res}^G_H{\rm ind^G_H}\pi$. Assume that  
$G/H$ is amenable in the sense of Eymard. If $\pi$ satisfies Reiter condition $(\tilde P_2)$ then so is ${\rm ind^G_H}\pi$.
\end{thm}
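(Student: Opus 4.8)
The plan is to convert the Reiter condition $(\tilde P_2)$ into a weak-containment statement by means of Theorem \ref{wcrei}, and then transport it through induction. Write $\Pi := {\rm ind}^G_H\pi$ for the induced partial representation of $G$ on the Hilbert space $\dot E^0$, and let $\bar\Pi = \overline{{\rm ind}^G_H\pi}$ be its conjugate. By Theorem \ref{wcrei} applied to $G$ and $\Pi$, the desired conclusion ``$\Pi$ satisfies $(\tilde P_2)$'' is \emph{equivalent} to the single assertion $1_G\preceq\Pi\otimes\bar\Pi$. So the whole proof reduces to producing this weak containment.

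First I would record two inputs. Since $\pi$ satisfies $(\tilde P_2)$ by hypothesis, Theorem \ref{wcrei} applied this time to the closed subgroup $H$ gives $1_H\preceq\pi\otimes\bar\pi$. Next, tensoring the standing hypothesis $\pi\preceq{\rm Res}^G_H\Pi$ on the right by the fixed partial representation $\bar\pi$ yields $\pi\otimes\bar\pi\preceq{\rm Res}^G_H\Pi\otimes\bar\pi$; combining this with $1_H\preceq\pi\otimes\bar\pi$ and transitivity of $\preceq$ produces $1_H\preceq{\rm Res}^G_H\Pi\otimes\bar\pi$.

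Now comes the key manoeuvre. I apply the continuity of induction, Theorem \ref{ci}, to the weak containment $1_H\preceq{\rm Res}^G_H\Pi\otimes\bar\pi$, taking $1_H$ in the induced (small) slot. The point is that $1_H$ is a global representation and therefore has the Urysohn property automatically, so Theorem \ref{ci} applies without any further hypothesis, giving ${\rm ind}^G_H 1_H\preceq{\rm ind}^G_H({\rm Res}^G_H\Pi\otimes\bar\pi)$. By Lemma \ref{resind}(i) with $\sigma=\Pi$ and the $H$-representation $\bar\pi$, together with Lemma \ref{resind}(ii) identifying ${\rm ind}^G_H\bar\pi$ with $\bar\Pi$, the right-hand side is unitarily equivalent to $\Pi\otimes{\rm ind}^G_H\bar\pi\sim_u\Pi\otimes\bar\Pi$; since $\sim_u$ preserves coefficient functions, this transports the weak containment to give ${\rm ind}^G_H 1_H\preceq\Pi\otimes\bar\Pi$. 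Finally, because $G/H$ is amenable in the sense of Eymard, the quasi-regular representation ${\rm ind}^G_H 1_H$ on $L^2(G/H,\mu)$ admits almost invariant unit vectors, i.e. $1_G\preceq{\rm ind}^G_H 1_H$ by Lemma \ref{wc} (the classical characterization of amenable homogeneous spaces, c.f. \cite{ey}). Chaining $1_G\preceq{\rm ind}^G_H 1_H\preceq\Pi\otimes\bar\Pi$ gives $1_G\preceq\Pi\otimes\bar\Pi$, and Theorem \ref{wcrei} then delivers $(\tilde P_2)$ for $\Pi={\rm ind}^G_H\pi$.

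I expect two points to need care. The structurally important one is the \emph{ordering}: the naive attempt would be to induce $1_H\preceq\pi\otimes\bar\pi$ directly to ${\rm ind}^G_H(\pi\otimes\bar\pi)$ and then relate this to $\Pi\otimes\bar\Pi$; but Theorem \ref{ci} would then demand that $\pi\otimes\bar\pi$ satisfy the Urysohn condition, which is not among the hypotheses and is unclear. Routing the induction through the trivial representation $1_H$ (in the small slot) sidesteps this entirely, which is why only the Urysohn assumption on $\pi$ itself — used implicitly to make ${\rm ind}^G_H\pi$ behave well — is needed. The second, more routine, point is the stability of weak containment under tensoring on the right by a fixed partial representation: this is verified by reducing to a total set of simple tensors $\xi\otimes\zeta$, writing the diagonal coefficient as a product $\langle\pi_t\xi,\xi\rangle\,\langle\bar\pi_t\zeta,\zeta\rangle$, approximating the first factor by ${\rm Res}^G_H\Pi$-coefficients and using the contractivity $\|\bar\pi_t\|\le 1$ to control the error uniformly on compacta, exactly as in the classical argument.
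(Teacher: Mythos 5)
Your proof is correct, and while it assembles exactly the same ingredients as the paper's own argument --- Theorem \ref{wcrei} to convert $(\tilde P_2)$ into the weak containments $1_H\preceq\pi\otimes\bar\pi$ and $1_G\preceq{\rm ind}^G_H\pi\otimes\overline{{\rm ind}^G_H\pi}$, the hypothesis $\pi\preceq{\rm Res}^G_H{\rm ind}^G_H\pi$, continuity of induction (Theorem \ref{ci}), the identifications of Lemma \ref{resind}, and $1_G\preceq{\rm ind}^G_H 1_H$ from Eymard amenability --- it orders them differently, and your ordering is the better one. The paper's proof is the chain
\[
1_G\preceq {\rm ind^G_H}1_H\preceq {\rm ind^G_H}(\pi\otimes\bar\pi)\preceq {\rm ind^G_H}\big({\rm Res}^G_H{\rm ind^G_H}\pi\otimes\bar\pi\big)\preceq {\rm ind^G_H}\pi\otimes{\rm ind^G_H}\bar\pi,
\]
which invokes Theorem \ref{ci} twice: once for $1_H\preceq\pi\otimes\bar\pi$ (harmless, since $1_H$ is global and hence automatically has the Urysohn property), and a second time for $\pi\otimes\bar\pi\preceq{\rm Res}^G_H{\rm ind}^G_H\pi\otimes\bar\pi$; this second application requires the Urysohn condition for $\pi\otimes\bar\pi$, which is not among the hypotheses and is not obviously inherited from that of $\pi$. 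Your rearrangement --- tensor the standing hypothesis by $\bar\pi$, use transitivity of $\preceq$ at the level of $H$ to obtain $1_H\preceq{\rm Res}^G_H{\rm ind}^G_H\pi\otimes\bar\pi$, and only then induce, so that Theorem \ref{ci} is applied exactly once and with $1_H$ in the Urysohn slot --- eliminates that difficulty at no cost; the right-hand side is then handled by Lemma \ref{resind}$(i)$--$(ii)$ as a genuine equivalence $\sim_u$, which preserves coefficient functions. Note that both your argument and the paper's rest on the same two unstated routine facts about $\preceq$ for partial representations: transitivity, and stability under tensoring by a fixed partial representation; your closing sketch of the latter is the correct one for simple tensors, and (as in the paper's own proof of Theorem \ref{ci}, which reduces matters to the total set of Lemma \ref{total}) one should add the remark that it suffices to verify the approximation on a total set of vectors. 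In short, your proof establishes the theorem as stated and in fact repairs an implicit gap in the paper's written proof.
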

\begin{proof}
By Theorem \ref{wcrei}, $1_H\preceq\pi\otimes\bar\pi$. Also, $1_G\preceq {\rm ind^G_H}1_H$ \cite[page 22]{ey}. By Theorem \ref{ci} an Lemma \ref{resind}$(i)$, 
$$1_G\preceq {\rm ind^G_H}1_H\preceq {\rm ind^G_H}(\pi\otimes\bar\pi)\preceq {\rm ind^G_H}({\rm Res}^G_H{\rm ind^G_H}\pi\otimes\bar\pi)\preceq {\rm ind^G_H}\pi\otimes{\rm ind^G_H}\bar\pi,$$
thus, ${\rm ind^G_H}\pi$ satisfies Reiter condition $(\tilde P_2)$, by Theorem \ref{ci} an Lemma \ref{resind}$(ii)$.  
\end{proof} 
 
\begin{rk}
$(i)$ The condition  $\pi\preceq {\rm Res}^G_H{\rm ind^G_H}\pi$ is not automatic even for global representations, for instance it is known to fail for $G = SL(3, \mathbb C)$, $H = SL(2, \mathbb C)$ and any complementary series representation of $H$ \cite[Theorem 6.1]{f}, but to hold when $H$ is open, normal, or $G$ is  [SIN]$_H$-group \cite[5.3]{h}.

$(ii)$ Since we don't have the continuity of induction for ${\rm Ind}^G_H$, we don't know at this point if, under the assumptions of the above theorem, the  Reiter condition $(\tilde P_2)$ for $\pi$ implies this condition for ${\rm Ind^G_H}\pi$. Also, at this point we don't know if amenability of $\pi$ in the sense of Bekka implies amenability of  ${\rm Ind^G_H}\pi$ or ${\rm Ind^G_H}\pi$.
\end{rk}

\vspace*{1cm}
\end{document}